\tikzstyle{stop} = [rectangle, rounded corners, minimum width=3cm, minimum height=1cm,text centered,  text width=4.5cm, draw=black, fill=green!30]
\tikzstyle{no} = [rectangle, rounded corners, minimum width=3cm, minimum height=1cm,text centered,  text width=4.5cm, draw=black, fill=red!30]
\tikzstyle{kernel} = [rectangle, rounded corners, minimum width=3cm, minimum height=1cm,text centered,  text width=4.5cm, draw=black, fill=yellow!30]
\tikzstyle{io} = [trapezium, trapezium left angle=70, trapezium right angle=110, minimum width=1cm, minimum height=1cm, text width=10cm, text centered, draw=black]
\tikzstyle{process} = [rectangle, minimum width=3cm, minimum height=1cm, text centered, text width = 4.5cm, draw=black]
\tikzstyle{decision} = [diamond, draw=black, text width=6em, text badly centered, inner sep=1pt]
\tikzstyle{empty} = []
\tikzstyle{arrow} = [thick,->]
\newcommand\iprec{\mathrel{\ooalign{$\prec$\cr
 \,\raise0.85ex\hbox{\scriptsize$\circ$}\cr}}}
\renewcommand{\tilde}{\widetilde}
\newcommand{\R}{\mathbb{R}}
\newcommand{\N}{\mathbb{N}}
\newcommand{\Z}{\mathbb{Z}}
\newcommand{\Hil}{\mathcal{H}}
\newcommand{\eps}{\varepsilon}
\DeclareMathOperator{\lspan}{span}
\DeclareMathOperator{\tr}{tr}
\DeclareMathOperator{\ran}{ran}
\newcounter{Theorem}
\numberwithin{equation}{section}
\numberwithin{Theorem}{section}
\theoremstyle{plain} 
\newtheorem{thm}[Theorem]{Theorem}
\newtheorem{cor}[Theorem]{Corollary}
\newtheorem{lem}[Theorem]{Lemma}
\newtheorem{prop}[Theorem]{Proposition}
\theoremstyle{definition}
\newtheorem{defn}[Theorem]{Definition}
\newtheorem{problem}[Theorem]{Problem}
\theoremstyle{remark}
\newtheorem{remark}[Theorem]{Remark}
\newtheorem{ex}[Theorem]{Example}
\newtheorem{nota}[Theorem]{Notation}
\begin{document}
\title[Diagonals of self-adjoint operators]{Diagonals of self-adjoint operators II: 
\\
Non-compact operators}

\begin{abstract}
Given a self-adjoint operator $T$ on a separable infinite-dimensional Hilbert space we study the problem of characterizing the set $\mathcal D(T)$ of all possible diagonals of $T$. For operators $T$ with at least two points in their essential spectrum $\sigma_{ess}(T)$, we give a complete characterization of $\mathcal D(T)$ for the class of self-adjoint operators sharing the same spectral measure as $T$ with a possible exception of multiplicities of eigenvalues at the extreme points of $\sigma_{ess}(T)$. 
We also give a more precise description of $\mathcal D(T)$ for a fixed self-adjoint operator $T$, albeit modulo the kernel problem for special classes of operators. These classes consist of operators $T$ for which an extreme point of the essential spectrum $\sigma_{ess}(T)$ is also an extreme point of the spectrum $\sigma(T)$.

Our results generalize a characterization of diagonals of orthogonal projections by Kadison \cite{k1, k2}, Blaschke-type  results of M\"uller and Tomilov \cite{mt} and Loreaux and  Weiss \cite{lw3}, and a characterization of diagonals of operators with finite spectrum by the authors \cite{npt}.
\end{abstract}

\author{Marcin Bownik}
\author{John Jasper}

\address{Department of Mathematics, University of Oregon, Eugene, OR 97403--1222, USA}

\email{mbownik@uoregon.edu}

\address{Department of Mathematics \& Statistics, Air Force Institute of Technology, Wright Patterson AFB, OH 45433, USA}

\email{john.jasper@afit.edu}

\keywords{diagonals of self-adjoint operators, the Schur-Horn theorem, the Pythagorean theorem, the Carpenter theorem, spectral theory}


\subjclass{Primary:  
47B15, Secondary: 46C05}

\thanks{The first author was partially supported by NSF grant DMS-1956395. The views expressed in this article are those of the authors and do not reflect the official policy or position of the United States Air Force, Department of Defense, or the U.S. Government.}

\maketitle


\section{Introduction}

The study of diagonals of bounded linear operators acting on infinite-dimensional Hilbert spaces has a long history. A systematic study of the set of diagonals of an operator was pioneered by Herrero \cite{herr}. In particular, he gave sufficient conditions for a sequence to be a diagonal in terms of the essential numerical range $W_e(T)$ of an operator $T$, see \cite{fsw}. Even earlier, Fan \cite{fan} jointly with Fong and Herrero \cite{ff, ffh} studied the existence of zero diagonals of bounded linear operators. More recently in this vein, Loreaux and Weiss \cite{lw2} have shown that an infinite-rank idempotent admits a zero diagonal if and only if it is not a Hilbert-Schmidt perturbation of an orthogonal projection.

M\"uller and Tomilov \cite{mt} proved a far reaching extension of Herrero's result \cite{herr}. Any sequence $(d_i)_{i\in \N}$ contained in the interior of $W_e(T)$ and satisfying the non-Blaschke condition
\begin{equation}\label{nbla}
\sum_{i\in \N} \operatorname{dist}(d_i, \R \setminus W_e(T)) =\infty
\end{equation}
is a diagonal of a self-adjoint operator $T$. Moreover, their result extends to tuples of self-adjoint operators. M\"uller and Tomilov \cite{mt2} also proved that the set of all possible constant diagonals of a bounded Hilbert space operator $T$ is always convex. In a related paper \cite{mt3} they studied matrix representations for $T$ having certain specified algebraic or asymptotic structure. 

Among other results on diagonals of operators, Fong \cite{fong} proved that any bounded sequence of complex numbers is a diagonal of a nilpotent operator. Jasper, Loreaux, and Weiss \cite{jlw} proved an infinite-dimensional extension of Thompson's theorem \cite{th} for compact operators and they characterized diagonals of unitary operators.  A complex-valued sequence $(d_i)_{i\in\N}$ is a diagonal of some unitary operator on $\mathcal H$ if and only if $\sup |d_i| \le 1$ and
\[
2(1-\inf_{i\in \N} |d_i|) \le \sum_{i\in\N} (1-|d_i|).
\]

Bourin and Lee \cite{bou, bou2} investigated a general setting of operator-valued diagonals defined as compressions $(P_{M_n}TP_{M_n})_{n\in \N}$ of an operator $T$ with respect to a collection of mutually orthogonal subspaces $( M_n)_{n\in\N}$ of a Hilbert space $\mathcal H$ such that $\bigoplus_{n\in N} M_n = \mathcal H$, where $P_{M_n}$ denotes an orthogonal projection onto $M_n$. In the case when all subspaces $M_n$ are $1$-dimensional such compressions correspond to a diagonal of $T$.


For finite-dimensional spaces, the classical Schur-Horn theorem \cite{horn, moa, schur} characterizes diagonals of hermitian matrices in terms of their eigenvalues. A sequence $(d_1,\ldots,d_N)$ is a diagonal of a hermitian matrix with eigenvalues $(\lambda_1,\ldots,\lambda_N)$ if and only if 
\begin{equation}\label{horn2}
(d_1,\ldots,d_N) \in \operatorname{conv} \{ (\lambda_{\sigma(1)},\ldots,\lambda_{\sigma(N)}): \sigma \in S_N\}.
\end{equation}
This characterization has attracted  significant interest and has been generalized in many remarkable ways. Some major milestones are the Kostant convexity theorem \cite{ko} and the convexity of moment mappings in symplectic geometry \cite{at, gs1, gs2}. 

An infinite-dimensional extension of the Schur-Horn theorem has been a subject of intensive study in recent years. In particular, we are interested in the following problem generalizing the Schur-Horn theorem.

\begin{problem}\label{DT}
Given a bounded linear operator $T$ on a separable Hilbert space $\mathcal H$,  characterize the set of all diagonals 
\begin{equation}\label{dt}
\mathcal D(T) = \{ (\langle T e_i, e_i \rangle)_{i\in \N}: (e_i)_{i\in\N} \text { is an orthonormal basis of } \mathcal H \} \subset \ell^{\infty}(\N).
\end{equation}
\end{problem}

Neumann \cite{neu} gave an initial, albeit approximate, solution to this problem for self-adjoint operators $T$ by identifying the $\ell^\infty$-closure of $\mathcal D(T)$ with a convex set generalizing condition \eqref{horn2}. Antezana, Massey, Ruiz, and Stojanoff \cite{amrs}
obtained a refinement of results of Neumann \cite{neu}.
However, the first fully satisfactory result in this direction was shown by Kadison in his influential work \cite{k1, k2}. Kadison discovered a characterization of diagonals of orthogonal projections on separable Hilbert spaces. A sequence
$(d_i)_{i\in \N}$ in $[0,1]$ is a diagonal of an orthogonal projection if and only if the sums 
\begin{equation}\label{kab}
a:=\sum_{d_i <1/2} d_i \qquad\text{and}\qquad b:=\sum_{d_i \ge 1/2} (1-d_i)
\end{equation}
satisfy either $a+b=\infty$, or $a+b<\infty$ and $a-b\in\Z$. Constructive proofs of the sufficiency part of Kadison's theorem, also known as the Carpenter's theorem, were given by the authors \cite{mbjj2} and Argerami \cite{ar}. Kaftal and Loreaux \cite{kl} gave an insightful proof of the necessity part of Kadison's theorem by identifying the integer $a-b$ with the so-called essential codimension of a pair of projections. Bownik and Szyszkowski \cite{bsz} proved a measurable extension of Kadison's theorem.

The natural extension of this problem to characterizing $\mathcal D(T)$ for normal operators $T$ remains mostly intractable. In the infinite-dimensional case Arveson \cite{a} gave a necessary condition on diagonals of operators with finite spectrum. In the finite-dimensional case the diagonals of $3\times 3$ normal matrices were completely characterized by Williams \cite{will}, but the problem remains open in dimensions $>3$.
Several authors \cite{am, am2, am3, rara, dfhs, ks, mr, ravi} have studied an extension of the Schur-Horn problem in von Neumann algebras, which was originally proposed by Arveson and Kadison \cite{ak}.

Arveson and Kadison \cite{ak} and Kaftal and Weiss \cite{kw} extended the Schur-Horn theorem to positive trace class operators and compact positive operators, respectively. These results are stated in terms of majorization inequalities. 
Kaftal and Weiss \cite{kw} showed that a nonincreasing sequence of positive numbers $d_1\ge d_2 \ge \ldots>0$ converging to $0$ is a diagonal of a positive compact operator with trivial kernel and positive eigenvalues $\lambda_1 \ge \lambda_2 \ge \ldots>0$, listed with multiplicity, if and only if
\begin{equation}\label{horn1}
\sum_{i=1}^{\infty}d_{i}= \sum_{i=1}^{\infty}\lambda_{i}
\qquad\text{and}\qquad
\sum_{i=1}^{n}d_{i}\leq \sum_{i=1}^{n}\lambda_{i}\quad\text{for all }n\in\N.
\end{equation}
Loreaux and Weiss \cite{lw} gave necessary conditions and sufficient conditions on $\mathcal D(T)$ for compact positive operators $T$ with nontrivial kernel.

Beyond positive compact operators, the authors \cite{mbjj} characterized the set $\mathcal D(T)$ for a class of locally invertible positive operators $T$. This result was used to characterize sequences of norms of a frame with prescribed lower and upper frame bounds, extending Kadison's theorem, which characterizes sequences of norms of Parseval frames. The connection between frame theory \cite{ch, hl} and the study of diagonals was investigated by Antezana, Massey, Ruiz, and Stojanoff \cite{amrs}. For more results in this direction see \cite{mbjj3, mbjj6, dfklow, klar}.

Beyond orthogonal projections, the second author \cite{jj} extended Kadison's result \cite{k1,k2} to characterize the set of diagonals $\mathcal D(T)$ of a self-adjoint operator $T$ with three points in the spectrum. Subsequently, the authors have characterized the set of diagonals $\mathcal D(T)$ of self-adjoint operators with finite spectrum \cite{npt, mbjj5} by introducing Lebesgue type majorization extending the Riemann type majorization  \eqref{horn1}. 

Siudeja and the authors \cite{unbound} proved a version of the Schur-Horn theorem for unbounded self-adjoint operators with discrete spectrum. All previous results only considered bounded operators. Another important development in this paper is a new type of result which we refer to as ``diagonal-to-diagonal". For example,  Proposition \ref{kwd2d}, which is the extension of the result of Kaftal and Weiss \cite{kw} mentioned above, is an example of such a diagonal-to-diagonal result. That is, given positive nonincreasing sequences \((d_{i})_{i\in\N}\) and \((\lambda_{i})_{i\in\N}\) which converge to zero and satisfy \eqref{horn1}, if \((\lambda_{i})_{i\in\N}\) is a \textit{diagonal} of some (not necessarily bounded) symmetric operator \(T\), then \((d_{i})_{i\in\N}\) is also a diagonal of \(T\).

The most recent progress on Problem~\ref{DT} was shown by Loreaux and Weiss \cite{lw3}. They resolved Blaschke's enigma, introduced by M\"uller and Tomilov \cite{mt}, for self-adjoint operators whose essential spectrum contains at least three points including the extreme points of the spectrum. If $T$ is a self-adjoint operator with essential spectrum satisfying $\#|\sigma_{ess}(T)|\ge 3$ and the spectrum of $T$ is contained in its essential numerical range $\sigma(T) \subset W_{e}(T)$, then the sufficient condition \eqref{nbla} is also necessary, see \cite[Theorem 4.6]{lw3}.
In summary, a complete answer to Problem~\ref{DT} was previously known only for compact positive operators with either trivial or infinite-dimensional kernel \cite{kw, lw}, for operators with finite spectrum \cite{npt, k1, k2}, or for the above class of operators with $\ge 3$ points in their essential spectrum \cite{lw3}.

The goal of this paper is to characterize the set of diagonals $\mathcal D(T)$ of an arbitrary self-adjoint operator $T$. In our previous paper \cite{mbjj7} we gave a characterization of $\mathcal D(T)$ for compact self-adjoint operators, which extended earlier results for positive compact operators in \cite{kw, lw}. In the current paper we characterize the set of diagonals $\mathcal D(T)$ for self-adjoint operators with at least 2 points in their essential spectrum, extending earlier results of the authors \cite{npt} for operators with finite spectrum.

In the case when $T$ has at least two points in its essential spectrum $\sigma_{ess}(T)$, by rescaling we can assume that the smallest and the largest point of $\sigma_{ess}(T)$ are $0$ and $1$, respectively. Our main result is Theorem \ref{mainthm}, which yields a characterization of $\mathcal D(T)$ for the class of self-adjoint operators sharing the same spectral measure as $T$ with the possible exception of the multiplicities of the eigenvalues $0$ and $1$. 

We also give a more precise description of $\mathcal D(T)$ for a fixed self-adjoint operator $T$, albeit modulo the kernel problem for three classes of operators for which an extreme point of the essential spectrum of $T$ is also an extreme point of the spectrum of $T$. The first class consists of positive compact operators, where the kernel problem makes its initial appearance, see Theorem \ref{kp}. The second class consists of diagonalizable operators whose spectrum is contained in $[0,1]$ and its eigenvalues sequence $(\lambda_i)$ satisfies the Blaschke condition: $\sum \min(\lambda_i,1-\lambda_i)<\infty$. The third class is an extension of the second class allowing the spectrum to be contained either in $(-\infty,1]$ or $[0,\infty)$, and requiring a similar Blaschke condition for eigenvalues in $(0,1)$.

Our description is given by a series of algorithms, which are grouped according to the cardinality of $\sigma_{ess}(T)$, that determine whether a numerical sequence is a diagonal of $T$, or not. The algorithm for operators with $\ge 5$ points in their essential spectrum uses a recursive pruning procedure and is always conclusive unless it is reduced to $4$-point algorithm. The algorithm for an operator $T$ with $4$-point essential spectrum is conclusive unless $T$ decouples into a pair of a compact operators and an operator with $3$-point essential spectrum, which reduces the outcome to the kernel problem (for positive compact operators) and $3$-point algorithm. A similar scenario plays out for an operator $T$ with $3$-point essential spectrum. The most interesting, but also the most complicated, algorithm for an operator $T$ with $2$-point essential spectrum is conclusive unless the question is reduced to one of the three above mentioned classes of the kernel problem or an application of the $1$-point algorithm \cite[Section 11]{mbjj7}. In turn, $1$-point algorithm for determining diagonals might be inconclusive only when it reduces to the kernel problem for positive compact operators. In summary, a conglomeration of our algorithms is inconclusive only if it reduces to the kernel problem for the three specific classes of operators described in the previous paragraph.

Moreover, we characterize diagonals of self-adjoint operators with uncountable spectrum, which includes all non-diagonalizable self-adjoint operators, see Theorem \ref{noncount}. This characterization does not take the form of a finite list of conditions, but rather we show that the algorithm for operators with \(\geq 5\)-point essential spectrum is always conclusive, though possibly after an infinite number of steps.

Even for nonnegative nonincreasing sequences in \(c_{0}\), the concept of majorization for infinite sequences can be quite subtle, see \cite{kw,kw1,lw}. Since the sequences in this paper can not, in general, be rearranged into nonincreasing order, we need to develop a more general concept of majorization which uses the following majorization function. This is distinct from the Hardy-Littlewood-P\'olya majorization of $L^1$ functions used in the Schur-Horn problem in von Neumann algebras \cite{am, rara, dhs, dfhs, mr, ravi}.

\begin{defn}\label{d81} Given a sequence $\boldsymbol{d}=(d_{i})_{i\in I}$, for each $\alpha\in\R$ define
\[C_{\boldsymbol{d}}(\alpha) = \sum_{0\leq d_{i}<\alpha}d_{i}\quad\text{and}\quad D_{\boldsymbol{d}}(\alpha) = \sum_{\alpha\leq d_{i}\leq 1}(1-d_{i}).\]
Define the \textit{majorization function} $f_{\boldsymbol{d}}: \R \to [0,\infty]$ by
\[f_{\boldsymbol{d}}(\alpha) =\begin{cases}
\sum_{d_{i}\leq \alpha}(\alpha-d_{i}) & \alpha \le0\\
(1-\alpha)C_{\boldsymbol{d}}(\alpha) + \alpha D_{\boldsymbol{d}}(\alpha) & \alpha\in(0,1)\\
\sum_{d_{i}\geq \alpha}(d_{i}-\alpha) & \alpha \ge 1.
\end{cases}
\]
\end{defn}

We will say that $x$ is an \textit{accumulation point} of a sequence $\boldsymbol\lambda=(\lambda_{i})_{i\in I}$ if for every $\eps>0$ there are infinitely many $i\in I$ such that $|x-\lambda_{i}|<\eps$. Note that a point $x$ is an accumulation point of $\boldsymbol\lambda$ if and only if it is a limit point of the set of values which $\boldsymbol\lambda$ takes, or $\{i\in I\colon \lambda_{i}=x\}$ is infinite.

\begin{defn}
We say that two operators $T$ and $T'$ are {\it unitarily equivalent modulo $\{0,1\}$} if $T\oplus P$ and $T' \oplus P$ are unitarily equivalent, where $P$ denotes an orthogonal projection on a Hilbert space with infinite-dimensional kernel and infinite-dimensional range.
\end{defn}

Note that in the case $T$ and $T'$ are unitarily equivalent modulo $\{0,1\}$ and $T$ is diagonalizable, then $T'$ is diagonalizable and the multiplicities of all  eigenvalues $\lambda \not = 0,1$ of $T$ and $T'$ coincide. In general, $T$ and $T'$ are unitarily equivalent modulo $\{0,1\}$ if and only if the restrictions of the spectral measures of $T$ and $T'$ to $\R\setminus \{0,1\}$ are unitarily equivalent. Our main result characterizing diagonals of non-compact operators, modulo two extreme points of the essential spectrum, takes the following form.

\begin{thm}\label{mainthm}
Let $T$ be a self-adjoint operator on $\mathcal H$ such that $0$ and $1$ are two extreme points of the essential spectrum $\sigma_{ess}(T)$ of $T$, that is,
\begin{equation*}
\{0,1 \} \subset \sigma_{ess}(T) \subset [0,1].
\end{equation*}
Let $\boldsymbol\lambda=(\lambda_{j})_{j\in J}$ be the list of all eigenvalues of $T$ with multiplicity (which is possibly an empty list). 
Let $\boldsymbol{d} = (d_{i})_{i\in \N}$. Define
\[\delta^{L} = \liminf_{\beta\nearrow 0}(f_{\boldsymbol\lambda}(\beta)-f_{\boldsymbol{d}}(\beta))\quad\text{and}\quad \delta^{U} = \liminf_{\beta\searrow 1}(f_{\boldsymbol\lambda}(\beta)-f_{\boldsymbol{d}}(\beta)).\]
Then, $\boldsymbol{d}$  is a diagonal of an operator which is unitarily equivalent to $T$ modulo $\{0,1\}$  if and only if 
\begin{equation}
\label{mainthm1}
 f_{\boldsymbol{d}}(\alpha)  \leq f_{\boldsymbol\lambda}(\alpha)  
\qquad\text{for all }\alpha\in\R\setminus[0,1],
\end{equation}
and one of the following five conditions holds:
\begin{enumerate}
\item
$\delta^L=\delta^U =\infty$,
\item
$\delta^L<\infty$, $\delta^U=\infty$, and $\sum_{d_i>0} d_i=\infty$,
\item
$\delta^U<\infty$, $\delta^L=\infty$, and $\sum_{d_i<1} (1-d_i)=\infty$,
\item 
$\delta^L+\delta^U <\infty$ and $f_{\boldsymbol d}(1/2) =\infty$, or
\item
$\delta^L+\delta^U <\infty$, $f_{\boldsymbol \lambda}(1/2) <\infty$, $f_{\boldsymbol d}(1/2) <\infty$, $\boldsymbol d$ has accumulation points at $0$ and $1$, and there exists $\delta_0 \in [0,\delta^L]$ and $\delta_1 \in [0,\delta^U]$ such that
\begin{align}
\label{mainthm2}
f_{\boldsymbol\lambda}(\alpha)  &\leq f_{\boldsymbol{d}}(\alpha) + (1-\alpha)\delta_{0} + \alpha\delta_{1} & \text{for all }\alpha\in(0,1),\\
\label{mainthm3}f_{\boldsymbol\lambda}'(\alpha)-f_{\boldsymbol{d}}'(\alpha)  & \equiv \delta_1-\delta_0 \mod 1 & \text{for some }\alpha\in(0,1),
\\
\label{mainthm4}
\sum_{\lambda_i<0} |\lambda_i|<\infty &\implies
\delta^L-\delta_0 \le \delta^U- \delta_1,
\\
\label{mainthm5}
\sum_{\lambda_i>1} (\lambda_i-1)<\infty
&\implies \delta^U- \delta_1 \le \delta^L-\delta_0.
\end{align}
\end{enumerate}
\end{thm}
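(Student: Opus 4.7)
The plan is to establish necessity and sufficiency separately, with sufficiency requiring a case-by-case analysis across the five conditions (i)--(v).

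For necessity, assume $\boldsymbol d$ is the diagonal of an operator $T'$ that is unitarily equivalent to $T$ modulo $\{0,1\}$. Outside $[0,1]$, the points $\alpha \notin \sigma_{ess}(T)$ are isolated eigenvalues of finite multiplicity, and \eqref{mainthm1} follows by applying the classical Schur--Horn majorization to the finite-dimensional spectral projection $E_{T'}((-\infty,\alpha])$ for $\alpha<0$ (and symmetrically for $\alpha>1$), noting that $T$ and $T'$ have the same eigenvalues away from $\{0,1\}$. The non-Blaschke dichotomies separating cases (i)--(iv) are forced by the observation that $\delta^U = \infty$ is incompatible with $\sum_{d_i<1}(1-d_i) < \infty$, and symmetrically at $0$. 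The integer congruence \eqref{mainthm3} in case (v) is the analogue of Kadison's integer obstruction from \cite{k1, k2} and is established via the essential codimension argument of Kaftal--Loreaux \cite{kl} applied to a suitably chosen pair of projections built from the spectral decomposition of $T'$ and from a target diagonal basis. Conditions \eqref{mainthm4}--\eqref{mainthm5} are trace-balance constraints: when the eigenvalue tails of $T$ outside $[0,1]$ have finite total defect, the free parameters $\delta_0,\delta_1$ cannot distribute more slack to one endpoint than is available at the other.

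For sufficiency, I proceed by cases. Cases (i)--(iv) all contain non-Blaschke behavior at one or both endpoints and are handled by combining the M\"uller--Tomilov construction \cite{mt} (and its extension by Loreaux--Weiss \cite{lw3}) with the authors' earlier characterization of diagonals of compact self-adjoint operators \cite{mbjj7}. In each case I would partition $\boldsymbol d$ into two subsequences: a non-Blaschke portion which is realized through the M\"uller--Tomilov machinery against the essential numerical range, and a compact-type portion which absorbs the outlying eigenvalues of $T$ via \cite{mbjj7}. The main obstacle is case (v), where every quantity is finite and the modular equality must be preserved simultaneously. Here the strategy is to use the parameters $\delta_0$ and $\delta_1$ to decompose the target operator as $T' \cong T_0' \oplus T_1' \oplus P$, where $T_0'$ and $T_1'$ are compact-type blocks absorbing the eigenvalue tails at $0$ and $1$ respectively, and $P$ is an orthogonal projection whose rank and co-rank are calibrated so that the Kadison integer of its diagonal block agrees with $\delta_1 - \delta_0$ modulo $1$. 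The hypothesis that $\boldsymbol d$ has accumulation points at both $0$ and $1$ provides the flexibility needed to choose this partition.

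The coordination of the sub-diagonalizations in case (v) will be carried out via the diagonal-to-diagonal framework of \cite{unbound} together with the finite-spectrum characterization from \cite{npt}. Concretely, I will construct an auxiliary reference diagonal that is simultaneously compatible with Kadison's carpenter theorem on the projection block $P$ and with the compact-diagonal theorems on the tail blocks $T_0'$ and $T_1'$; the concatenation of the corresponding orthonormal bases then produces a diagonal of $T'$ whose entries are exactly $\boldsymbol d$. Conditions \eqref{mainthm4}--\eqref{mainthm5} enter precisely at this step, ensuring that in the regimes where one side of $\sigma(T)$ carries only finitely much trace-mass outside $[0,1]$, the residual trace can be consistently apportioned between the two tail blocks. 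I expect the bookkeeping of these constants, and the verification that the five cases exhaust all situations, to be the most delicate part of the argument.
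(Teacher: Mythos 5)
Your necessity sketch has a concrete error: you write that ``$\delta^U = \infty$ is incompatible with $\sum_{d_i<1}(1-d_i) < \infty$,'' but the implication runs the other way. The relevant necessary fact (Lemma~\ref{neci} in the paper) is that $\delta^U < \infty$ forces $\sum_{d_i<1}(1-d_i) = \infty$; this comes from a trace-class argument using the spectral projection $\pi((-\infty,1])$, and is what makes the $\sum_{d_i<1}(1-d_i)=\infty$ hypothesis in case (iii) redundant given $\delta^U<\infty$. The converse you assert is simply false: $\delta^U$ only involves $\lambda_j>1$ and $d_i>1$, so $\delta^U=\infty$ says nothing about $\sum_{d_i<1}(1-d_i)$. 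Similarly, applying ``classical Schur--Horn to the finite-dimensional spectral projection'' does not directly give \eqref{mainthm1}, because $\boldsymbol d$ is the diagonal relative to an orthonormal basis of the whole infinite-dimensional space $\mathcal H$, not of the finite-rank range of $\pi((-\infty,\alpha])$. You need the infinite-dimensional Schur-type inequality for operators with compact negative part (Theorem~\ref{cptschurv2}), which the paper invokes.

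The sufficiency plan for case (v) is where the proposal really comes apart. You propose to realize $T'\cong T_0'\oplus T_1'\oplus P$ with $P$ an orthogonal projection whose Kadison integer matches $\delta_1-\delta_0$, but there is no such decomposition in general: under hypothesis (v) the operator $T$ is diagonalizable with eigenvalues clustering only at $0$ and $1$ and satisfying the Blaschke condition $f_{\boldsymbol\lambda}(1/2)<\infty$, which is a much weaker structure than ``compact $\oplus$ compact $\oplus$ projection.'' Any argument going through such a $P$ collapses the interior majorization inequality \eqref{mainthm2} to the special Kadison form, losing the parameters $\delta_0,\delta_1$. What the paper actually proves here is (a) a new equivalence between Riemann and Lebesgue interior majorization for nondecreasing sequences indexed by $\Z$ (Theorem~\ref{eqmajs}), (b) the corresponding diagonal-to-diagonal result for sequences in $(0,1)$ (Theorems~\ref{rimsuff}, \ref{archaic}, \ref{archaic2}), and (c) a careful reduction (Theorem~\ref{suff} and its variant~\ref{csuff}) that handles the exterior tails and the trace-balance conditions \eqref{mainthm4}--\eqref{mainthm5}, with a further four-way case split in Section~\ref{S8} to handle equality in \eqref{mainthm2} and the boundary cases $\delta_0\in\{0,\delta^L\}$, $\delta_1\in\{0,\delta^U\}$ (these use Corollaries~\ref{intToComp1}--\ref{intToComp2}, Theorem~\ref{intropv2}, and Propositions~\ref{posSH},~\ref{kwd2d}). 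None of this machinery is visible in your sketch, and the gap is not just one of detail: the equivalence of the two majorization notions is a genuinely new lemma that must be proved.

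Finally, you never engage with the decoupling phenomenon (Definition~\ref{decouple}, Proposition~\ref{p211}, Lemma~\ref{decoup01}), which is precisely why the theorem is stated ``modulo $\{0,1\}$'' rather than as a characterization of $\mathcal D(T)$ itself. The paper first proves Theorem~\ref{pmt}, where the sufficiency direction explicitly excludes the decoupling events $(\mathfrak d_\alpha)$, and then derives Theorem~\ref{mainthm} by showing that in the decoupled scenarios one can still achieve the diagonal by an operator that differs from $T$ only in the multiplicities of the eigenvalues $0$ and $1$. Without identifying this obstruction, your sufficiency argument for (v) cannot close: the diagonal-to-diagonal results only apply when strict inequality in \eqref{mainthm2} can be arranged (hypothesis~\eqref{suff6} of Theorem~\ref{suff}), and the remaining cases require building $T'$ with adjusted kernel/cokernel rather than simply rescheduling basis vectors inside $T$.
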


The conditions (i)--(iv) have a qualitative character, which lacks any quantitative conditions when at least one the excesses $\delta^L$ or $\delta^U$, evaluated between eigenvalue list $\boldsymbol \lambda$ and diagonal $\boldsymbol d$ and beyond the extreme points of essential spectrum, are infinite. In the case when both $\delta^L$ and $\delta^U$ are finite, we either have a qualitative non-Blaschke condition on diagonal terms between $(0,1)$,
\[
\sum_{i: 0<d_i<1} \min(d_i,1-d_i) =\infty,
\]
or we have the extremely elaborate quantitative conditions in (v). 

It turns out that any operator $E$ with diagonal $\boldsymbol d$ satisfying $\delta^L+\delta^U<\infty$ and the Blaschke condition
\begin{equation}\label{blad}
\sum_{i: 0<d_i<1} \min(d_i,1-d_i) <\infty,
\end{equation}
is actually diagonalizable, see Theorem \ref{nec}. The eigenvalue list $\boldsymbol \lambda$ of $E$ must satisfy not only the Blaschke condition
\begin{equation}\label{blal}
\sum_{i: 0<\lambda_i<1} \min(\lambda_i,1-\lambda_i) <\infty,
\end{equation}
but also the Lebesgue majorization inequality \eqref{mainthm2} expressed in terms of the majorization functions $f_{\boldsymbol{\lambda}}$ and $f_{\boldsymbol{d}}$. This is an extension of the Lebesgue interior majorization inequalities already present for operators with finite spectrum \cite[Theorem 1.3]{npt}. The condition \eqref{mainthm3} is an analogue of the trace condition $a-b\in \Z$ for finite sums \eqref{kab} in Kadison's theorem. In contrast, the last two conditions \eqref{mainthm4} and \eqref{mainthm5} do not have an obvious analogue for operators with finite spectrum as they collapse to the equality $\delta^L-\delta_0=\delta^U-\delta_1=0$. 
Indeed, the quantities $\delta_0$ and $\delta_1$ measure how much of mass of the eigenvalue sequence $\boldsymbol \lambda$ has crossed over extreme points of the essential spectrum, $0$ and $1$ respectively, in the process of producing the diagonal $\boldsymbol d$. In other words, the quantities $\delta^L-\delta_0$ and $\delta^U-\delta_1$ represent how much of the mass of $\boldsymbol \lambda$ has disappeared within the exterior majorization inequality \eqref{mainthm1}. This is similar to the case of non-trace class compact positive operators where the excess $\sigma_+$ could be positive.
In light of \eqref{mainthm1}, we could replace the summability of \(\boldsymbol{\lambda}_{-}\) and \((\lambda_{i}-1)_{\lambda_{i}>1}\) in \eqref{mainthm4} and \eqref{mainthm5}, respectively, with summability of \(\boldsymbol{d}_{-}\) and \((d_{i}-1)_{d_{i}>1}\). 
Hence, together \eqref{mainthm3}, \eqref{mainthm4}, and \eqref{mainthm5} can be viewed as the trace condition on diagonals of self-adjoint operators with two points in their essential spectrum.

To establish the sufficiency part in Theorem \ref{mainthm} we show the equivalence of Riemann and Lebesgue interior majorization for nondecreasing sequences $\boldsymbol \lambda$ and $\boldsymbol d$ in $[0,1]$, which are indexed by the integers, and satisfy the Blaschke condition \eqref{blad} and \eqref{blal}. As a consequence, we establish diagonal-to-diagonal result for sequences in $(0,1)$ satisfying the majorization \eqref{mainthm2} and trace  \eqref{mainthm3} conditions. This is then used to prove the main diagonal-to-diagonal result for sequences $\boldsymbol \lambda$ and $\boldsymbol d$ satisfying (v), see Theorem~\ref{suff}. In addition, we need to impose strictness of the inequality \eqref{mainthm2}, which is imposed by the  decoupling phenomenon, see Theorem \ref{nec}. In a nutshell, decoupling at a point $\alpha \in [0,1]$ breaks the operator $E$ into two parts with spectrum in $(-\infty,\alpha]$ and $[\alpha,\infty)$, and splits the diagonal sequence $\boldsymbol d$ at the same point $\alpha$ into two subsequences, which become diagonals of two parts of $E$, respectively. Once decoupling happens, more necessary conditions arise, which imposes a limitation on the generality of sufficiency results.  

The proof of the sufficiency of the remaining conditions (i)--(iv) in Theorem \ref{mainthm} is exceedingly concise given how large of a class of self-adjoint operators it covers. It reflects a general observation that sufficiency proofs are hardest and most convoluted for simple operators such as projections, compact operators, and diagonalizable operators satisfying the Blaschke condition \eqref{blal}, while they become simpler for more complicated operators, such as non-diagonalizable operators. This is due to a theorem of M\"uller and Tomilov \cite{mt}, which guarantees that any sequence in the interior of the essential numerical range $W_e(E)$ and satisfying \eqref{nbla} is a diagonal of $E$. Hence, the sufficiency proof is mostly reduced to considerations involving exterior majorization \eqref{mainthm1}, see Theorem \ref{nbd}. This result is exceptional, compared with the rest of the paper, as all earlier sufficiency theorems are diagonal-to-diagonal. The proof of Theorem \ref{mainthm} concludes in Section \ref{S8}, which combines earlier necessity and sufficiency results. In addition, we obtain a stronger variant of Theorem \ref{mainthm}, which describes the set of diagonals $\mathcal D(T)$ for a single self-adjoint operator $T$, albeit with a gap between necessary and sufficient conditions.

The rest of the paper is devoted to algorithms for determining whether a given sequence is a diagonal of a self-adjoint operator. In particular, we answer Problem~\ref{DT} for all non-diagonalizable self-adjoint operators, see Remark \ref{concl5}.

\begin{thm}\label{noncount}
Let $E$ be a self-adjoint operator  such that its spectrum $\sigma(E)$ is uncountable. Let $\boldsymbol d$ be a bounded sequence of real numbers. Then, the algorithm for determining diagonals of self-adjoint operators 
with $\geq 5$-point essential spectrum in Section \ref{S21} is conclusive for the pair $(E,\boldsymbol d)$.
\end{thm}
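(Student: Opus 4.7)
The plan is to verify the algorithm applies and then track the recursion tree, showing that no branch ever falls out of the $\geq 5$-point regime.

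First I would check the hypothesis of the algorithm. The spectrum decomposes as
\[
\sigma(E) = \sigma_{ess}(E) \cup (\sigma(E)\setminus \sigma_{ess}(E)),
\]
where $\sigma(E)\setminus \sigma_{ess}(E)$ consists of isolated eigenvalues of finite multiplicity and is therefore a discrete, hence at most countable, subset of $\R$. Since $\sigma(E)$ is uncountable by hypothesis, $\sigma_{ess}(E)$ must itself be uncountable, so in particular $|\sigma_{ess}(E)| \geq 5$, and the algorithm of Section \ref{S21} is applicable to $(E,\boldsymbol{d})$. Moreover, the same observation shows that $E$ is not diagonalizable.

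Next I would examine the operations that the pruning procedure performs on the pair $(E,\boldsymbol{d})$. According to the overview in the introduction, the only way the $\geq 5$-point algorithm can fail to be conclusive is by reducing the problem to the $4$-point algorithm, which in turn may reduce to the $3$-, $2$-, or $1$-point algorithms and ultimately to the kernel problem. Each such reduction arises from one of two moves: either finitely (or countably) many eigenvalues are removed from the eigenvalue list $\boldsymbol{\lambda}$ of $E$, or $E$ is decoupled into an orthogonal sum $E_1 \oplus E_2$ on invariant subspaces whose spectra are separated by some $\alpha \in \R$. Removing eigenvalues of finite multiplicity leaves $\sigma_{ess}$ unchanged and therefore preserves uncountability. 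A decoupling partitions $\sigma_{ess}(E)$ as $\sigma_{ess}(E_1)\cup \sigma_{ess}(E_2)$, and since a countable union of countable sets is countable, at least one summand $E_i$ must inherit an uncountable essential spectrum. By induction on the depth of the pruning tree, every branch that survives finitely many steps continues to satisfy the $\geq 5$-point hypothesis; hence the algorithm never degenerates into the $4$-point case (or anything smaller) and never reaches a kernel problem.

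This already rules out the only failure mode listed for the $\geq 5$-point algorithm after finitely many steps. For the infinite-step case, I would argue that the pruning procedure produces a nested sequence of restrictions $(E^{(n)}, \boldsymbol{d}^{(n)})$, each with $\sigma_{ess}(E^{(n)})$ uncountable, together with a running verification of the exterior majorization \eqref{mainthm1} and the conditions (i)--(v) of Theorem \ref{mainthm} on the remaining data. If at any finite stage a necessary condition fails, the algorithm outputs ``no''; otherwise the surviving data converges, and Theorem \ref{mainthm} applied to the limit operator (which still has $\geq 5$-point essential spectrum, in fact uncountable) certifies membership $\boldsymbol{d}\in\mathcal{D}(E)$ via one of the qualitative alternatives (i)--(iv), since the quantitative alternative (v) requires the Blaschke condition \eqref{blad} which in turn forces the operator to be diagonalizable by Theorem \ref{nec}, contradicting uncountability of $\sigma(E)$.

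The main obstacle is the last step: showing rigorously that the infinite pruning stabilizes into a conclusive verdict rather than circling indefinitely. I expect this to hinge on a monotonicity invariant of the pruning (for instance, a strictly decreasing measure of ``unresolved mass'' in $\boldsymbol{d}$ and $\boldsymbol{\lambda}$) combined with the fact that, because $\sigma(E)$ is uncountable, the non-Blaschke alternative automatically holds for the limit — so the qualitative branches of Theorem \ref{mainthm} always apply and avoid the delicate condition (v) altogether.
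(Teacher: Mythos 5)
Your proposal reaches the right conclusion, but the route you sketch does not match the paper's and contains a genuine logical gap.

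The paper does not track uncountability through the pruning/decoupling tree at all. Instead, its proof (Remark~\ref{concl5}) is a one-step reduction to Theorem~\ref{concl}: if $\sigma(E)$ is uncountable then $\sigma_{ess}(E)$ is uncountable; the one-sided points $\sigma_{ess}^{+}(E)\setminus\sigma_{ess}^{-}(E)$ and $\sigma_{ess}^{-}(E)\setminus\sigma_{ess}^{+}(E)$ are each at most countable (each such point carries a one-sided gap interval, and such intervals are pairwise disjoint), so $\sigma_{ess}^{+}(E)\cap\sigma_{ess}^{-}(E)$ has cardinality of the continuum, hence condition~\eqref{concl0} holds. Theorem~\ref{concl} (whose proof rests on Lemma~\ref{prune3}) then shows the full cascade of algorithms is conclusive. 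Crucially, this approach does handle the possibility of dropping to the $4$- and $3$-point algorithms: condition~\eqref{concl0} is the invariant that survives pruning and forces the $3$-point algorithm, when reached, to decide without any further splitting (because then $0\in\sigma_{ess}^{+}$ and $1\in\sigma_{ess}^{-}$). Your observation that $\sigma_{ess}(E)$ is uncountable is only the first step; you never isolate the key fact about $\sigma_{ess}^{+}\cap\sigma_{ess}^{-}$, which is what makes the argument go through.

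The gap in your argument is the passage from ``at least one summand $E_i$ inherits an uncountable essential spectrum'' to ``every branch that survives finitely many steps continues to satisfy the $\geq5$-point hypothesis; hence the algorithm never degenerates.'' That inference is not valid: if a decoupling produces a summand with small essential spectrum, the algorithm must also handle that summand, so knowing that the other summand stays large does not protect you from the $4$-, $3$-, $2$-, or $1$-point algorithms (and the kernel problems lurking therein). The correct observation, which you do not make, is that the $\geq5$-point algorithm in Figure~\ref{diagram4} never decouples at all: pruning replaces $(E,\boldsymbol{d})$ by a single pair $(E',\boldsymbol{d}')$ (the compact half that is discarded has trivial kernel and is handled once and for all by the Kaftal--Weiss theorem). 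Moreover, pruning does not leave $\sigma_{ess}$ unchanged, as you claim; it deletes exactly the pruned endpoint from $\sigma_{ess}$. Your argument about ``the infinite pruning stabilizing'' is left unresolved by your own admission; the paper resolves it by noting that $\min\sigma_{ess}(E)$ strictly increases with each prune, so the transfinite recursion must terminate after countably many steps. Finally, two factual errors: uncountable spectrum does not imply that $E$ is non-diagonalizable (a diagonalizable operator with dense eigenvalue set has uncountable spectrum), and the algorithm's decisions are governed by Theorem~\ref{mainthm2.}, not Theorem~\ref{mainthm}, so the appeal to alternative~(v) of Theorem~\ref{mainthm} and to Theorem~\ref{nec} to rule out the Blaschke case is aimed at the wrong statement.
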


We conclude the paper by discussing the kernel problem for two classes of non-compact operators, for which our algorithms are inconclusive. We show that operators in these classes need to satisfy additional necessary conditions beyond those arising from Theorem \ref{nec}. These are analogues of necessary conditions for compact positive operators with nontrivial kernel shown by Kaftal and Loreaux \cite{kl}. Due to the gap between necessary and sufficient conditions, Problem~\ref{DT} remains open only for these very special classes of operators.

\section{Preliminaries}

In this section we state previously known results about diagonals of operators which will be used in subsequent sections. To describe our results in detail, we need to set some notation.
Let \(I\) be a countably infinite set.
Let $c_0(I)$ be the collection of real-valued sequences converging to $0$ and indexed by the set \(I\). That is, the set \(\{i\in I: |d_{i}|>\eps\}\) is finite for every \(\eps>0\). Let \(c_{0}^{+}(I)\) be the set of nonnegative-valued sequences in \(c_{0}(I)\). We will write \(c_{0}\) and \(c_{0}^{+}\) when the indexing set of the sequence is obvious.

\begin{defn}
Let $\boldsymbol\lambda =(\lambda_i)_{i\in I}$ be a real-valued sequence. Define its {\it positive part} $\boldsymbol\lambda_+ =(\lambda^{+}_i)_{i\in I}$ by $\lambda^+_i=\max(\lambda_i,0)$. The {\it negative part} is defined as $\boldsymbol\lambda_-=(-\boldsymbol \lambda)_+$.
If $\boldsymbol\lambda \in c_0^+$, then define its {\it decreasing rearrangement} $\boldsymbol\lambda^{\downarrow} =(\lambda^{\downarrow}_i)_{i\in \N}$ by taking $\lambda^{\downarrow}_i$ to be the $i$th largest term of $\boldsymbol \lambda$. For the sake of brevity, we will denote the $i$th term of $(\boldsymbol\lambda_{+})^{\downarrow}$ by $\lambda_{i}^{+\downarrow}$, and similarly for $(\boldsymbol\lambda_{-})^{\downarrow}$.
\end{defn}

\begin{defn} Given two sequences \(\boldsymbol\lambda = (\lambda_{j})_{j\in J}\) and \(\boldsymbol{d}=(d_{i})_{i\in I}\), the \textit{concatenation} of \(\boldsymbol{\lambda}\) and \(\boldsymbol{d}\), denoted \(\boldsymbol\lambda\oplus\boldsymbol{d}\), is the sequence \((c_{k})_{k\in I\sqcup J}\) where \(I\sqcup J\) is the disjoint union of \(I\) and \(J\), and \[c_{k} = \begin{cases} \lambda_{k} & k\in J,\\ d_{k} & k\in I.\end{cases}\]
Note, if \(I\cap J=\varnothing\), then \(I\sqcup J:=I\cup J\), if \(I\cap J\neq\varnothing\), then \(I\sqcup J= (I\times\{1\})\cup (J\times\{2\})\). In the latter case \(k\in I\) is interpreted to mean \((k,1)\in I\times\{1\}\), and similarly for \(k\in J\).
\end{defn}

\subsection{Majorization of sequences} There are two ways in which we can define majorization of sequences.

\begin{defn}\label{rmajdef}
Let \(I,J\) be countable infinite sets, and let \(\boldsymbol\lambda = (\lambda_{i})_{i\in I}\) and \(\boldsymbol{d}=(d_{i})_{i\in J}\) be sequences in \(c_{0}^{+}\). We say that \(\boldsymbol\lambda\) {\it majorizes} \(\boldsymbol d\) and write \(\boldsymbol d \prec \boldsymbol \lambda\) if
\begin{equation}\label{rmajdef1}\sum_{i=1}^{n}d_{i}^{\downarrow}\leq \sum_{i=1}^{n}\lambda_{i}^{\downarrow}\quad\text{for all }n\in\N.\end{equation}
If, in addition, we have
\[\liminf_{n\to\infty}\sum_{i=1}^{n}(\lambda_{i}^{\downarrow}-d_{i}^{\downarrow})=0\]
then we say that \(\boldsymbol\lambda\) {\it strongly majorizes} \(\boldsymbol d\), and we write \(\boldsymbol d\preccurlyeq\boldsymbol\lambda\).
\end{defn}

\begin{defn}\label{deltadef}
Given two sequences $\boldsymbol{d}\in c_{0}(I)$ and $\boldsymbol\lambda\in c_{0}(J)$, for $\alpha\in\R\setminus\{0\}$ we define the function
\begin{equation}\label{delta}\delta(\alpha,\boldsymbol\lambda,\boldsymbol{d}) = \begin{cases} \displaystyle\sum_{\lambda_{i}\geq\alpha}(\lambda_{i}-\alpha)-\sum_{d_{i}\geq\alpha}(d_{i}-\alpha) & \alpha>0,\\
\displaystyle\sum_{\lambda_{i}\leq\alpha}(\alpha-\lambda_{i})-\sum_{d_{i}\leq\alpha}(\alpha-d_{i}) & \alpha<0.
\end{cases}
\end{equation}
If $\delta(\alpha,\boldsymbol\lambda,\boldsymbol{d})\geq 0$ for all $\alpha\neq 0$ then we say that $\boldsymbol\lambda$ {\it majorizes} $\boldsymbol{d}$.
\end{defn}

The following lemma can be found in \cite[Lemma 2.5]{mbjj7}.

\begin{lem}\label{dclem}  If $\mathbf c\in c_{0}^{+}(I)$, then
\[g(\alpha) = \sum_{i\colon c_{i}\geq\alpha}(c_{i}-\alpha).\]
is piecewise linear, continuous, and decreasing on $(0,\infty)$, and
\begin{equation*}\lim_{\alpha\searrow0}g(\alpha) = \sum_{i\in I}c_{i}.\end{equation*}
Moreover, if $\boldsymbol{d},\boldsymbol\lambda\in c_{0}(I)$, then $\delta(\alpha,\boldsymbol\lambda,\boldsymbol{d})$ is piecewise linear and continuous on $\R\setminus\{0\}$.
\end{lem}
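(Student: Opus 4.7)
The plan is to exploit the fact that, since $\mathbf{c}\in c_0^+(I)$, the set $F(\alpha) := \{i \in I : c_i \geq \alpha\}$ is finite for every $\alpha > 0$, and that the distinct positive values attained by $\mathbf{c}$ can only accumulate at $0$. I would enumerate these distinct values in decreasing order as $a_1 > a_2 > \cdots$ (a finite or countably infinite list tending to $0$). On each open interval $(a_{k+1}, a_k)$ the set $F(\alpha)$ equals a constant finite set $F_k := \{i : c_i \geq a_k\}$, so that
\[ g(\alpha) = \sum_{i \in F_k} c_i - |F_k|\, \alpha \]
is affine with slope $-|F_k| \leq 0$; on $(a_1, \infty)$ we have $g \equiv 0$. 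This simultaneously yields piecewise linearity and monotonicity on $(0,\infty)$.

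The delicate step is continuity at the break points $\alpha_0 = a_k$, which is the main obstacle. As $\alpha$ decreases through $a_k$, the indices newly added to $F(\alpha)$ are exactly those with $c_i = a_k$, and for each such index the contribution $c_i - \alpha$ vanishes at $\alpha = a_k$. Hence the affine pieces on either side match at $a_k$ with the value $g(a_k) = \sum_{i: c_i \geq a_k}(c_i - a_k)$, giving continuity on all of $(0,\infty)$. The boundary limit $\lim_{\alpha \searrow 0} g(\alpha) = \sum_{i \in I} c_i$ then follows by monotone convergence, since the nonnegative summands $(c_i - \alpha)\, \mathbf{1}_{\{c_i \geq \alpha\}}$ increase pointwise in $\alpha \searrow 0$ to $c_i$, and summation commutes with this monotone limit.

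For the moreover clause, note that for $\alpha > 0$ the condition $\lambda_i \geq \alpha$ is equivalent to $\lambda_i^+ \geq \alpha$, and likewise for $\boldsymbol{d}$, so that
\[ \delta(\alpha, \boldsymbol\lambda, \boldsymbol{d}) = g_{\boldsymbol\lambda_+}(\alpha) - g_{\boldsymbol{d}_+}(\alpha) \quad \text{on } (0,\infty), \]
where $g_{\boldsymbol\lambda_+}$ and $g_{\boldsymbol{d}_+}$ are the functions of the first part applied to the positive parts $\boldsymbol\lambda_+, \boldsymbol{d}_+ \in c_0^+(I)$. Being the difference of two piecewise linear and continuous functions, $\delta(\cdot, \boldsymbol\lambda, \boldsymbol{d})$ is itself piecewise linear and continuous on $(0, \infty)$. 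The case $\alpha < 0$ reduces to the previous case via the substitution $\alpha \mapsto -\alpha$ applied to the negative parts, giving $\delta(\alpha, \boldsymbol\lambda, \boldsymbol{d}) = g_{\boldsymbol\lambda_-}(-\alpha) - g_{\boldsymbol{d}_-}(-\alpha)$ on $(-\infty, 0)$. Once the bookkeeping at the break points is handled by the vanishing-contribution observation above, the remainder is routine.
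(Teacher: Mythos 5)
The paper does not prove this lemma; it simply cites it as \cite[Lemma 2.5]{mbjj7}, so there is no in-paper argument to compare against. Judged on its own, your proof is correct and complete: enumerating the distinct positive values $a_1>a_2>\cdots$ of $\mathbf c$ and observing that $F(\alpha)=\{i:c_i\ge\alpha\}$ is the fixed finite set $F_k$ on each gap $(a_{k+1},a_k)$ gives the affine formula $g(\alpha)=\sum_{i\in F_k}c_i-|F_k|\alpha$; the matching of one-sided limits at each $a_k$ hinges precisely on the fact that the newly appearing indices with $c_i=a_k$ contribute $0$ there, which you identify; monotone convergence correctly handles $\lim_{\alpha\searrow 0}g(\alpha)=\sum_i c_i$ (including the case where the right side is $+\infty$); and the reduction $\delta(\alpha,\boldsymbol\lambda,\boldsymbol d)=g_{\boldsymbol\lambda_+}(\alpha)-g_{\boldsymbol d_+}(\alpha)$ for $\alpha>0$ (and the symmetric negative-part version for $\alpha<0$) cleanly inherits piecewise linearity and continuity from the first part. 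One minor remark: "decreasing" in the statement must be read as nonincreasing, since $g$ is constantly $0$ on $(a_1,\infty)$, and your slope bound $-|F_k|\le 0$ is exactly what is needed.
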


The following result shows that two concepts of majorization are equivalent \cite[Proposition 2.7]{mbjj7}.

\begin{prop}\label{LR}
 Let $\boldsymbol d=(d_{i})_{i=1}^{\infty}, \boldsymbol\lambda=(\lambda_{i})_{i=1}^{\infty}$ be sequences in $c_0^+$. Then, the following two conditions are equivalent:
\begin{enumerate}
\item \(\boldsymbol d\prec \boldsymbol \lambda\),
\item $\delta(\alpha,\boldsymbol\lambda,\boldsymbol d)\geq 0$ for all $\alpha> 0$.
\end{enumerate}
In this case 
\begin{equation}\label{rtelt}\liminf_{\alpha\searrow 0} \delta(\alpha,\boldsymbol\lambda,\boldsymbol d) = \liminf_{k\to\infty}\sum_{i=1}^{k}(\lambda^\downarrow_{i}-d^\downarrow_{i}).\end{equation}
\end{prop}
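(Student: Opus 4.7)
Proof plan. The equivalence is a Hardy-Littlewood-P\'olya type result adapted to sequences in $c_0^+$. The whole argument hinges on the elementary identity that, for any $\boldsymbol c\in c_0^+$ and any index $n$ with $c^{\downarrow}_n>0$, setting $\alpha=c^{\downarrow}_n$ gives
\[
\sum_{i=1}^{n} c^{\downarrow}_i \,=\, g_c(\alpha)+n\alpha, \qquad g_c(\alpha):=\sum_{c_i>\alpha}(c_i-\alpha),
\]
since the top $n$ terms of the rearrangement are at least $\alpha$ while the remaining terms contribute zero to $g_c(\alpha)$. This connects the Riemann-type partial sums of Definition~\ref{rmajdef} with the Lebesgue-type functional $g_c$ from Lemma~\ref{dclem}.

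For (i)$\Rightarrow$(ii), fix $\alpha>0$ and set $k_0=\#\{i:\lambda_i>\alpha\}$, $m_0=\#\{i:d_i>\alpha\}$, so that $g_\lambda(\alpha)=\sum_{i=1}^{k_0}(\lambda^{\downarrow}_i-\alpha)$ and $g_d(\alpha)=\sum_{i=1}^{m_0}(d^{\downarrow}_i-\alpha)$. Splitting into the cases $k_0\ge m_0$ and $k_0<m_0$, the Riemann hypothesis $\sum_{i=1}^{N}d^{\downarrow}_i\le\sum_{i=1}^{N}\lambda^{\downarrow}_i$ applied at $N=\min(k_0,m_0)$, combined with the bounds $\lambda^{\downarrow}_i>\alpha$ for $i\le k_0$ and $\lambda^{\downarrow}_i\le\alpha$ for $i>k_0$, closes each case to yield $g_\lambda(\alpha)\ge g_d(\alpha)$.

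For (ii)$\Rightarrow$(i), fix $n$ and first assume $\lambda^{\downarrow}_n>0$; put $\alpha=\lambda^{\downarrow}_n$. The key identity applied to $\boldsymbol\lambda$ gives $\sum_{i=1}^n\lambda^{\downarrow}_i=g_\lambda(\alpha)+n\alpha$ exactly. For $\boldsymbol d$, the pointwise inequality $d^{\downarrow}_i\le\alpha+\max(d^{\downarrow}_i-\alpha,0)$ summed over $i\le n$ and then extended over all indices produces $\sum_{i=1}^n d^{\downarrow}_i\le g_d(\alpha)+n\alpha$; combining with $g_\lambda(\alpha)\ge g_d(\alpha)$ from (ii) yields the Riemann inequality at $n$. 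If $\lambda^{\downarrow}_n=0$, then $\boldsymbol\lambda$ has finite sum and sending $\alpha\searrow 0$ in (ii) gives $\sum d_i\le\sum\lambda_i=\sum_{i=1}^n\lambda^{\downarrow}_i$, completing that case.

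For the liminf identity, introduce the excess functions
\[
\phi_n(\alpha)=g_\lambda(\alpha)+n\alpha-\sum_{i=1}^n\lambda^{\downarrow}_i,\qquad \psi_n(\alpha)=g_d(\alpha)+n\alpha-\sum_{i=1}^n d^{\downarrow}_i.
\]
An internal reprise of the (i)$\Rightarrow$(ii) case analysis, applied to $\boldsymbol\lambda$ (resp.\ $\boldsymbol d$) in isolation, shows $\phi_n,\psi_n\ge 0$ on $(0,\infty)$, with $\phi_n(\lambda^{\downarrow}_n)=0$ and $\psi_n(d^{\downarrow}_n)=0$. Subtracting the definitions yields the master identity
\[
\delta(\alpha,\boldsymbol\lambda,\boldsymbol d)=s_n+\phi_n(\alpha)-\psi_n(\alpha),\qquad s_n:=\sum_{i=1}^n(\lambda^{\downarrow}_i-d^{\downarrow}_i),
\]
and hence the two-sided bound $\delta(\lambda^{\downarrow}_n)\le s_n\le\delta(d^{\downarrow}_n)$ for every $n$. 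Picking a subsequence $n_k$ with $s_{n_k}\to\liminf_n s_n$ and using $\lambda^{\downarrow}_{n_k}\to 0$ yields $\liminf_{\alpha\searrow 0}\delta(\alpha)\le\liminf_n s_n$. For the reverse direction, taking $m(\alpha)=\#\{i:d_i>\alpha\}$ forces $\psi_{m(\alpha)}(\alpha)=0$ and so $\delta(\alpha)\ge s_{m(\alpha)}$; since $m(\alpha)\to\infty$ as $\alpha\searrow 0$ (when $\boldsymbol d$ has infinitely many positive terms), taking liminf reverses to equality. The main technical subtlety I anticipate is bookkeeping around ties in the rearrangements and the degenerate case where one (or both) sequence has only finitely many positive terms, where both sides of the identity collapse to the genuine limit $\sum\lambda_i-\sum d_i$.
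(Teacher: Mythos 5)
Your argument is correct in structure and captures the right ideas; the paper itself only cites \cite[Proposition~2.7]{mbjj7} without reproducing a proof, so there is no in-paper argument to compare against, but your route (connect Riemann partial sums to $g_c(\alpha)=\sum_{c_i\geq\alpha}(c_i-\alpha)$ via the identity $\sum_{i=1}^{n}c^{\downarrow}_i=g_c(c^{\downarrow}_n)+n\,c^{\downarrow}_n$, then track the nonnegative excess functions $\phi_n,\psi_n$) is the natural one.

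There is one genuine slip in the (i)$\Rightarrow$(ii) step: you claim the Riemann inequality is invoked at $N=\min(k_0,m_0)$ with $k_0=\#\{i:\lambda_i>\alpha\}$, $m_0=\#\{i:d_i>\alpha\}$. In the case $k_0<m_0$ this fails. With $N=k_0$,
\[
g_\lambda(\alpha)-g_d(\alpha)\;\geq\;\sum_{i=1}^{k_0}d^{\downarrow}_i-k_0\alpha-\sum_{i=1}^{m_0}d^{\downarrow}_i+m_0\alpha\;=\;\sum_{i=k_0+1}^{m_0}(\alpha-d^{\downarrow}_i),
\]
and each summand is strictly negative because $d^{\downarrow}_i>\alpha$ for $i\le m_0$; the estimate does not close. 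The correct choice is $N=m_0$ in \emph{both} cases: then
\[
g_\lambda(\alpha)-g_d(\alpha)\;\geq\;\sum_{i=1}^{k_0}(\lambda^{\downarrow}_i-\alpha)-\sum_{i=1}^{m_0}(\lambda^{\downarrow}_i-\alpha)\;=\;\sum_{i=k_0+1}^{m_0}(\alpha-\lambda^{\downarrow}_i)\;\ge 0,
\]
since $\lambda^{\downarrow}_i\le\alpha$ for $i>k_0$ (and when $k_0\geq m_0$ the sum is empty or contributes $\sum_{i=m_0+1}^{k_0}(\lambda^{\downarrow}_i-\alpha)\geq 0$). This is a one-line fix, but as written the claim is wrong in one of the two cases.

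Two smaller points worth tightening but not fatal. First, the sentence ``an internal reprise of the (i)$\Rightarrow$(ii) case analysis\ldots shows $\phi_n,\psi_n\ge 0$'' obscures what is actually a one-step observation: $\phi_n(\alpha)\geq 0$ because $\sum_{i=1}^{n}(\lambda^{\downarrow}_i-\alpha)\leq \sum_{i\leq n,\ \lambda^{\downarrow}_i>\alpha}(\lambda^{\downarrow}_i-\alpha)\leq g_\lambda(\alpha)$, and similarly for $\psi_n$; there is no need to re-run a case analysis. Second, the liminf argument needs, and you do implicitly use, the standard fact that if $n_k\to\infty$ then $\liminf_k s_{n_k}\geq\liminf_n s_n$; since $m(\alpha)$ may skip indices when there are ties in $\boldsymbol{d}^{\downarrow}$, it is worth stating this explicitly rather than treating $m(\alpha)$ as if it hit every large $n$. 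The degenerate situations you flag (one or both sequences eventually zero) are indeed handled by the observation that both sides collapse to $\sum\lambda_i-\sum d_i$ (finite by the established majorization and Proposition~\ref{LRS}-type reasoning when $\sum\lambda_i<\infty$).
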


In the case $\boldsymbol \lambda$ is summable we obtain a stronger version of the identity \eqref{rtelt}, see \cite[Proposition 2.8]{mbjj7}.

\begin{prop}\label{LRS} Let \(\boldsymbol d = (d_{i})_{i\in I}\) and \(\boldsymbol\lambda = (\lambda_{i})_{i\in J}\) be nonnegative sequences. If \(\boldsymbol\lambda\) is summable and \(\delta(\alpha,\boldsymbol\lambda,\boldsymbol d)\geq 0\) for all \(\alpha>0\), then \(\boldsymbol{d}\) is summable, and
\[\liminf_{\alpha\searrow 0}\delta(\alpha,\boldsymbol\lambda,\boldsymbol d) = \sum_{j\in J}\lambda_{j} - \sum_{i\in I}d_{i}.\]
\end{prop}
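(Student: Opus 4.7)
The plan is to reduce Proposition \ref{LRS} to Proposition \ref{LR} by first establishing that $\boldsymbol d$ itself lies in $c_0^+$, and then using the absolute convergence of both sequences to upgrade the liminf of partial sums into an honest limit, namely the difference of the two sums.

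First, I will show that $\boldsymbol d \in c_0^+$. Suppose toward a contradiction that there exists $\eps > 0$ such that the set $\{i \in I : d_i \geq \eps\}$ is infinite. Then for every $\alpha \in (0, \eps)$,
\[
\sum_{d_i \geq \alpha}(d_i - \alpha) \;\geq\; \sum_{d_i \geq \eps}(\eps - \alpha) \;=\; +\infty,
\]
while summability of $\boldsymbol\lambda$ forces $\sum_{\lambda_j \geq \alpha}(\lambda_j - \alpha) \leq \sum_{j\in J} \lambda_j < \infty$. This yields $\delta(\alpha, \boldsymbol\lambda, \boldsymbol d) = -\infty$, contradicting the hypothesis $\delta(\alpha,\boldsymbol\lambda,\boldsymbol d)\ge 0$. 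Hence $\boldsymbol d \in c_0^+$.

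Next, since the quantity $\delta(\alpha, \boldsymbol\lambda, \boldsymbol d)$ depends only on the multisets of values of the two sequences (not on the indexing sets $I$ and $J$), I may reindex both by $\N$ so that Proposition \ref{LR} applies directly. That proposition gives both the majorization $\boldsymbol d \prec \boldsymbol \lambda$ in the sense of \eqref{rmajdef1} and the identity
\[
\liminf_{\alpha\searrow 0}\delta(\alpha, \boldsymbol\lambda, \boldsymbol d) \;=\; \liminf_{k\to\infty}\sum_{i=1}^{k}(\lambda_i^{\downarrow} - d_i^{\downarrow}).
\]
The majorization combined with summability of $\boldsymbol\lambda$ bounds the partial sums $\sum_{i=1}^n d_i^{\downarrow} \leq \sum_{j\in J} \lambda_j$ uniformly in $n$, so $\boldsymbol d$ is summable, which gives the first conclusion of the proposition.

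Finally, since both $\sum_i d_i^{\downarrow}$ and $\sum_i \lambda_i^{\downarrow}$ now converge absolutely, the liminf on the right-hand side is in fact a genuine limit equal to $\sum_{j\in J} \lambda_j - \sum_{i\in I} d_i$, and combining this with the previous display yields the claimed identity. There is no real obstacle: once the preliminary step $\boldsymbol d \in c_0^+$ is in hand, everything else is a direct appeal to Proposition \ref{LR} together with the standard fact that the limit of a difference of partial sums of two convergent series is the difference of their sums.
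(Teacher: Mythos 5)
Your argument is correct, and it is the natural reduction of Proposition~\ref{LRS} to Proposition~\ref{LR}: first establish $\boldsymbol{d}\in c_0^+$ from $\delta\ge 0$ together with summability of $\boldsymbol\lambda$; then invoke Proposition~\ref{LR} to get the majorization $\boldsymbol d\prec\boldsymbol\lambda$ and the identity \eqref{rtelt}; use the majorization and $\sum\lambda_j<\infty$ to bound the partial sums of $\boldsymbol{d}^\downarrow$ and hence obtain summability of $\boldsymbol d$; and finally observe that the liminf of $\sum_{i\le k}(\lambda_i^\downarrow-d_i^\downarrow)$ is an honest limit equal to $\sum\lambda_j-\sum d_i$. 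Note that the paper does not give an in-text proof (it cites Proposition~2.8 of the companion paper \cite{mbjj7}), so there is no local argument to compare against, but your route is the expected one. One point deserves a slightly more careful phrasing: the ``reindex both by $\N$'' step is not literally a relabelling of the multisets if $I$ or $J$ happens to be finite (the statement of the proposition, and its uses elsewhere in the paper, allow for finite index sets, e.g.\ finitely many positive eigenvalues). What you actually do in that case is pad the shorter sequence with zeros, which is harmless because appending zeros changes neither $\delta(\alpha,\boldsymbol\lambda,\boldsymbol{d})$ for $\alpha>0$, nor the decreasing rearrangements beyond a trailing tail of zeros, nor the total sums; it would be cleaner to say so explicitly.
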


\subsection{Necessary conditions for one-sided compact operators} In our earlier paper \cite{mbjj7} we proved the following necessary conditions on diagonals, which only assume that either positive or negative part of the operator is compact. Thus, they will also be useful for non-compact operators.

The following result is a generalization of Schur's Theorem \cite[Theorem 3.2]{mbjj7}.

\begin{thm}\label{cptschurv2} Let \(E\) be a self-adjoint operator on a Hilbert space \(\Hil\) with compact negative part, and let \((e_{i})_{i\in I}\) be an orthonormal basis for \(\Hil\). Set \(\boldsymbol{d} = (\langle Ee_{i},e_{i}\rangle)_{i\in I}\), and let \(\boldsymbol\lambda\) be the sequence of strictly negative eigenvalues of \(E\), counted with multiplicity. Then \(\delta(\alpha,\boldsymbol\lambda,\boldsymbol{d})\geq 0\) for all \(\alpha<0\).\end{thm}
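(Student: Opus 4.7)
The plan is to apply a pointwise Jensen inequality to the convex cutoff $f(t) = (\alpha - t)_{+} := \max(\alpha - t, 0)$ and to exploit that compactness of $E_{-}$ forces $f(E)$ to be finite rank.

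Fix $\alpha < 0$. First I would observe that since $E_{-}$ is compact, the spectrum of $E$ in $(-\infty, 0)$ consists of eigenvalues of finite multiplicity with $0$ as the only possible accumulation point. Consequently $\sigma(E) \cap (-\infty, \alpha]$ is a finite set of eigenvalues of finite total multiplicity, so $f(E) = (\alpha I - E)_{+}$ is a positive finite-rank operator. By the spectral theorem its trace is
\[
\tr f(E) \;=\; \sum_{\lambda_i \le \alpha}(\alpha - \lambda_i),
\]
with the sum taken over the negative eigenvalues of $E$ counted with multiplicity, which is exactly the first term in the definition of $\delta(\alpha, \boldsymbol\lambda, \boldsymbol d)$ for $\alpha < 0$.

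Next, for each $i \in I$ let $\mu_{e_i}$ denote the scalar spectral measure of $E$ at the unit vector $e_i$; it is a Borel probability measure on $\sigma(E)$ with mean $\int t\, d\mu_{e_i}(t) = \langle E e_i, e_i\rangle = d_i$. Since $f$ is convex, Jensen's inequality gives
\[
\langle f(E) e_i, e_i \rangle \;=\; \int f(t)\, d\mu_{e_i}(t) \;\geq\; f(d_i) \;=\; (\alpha - d_i)_{+}.
\]
Because $f(E)$ is positive and trace class, its trace is independent of the chosen orthonormal basis, so summing the Jensen inequality over $i \in I$ yields
\[
\sum_{\lambda_i \le \alpha}(\alpha - \lambda_i) \;=\; \tr f(E) \;=\; \sum_{i\in I} \langle f(E) e_i, e_i \rangle \;\geq\; \sum_{i \in I} (\alpha - d_i)_{+} \;=\; \sum_{d_i \le \alpha}(\alpha - d_i),
\]
which is exactly $\delta(\alpha, \boldsymbol\lambda, \boldsymbol d) \ge 0$.

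The only delicate point -- and it is more a bookkeeping remark than an obstacle -- is the identification of $f(E)$ as finite rank; this is what legitimizes both the evaluation of $\tr f(E)$ via the eigenvalue list of $E$ and the computation of the trace as $\sum_i \langle f(E) e_i, e_i\rangle$ in the given basis. Once those two trace formulas are in hand, the result reduces to a single application of convexity of $t \mapsto (\alpha - t)_{+}$, so no further work is required.
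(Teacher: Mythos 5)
Your argument is correct. Since this theorem is stated here as a recollection of \cite[Theorem 3.2]{mbjj7}, the present paper does not reproduce its proof, so a line-by-line comparison is not possible; but the proposal stands on its own. The key observations are all sound: compactness of $E_{-}$ makes $\pi((-\infty,\alpha])$ finite rank for every $\alpha<0$, hence $f(E)=(\alpha\mathbf{I}-E)_{+}$ is positive finite rank with $\tr f(E)=\sum_{\lambda_i\le\alpha}(\alpha-\lambda_i)$; the scalar spectral measure $\mu_{e_i}$ is a probability measure with barycenter $d_i$; convexity of $t\mapsto(\alpha-t)_{+}$ gives $\langle f(E)e_i,e_i\rangle\ge(\alpha-d_i)_{+}$ via Jensen; and summing in $i$ and computing the trace in the basis $(e_i)$ yields $\delta(\alpha,\boldsymbol\lambda,\boldsymbol d)\ge 0$. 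The one step worth stating explicitly in a write-up is that the trace of the positive finite-rank operator $f(E)$ can be computed both from its spectral decomposition and from the orthonormal basis $(e_i)$ (which is exactly what legitimizes the middle equality), and that Jensen applies because $f$ is bounded on $\sigma(E)$ thanks to the lower bound on $\sigma(E)$ coming from compactness of $E_{-}$. This Jensen/trace argument is the standard clean route to such Schur-type majorization inequalities and is noticeably shorter than an argument via decreasing rearrangements and Ky Fan's variational principle, which is the other common approach.
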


\begin{defn}\label{decouple}
Let $E$ be a self-adjoint operator on a Hilbert space $\mathcal H$. Let $(d_{i})$ be a diagonal of $E$ with respect to an orthonormal basis $(e_{i})$ of $\mathcal H$.
We say that the operator $E$ {\it decouples at $\alpha\in \R$} with respect to $(d_{i})$ if 
\[
\mathcal H_0=\overline{\lspan }\{ e_i: d_i < \alpha \}\qquad\text{and}\qquad \mathcal H_1=\overline{\lspan} \{ e_i: d_i \ge \alpha \},
\]
are invariant subspaces of $E$ and
\[
\sigma(E|_{\mathcal H_0}) \subset (-\infty,\alpha]\qquad\text{and}\qquad\sigma(E|_{\mathcal H_1}) \subset [\alpha,\infty).
\]
\end{defn}

The following result shows that the operator decouples when the excess $\displaystyle
\liminf_{\alpha\nearrow 0}\delta(\alpha,\boldsymbol\lambda,\boldsymbol d)$
is zero \cite[Proposition 3.5]{mbjj7}.

\begin{prop}\label{p211}
Let $E$ be a self-adjoint operator on $\mathcal H$ with the eigenvalue list (with multiplicity) $\boldsymbol \lambda$, which is possibly an empty list. Let $\boldsymbol d$ be a diagonal of $E$ with respect to some orthonormal basis $(e_i)_{i\in I}$. Assume that either:
\begin{itemize}
\item the positive part $E_{+}$ is compact and 
$\displaystyle
\liminf_{\alpha\searrow 0}\delta(\alpha,\boldsymbol\lambda,\boldsymbol d) =0
$,
or 
\item the negative part $E_-$ is compact and
$\displaystyle
\liminf_{\alpha\nearrow 0}\delta(\alpha,\boldsymbol\lambda,\boldsymbol d) =0$.
\end{itemize}
Then,
the operator $E$ decouples at the point $0$. 
\end{prop}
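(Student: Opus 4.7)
The two cases are symmetric, so the plan is to handle the first ($E_+$ compact, $\liminf_{\alpha\searrow 0}\delta(\alpha,\boldsymbol\lambda,\boldsymbol d)=0$) in detail; the second follows either by applying the first to $-E$ with diagonal $-\boldsymbol d$, or by repeating the argument with all signs swapped.

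The key device is a pointwise rewriting of $\delta$ using the convex function $\phi_\alpha(x)=(x-\alpha)_+$. Since $E_+$ is compact, for each $\alpha>0$ the operator $\phi_\alpha(E)=(E-\alpha I)_+$ has finite rank (its range is the spectral subspace of $E$ for $[\alpha,\infty)$), so it is trace class with
\[
\tr(\phi_\alpha(E))=\sum_{\lambda_j\geq\alpha}(\lambda_j-\alpha)=\sum_{i\in I}\langle\phi_\alpha(E)e_i,e_i\rangle.
\]
The scalar Jensen inequality applied to the spectral measure of $E$ at each unit vector $e_i$ yields $(d_i-\alpha)_+=\phi_\alpha(\langle Ee_i,e_i\rangle)\leq\langle\phi_\alpha(E)e_i,e_i\rangle$, so
\[
\delta(\alpha,\boldsymbol\lambda,\boldsymbol d)=\sum_{i\in I}\bigl[\langle\phi_\alpha(E)e_i,e_i\rangle-(d_i-\alpha)_+\bigr]
\]
is a sum of non-negative terms.

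The next step is the limit passage. Pick $\alpha_n\searrow 0$ with $\delta(\alpha_n,\boldsymbol\lambda,\boldsymbol d)\to 0$. Pointwise convergence $\phi_{\alpha_n}(x)\to x_+$ on $\sigma(E)$, together with the boundedness of the spectral integrals, gives $\langle\phi_{\alpha_n}(E)e_i,e_i\rangle\to\langle E_+e_i,e_i\rangle$ and $(d_i-\alpha_n)_+\to(d_i)_+$ for each $i$. Fatou's lemma applied to counting measure on $I$ then forces $\sum_{i}[\langle E_+e_i,e_i\rangle-(d_i)_+]=0$, and non-negativity of every summand yields $\langle E_+e_i,e_i\rangle=(d_i)_+$ for all $i\in I$. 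Splitting on the sign of $d_i$: if $d_i>0$, subtracting $d_i=\langle Ee_i,e_i\rangle$ gives $\langle E_-e_i,e_i\rangle=0$, hence $E_-e_i=0$ by positivity of $E_-$; if $d_i\leq 0$ it gives $E_+e_i=0$.

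The final step is the structural assembly. Decompose $\mathcal H=V_-\oplus V_0\oplus V_+$ into the $E$-invariant spectral subspaces for $(-\infty,0)$, $\{0\}$, and $(0,\infty)$. The vectorwise conclusions place $e_i\in V_-\oplus V_0=\ker E_+$ whenever $d_i\leq 0$ and $e_i\in V_0\oplus V_+=\ker E_-$ whenever $d_i>0$. For $i$ with $d_i=0$ a short refinement is needed: writing $e_i=v+w$ with $v\in V_-$, $w\in V_0$, the identity $0=\langle Ee_i,e_i\rangle=\langle Ev,v\rangle$ together with triviality of $\ker(-E|_{V_-})$ forces $v=0$, so $e_i\in V_0$. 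Thus $\mathcal H_0=\overline{\lspan}\{e_i:d_i<0\}\subset V_-\oplus V_0$ and $\mathcal H_1=\overline{\lspan}\{e_i:d_i\geq 0\}\subset V_0\oplus V_+$. Combined with $\mathcal H_0\perp\mathcal H_1$ and $\mathcal H_0\oplus\mathcal H_1=\mathcal H=V_-\oplus V_0\oplus V_+$, this yields $V_-\subset\mathcal H_0$ and $V_+\subset\mathcal H_1$, so $\mathcal H_0=V_-\oplus W_0$ and $\mathcal H_1=V_+\oplus W_1$ for orthogonal subspaces $W_0,W_1\subset V_0=\ker E$ with $W_0\oplus W_1=V_0$. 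Both subspaces are therefore $E$-invariant with $\sigma(E|_{\mathcal H_0})\subset(-\infty,0]$ and $\sigma(E|_{\mathcal H_1})\subset[0,\infty)$, establishing the decoupling. The main technical subtlety is the limit passage: when $E_+$ is compact but not trace class, $\sum_i\langle E_+e_i,e_i\rangle$ may diverge and dominated convergence is unavailable, but Fatou combined with the non-negativity of each summand handles this without needing a dominant.
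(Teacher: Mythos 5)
Your proof is correct. The key steps are sound: the identity $\delta(\alpha,\boldsymbol\lambda,\boldsymbol d)=\sum_{i\in I}\bigl[\langle\phi_\alpha(E)e_i,e_i\rangle-(d_i-\alpha)_+\bigr]$ with $\phi_\alpha(x)=(x-\alpha)_+$ is valid because $\phi_\alpha(E)$ has finite rank when $E_+$ is compact and $\alpha>0$; Jensen applied to the spectral measure $\mu_{e_i}$ makes every summand nonnegative; monotone convergence of $\phi_{\alpha_n}$ to $x\mapsto x_+$ gives pointwise convergence of the summands; and Fatou on counting measure then forces $\langle E_+e_i,e_i\rangle=(d_i)_+$ for each $i$, precisely as you say, with no need for trace-class hypotheses. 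The vectorwise dichotomy, the refinement placing $e_i\in V_0$ when $d_i=0$, and the orthogonality argument showing $V_-\subset\mathcal H_0$, $V_+\subset\mathcal H_1$ all check out, and the conclusion that $\mathcal H_0=V_-\oplus W_0$, $\mathcal H_1=V_+\oplus W_1$ with $W_0,W_1\subset\ker E$ gives exactly the decoupling.

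This appears to be a genuinely different route from the one the paper leans on. Proposition~\ref{p211} is imported from the companion paper, and the surrounding machinery reproduced here (Lemma~\ref{nel} and Lemma~\ref{nelt}) proceeds by expanding in an explicit orthonormal eigenbasis $(f_j)$ of $E_+$ and manipulating the resulting inner-product identities; the deficiency term $\sum_j\lambda_j(1-\sum_i|\langle e_i,f_j\rangle|^2)-\sum_i\langle EPe_i,e_i\rangle$ is then shown to vanish. Your argument replaces all of that bookkeeping with a single application of Jensen's inequality to the convex functions $\phi_\alpha$, together with Fatou. The payoff is a shorter and more conceptual proof that sidesteps the eigenvector-coefficient computations entirely; what the paper's approach buys in exchange is the quantitative inequality of Lemma~\ref{nel}, which is reused elsewhere (e.g.\ in Lemma~\ref{del0}, Lemma~\ref{del1}, and Theorem~\ref{cpttrace}) and encodes more information than the pure decoupling statement needs. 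As a proof of Proposition~\ref{p211} alone, yours is cleaner.
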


The following theorem, whose proof depends on two technical lemmas, is a generalization of the trace condition on diagonals of trace class operators to one-sided compact operators. These three results were proven in previous paper \cite[Section 3]{mbjj7} in far greater generality than was needed for compact operators. But we will make use of the generality in this paper.

\begin{lem}\label{nel}
Let $E$ be a self-adjoint operator on $\mathcal H$ such that its positive part is a compact non-trace class operator with positive eigenvalues $\lambda_1 \ge \lambda_2\geq \ldots>0$, listed with multiplicity. Let $(f_j)_{j\in\N}$ be the corresponding orthonormal sequence of eigenvectors, that is, $Ef_j = \lambda_j f_j$, $j\in \N$.
Let  $(e_{i})_{i\in \N}$ be an orthonormal sequence in $\Hil$ and let $d_{i} = \langle Ee_{i},e_{i}\rangle$, $i\in\N$. 
Then,
\begin{equation}\label{nel1}
\liminf_{M\to\infty}\sum_{i=1}^{M}(\lambda_{i} - d_{i}) 
\geq \sum_{j=1}^{\infty}
\lambda_{j}\bigg( 1-\sum_{i\in \N} |\langle e_i, f_j \rangle|^2 \bigg) - \sum_{i\in \N} \langle EP e_i,e_i \rangle,
\end{equation}
where $P$ is an orthogonal projection of $\mathcal H$ onto $(\operatorname{span} \{f_j: j\in \N \})^\perp$.
\end{lem}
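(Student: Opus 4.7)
The plan is to rewrite $\sum_{i=1}^M(\lambda_i - d_i)$ as the difference of a nonnegative sum indexed by $j$ (the positive‐part contribution) and a monotone tail (the negative‐part contribution), then apply Fatou's lemma. The key technical move is to use Abel summation so that Fatou can be applied.

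First, let $Q$ be the orthogonal projection of $\Hil$ onto $\overline{\lspan}\{f_j:j\in\N\}$, so $P = I-Q$. Because $Q$ and $P$ are spectral projections of $E$ (for $(0,\infty)$ and $(-\infty,0]$ respectively), they commute with $E$; thus $EQ = E_+$ and $EP = -E_-$. The spectral representation $E_+ = \sum_j \lambda_j\langle\cdot,f_j\rangle f_j$ yields $\langle E_+ e_i,e_i\rangle = \sum_j \lambda_j |\langle e_i,f_j\rangle|^2$. Setting $a_j^{(M)} := \sum_{i=1}^M|\langle e_i,f_j\rangle|^2$ and summing $d_i = \langle E_+ e_i,e_i\rangle + \langle EPe_i,e_i\rangle$ over $1\le i \le M$ gives
\[
\sum_{i=1}^M(\lambda_i - d_i) \;=\; A_M - B_M,
\]
where $A_M := \sum_{j=1}^\infty \lambda_j\bigl(\mathbf{1}_{j\le M} - a_j^{(M)}\bigr)$ and $B_M := \sum_{i=1}^M \langle EPe_i,e_i\rangle \le 0$.

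The terms of $A_M$ have mixed signs, obstructing a direct Fatou argument. To fix this, apply Abel summation using the monotonicity $\lambda_j\searrow 0$. Define
\[
C_j^{(M)} := \min(j,M) - \sum_{i=1}^j a_i^{(M)}.
\]
Each $C_j^{(M)} \ge 0$: for $j\le M$ this follows from $a_i^{(M)} \le \|f_i\|^2 = 1$, and for $j>M$ from $\sum_i a_i^{(M)} = \sum_{i=1}^M\|Qe_i\|^2 \le M$. Moreover $C_j^{(M)} \le M$, so the boundary term $\lambda_N C_N^{(M)}$ vanishes as $N\to\infty$, and Abel summation gives
\[
A_M = \sum_{j=1}^\infty (\lambda_j - \lambda_{j+1})\,C_j^{(M)},
\]
a series with nonnegative summands. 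Writing $a_i := \sum_{k\in\N}|\langle e_k,f_i\rangle|^2 = \lim_{M\to\infty} a_i^{(M)}$, we have $\lim_M C_j^{(M)} = \sum_{i\le j}(1-a_i)$ for each fixed $j$, so Fatou's lemma followed by a Tonelli interchange (both legal since every term is nonnegative) yields
\[
\liminf_{M\to\infty} A_M \;\ge\; \sum_{j}(\lambda_j - \lambda_{j+1})\sum_{i\le j}(1-a_i) \;=\; \sum_i(1-a_i)\sum_{j\ge i}(\lambda_j - \lambda_{j+1}) \;=\; \sum_i \lambda_i(1-a_i),
\]
the last step by telescoping using $\lambda_j\to 0$.

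To finish, note that $\langle EPe_i,e_i\rangle = -\langle E_- e_i,e_i\rangle \le 0$, so $B_M$ is nonincreasing in $M$ with $\lim_M B_M = \sum_{i\in\N}\langle EPe_i,e_i\rangle \in [-\infty,0]$. The standard inequality $\liminf_M(A_M - B_M) \ge \liminf_M A_M - \lim_M B_M$ then delivers \eqref{nel1}; in the degenerate case $\lim_M B_M = -\infty$ the target right side is $+\infty$, and this is also forced on the left since $A_M \ge 0$ and $-B_M\to\infty$. The principal obstacle is the mixed‐sign nature of $A_M = \sum_j \lambda_j(\mathbf{1}_{j\le M}-a_j^{(M)})$: the Abel rearrangement, which trades the indicator $\mathbf{1}_{j\le M}$ for the manifestly nonnegative partial sums $C_j^{(M)}$ while using up the monotonicity of $(\lambda_j)$, is the only input that lets Fatou's lemma recover the limit terms from $j>M$ without losing them.
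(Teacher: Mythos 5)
Your proof is correct. The paper does not reprove Lemma \ref{nel}; it cites it from the authors' earlier paper \cite[Section 3]{mbjj7}, so a line-by-line comparison with ``the paper's proof'' is not possible here. Nevertheless, every step of your argument checks out: the identification $EQ = E_+$, $EP = -E_-$ via spectral projections is valid since $\overline{\lspan}\{f_j\}$ is exactly $\ran\pi((0,\infty))$ by compactness of $E_+$; the decomposition $\sum_{i=1}^M(\lambda_i - d_i) = A_M - B_M$ with $A_M = \sum_j \lambda_j(\mathbf 1_{j\le M} - a_j^{(M)})$ is an honest algebraic identity (the double series $\sum_j \lambda_j a_j^{(M)} = \sum_{i\le M}\langle E_+e_i,e_i\rangle$ converges absolutely); the partial sums $C_j^{(M)} = \min(j,M) - \sum_{i\le j}a_i^{(M)}$ are nonnegative by the Bessel bounds $a_i^{(M)}\le 1$ and $\sum_i a_i^{(M)} = \sum_{k\le M}\|Qe_k\|^2 \le M$; Abel summation with the vanishing boundary term $\lambda_N C_N^{(M)}\to 0$ (since $C_N^{(M)}\le M$ and $\lambda_N\to 0$) converts $A_M$ to a nonnegative series, after which Fatou and Tonelli apply without obstruction; and the telescope $\sum_{j\ge i}(\lambda_j-\lambda_{j+1}) = \lambda_i$ uses $\lambda_j\to 0$, not trace class decay. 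The final passage from $\liminf A_M$ and $\lim B_M$ to $\liminf(A_M - B_M)$, including the degenerate case $\lim B_M = -\infty$, is handled correctly. The Abel rearrangement is indeed the essential move; without it the mixed signs of $\mathbf 1_{j\le M} - a_j^{(M)}$ would block Fatou.
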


\begin{lem}\label{nelt}
Let $E$ be a self-adjoint operator on $\mathcal H$ such that its positive part is a compact trace class operator with positive eigenvalues $(\lambda_{j})_{j=1}^{M}$, where \(M\in\N\cup\{0,\infty\}\). Let $(f_j)_{j=1}^{M}$ be the corresponding orthonormal sequence of eigenvectors, that is, $Ef_j = \lambda_j f_j$.
Let  $(e_{i})_{i\in I}$ be an orthonormal basis in $\Hil$ and let $d_{i} = \langle Ee_{i},e_{i}\rangle$, $i\in I$. 
Then,
\begin{equation}\label{nelt1}
\sum_{i=1}^{M}\lambda_{i} - \sum_{i:d_{i}>0}d_{i} 
=\sum_{j=1}^{M}
\lambda_{j}\bigg( 1-\sum_{i:d_{i}>0} |\langle e_i, f_j \rangle|^2 \bigg) - \sum_{i:d_{i}>0} \langle EP e_i,e_i \rangle,
\end{equation}
where $P$ is an orthogonal projection of $\mathcal H$ onto $(\overline{\lspan}(f_j)_{j=1}^{M})^\perp$.
\end{lem}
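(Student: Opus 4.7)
The plan is to derive \eqref{nelt1} by summing a pointwise identity over $i$ and applying Tonelli's theorem, with the trace-class hypothesis ensuring all sums are finite. Let $Q = I - P$ be the orthogonal projection onto $\overline{\lspan}(f_j)_{j=1}^{M}$. Since $(f_j)_{j=1}^{M}$ is an orthonormal family of eigenvectors exhausting the strictly positive eigenvalues of $E$ (with multiplicity), $\ran(Q)$ equals the spectral subspace of $E$ associated to $(0,\infty)$ and $\ran(P)$ equals the spectral subspace associated to $(-\infty,0]$. Thus $P$ and $Q$ both commute with $E$, we have $EQ = \sum_{j=1}^{M} \lambda_j \langle \cdot, f_j\rangle f_j$, and $E|_{\ran(P)} \le 0$. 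In particular,
\[
\langle EPe_i, e_i \rangle = \langle EPe_i, Pe_i \rangle \le 0 \qquad \text{for every } i \in I.
\]

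For the pointwise step, split $d_i = \langle EQe_i, e_i\rangle + \langle EPe_i, e_i\rangle$ and compute $\langle EQe_i, e_i\rangle = \sum_{j=1}^{M} \lambda_j |\langle e_i, f_j\rangle|^2$, which rearranges to
\[
d_i - \langle EPe_i, e_i\rangle = \sum_{j=1}^{M} \lambda_j |\langle e_i, f_j\rangle|^2 \ge 0.
\]
The crucial observation is that, when restricted to those indices with $d_i > 0$, the left-hand side is a sum of two non-negative quantities (namely $d_i > 0$ and $-\langle EPe_i, e_i\rangle \ge 0$), while the right-hand side also consists entirely of non-negative terms. Summing over $\{i : d_i > 0\}$ and then applying Tonelli's theorem to interchange the double sum on the right yields
\[
\sum_{i: d_i > 0} d_i \; - \; \sum_{i: d_i > 0} \langle EPe_i, e_i\rangle \; = \; \sum_{j=1}^{M} \lambda_j \sum_{i: d_i > 0} |\langle e_i, f_j\rangle|^2.
\]

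To conclude, since $(e_i)_{i\in I}$ is an orthonormal basis, $\sum_{i\in I} |\langle e_i, f_j\rangle|^2 = \|f_j\|^2 = 1$, so each inner sum on the right is bounded by $1$; combined with the trace-class hypothesis $\sum_{j=1}^{M} \lambda_j < \infty$, the right-hand side is finite. Hence both non-negative sums on the left are individually finite, and rearrangement gives
\[
\sum_{j=1}^{M} \lambda_j - \sum_{i: d_i > 0} d_i = \sum_{j=1}^{M} \lambda_j \biggl(1 - \sum_{i: d_i > 0} |\langle e_i, f_j\rangle|^2\biggr) - \sum_{i: d_i > 0} \langle EPe_i, e_i\rangle,
\]
which is \eqref{nelt1}. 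I expect the main (minor) obstacle to be a careful sign analysis: restricting to the indices with $d_i > 0$ is essential so that Tonelli's theorem applies without any $\infty - \infty$ ambiguity, and the trace-class hypothesis is precisely what upgrades the $\liminf$-inequality of Lemma \ref{nel} to the genuine equality stated here. Everything else is algebraic bookkeeping built on the spectral decomposition of $E$ with respect to the projections $P$ and $Q$.
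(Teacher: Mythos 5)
Your proof is correct and self-contained. The key observations — that $P = \pi((-\infty,0])$ and $Q = \pi((0,\infty))$ are the spectral projections onto the nonpositive and positive spectral subspaces, so $EP \le 0$ and $EQ = E_+$ is the trace-class positive part with $\langle E_+ e_i, e_i\rangle = \sum_j \lambda_j |\langle e_i,f_j\rangle|^2$ — are exactly right, and the restriction to $\{i : d_i > 0\}$ is precisely what makes both sides of the pointwise identity $d_i - \langle EPe_i,e_i\rangle = \sum_j \lambda_j |\langle e_i,f_j\rangle|^2$ a sum of nonnegative terms, so Tonelli applies without ambiguity and the trace-class bound $\sum_j \lambda_j < \infty$ forces finiteness of each piece. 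One small remark worth making explicit: the identity $\overline{\lspan}(f_j)_{j=1}^M = \ran\pi((0,\infty))$ relies on $E_+|_{\ran\pi((0,\infty))}$ being a compact positive \emph{injective} operator, which therefore admits an orthonormal eigenbasis with strictly positive eigenvalues; since $(\lambda_j)_{j=1}^M$ lists all positive eigenvalues with multiplicity, the $(f_j)$ exhaust that eigenbasis, so indeed $Q$ is the full spectral projection. This paper does not reprove the lemma but cites it from its companion (Section 3 of the compact-operator predecessor), so a line-by-line comparison is not possible here; that said, your decomposition via $P$ and $Q$, the pointwise split of $d_i$, and the Tonelli interchange are the natural route and there is nothing to add.
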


\begin{thm}\label{cpttrace} Let $E$ be a self-adjoint operator on $\mathcal H$ such that its positive part $E_{+}$ is a compact operator with the eigenvalue list (with multiplicity) $\boldsymbol \lambda$. Let $\boldsymbol d\in c_{0}$ be a diagonal of $E$ such that
\[\sum_{d_{i}<0}|d_{i}|<\infty,
\]
If the positive excess $\sigma_{+}=\liminf_{\alpha\searrow 0}\delta(\alpha,\boldsymbol\lambda,\boldsymbol{d})<\infty$, then the negative part of $E$ is trace class. Moreover, 
\begin{equation}\label{cpttrace0}
\tr(E_-) \le \sum_{d_{i}<0}|d_{i}| + \sigma_{+},
\end{equation}
with the equality when  \(\sum_{\lambda_{i}>0}\lambda_{i} < \infty\).
\end{thm}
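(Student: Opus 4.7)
The plan is to split into two cases depending on whether $E_+$ is trace class, and apply Lemma~\ref{nelt} or Lemma~\ref{nel} accordingly. Two preliminary facts will be used in both cases. First, $\delta(\alpha,\boldsymbol\lambda,\boldsymbol d)\geq 0$ for all $\alpha>0$: applying Theorem~\ref{cptschurv2} to $-E$ (whose negative part is the compact operator $E_+$) together with the identity $\delta(\alpha,-\boldsymbol\lambda,-\boldsymbol d)=\delta(-\alpha,\boldsymbol\lambda,\boldsymbol d)$ gives this. Since $\delta(\alpha,\boldsymbol\lambda,\boldsymbol d)=\delta(\alpha,\boldsymbol\lambda,\boldsymbol d_+)$ for $\alpha>0$, Proposition~\ref{LR} then yields the majorization $\boldsymbol d_+\prec\boldsymbol\lambda$. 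Second, since $P$ commutes with $E$, $E_+P=0$, and $E_-f_j=0$ for each $j$ (so $E_-=E_-P$), one has $\langle EPe_i,e_i\rangle=-\langle E_-e_i,e_i\rangle$.

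\emph{Case 1: $E_+$ is trace class.} Since $\boldsymbol\lambda$ is summable, Proposition~\ref{LRS} gives $\sigma_+=\sum_j\lambda_j-\sum_{d_i>0}d_i$. Apply Lemma~\ref{nelt} with the full orthonormal basis $(e_i)_{i\in I}$ realizing $\boldsymbol d$, so Parseval yields $\sum_{i\in I}|\langle e_i,f_j\rangle|^2=1$ for every $j$. By Fubini,
\[\sum_j\lambda_j\Bigl(1-\sum_{d_i>0}|\langle e_i,f_j\rangle|^2\Bigr)=\sum_j\lambda_j\sum_{d_i\le 0}|\langle e_i,f_j\rangle|^2=\sum_{d_i\le 0}\langle E_+e_i,e_i\rangle.\]
Using $\langle E_+e_i,e_i\rangle=d_i+\langle E_-e_i,e_i\rangle$, this equals $\sum_{d_i\le 0}\langle E_-e_i,e_i\rangle-\sum_{d_i<0}|d_i|$. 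Adding $\sum_{d_i>0}\langle E_-e_i,e_i\rangle$ from the second term of \eqref{nelt1}, the right-hand side of \eqref{nelt1} simplifies to $\tr(E_-)-\sum_{d_i<0}|d_i|$. Equating with the left-hand side $\sigma_+$ yields the equality $\tr(E_-)=\sigma_++\sum_{d_i<0}|d_i|$.

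\emph{Case 2: $E_+$ is not trace class.} Since $\sum_j\lambda_j=\infty$ while $\sigma_+<\infty$, the expression defining $\sigma_+$ forces $\sum_{d_i>0}d_i=\infty$; in particular there are infinitely many positive $d_i$. Let $(e_i)_{i\in\N}$ be the subsequence of the basis with $d_i>0$, reindexed so $d_1\ge d_2\ge\cdots>0$, and apply Lemma~\ref{nel} to this orthonormal sequence. By Proposition~\ref{LR} together with the majorization $\boldsymbol d_+\prec\boldsymbol\lambda$, the left-hand side of \eqref{nel1} equals $\sigma_+$. The same algebraic rearrangement as in Case~1 simplifies the right-hand side to $\tr(E_-)-\sum_{d_i<0}|d_i|$. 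Therefore $\tr(E_-)\le\sigma_++\sum_{d_i<0}|d_i|<\infty$, so $E_-$ is trace class. The principal obstacle here is that a direct application of Lemma~\ref{nel} with the full orthonormal basis gives a liminf of $+\infty$ on the left and yields no useful bound; restricting to the positive-diagonal subsequence arranged in decreasing order is what allows Proposition~\ref{LR} to identify the liminf with the finite quantity $\sigma_+$.
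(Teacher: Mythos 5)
Your proof is correct and follows the intended route: the two technical lemmas (Lemma~\ref{nel} and Lemma~\ref{nelt}) are set up in the paper precisely as the inputs to Theorem~\ref{cpttrace}, and you use them in the natural way. The key observations you make — that $P$ projects onto $\ker E_+$ so $EP=-E_-$, that Parseval over the full basis lets you rewrite $1-\sum_{d_i>0}|\langle e_i,f_j\rangle|^2$ as $\sum_{d_i\le 0}|\langle e_i,f_j\rangle|^2$, that Theorem~\ref{cptschurv2} applied to $-E$ gives the majorization needed for Proposition~\ref{LR}/\ref{LRS}, and that in the non-trace-class case one must restrict the orthonormal sequence to the positive-diagonal terms in decreasing order so that the liminf on the left of \eqref{nel1} can be identified with $\sigma_+$ — are exactly the mechanisms these lemmas were designed for, and the finiteness bookkeeping (both summands on the right side of \eqref{nelt1}/\eqref{nel1} are nonnegative, so each is separately finite, which is what lets you split $\tr(E_-)$ without $\infty-\infty$ issues) is handled correctly.
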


We also need cardinality inequalities which are implied by decoupling \cite[Lemma 3.11]{mbjj7}.

\begin{lem}\label{card}
Let $E$ be a self-adjoint operator on $\mathcal H$ with the eigenvalue list (with multiplicity) $\boldsymbol \lambda$, which is possibly an empty list. Let $\boldsymbol d$ be a diagonal of $E$ with respect to some orthonormal basis $(e_i)_{i\in I}$. Assume that $E$ decouples at $0$. If the positive part $E_{+}$ is compact, then
\begin{equation}\label{card1}
\#|\{i:\lambda_{i}=0\}|\geq \#|\{i:d_{i}=0\}|
\text{\, and \,}
\#|\{i:\lambda_{i} \ge 0\}|\geq \#|\{i:d_{i} \ge0\}|.
\end{equation}
If the negative part $E_-$ is compact, then
\begin{equation}\label{card2}
\#|\{i:\lambda_{i}=0\}|\geq \#|\{i:d_{i}=0\}|
\text{\, and \,}
\#|\{i:\lambda_{i} \le 0\}|\geq \#|\{i:d_{i} \le0\}|.
\end{equation}
\end{lem}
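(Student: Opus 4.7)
The plan is to exploit the decoupling structure at $0$ directly. By hypothesis we have the orthogonal decomposition $\mathcal H = \mathcal H_0 \oplus \mathcal H_1$ with $\mathcal H_0 = \overline{\lspan}\{e_i : d_i < 0\}$, $\mathcal H_1 = \overline{\lspan}\{e_i : d_i \ge 0\}$, and the corresponding block decomposition $E = E_0 \oplus E_1$ where $E_0 := E|_{\mathcal H_0} \le 0$ and $E_1 := E|_{\mathcal H_1} \ge 0$. Since $E_0 \le 0$ we have $E_+ = 0 \oplus E_1$, so when $E_+$ is compact the operator $E_1$ is itself a compact nonnegative operator on $\mathcal H_1$. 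The spectral theorem then yields
\[
\mathcal H_1 = \ker E_1 \oplus \bigoplus_{\mu > 0} \ker(E_1 - \mu I),
\]
with finite-dimensional eigenspaces for $\mu > 0$, while the $0$-eigenspace of $E$ itself decomposes as $\ker E_0 \oplus \ker E_1$.

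First I would establish the kernel inequality. For any index $i$ with $d_i = 0$ we have $e_i \in \mathcal H_1$ and $\langle E_1 e_i, e_i \rangle = 0$; since $E_1 \ge 0$ this forces $e_i \in \ker E_1$. Hence $(e_i)_{d_i = 0}$ is an orthonormal family in $\ker E_1$, and
\[
\#|\{i:d_i=0\}| \le \dim \ker E_1 \le \dim \ker E_0 + \dim \ker E_1 = \#|\{i:\lambda_i=0\}|.
\]
For the second inequality in \eqref{card1}, I would count nonnegative eigenvalues: since the strictly positive eigenvalues of $E$ coincide with those of $E_1$,
\[
\#|\{i : \lambda_i \ge 0\}| = \dim \ker E_0 + \dim \mathcal H_1 = \dim \ker E_0 + \#|\{i : d_i \ge 0\}|,
\]
where the first equality uses the spectral decomposition of $E_1$ and the second uses that $(e_i)_{d_i \ge 0}$ is an orthonormal basis of $\mathcal H_1$. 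Both inequalities in \eqref{card1} follow at once.

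The case when $E_-$ is compact is handled by the symmetric argument applied to $E_0$. Compactness of $E_-$ forces compactness of $E_0$, yielding
\[
\#|\{i : \lambda_i \le 0\}| = \dim \ker E_1 + \dim \mathcal H_0 = \dim \ker E_1 + \#|\{i : d_i < 0\}|.
\]
The kernel bound $\dim \ker E_1 \ge \#|\{i : d_i = 0\}|$ established above uses only $E_1 \ge 0$ and not compactness of $E_1$, so it remains available; combining it with the above count produces \eqref{card2}. I do not foresee any essential obstacle here, as the argument is a direct consequence of the spectral theorem on the compact summand; the only care needed is to interpret sums of dimensions as cardinal arithmetic when kernels are infinite-dimensional.
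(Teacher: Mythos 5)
Your proof is correct and uses the natural argument for this fact. You take the block decomposition $E = E_0 \oplus E_1$ provided by decoupling, observe that $E_+ = 0 \oplus E_1$ (so compactness of $E_+$ forces $E_1$ compact hence diagonalizable), count eigenvalues of $E$ as the disjoint union of eigenvalues of $E_0$ and $E_1$ (with multiplicities added on kernels), and exploit the elementary observation that $\langle E_1 e_i, e_i\rangle = 0$ with $E_1 \ge 0$ forces $e_i \in \ker E_1$ — which, as you correctly remark, does not need compactness and so carries over to the $E_-$ compact case. The cardinal-arithmetic caveat at the end is appropriate but harmless since all dimensions here are at most countable. The paper itself merely cites [mbjj7, Lemma 3.11] for this result rather than reproving it, but the lemma is elementary and the route you take is the canonical one; there is no gap.
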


\subsection{Diagonal-to-diagonal results} The following diagonal-two-diagonal result was shown by Siudeja and the authors \cite[Theorem 3.6]{unbound}.

\begin{prop}\label{posSH} Let $\boldsymbol\lambda=(\lambda_{i})_{i=1}^{\infty}$ and $\boldsymbol d = (d_{i})_{i=1}^{\infty}$ be nonincreasing sequences and define
\[\delta_{n} = \sum_{i=1}^{n}(\lambda_{i}-d_{i}).\]
If $\boldsymbol\lambda$ is a diagonal of a self-adjoint operator $E$, $\delta_{n}\geq 0$ for all $n\in\N$, and
\[\liminf_{n\to\infty}\delta_{n} = 0,\]
then $\boldsymbol d$ is also a diagonal of $E$.
\end{prop}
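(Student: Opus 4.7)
The plan is to reduce the proposition to a sequence of finite-dimensional Schur-Horn applications on growing blocks, using the condition $\liminf_n \delta_n = 0$ to match sums asymptotically on each block. Fix an orthonormal basis $(f_i)_{i\in\N}$ with $\langle Ef_i, f_i \rangle = \lambda_i$, and pass to a subsequence $n_k \uparrow \infty$ with $\delta_{n_k} < 2^{-k}$ to make the boundary errors summable.

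For each $k$, the compression $P_{H_k} E P_{H_k}$ to $H_k = \lspan(f_i)_{i\leq n_k}$ is a finite-dimensional self-adjoint operator whose diagonal in $(f_i)_{i\leq n_k}$ is $(\lambda_i)_{i\leq n_k}$, so Schur's theorem tells us its eigenvalues majorize this diagonal. Combined with the hypothesis $\delta_m \geq 0$ for $m \leq n_k$, the target tuple $(d_i)_{i\leq n_k}$ is partial-sum majorized by these eigenvalues, except for a total-sum discrepancy of $\delta_{n_k}$. I would absorb this discrepancy into a small perturbation $\tilde d^{(k)}$ of $(d_i)_{i\leq n_k}$, nonincreasing and with $\|\tilde d^{(k)} - (d_i)_{i \le n_k}\|_\infty = O(\delta_{n_k})$, that meets the hypotheses of the finite Schur-Horn theorem exactly; the finite theorem then yields a unitary $U_k$ on $H_k$ such that $(U_k^*EU_k)_{ii} = \tilde d^{(k)}_i$ for all $i \leq n_k$.

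To assemble the global basis $(e_i)_{i\in\N}$, I would proceed inductively block by block, defining the new vectors on block $k$ using $U_k$ restricted to the fresh coordinates $(f_i)_{n_{k-1} < i \leq n_k}$, and employing a small overlap with the previous block to reabsorb the residual perturbation $\delta_{n_{k-1}}$ so that earlier diagonal entries remain exactly $d_i$. The bound $\delta_{n_k} < 2^{-k}$ ensures that all boundary corrections are absolutely summable and that the construction converges in operator norm to a genuine orthonormal basis, producing $\boldsymbol d$ as a diagonal of $E$. The main obstacle is choosing the perturbation $\tilde d^{(k)}$, possibly by spreading $\delta_{n_k}$ over several entries when $\boldsymbol d$ has no sufficiently large jumps near the block end, so that monotonicity and partial-sum majorization are preserved simultaneously, and then rigorously executing the inductive gluing so that the block-boundary corrections neither accumulate nor disturb diagonals already fixed on earlier blocks. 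The hypothesis $\liminf \delta_n = 0$ is essential throughout, as it permits the quantitative strengthening $\delta_{n_k} < 2^{-k}$ and the absolute summability of the boundary errors.
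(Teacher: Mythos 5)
The paper does not prove Proposition~\ref{posSH} in situ; it imports it from \cite[Theorem~3.6]{unbound}. So your argument must stand on its own, and I believe it does not: the block-by-block Schur--Horn strategy is viable, but your criterion for choosing the block boundaries is the crux of the matter, not a bookkeeping convenience, and the criterion you propose is insufficient.

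Concretely, you select $n_k$ only subject to $\delta_{n_k}<2^{-k}$. Consider $\boldsymbol\lambda=(1,1,1,1,0,0,\ldots)$, $\boldsymbol d=(0.9,0.9,0.9,0.9,0.4,0,0,\ldots)$, so $\delta=(0.1,0.2,0.3,0.4,0,0,\ldots)$; the hypotheses hold. Your rule permits $n_1=4$, since $\delta_4=0.4<1/2$. The compression $P_{H_4}EP_{H_4}$ has diagonal $(1,1,1,1)$ in $(f_i)_{i\le 4}$, so all its eigenvalues lie in $[-\|E\|,\|E\|]$; if $E$ is, say, the diagonal operator with entries $\boldsymbol\lambda$, then $\|E\|=1$ and the compression is the identity on $H_4$, with eigenvalues $(1,1,1,1)$. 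Your proposed target $\tilde d^{(1)}$ must have sum $4$ and satisfy $\tilde d^{(1)}_i\approx d_i=0.9$ with perturbation $O(\delta_{n_1})=O(0.4)$; e.g., $\tilde d^{(1)}=(0.9,0.9,0.9,1.3)$. But $(1,1,1,1)$ does \emph{not} majorize $(1.3,0.9,0.9,0.9)$ ($1<1.3$ at $m=1$), so the finite Schur--Horn theorem does not apply; indeed no basis of $H_4$ can have a diagonal entry exceeding $1$. The only perturbation that works is $\tilde d^{(1)}=(1,1,1,1)$, which deviates from \emph{every} $d_i$ by $0.1$, and then your one-vector overlap in the next step cannot repair four wrong diagonal entries at once. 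Your rule likewise permits $n_1=1$, $n_2=2$ (since $\delta_2=0.2<1/4$), and then the block $W_1=\lspan(v_1,f_2)$ has source diagonal $(d_1+\delta_1,\lambda_2)=(1,1)$ and target $(d_1,d_2+\delta_2)=(0.9,1.1)$, again not majorized. The correct selection, which makes the block Schur--Horn work, is to take each $n_{k+1}$ to be a point where $\delta$ attains its running minimum over $[n_k,n_{k+1}]$, so that $\delta_{n_{k+1}}\le\delta_j$ for all $n_k\le j<n_{k+1}$; this is precisely what makes the inequality $\delta_{n_k+m-1}-\delta_{n_{k+1}}\ge0$ (and hence the block majorization) hold. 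The hypothesis $\liminf_n\delta_n=0$ guarantees infinitely many such points, with $\delta_{n_k}\downarrow 0$. Choosing $n_k$ merely with $\delta_{n_k}<2^{-k}$ does not yield this, and without it the argument fails at the very first Schur--Horn application.

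Two further points are left unaddressed. First, even with correctly chosen $n_k$, the resulting orthonormal sequence $(e_i)$ is at each finite stage only codimension one inside $H_k$ (the one missing direction being the carry vector $v_k$), and you must show that $\overline{\lspan(e_i)}=\mathcal H$, i.e., that the carry vectors do not converge to a fixed orthocomplementary direction; this requires an argument, not an assertion. Second, your closing claim that ``the construction converges in operator norm'' is not well-formed: the $U_k$ act on growing finite-dimensional subspaces with no natural embedding into a fixed unitary group, and in any case there is no norm-convergent sequence of unitaries here; completeness of the constructed basis must be argued directly. In summary, the approach is in the right spirit, but as written the block choice is wrong (and would break the argument outright), the perturbation step can fail to be admissible for Schur--Horn, and completeness of the limiting basis is not established.
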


The following result is a diagonal-to-diagonal generalization of the Kaftal-Weiss theorem \cite[Corollary 6.1]{kw}, which was shown in \cite[Propostion 8.3]{mbjj7}.

\begin{prop}\label{kwd2d}
Let $\boldsymbol\lambda=(\lambda_{i})_{i=1}^{\infty}$ and $\boldsymbol d = (d_{i})_{i=1}^{\infty}$ be positive sequences in \(c_{0}\) such that \(\boldsymbol d\prec\boldsymbol\lambda\) and 
\[
\sum_{i=1}^\infty \lambda_i = \sum_{i=1}^\infty d_i.
\]
If $\boldsymbol\lambda$ is a diagonal of a self-adjoint operator $E$,
then $\boldsymbol d$ is also a diagonal of $E$.
\end{prop}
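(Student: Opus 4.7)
The plan is to reduce Proposition~\ref{kwd2d} to the nonincreasing diagonal-to-diagonal result of Proposition~\ref{posSH}. Since whether a sequence is a diagonal of $E$ depends only on the multiset of values it takes, I may reorder and assume both $\boldsymbol\lambda$ and $\boldsymbol d$ are nonincreasing. Under this normalization the majorization hypothesis $\boldsymbol d \prec \boldsymbol\lambda$ reads $\delta_n := \sum_{i=1}^{n}(\lambda_i - d_i) \geq 0$ for all $n \in \N$.

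If $\sum \lambda_i < \infty$, then the equal-sum hypothesis forces $\sum d_i < \infty$ and $\delta_n \to 0$, so Proposition~\ref{posSH} applies directly and delivers the conclusion. Propositions~\ref{LR} and~\ref{LRS} give an equivalent derivation through the Lebesgue majorization function, since $\liminf_n \delta_n = \liminf_{\alpha \searrow 0} \delta(\alpha, \boldsymbol\lambda, \boldsymbol d) = \sum \lambda_i - \sum d_i = 0$.

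The genuinely nontrivial case is $\sum \lambda_i = \sum d_i = \infty$, where $\liminf_n \delta_n$ may be strictly positive (already for $\lambda_i = 1/i$ and $d_i = 1/(i+1)$ one computes $\delta_n \to 1$), so Proposition~\ref{posSH} is not directly applicable. For this case the plan is an iterative construction modeled on the original Kaftal--Weiss argument \cite{kw}: starting from an orthonormal basis $(f_i)$ with $\langle E f_i, f_i\rangle = \lambda_i$, repeatedly apply $2 \times 2$ unitary rotations (T-transformations) to pairs of basis vectors, each rotation preserving $E$ while moving two entries of the current diagonal closer to the target sequence $\boldsymbol d$. The majorization hypothesis guarantees that a suitable pair of entries is always available to rotate, and the equal-sum condition prevents any residual ``defect'' from accumulating in the limit. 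The main obstacle is the convergence analysis: verifying that the resulting sequence of orthonormal bases converges to a genuine orthonormal basis of $\mathcal H$ whose diagonal is \emph{exactly} $\boldsymbol d$, without losing a subspace at infinity or leaving a residual deficit. This is the technical heart of Kaftal--Weiss for compact positive operators, to be adapted here to a general self-adjoint $E$.
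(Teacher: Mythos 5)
Your reduction to nonincreasing sequences and the finite-sum case are fine: when $\sum\lambda_i<\infty$, equality of sums forces $\delta_n\to 0$, so Proposition~\ref{posSH} applies, and you correctly observe that it stops applying when $\sum\lambda_i=\sum d_i=\infty$ and $\liminf_n\delta_n>0$ (your $\lambda_i=1/i$, $d_i=1/(i+1)$ example is a clean witness). The present paper does not prove Proposition~\ref{kwd2d} itself; it cites \cite[Proposition~8.3]{mbjj7} and describes that result as a ``diagonal-to-diagonal generalization of the Kaftal-Weiss theorem,'' so the family of ideas you name --- T-transformations in the style of Kaftal--Weiss \cite{kw} --- is at least the right one.

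The gap is that you stop exactly where the theorem begins. The infinite-sum, positive-excess case is not a routine adaptation: in Kaftal--Weiss the starting basis diagonalizes a compact positive operator, and both the choice of rotation angles and the convergence of the sequence of bases exploit that the off-diagonal of $E$ in the working basis is under control. In the diagonal-to-diagonal setting $\boldsymbol\lambda$ is only a diagonal, so $\langle Ee_i,e_j\rangle$ for $i\neq j$ is a priori arbitrary; one can kill the phase and use the intermediate value theorem to hit any target diagonal value between two existing entries, but the rotation angle that achieves this is then dictated by $E$, not chosen by you, which complicates summability estimates. Moreover, with $\liminf_n\delta_n=\sigma>0$ the residual mass never vanishes at any finite stage, and one must distribute it over an infinite tail so that it thins to zero at every fixed position while the sequence of bases still converges strongly to a genuine orthonormal basis with no lost subspace. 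You name this convergence issue as ``the main obstacle'' but do not resolve either difficulty, and together they are essentially all the content of Proposition~\ref{kwd2d} beyond Proposition~\ref{posSH}. As written, the argument is incomplete precisely in the case where the hypothesis $\sum\lambda_i=\sum d_i$ does any work beyond forcing $\liminf_n\delta_n=0$.
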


The following elementary lemma \cite[Lemma 4.5]{mbjj7} shows that diagonal-to-diagonal results extend from diagonal subsequences.

\begin{lem}\label{subdiag} Let $(\lambda_{i})_{i\in I}$ and $(d_{i})_{i\in J}$ be two sequences of real numbers. Suppose that: 
\begin{enumerate}
\item there is a set $K$ and partitions $(I_{k})_{k\in K}$ and $(J_{k})_{k\in K}$ of $I$ and $J$, respectively,
\item for every $k\in K$, if $E_k$ is any self-adjoint operator with diagonal $(\lambda_{i})_{i\in I_k}$, then $(d_{i})_{i\in J_k}$ is also a diagonal of $E_k$,
\end{enumerate}
Then, if $E$ is any self-adjoint operator with diagonal $(\lambda_{i})_{i\in I}$, then $(d_{i})_{i\in J}$ is also a diagonal of $E$.
\end{lem}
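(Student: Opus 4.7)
The plan is to prove this via a block-diagonal decomposition induced by the partition $(I_k)_{k\in K}$. The lemma is essentially a bookkeeping statement: if we can handle each block separately, we can glue the pieces together along the partition.

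First I would start with an arbitrary self-adjoint operator $E$ on a Hilbert space $\mathcal{H}$ realizing $(\lambda_{i})_{i\in I}$ as its diagonal with respect to some orthonormal basis $(e_{i})_{i\in I}$. Using the partition of $I$, I would define the mutually orthogonal subspaces $\mathcal{H}_{k} := \overline{\lspan}\{e_{i} : i \in I_{k}\}$ for each $k\in K$, so that $\mathcal{H} = \bigoplus_{k\in K} \mathcal{H}_{k}$. Letting $P_{k}$ denote the orthogonal projection onto $\mathcal{H}_{k}$, I would consider the compression $E_{k} := P_{k} E P_{k}\big|_{\mathcal{H}_{k}}$. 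This is a self-adjoint operator on $\mathcal{H}_{k}$, and because $P_{k} e_{i} = e_{i}$ for $i\in I_{k}$, we have $\langle E_{k} e_{i}, e_{i}\rangle = \langle E e_{i}, e_{i}\rangle = \lambda_{i}$. Thus $(\lambda_{i})_{i\in I_{k}}$ is a diagonal of $E_{k}$ with respect to $(e_{i})_{i\in I_{k}}$.

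The key step is then to invoke hypothesis (ii) for each $k\in K$: there exists an orthonormal basis $(f_{i})_{i\in J_{k}}$ of $\mathcal{H}_{k}$ such that $\langle E_{k} f_{i}, f_{i}\rangle = d_{i}$ for all $i\in J_{k}$. Since the $\mathcal{H}_{k}$ are pairwise orthogonal and their orthogonal sum is $\mathcal{H}$, concatenating these bases yields an orthonormal basis $(f_{i})_{i\in J}$ of $\mathcal{H}$, where $J = \bigsqcup_{k\in K} J_{k}$. Finally, for $i\in J_{k}$, since $f_{i} \in \mathcal{H}_{k}$ we have $P_{k} f_{i} = f_{i}$, and so
\[
\langle E f_{i}, f_{i}\rangle = \langle P_{k} E P_{k} f_{i}, f_{i}\rangle = \langle E_{k} f_{i}, f_{i}\rangle = d_{i},
\]
showing that $(d_{i})_{i\in J}$ is a diagonal of $E$.

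I do not foresee a substantive obstacle: the argument is a routine block-decomposition that exploits the fact that a diagonal entry of $E$ in a basis vector $f$ contained in an invariant-for-the-compression subspace depends only on the compression. The only implicit point worth flagging is that hypothesis (ii) tacitly requires $|I_{k}| = |J_{k}|$ for each $k$ (both equal $\dim \mathcal{H}_{k}$), which is forced by the statement that $(\lambda_{i})_{i\in I_{k}}$ and $(d_{i})_{i\in J_{k}}$ are both diagonals of the same operator $E_{k}$.
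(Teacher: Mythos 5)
Your proof is correct and is the standard argument for this lemma: compress $E$ to the mutually orthogonal blocks $\mathcal{H}_{k}=\overline{\lspan}\{e_{i}:i\in I_{k}\}$, observe that the compression $P_{k}EP_{k}\big|_{\mathcal{H}_{k}}$ has diagonal $(\lambda_{i})_{i\in I_{k}}$, apply hypothesis (ii) to obtain a new orthonormal basis of each $\mathcal{H}_{k}$ realizing $(d_{i})_{i\in J_{k}}$, and note that for a vector $f\in\mathcal{H}_{k}$ the quantity $\langle Ef,f\rangle$ only sees the compression. This matches the approach the paper takes (the result is quoted from \cite{mbjj7}), and your closing remark that $\#|I_{k}|=\#|J_{k}|$ is automatically forced once some $E$ with diagonal $(\lambda_{i})_{i\in I}$ exists is a correct and worthwhile observation.
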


The following result on diagonals of compact operators was shown by the authors \cite[Theorems 10.1 and 10.2]{mbjj7}.

\begin{thm}\label{intropv3}
 Let $\boldsymbol\lambda , \boldsymbol d \in c_0$. Define
\[\sigma_{+} = \liminf_{n\to\infty} \sum_{i=1}^n (\lambda_i^{+ \downarrow} - d_i^{+ \downarrow})\quad\text{and}\quad \sigma_{-} = \liminf_{n\to\infty} \sum_{i=1}^n (\lambda_i^{- \downarrow} - d_i^{- \downarrow}).\]
\noindent {\rm (Necessity)} Let $E$ be a compact operator  with the eigenvalue list $\boldsymbol\lambda$.
If $\boldsymbol d$ is a diagonal of $E$, then
\begin{equation}\label{p5v3}
\sigma_{+}=0 \implies \#|\{i:\lambda_{i}=0\}|\geq \#|\{i:d_{i}=0\}|
\text{\, and \,}
\#|\{i:\lambda_{i} \ge 0\}|\geq \#|\{i:d_{i} \ge0\}|,
\end{equation}
\begin{equation}\label{p6v3}
\sigma_{-}=0 \implies \#|\{i:\lambda_{i}=0\}|\geq \#|\{i:d_{i}=0\}|
\text{\, and \,}
\#|\{i:\lambda_{i} \le 0\}|\geq \#|\{i:d_{i} \le0\}|,
\end{equation}
\begin{equation}\label{p1v3}
\sum_{i=1}^n \lambda_i^{+ \downarrow} \ge 
\sum_{i=1}^n d_i^{+ \downarrow}
\qquad\text{for all }n\in \N,
\end{equation}
\begin{equation}\label{p2v3}
\sum_{i=1}^n \lambda_i^{- \downarrow} \ge 
\sum_{i=1}^n d_i^{- \downarrow}
\qquad\text{for all }n\in \N,
\end{equation}
\begin{equation}\label{p3v3}
\boldsymbol d_+ \in \ell^1
\quad \implies\quad
\sigma_{-}\geq \sigma_{+},
\end{equation}
\begin{equation}\label{p4v3}
\boldsymbol d_- \in \ell^1 
\quad \implies\quad 
\sigma_{+}\geq \sigma_{-}.
\end{equation}

\noindent {\rm (Sufficiency)} Conversely, suppose that \eqref{p5v3}--\eqref{p4v3} hold and
\begin{equation}\label{p0v3}\sigma_{+}+\sigma_{-}>0.\end{equation}
If a self-adjoint operator $E$ has diagonal $\boldsymbol\lambda$, 
then $\boldsymbol d$ is also a diagonal of $E$.
\end{thm}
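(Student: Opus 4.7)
The Riemann majorizations \eqref{p1v3} and \eqref{p2v3} are equivalent (by Proposition \ref{LR}) to $\delta(\alpha,\boldsymbol\lambda,\boldsymbol d)\ge 0$ for $\alpha>0$ and $\alpha<0$, respectively. Since $E$ is compact, both $E_+$ and $E_-$ are compact, so these inequalities follow from applying Theorem \ref{cptschurv2} (the generalized Schur theorem) to $-E$ and to $E$, respectively. The cardinality conditions \eqref{p5v3} and \eqref{p6v3} are consequences of decoupling: when $\sigma_+=0$, identity \eqref{rtelt} gives $\liminf_{\alpha\searrow 0}\delta(\alpha,\boldsymbol\lambda,\boldsymbol d)=0$, so Proposition \ref{p211} forces $E$ to decouple at $0$, and then Lemma \ref{card} yields the required inequalities; a symmetric argument handles $\sigma_-=0$. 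For the trace inequality \eqref{p3v3}, I would assume $\boldsymbol d_+\in\ell^1$ and $\sigma_-<\infty$ (else the inequality is trivial). Applying Theorem \ref{cpttrace} to $-E$---whose positive part $E_-$ is compact, whose negative diagonal sums to $\|\boldsymbol d_+\|_1<\infty$, and whose positive excess equals $\sigma_-$---shows $E_+$ is trace class. Proposition \ref{LRS} then identifies $\sigma_+=\sum_i\lambda_i^+-\sum_i d_i^+$, and the trace bound $\sum_i\lambda_i^+=\tr(E_+)\le \|\boldsymbol d_+\|_1+\sigma_-$ rearranges to $\sigma_+\le \sigma_-$. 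A symmetric argument establishes \eqref{p4v3}.

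\textbf{Sufficiency.} The plan is to partition the index set of $\boldsymbol d$ into subsets $I_+,I_0,I_-$ aligned with a decomposition of $E$ into three blocks, and then apply diagonal-to-diagonal results block-by-block, assembling the conclusion via Lemma \ref{subdiag}. Since $\boldsymbol\lambda$ is a diagonal of $E$, grouping its positive, zero, and negative entries via the witnessing orthonormal basis induces an orthogonal decomposition $\mathcal H=\mathcal H_+\oplus\mathcal H_0\oplus\mathcal H_-$ in which $E|_{\mathcal H_+}$, the zero operator on $\mathcal H_0$, and $E|_{\mathcal H_-}$ have $\boldsymbol\lambda_{>0}$, $\boldsymbol 0$, and $\boldsymbol\lambda_{<0}$ as their diagonals. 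I would then construct $I_+, I_0, I_-$ so that the corresponding subsequences of $\boldsymbol d$ become diagonals of these three blocks. For the positive block, Proposition \ref{kwd2d} applies when $\sigma_+=0$ (sums match by Proposition \ref{LRS}), while Proposition \ref{posSH} applies after rearrangement when $\sigma_+>0$; symmetric results govern the negative block. Zero entries of $\boldsymbol d$ are assigned to $I_0$, with adequate room guaranteed by the cardinality conditions \eqref{p5v3} and \eqref{p6v3} when either excess vanishes.

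\textbf{Main obstacle.} The crux is constructing the partition $I_+\sqcup I_0\sqcup I_-$ consistently with all the constraints simultaneously. The assumption \eqref{p0v3} that $\sigma_++\sigma_->0$ supplies the slack that allows some $d_i$'s to be moved between blocks---for example, placing positive entries of $\boldsymbol d$ into the kernel or negative block when $\sigma_->0$ permits the negative side to absorb positive excess without violating \eqref{p2v3}. Conversely, when one of $\sigma_\pm$ vanishes, the cardinality and trace conditions rigidly constrain the placement, and verifying that \eqref{p5v3}--\eqref{p4v3} are precisely what is needed is the principal bookkeeping challenge. The boundary cases where one of $\sigma_\pm$ equals $\infty$ or one of $\boldsymbol d_\pm\notin\ell^1$ must be treated separately, since certain conditions become vacuous and a direct application of Proposition \ref{posSH}'s flexibility under nonvanishing $\liminf$ is required. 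Unifying these cases into a single explicit construction of the partition is where the real technical work lies.
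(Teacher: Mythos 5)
This theorem is not proved in the present paper; it is quoted verbatim from the authors' earlier compact-operator paper (\cite[Theorems 10.1 and 10.2]{mbjj7}), so there is no in-paper proof to compare against. Evaluating your attempt on its own terms:

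\textbf{Necessity.} Your argument is essentially correct. The Riemann inequalities \eqref{p1v3}, \eqref{p2v3} do follow from Theorem \ref{cptschurv2} applied to $-E$ and $E$ combined with Proposition \ref{LR}, the cardinality conditions \eqref{p5v3}, \eqref{p6v3} from \eqref{rtelt}, Proposition \ref{p211}, and Lemma \ref{card}, and the trace conditions \eqref{p3v3}, \eqref{p4v3} from Theorem \ref{cpttrace} applied to $\mp E$ together with Proposition \ref{LRS}. One small bookkeeping point worth making explicit: \eqref{rtelt} is stated for nonnegative sequences, so you should apply it to $\boldsymbol\lambda_+$ and $\boldsymbol d_+$ and then observe that $\delta(\alpha,\boldsymbol\lambda_+,\boldsymbol d_+)=\delta(\alpha,\boldsymbol\lambda,\boldsymbol d)$ for $\alpha>0$.

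\textbf{Sufficiency.} The strategy of splitting into three blocks keyed to the sign of $\lambda_i$ and assembling via Lemma \ref{subdiag} is a reasonable template, but there are two genuine gaps that your writeup glosses over. First, the phrase ``induces an orthogonal decomposition $\mathcal H=\mathcal H_+\oplus\mathcal H_0\oplus\mathcal H_-$ in which $E|_{\mathcal H_+}$, the zero operator on $\mathcal H_0$, and $E|_{\mathcal H_-}$ have \ldots as their diagonals'' is not correct as stated: since $\boldsymbol\lambda$ is merely a diagonal of $E$ and not its eigenvalue list, the $\mathcal H_k$ are not invariant subspaces, and $P_{\mathcal H_0}EP_{\mathcal H_0}$ is not the zero operator (it only has zero diagonal). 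The correct formulation for Lemma \ref{subdiag} is to quantify over \emph{arbitrary} self-adjoint operators $E_k$ with diagonal $(\lambda_i)_{i\in I_k}$, not the specific compressions, and this distinction matters for the zero block: you need $(d_j)_{j\in J_0}$ to be a diagonal of \emph{every} zero-diagonal operator, which in practice forces $\#|I_0|=\#|J_0|$. Second, and more seriously, when both $\sigma_+>0$ and $\sigma_->0$ the hypotheses \eqref{p5v3}, \eqref{p6v3} say nothing, so $\#|\{i:\lambda_i=0\}|$ may be strictly smaller than $\#|\{i:d_i=0\}|$ and no sign-aligned partition of the zero entries exists; you must then transplant zeros of $\boldsymbol d$ into the positive or negative block and argue, using the strictly positive excess, that the corresponding majorization with non-vanishing $\liminf$ (Proposition \ref{posSH} or Theorem \ref{step5}) still holds. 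A related issue arises when $\sigma_+\ne\sigma_-$: Theorem \ref{step5} only covers the balanced case $\sigma_+=\sigma_->0$, so when excesses differ you must first rebalance—using that, say, $\sigma_+>\sigma_-$ forces $\boldsymbol d_+\notin\ell^1$ via \eqref{p3v3}, which gives room to absorb the discrepancy. You flag this as ``the real technical work,'' which is an honest assessment, but it means your proposal is a plausible roadmap rather than a proof: without an explicit construction of the partition and a case analysis handling $\sigma_\pm\in\{0,\text{finite},\infty\}$ together with the $\ell^1$ dichotomies, the sufficiency direction remains open.
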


Using Proposition \ref{LR}  we have the following special case of the sufficiency part of Theorem \ref{intropv3} 
when excesses are equal $\sigma_-=\sigma_+>0$.


\begin{thm}\label{step5} Let $\boldsymbol\lambda, \boldsymbol d  \in c_{0}$.  If there is a self-adjoint operator $E$ with diagonal $\boldsymbol\lambda$, 
\begin{equation}\label{pos5.0}\delta(\alpha,\boldsymbol\lambda, \boldsymbol d)\geq 0\quad\text{for all }\alpha\neq 0,\end{equation}
and
\begin{equation}\label{pos5}
\liminf_{\alpha\searrow 0}\delta(\alpha,\boldsymbol\lambda, \boldsymbol d) = \liminf_{\alpha\nearrow 0}\delta(\alpha,\boldsymbol\lambda, \boldsymbol d)>0,\end{equation}
then $\boldsymbol d$ is also a diagonal of $E$.
\end{thm}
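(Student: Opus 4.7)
The plan is to derive Theorem \ref{step5} as a clean special case of the sufficiency direction of Theorem \ref{intropv3}, translating between the two formulations of the excesses via Proposition \ref{LR}. Write \(\sigma_{+}:=\liminf_{\alpha\searrow 0}\delta(\alpha,\boldsymbol\lambda,\boldsymbol d)\) and \(\sigma_{-}:=\liminf_{\alpha\nearrow 0}\delta(\alpha,\boldsymbol\lambda,\boldsymbol d)\), so the standing hypothesis reads \(\sigma_{+}=\sigma_{-}>0\); the task is to show that these coincide with the excesses defined in Theorem \ref{intropv3} and then verify the remaining hypotheses of that theorem.

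For \(\alpha>0\), the sums defining \(\delta(\alpha,\boldsymbol\lambda,\boldsymbol d)\) range only over terms \(\ge\alpha>0\), so \(\delta(\alpha,\boldsymbol\lambda,\boldsymbol d)=\delta(\alpha,\boldsymbol\lambda_{+},\boldsymbol d_{+})\). Applying Proposition \ref{LR} to the nonnegative sequences \(\boldsymbol\lambda_{+},\boldsymbol d_{+}\in c_{0}^{+}\), the hypothesis \eqref{pos5.0} restricted to \(\alpha>0\) gives the Riemann majorization \(\boldsymbol d_{+}\prec\boldsymbol\lambda_{+}\), which is precisely condition \eqref{p1v3}, while the identity \eqref{rtelt} identifies our \(\sigma_{+}\) with its counterpart in Theorem \ref{intropv3}. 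By the symmetric argument on the negative side --- equivalently, by applying the above to \(-\boldsymbol\lambda,-\boldsymbol d\) with \(\alpha\) replaced by \(-\alpha\) --- the hypothesis for \(\alpha<0\) yields \eqref{p2v3} and the matching identification of \(\sigma_{-}\).

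With the two notions of excesses reconciled, the remaining hypotheses of Theorem \ref{intropv3} are automatic. The strict positivity \(\sigma_{+}=\sigma_{-}>0\) renders the cardinality implications \eqref{p5v3} and \eqref{p6v3} vacuously true and yields \eqref{p0v3} at once, while the equality \(\sigma_{+}=\sigma_{-}\) forces the conclusions of \eqref{p3v3} and \eqref{p4v3} regardless of their hypotheses. Invoking the sufficiency part of Theorem \ref{intropv3} to the operator \(E\), which by assumption has diagonal \(\boldsymbol\lambda\), now produces \(\boldsymbol d\) as a diagonal of \(E\). I expect no serious obstacle here: all the genuine content is packed into Theorem \ref{intropv3}, and Theorem \ref{step5} is the cleanest specialization in which the cardinality and tie-breaking side conditions collapse under the symmetric excess assumption \(\sigma_{+}=\sigma_{-}>0\); the proof is essentially a bookkeeping reduction.
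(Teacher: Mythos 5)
Your proof is correct and is precisely the route the paper takes: the paper introduces Theorem \ref{step5} with the remark that it is the special case of the sufficiency part of Theorem \ref{intropv3} when $\sigma_-=\sigma_+>0$, obtained via Proposition \ref{LR}. Your translation between the $\delta(\alpha,\cdot,\cdot)$ excesses and the partial-sum excesses, and the observation that the side conditions \eqref{p5v3}--\eqref{p4v3} and \eqref{p0v3} collapse under the symmetric positive hypothesis, is exactly the intended bookkeeping.
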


\subsection{Diagonals of operators}

Loreaux and Weiss \cite{lw} gave necessary conditions and sufficient conditions on $\mathcal D(T)$ for compact positive operators $T$ with nontrivial kernel. When $\ker(T)$ is infinite dimensional, the necessary and sufficient conditions coincide yielding a complete characterization. However, when $\ker T$ is nontrivial and finite dimensional, the full characterization of $\mathcal D(T)$ remains elusive. This is known as the kernel problem.
The following theorem is a convenient reformulation of their two results. The sufficiency part of Theorem \ref{kp} is \cite[Theorem 2.4]{lw}, whereas the necessity is \cite[Theorem 3.4]{lw}.

\begin{thm}\label{kp}
Let $E$ be a compact operator on $\mathcal H$ with the eigenvalue list $\boldsymbol\lambda \in c^+_0$. 
Let 
$\boldsymbol d \in c^+_0$. If  $\#|\{i:d_{i}=0\}|<\infty$, then we set
$z=   \#|\{ i : \lambda_i =0 \}| - \#|\{i:d_{i}=0\}|$; otherwise set $z=0$.

\noindent {\rm (Necessity)} If $\boldsymbol d$ is a diagonal of $E$, then
\begin{align}\label{kp1}
 \#|\{i:\lambda_{i}=0\}| &\geq \#|\{i:d_{i}=0\}|,
\\
\label{kp2}
\sum_{i=1}^n \lambda_i^{+ \downarrow} &\ge 
\sum_{i=1}^n d_i^{+ \downarrow}
\qquad\text{for all }n\in \N,
\\
\label{kp3}
\sum_{i=1}^\infty \lambda_i &= 
\sum_{i=1}^\infty d_i,
\end{align}
and for any $p\in \N$, $p\le z$, and for every $\epsilon>0$, there exists $N=N_{p,\epsilon}>0$, such that
\begin{equation}\label{kp4a}
\sum_{i=1}^n \lambda_i^{+ \downarrow} +  \epsilon \lambda_{n+1}^{+ \downarrow}
\ge 
\sum_{i=1}^{n+p} d_i^{+ \downarrow} 
\qquad\text{for all }n\ge N.
\end{equation}

\noindent {\rm (Sufficiency)} Conversely, if \eqref{kp1}--\eqref{kp3} hold and
for any $p\in \N$, $p\le z$, there exists $N=N_p$, such that
\begin{equation}\label{kp4b}
\sum_{i=1}^n \lambda_i^{+ \downarrow} 
\ge 
\sum_{i=1}^{n+p} d_i^{+ \downarrow} 
\qquad\text{for all }n\ge N,
\end{equation}
then $\boldsymbol d$ is a diagonal of $E$. 
\end{thm}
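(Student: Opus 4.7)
My plan is to handle necessity and sufficiency separately, leveraging the compact-operator machinery already assembled in Section 2.

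For necessity, condition \eqref{kp1} follows directly from positivity of $E$: if $d_i = \langle E e_i, e_i \rangle = 0$ and $E \ge 0$, then $E^{1/2} e_i = 0$, so $e_i \in \ker E$; hence $\{i : d_i = 0\}$ embeds as an orthonormal subset of $\ker E$. Condition \eqref{kp2} is precisely the Schur-type majorization \eqref{p1v3} from Theorem \ref{intropv3}. Condition \eqref{kp3} follows from the independence of $\sum_i \langle E e_i, e_i\rangle$ from the choice of ONB when $E \ge 0$: both $\sum d_i$ and $\sum \lambda_i$ equal $\tr E$, finite or infinite. The delicate condition is \eqref{kp4a}, whose necessity I would establish by applying Lemma \ref{nelt} to a trace-class truncation of $E$, in the spirit of the essential-codimension mechanism of Kaftal and Loreaux. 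Fix $p \le z$ and $\epsilon > 0$, let $f_1,\dots,f_n$ denote the top $n$ eigenvectors of $E$, and consider the action of $E$ against the $n+p$ basis vectors $e_i$ of largest $d_i$. The $z$ free kernel directions furnished by \eqref{kp1} allow $p$ extra basis indices to be absorbed on the right-hand side, up to an error of order $\epsilon \lambda_{n+1}^{+\downarrow}$ coming from off-diagonal leakage between the truncated top of the spectrum and its tail; this is exactly what Lemma \ref{nelt} quantifies.

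For sufficiency, I would proceed via a diagonal-to-diagonal construction. Conditions \eqref{kp1}--\eqref{kp3} on the positive parts of $\boldsymbol\lambda$ and $\boldsymbol d$ already place us in the Kaftal--Weiss regime, in which Proposition \ref{kwd2d} produces the desired diagonal whenever the positive parts satisfy majorization with equal sums. The strengthened inequality \eqref{kp4b} — without the $\epsilon$-slack — is exactly what is needed to perform a finite reshuffle: construct an auxiliary sequence $\boldsymbol \mu$ by trading $p$ zero entries of $\boldsymbol\lambda$ against the $p$ additional positive entries present in $\boldsymbol d$ but not in $\boldsymbol\lambda$. Inequality \eqref{kp4b} is precisely the statement that $\boldsymbol d \prec \boldsymbol \mu$ in the Kaftal--Weiss sense, with equal sums; then Proposition \ref{kwd2d} realizes $\boldsymbol d$ as a diagonal of any operator with eigenvalue list $\boldsymbol \mu$, and Lemma \ref{subdiag} glues the pieces together after a finite-dimensional correction that pairs $p$ kernel vectors of $E$ with the swapped-in entries.

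The main obstacle is the gap between the necessary \eqref{kp4a} (with slack $\epsilon \lambda_{n+1}^{+\downarrow}$) and the sufficient \eqref{kp4b} (no slack). This is precisely the kernel problem for compact positive operators with finite-dimensional nontrivial kernel: it is still open whether \eqref{kp4a} alone is sufficient, and so the theorem above is only a partial characterization of $\mathcal D(E)$. The combinatorial trade between kernel dimensions of $E$ and excess positive mass in $\boldsymbol d$ is where all the subtlety resides, and this is the seed of the analogous kernel-problem obstacles that resurface for non-compact operators later in the paper.
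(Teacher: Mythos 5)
The paper does not prove Theorem~\ref{kp} at all: it is introduced explicitly as ``a convenient reformulation'' of two results of Loreaux and Weiss, with the sufficiency attributed to \cite[Theorem 2.4]{lw} and the necessity to \cite[Theorem 3.4]{lw}. So there is no paper proof to compare against; the relevant question is whether your sketch could serve as an independent proof of a result the authors take as a black box.

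For necessity, the easy parts are handled correctly: since $E\geq 0$, $d_{i}=\langle Ee_{i},e_{i}\rangle=\|E^{1/2}e_{i}\|^{2}=0$ forces $e_{i}\in\ker E$, giving \eqref{kp1}; \eqref{kp2} is \eqref{p1v3} of Theorem~\ref{intropv3}; and \eqref{kp3} is basis-independence of $\tr E\in[0,\infty]$ for a positive operator. But your derivation of \eqref{kp4a} does not work as stated. Lemma~\ref{nelt} is an exact identity relating the eigenvalue sum to the diagonal sum of the actual operator $E$; it has no $\epsilon\lambda_{n+1}^{+\downarrow}$ slack built into it, and once you replace $E$ by a trace-class truncation, the inner products $\langle Ee_{i},e_{i}\rangle$ of the truncation are no longer the $d_{i}$, so Lemma~\ref{nelt} applied to the truncation relates $\boldsymbol\lambda$ to a different sequence, not to $\boldsymbol d$. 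Extracting the precise $\epsilon$-slack is the nontrivial content of \cite[Theorem 3.4]{lw} (which Kaftal and Loreaux later reinterpret through essential codimension), and it is not a consequence of the identities in Section~2. Your sufficiency sketch has a parallel gap: Proposition~\ref{kwd2d} needs strictly positive sequences with equal sums and majorization \emph{for all} $n$, whereas \eqref{kp4b} only guarantees $\sum_{1}^{n}\lambda_{i}^{+\downarrow}\geq\sum_{1}^{n+p}d_{i}^{+\downarrow}$ for $n\geq N_{p}$; you still have to handle the initial block $n<N_{p}$, and the ``trade $p$ zeros for $p$ positive entries'' construction must be shown to preserve both the finite-$n$ inequalities and the equal-sum condition, which requires genuine combinatorial work (this is essentially \cite[Theorem 2.4]{lw}). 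In short, the skeleton is pointed the right way, but the two hard implications are asserted rather than derived, and the tool you reach for (Lemma~\ref{nelt} on a truncation) does not reach them.
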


To describe our algorithms for determining diagonals of self-adjoint operators we shall use the concept of splitting of an operator.

\begin{defn}\label{splitting} Let $E$ be a self-adjoint operator. Let $\alpha \in \R$. We say that a pair of operators $E_1$ and $E_2$ is a {\it splitting} of $E$  at $\alpha$ if there exist $z_0,z_1,z_2 \in \N \cup \{0,\infty\}$ such that:
\begin{enumerate}
\item $E_1$ is a self-adjoint operator such that $\sigma(E_1) \subset (-\infty,\alpha]$ and $z_1=\dim \ker (E_1-\alpha \bf I)$,
\item $E_2$ is a self-adjoint operator such that $\sigma(E_2) \subset [\alpha,\infty)$ and $z_2=\dim \ker (E_2-\alpha \bf I)$,
 and
\item $E$ is unitarily equivalent to $\alpha \mathbf I_{z_0} \oplus E_1 \oplus E_2$, and hence $z_0+z_1+z_2= \dim \ker (E-\alpha \bf I)$.
\end{enumerate}
\end{defn}

The following elementary fact \cite[Theorem 11.4]{mbjj7} bridges the concepts of decoupling and splitting.

\begin{thm}\label{isplit}
Let $E$ be a self-adjoint operator.
Let $\boldsymbol d$ be a bounded sequence. 
Suppose that $\boldsymbol d$ is a diagonal of $E$ and the operator $E$ decouples at $\alpha \in \R$. Then, there exists a splitting $E_1$ and $E_2$ of $E$ at $\alpha$ such that:
\begin{enumerate}[(i)]
\item the sequence $(d_{i})_{d_i<\alpha}$ is a diagonal of $E_1$,
\item the sequence $(d_{i})_{d_i>\alpha}$ is a diagonal of $E_2$, and
\item the number of $\alpha$'s in $\boldsymbol d$ satisfies
\begin{equation}\label{z102}
\dim \ker (\alpha \mathbf I - E) =  \dim \ker (\alpha \mathbf I -E_1) + \dim \ker (\alpha \mathbf I - E_2) + \#|\{i: d_i=\alpha\}|.
\end{equation}
\end{enumerate}
Conversely, if there exists a splitting of $E$ such that (i)--(iii) hold, then $\boldsymbol d$ is a diagonal of $E$ and the operator $E$ decouples at $\alpha$.
\end{thm}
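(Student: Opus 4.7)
I plan to prove the two directions separately. The forward direction carries most of the content; the converse is essentially a construction from the given data.

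\emph{Forward direction.} Assume $\boldsymbol d$ is a diagonal of $E$ via an orthonormal basis $(e_i)_{i\in I}$ with respect to which $E$ decouples at $\alpha$. Partition the basis according to whether $d_i<\alpha$, $d_i=\alpha$, or $d_i>\alpha$, and set
\[
\mathcal H_0=\overline{\lspan}\{e_i:d_i<\alpha\},\quad \mathcal H_\alpha=\overline{\lspan}\{e_i:d_i=\alpha\},\quad \mathcal K=\overline{\lspan}\{e_i:d_i>\alpha\}.
\]
Then $\mathcal H_1:=\mathcal H_\alpha\oplus\mathcal K$ is the decoupling subspace from Definition~\ref{decouple}, so $\mathcal H_0$ and $\mathcal H_1$ are $E$-invariant with $\sigma(E|_{\mathcal H_0})\subset(-\infty,\alpha]$ and $\sigma(E|_{\mathcal H_1})\subset[\alpha,\infty)$. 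The crucial observation is that each $e_i$ with $d_i=\alpha$ is a bona fide eigenvector: writing $T=E|_{\mathcal H_1}-\alpha\mathbf I\geq 0$, we have $\|T^{1/2}e_i\|^2=\langle Te_i,e_i\rangle=d_i-\alpha=0$, hence $Te_i=0$ and $Ee_i=\alpha e_i$. Therefore $\mathcal H_\alpha\subset\ker(\alpha\mathbf I-E)$ and its orthogonal complement $\mathcal K$ inside $\mathcal H_1$ is also $E$-invariant.

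\emph{Verification of the splitting.} Set $E_1:=E|_{\mathcal H_0}$, $E_2:=E|_{\mathcal K}$, and $z_0:=\dim\mathcal H_\alpha=\#|\{i:d_i=\alpha\}|$. The decomposition $\mathcal H=\mathcal H_0\oplus\mathcal H_\alpha\oplus\mathcal K$ realizes $E$ as $E_1\oplus\alpha\mathbf I_{z_0}\oplus E_2$, so this is a splitting at $\alpha$ in the sense of Definition~\ref{splitting}. Property (i) holds because $(e_i)_{d_i<\alpha}$ is an orthonormal basis of $\mathcal H_0$ producing the diagonal $(d_i)_{d_i<\alpha}$ for $E_1$, and (ii) is analogous on $\mathcal K$. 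For (iii), the $E$-invariance of each summand gives the orthogonal decomposition
\[
\ker(\alpha\mathbf I-E)=\ker(\alpha\mathbf I-E_1)\oplus\mathcal H_\alpha\oplus\ker(\alpha\mathbf I-E_2),
\]
whose dimensions yield \eqref{z102}.

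\emph{Converse direction.} Given a splitting $E\cong\alpha\mathbf I_{z_0}\oplus E_1\oplus E_2$ together with bases of the underlying spaces of $E_1$ and $E_2$ that diagonalize them with diagonals $(d_i)_{d_i<\alpha}$ and $(d_i)_{d_i>\alpha}$, and with $z_0=\#|\{i:d_i=\alpha\}|$, choose any orthonormal basis of the $z_0$-dimensional $\alpha\mathbf I_{z_0}$-summand; each such vector contributes the diagonal entry $\alpha$. Concatenating the three bases, and transferring through the unitary equivalence, produces an orthonormal basis of $\mathcal H$ whose diagonal is precisely $\boldsymbol d$ after reindexing. With respect to this basis, $\overline{\lspan}\{e_i:d_i<\alpha\}$ equals the $E_1$-summand and $\overline{\lspan}\{e_i:d_i\geq\alpha\}$ equals the $\alpha\mathbf I_{z_0}\oplus E_2$-summand, both $E$-invariant with spectra in $(-\infty,\alpha]$ and $[\alpha,\infty)$ respectively, so $E$ decouples at $\alpha$.

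The main obstacle is the eigenvector step in the forward direction: extracting the $d_i=\alpha$ basis vectors as a genuine $\alpha$-eigenspace inside $\mathcal H_1$. Once this orthogonal refinement $\mathcal H=\mathcal H_0\oplus\mathcal H_\alpha\oplus\mathcal K$ is in place, both the splitting and the kernel identity \eqref{z102} are purely bookkeeping, and the converse is a direct construction.
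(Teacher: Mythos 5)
Your proof is correct. Note, however, that the paper does not supply a proof of this statement: it is quoted verbatim from the first paper in the series (\cite[Theorem 11.4]{mbjj7}), so there is no in-text argument to compare against. That said, your argument is exactly the natural one, and the paper itself tacitly invokes the same key observation elsewhere — in the proof of part $(\mathfrak{d}_\alpha)$ of Theorem~\ref{necdec} the authors write ``Since $d_i=\alpha$ implies that $e_i$ is an eigenvector of $E$ with eigenvalue $\alpha$,'' which is precisely your positivity step $\|T^{1/2}e_i\|^2=\langle(E|_{\mathcal H_1}-\alpha\mathbf I)e_i,e_i\rangle=d_i-\alpha=0\Rightarrow Ee_i=\alpha e_i$. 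The remaining bookkeeping — that $\mathcal K=\mathcal H_1\ominus\mathcal H_\alpha$ is $E$-invariant because $\mathcal H_\alpha$ is an invariant subspace of the self-adjoint restriction $E|_{\mathcal H_1}$, the identification $E\cong E_1\oplus\alpha\mathbf I_{z_0}\oplus E_2$, the kernel decomposition for \eqref{z102}, and the converse concatenation of bases — is all sound. One small point worth making explicit: when you conclude $\sigma(E_2)\subset[\alpha,\infty)$, the cleanest justification is that $E|_{\mathcal H_1}-\alpha\mathbf I\geq 0$ and $E_2=E|_{\mathcal K}$ is a restriction to an invariant subspace, so $E_2-\alpha\mathbf I\geq 0$ as well; this avoids any need to compare spectra of the block summands directly.
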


Finally, we will also need the characterization of diagonals of compact self-adjoint operators modulo the dimension of the kernel \cite[Theorem 1.3]{mbjj7}.

\begin{thm}\label{intropv2}
Let $\boldsymbol\lambda, \boldsymbol d \in c_0$. Set
\[\sigma_{+} = \liminf_{n\to\infty} \sum_{i=1}^n (\lambda_i^{+ \downarrow} - d_i^{+ \downarrow})\quad\text{and}\quad \sigma_{-} = \liminf_{n\to\infty} \sum_{i=1}^n (\lambda_i^{- \downarrow} - d_i^{- \downarrow})\]
Let $T$ be a compact self-adjoint operator with eigenvalue list $\boldsymbol\lambda$.
Then the following are equivalent:
\begin{enumerate}
\item
$\boldsymbol d\in \mathcal D(T')$ for some operator $T'$ in the operator norm closure of unitary orbit of $T$,
\[ T' \in \overline
{\{ \text{$UTU^*$}: U \text{ is unitary}\}}^{||\cdot||},
\]
\item $\boldsymbol d$ is a diagonal of an operator $T'$ such that $T\oplus \boldsymbol 0$ and $T' \oplus \boldsymbol 0$
are unitarily equivalent, where $\boldsymbol 0$ denotes the zero operator on an infinite-dimensional Hilbert space,
\item
$\boldsymbol\lambda$ and $\boldsymbol d$ satisfy the following four conditions 
\begin{equation}\label{p1v2}
\sum_{i=1}^n \lambda_i^{+ \downarrow} \ge 
\sum_{i=1}^n d_i^{+ \downarrow}
\qquad\text{for all }k\in \N,
\end{equation}
\begin{equation}\label{p2v2}
\sum_{i=1}^n \lambda_i^{- \downarrow} \ge 
\sum_{i=1}^n d_i^{- \downarrow}
\qquad\text{for all }k\in \N,
\end{equation}
\begin{equation}\label{p3v2}
\boldsymbol d_+ \in \ell^1
\quad \implies\quad
\sigma_{-}\geq \sigma_{+}
\end{equation}
\begin{equation}\label{p4v2}
\boldsymbol d_- \in \ell^1 
\quad \implies\quad 
\sigma_{+}\geq \sigma_{-}
\end{equation}
\end{enumerate}
\end{thm}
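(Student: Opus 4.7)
The plan is to establish the three-way equivalence by showing $(1)\Leftrightarrow(2)$ via a spectral characterization, $(2)\Rightarrow(3)$ via Theorem \ref{intropv3} (Necessity), and $(3)\Rightarrow(2)$ by constructing a suitable $T'$ and applying Theorem \ref{intropv3} (Sufficiency) in the generic case or a diagonal-to-diagonal argument in the degenerate case.

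For $(1)\Leftrightarrow(2)$, I invoke the spectral characterization that, for a compact self-adjoint operator $T$, the operator norm closure of its unitary orbit consists of exactly those compact self-adjoint $T'$ with the same non-zero eigenvalue list (counted with multiplicities), while $\dim\ker T'$ is otherwise unconstrained. (Intuitively, a kernel vector can escape weakly in a norm-convergent sequence of unitary conjugates, so $\dim\ker$ need not be lower-semicontinuous.) This is equivalent to the condition $T\oplus\boldsymbol 0\cong T'\oplus\boldsymbol 0$, since the infinite-dimensional zero summand absorbs any kernel discrepancy. For $(2)\Rightarrow(3)$, I apply Theorem \ref{intropv3} (Necessity) to $T'$: since $T$ and $T'$ share the same positive and negative eigenvalue lists, the excesses $\sigma_\pm$ coincide, and conditions \eqref{p1v3}--\eqref{p4v3} translate directly to \eqref{p1v2}--\eqref{p4v2}. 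The cardinality conditions \eqref{p5v3}, \eqref{p6v3} of Theorem \ref{intropv3} are not required here, since they are absorbed by the freedom in choosing $\dim\ker T'$.

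For $(3)\Rightarrow(2)$, I will construct $T'$ with an eigenvalue list $\boldsymbol \lambda'$ obtained from $\boldsymbol \lambda$ by enlarging the zero multiplicity (taken to be $\aleph_0$ if necessary) so that the cardinality conditions \eqref{p5v3}, \eqref{p6v3} hold for $(\boldsymbol \lambda',\boldsymbol d)$. Any such $T'$ satisfies $T'\oplus\boldsymbol 0\cong T\oplus\boldsymbol 0$. In the generic case $\sigma_+ + \sigma_- > 0$, Theorem \ref{intropv3} (Sufficiency) applies directly, yielding $\boldsymbol d\in\mathcal D(T')$. In the degenerate case $\sigma_+=\sigma_-=0$, hypothesis \eqref{p0v3} fails; here, Proposition \ref{LR} promotes \eqref{p1v2} and \eqref{p2v2} to the strong majorizations $\boldsymbol d^{\pm\downarrow}\preccurlyeq\boldsymbol \lambda^{\pm\downarrow}$, and I will split $T'=T'_+\oplus T'_-\oplus\boldsymbol 0_z$ into positive, negative, and kernel summands, apply Proposition \ref{posSH} to each of the positive and negative sides of $\boldsymbol d$ separately, and paste the resulting sub-diagonals via Lemma \ref{subdiag}.

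The main obstacle is the degenerate case $\sigma_+ = \sigma_- = 0$, where Theorem \ref{intropv3} is silent due to the failure of \eqref{p0v3}. The delicate point is to partition the zero entries of $\boldsymbol d$ between the positive and negative summands of $T'$ and to correspondingly adjust the kernel multiplicities of $T'_+$ and $T'_-$, so that each sub-diagonal problem on $T'_\pm$ is realizable via Proposition \ref{posSH}, while simultaneously preserving the unitary equivalence $T\oplus\boldsymbol 0\cong T'\oplus\boldsymbol 0$.
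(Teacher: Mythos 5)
The statement labelled Theorem~\ref{intropv2} is not proved in this paper at all; it is imported wholesale as a citation to the authors' companion paper \cite[Theorem~1.3]{mbjj7}, so there is no in-text proof to compare against. That said, the route you propose is the natural one, and it does match the surrounding architecture of the paper: the heavy lifting is delegated to Theorem~\ref{intropv3} (which is itself \cite[Theorems~10.1--10.2]{mbjj7}), with the degenerate case handled separately via the diagonal-to-diagonal machinery.

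On the specifics: your equivalence $(1)\Leftrightarrow(2)$ is correct, but it rests on the Weyl--von Neumann(--Berg) characterization of approximate unitary equivalence for compact self-adjoint operators; this should be cited explicitly rather than argued by the heuristic about ``kernel vectors escaping weakly,'' since what is actually needed is the Lipschitz dependence of the two signed, ordered eigenvalue sequences on the operator (Weyl's inequalities) together with the standard construction that shoves finite-rank kernel discrepancies to infinitesimal spectral weight. Your $(2)\Rightarrow(3)$ and your $(3)\Rightarrow(2)$ in the non-degenerate case $\sigma_++\sigma_->0$ are both fine, and the key observation—that \eqref{p1v3}--\eqref{p4v3} only see the nonzero terms, so the cardinality conditions \eqref{p5v3},\eqref{p6v3} can always be manufactured by inflating the kernel of $T'$—is exactly right.

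The degenerate case $\sigma_+=\sigma_-=0$ is where your write-up is only a sketch, and your description of the splitting is slightly off: you propose $T'=T'_+\oplus T'_-\oplus\boldsymbol 0_z$ with $T'_\pm$ the strictly positive/negative parts, but the compact summands must in general be allowed nontrivial kernels. The cleaner bookkeeping, which resolves the ``delicate point'' you flag, is this: send \emph{all} zero entries of $\boldsymbol d$ to a separate $\boldsymbol 0_{z_d}$ summand and never distribute them; the positive summand $E_1$ of $T'$ is then taken to have eigenvalue list equal to the strictly positive part of $\boldsymbol\lambda$ padded with enough zeros to reach dimension $p_d:=\#\{i:d_i>0\}$. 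The cardinal arithmetic closes because $\sigma_+=0$ forces $p_d\ge p_\lambda$ when $p_\lambda<\infty$ (otherwise a partial sum of $\boldsymbol d_+$ would overshoot the total mass of $\boldsymbol\lambda_+$), and forces $p_d=\infty$ when $p_\lambda=\infty$ (otherwise the excess $\delta_n$ would become strictly increasing for large $n$ yet have $\liminf$ equal to $0$, a contradiction). With this in hand, the pair $(\boldsymbol\lambda^{(1)\downarrow},\boldsymbol d^{(1)\downarrow})$ falls under Proposition~\ref{posSH} (or the finite-dimensional Schur--Horn theorem when $p_d<\infty$, where you use transitivity of majorization to get the diagonal-to-diagonal form), so $\boldsymbol d^{(1)}$ is a diagonal of $E_1$. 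The negative summand $E_2$ is treated symmetrically. Since you then want $\boldsymbol d$ as a diagonal of the \emph{specific} operator $T'=E_1\oplus E_2\oplus\boldsymbol 0_{z_d}$, pasting via the direct-sum decomposition is simpler and logically tighter than invoking Lemma~\ref{subdiag}, which requires a diagonal-to-diagonal statement valid for \emph{arbitrary} operators with the given sub-diagonal; that Lemma is overkill here and actually imposes a stronger hypothesis than you need.
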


\section{The majorization function $f_{\boldsymbol{d}}$}

In this section we prove several useful properties of the majorization function $f_{\boldsymbol{d}}$ given by Definition \ref{d81}. 
The following proposition is a convenient reformulation of \cite[Lemma 3.9]{mbjj3}. The formula \eqref{f1} follows directly from Definition \ref{d81}.

\begin{prop}\label{f} Let $\boldsymbol{d}=(d_{i})_{i\in I}$ be a sequence in $[0,1]$ with $f_{\boldsymbol{d}}(\alpha)<\infty$ for all $\alpha\in(0,1)$. The function $f_{\boldsymbol{d}}$ is piecewise linear, continuous, concave, satisfies
\begin{equation}\label{f0}\lim_{\alpha\to 0^{+}}f_{\boldsymbol{d}}(\alpha) = \lim_{\alpha\to 1^{-}}f_{\boldsymbol{d}}(\alpha)=0,\end{equation}
and
\[f_{\boldsymbol{d}}'(\alpha)=D_{\boldsymbol{d}}(\alpha)-C_{\boldsymbol{d}}(\alpha)\quad\text{ for every }\alpha\in (0,1)\setminus\{d_{i}\colon i\in I\}.\]
Moreover, for \(1>\alpha\geq\beta>0\)
\begin{equation}\label{f1}
C_{\boldsymbol{d}}(\alpha) - D_{\boldsymbol{d}}(\alpha) = C_{\boldsymbol{d}}(\beta) - D_{\boldsymbol{d}}(\beta) + \#|\{i : d_{i}\in[\beta,\alpha)\}|.
\end{equation}
In particular, there exists a number $\eta\in[0,1)$ such that 
\[D_{\boldsymbol{d}}(\alpha)-C_{\boldsymbol{d}}(\alpha)\equiv \eta\mod 1\quad\text{for all }\alpha\in(0,1).\]
\end{prop}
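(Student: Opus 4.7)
The plan is to note that the piecewise linearity, continuity, concavity, the boundary limits \eqref{f0}, and the derivative formula are essentially a repackaging of \cite[Lemma 3.9]{mbjj3}: under the standing assumption $f_{\boldsymbol{d}}(\alpha)<\infty$ on $(0,1)$, both $C_{\boldsymbol{d}}$ and $D_{\boldsymbol{d}}$ are finite sums for each $\alpha\in(0,1)$ because $(1-\alpha)C_{\boldsymbol{d}}(\alpha)+\alpha D_{\boldsymbol{d}}(\alpha)<\infty$ with $\alpha,1-\alpha>0$. Then $C_{\boldsymbol{d}}$ is nondecreasing, left-continuous and piecewise constant with jumps exactly at the points $d_i$, while $D_{\boldsymbol{d}}$ is nonincreasing with matched jumps, and both are constant on each open interval free of $d_i$'s; plugging into the middle line of Definition \ref{d81} makes $f_{\boldsymbol{d}}$ affine on each such interval with slope $D_{\boldsymbol{d}}(\alpha)-C_{\boldsymbol{d}}(\alpha)$, establishing the derivative formula. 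Concavity is then immediate since the slope decreases across each jump point of $C_{\boldsymbol{d}}$ and $D_{\boldsymbol{d}}$ (a $d_i$ moves from $D$ to $C$ as $\alpha$ crosses $d_i$, lowering the slope by $1$). For the limits \eqref{f0}, as $\alpha\searrow 0$ we have $C_{\boldsymbol{d}}(\alpha)\to 0$ and $\alpha D_{\boldsymbol{d}}(\alpha)\to 0$ since $D_{\boldsymbol{d}}(\alpha)$ is bounded by $D_{\boldsymbol{d}}(\alpha_0)<\infty$ for $\alpha$ close to $0$; symmetrically at $\alpha\nearrow 1$.

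The main piece of new bookkeeping is \eqref{f1}. For $0<\beta\leq\alpha<1$, the indices $i$ with $d_i\in[\beta,\alpha)$ contribute to $D_{\boldsymbol{d}}(\beta)$ (as $1-d_i$) but not to $D_{\boldsymbol{d}}(\alpha)$, and contribute to $C_{\boldsymbol{d}}(\alpha)$ (as $d_i$) but not to $C_{\boldsymbol{d}}(\beta)$; all other indices contribute identically to both. Therefore
\[
C_{\boldsymbol{d}}(\alpha)-C_{\boldsymbol{d}}(\beta) \;=\; \sum_{d_i\in[\beta,\alpha)} d_i
\quad\text{and}\quad
D_{\boldsymbol{d}}(\beta)-D_{\boldsymbol{d}}(\alpha) \;=\; \sum_{d_i\in[\beta,\alpha)}(1-d_i),
\]
and adding these two identities gives
\[
\bigl(C_{\boldsymbol{d}}(\alpha)-D_{\boldsymbol{d}}(\alpha)\bigr)-\bigl(C_{\boldsymbol{d}}(\beta)-D_{\boldsymbol{d}}(\beta)\bigr) \;=\; \sum_{d_i\in[\beta,\alpha)}\bigl(d_i+(1-d_i)\bigr) \;=\; \#|\{i: d_i\in[\beta,\alpha)\}|,
\]
which is exactly \eqref{f1}. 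Rearranging sums is legitimate because each of $C_{\boldsymbol{d}}$ and $D_{\boldsymbol{d}}$ is finite.

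The last assertion follows at once from \eqref{f1}: the quantity $C_{\boldsymbol{d}}(\alpha)-D_{\boldsymbol{d}}(\alpha)$ changes by an integer as $\alpha$ varies in $(0,1)$, so its residue modulo $1$ is independent of $\alpha$. Negating and choosing the representative in $[0,1)$ defines $\eta$ with $D_{\boldsymbol{d}}(\alpha)-C_{\boldsymbol{d}}(\alpha)\equiv\eta\pmod 1$ for every $\alpha\in(0,1)$. There is no real obstacle in this proof; the only thing one has to be mildly careful about is that the assumption $f_{\boldsymbol{d}}(\alpha)<\infty$ on $(0,1)$ is precisely what makes all the infinite sums absolutely convergent so that the term-by-term manipulations and the slope computation are justified.
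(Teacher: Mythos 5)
Your proposal is correct in substance and fills in exactly the bookkeeping the paper delegates to \cite[Lemma 3.9]{mbjj3} and to ``follows directly from Definition \ref{d81}.'' The derivation of \eqref{f1}, the slope computation, the concavity via a drop of $1$ in the slope at each $d_i$, and the reduction of \eqref{kadcond}-style invariance modulo $1$ to \eqref{f1} are all sound.

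However, your justification of the left boundary limit in \eqref{f0} has a monotonicity error. You write that $\alpha D_{\boldsymbol{d}}(\alpha)\to 0$ ``since $D_{\boldsymbol{d}}(\alpha)$ is bounded by $D_{\boldsymbol{d}}(\alpha_0)<\infty$ for $\alpha$ close to $0$.'' But $D_{\boldsymbol{d}}$ is \emph{nonincreasing} in $\alpha$ (the index set $\{i : \alpha\leq d_i\leq 1\}$ shrinks as $\alpha$ grows), so for $\alpha<\alpha_0$ one has $D_{\boldsymbol{d}}(\alpha)\geq D_{\boldsymbol{d}}(\alpha_0)$, and in general $D_{\boldsymbol{d}}(\alpha)\to\infty$ as $\alpha\searrow 0$ whenever $\sum_{d_i>0}(1-d_i)=\infty$. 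The conclusion is still true, but needs a slightly sharper bound: fix $\alpha_0\in(0,1)$ and write, for $\alpha<\alpha_0$,
\begin{equation*}
\alpha D_{\boldsymbol{d}}(\alpha)=\alpha D_{\boldsymbol{d}}(\alpha_0)+\alpha\sum_{\alpha\leq d_i<\alpha_0}(1-d_i)\leq \alpha D_{\boldsymbol{d}}(\alpha_0)+\sum_{\alpha\leq d_i<\alpha_0}d_i\leq \alpha D_{\boldsymbol{d}}(\alpha_0)+C_{\boldsymbol{d}}(\alpha_0),
\end{equation*}
where the middle inequality uses $\alpha\leq d_i$ on each term. Taking $\alpha\searrow 0$ and then $\alpha_0\searrow 0$ (so that $C_{\boldsymbol{d}}(\alpha_0)\to 0$) gives $\alpha D_{\boldsymbol{d}}(\alpha)\to 0$, and the symmetric estimate handles $\alpha\nearrow 1$. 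With that repair the proof is complete.
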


Kadison's theorem \cite{k1,k2} takes the following form using the majorization function $f_{\boldsymbol{d}}$.

\begin{thm}[Kadison]
\label{Kadison} Let $(d_{i})_{i\in I}$ be a sequence in $[0,1]$ and $\alpha\in(0,1)$. 
There exists an orthogonal projection $P$ on $\ell^2(I)$ with diagonal $(d_{i})_{i\in I}$ if and only if either:
\begin{enumerate}
\item $f_{\boldsymbol{d}}(\alpha)=\infty$ for some (and hence all) $\alpha\in(0,1)$,
\item $f_{\boldsymbol{d}}(\alpha)<\infty$ for some (and hence all) $\alpha\in(0,1)$, and
\begin{equation}
\label{kadcond} 
f_{\boldsymbol{d}}'(\alpha)\in\Z
\qquad\text{for some (and hence for almost every) }\alpha\in(0,1).
\end{equation}
\end{enumerate}
\end{thm}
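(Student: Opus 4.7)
The approach is to deduce this reformulation from the classical Kadison theorem stated in the introduction, namely that $(d_{i})_{i\in I}$ is a diagonal of a projection iff $a+b=\infty$, or $a+b<\infty$ and $a-b\in\Z$, where $a=\sum_{d_{i}<1/2}d_{i}$ and $b=\sum_{d_{i}\geq 1/2}(1-d_{i})$. The starting observation is that
\[
f_{\boldsymbol{d}}(1/2) \,=\, \tfrac{1}{2}C_{\boldsymbol{d}}(1/2) + \tfrac{1}{2}D_{\boldsymbol{d}}(1/2) \,=\, \tfrac{1}{2}(a+b),
\]
so the dichotomy between (i) and (ii) should correspond to $a+b=\infty$ versus $a+b<\infty$.

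First I would verify the ``some $\iff$ all'' clause for $f_{\boldsymbol{d}}(\alpha)$. If $f_{\boldsymbol{d}}(1/2)<\infty$ and $\alpha\in(1/2,1)$, then the convergence of $\sum_{1/2\leq d_{i}\leq 1}(1-d_{i})$ forces only finitely many $d_{i}$ into $[1/2,\alpha)$, since $1-d_{i}>1-\alpha>0$ for such indices; hence both $C_{\boldsymbol{d}}(\alpha)$ and $D_{\boldsymbol{d}}(\alpha)$ remain finite. The case $\alpha\in(0,1/2)$ is symmetric, and the converse is immediate. This justifies the parentheticals in (i) and (ii).

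In the case $a+b=\infty$, condition (i) holds and the classical Kadison theorem gives the projection directly. In the case $a+b<\infty$, I must show condition \eqref{kadcond} is equivalent to $a-b\in\Z$. By Proposition \ref{f}, whenever $f_{\boldsymbol{d}}$ is finite on $(0,1)$ there exists $\eta\in[0,1)$ with $D_{\boldsymbol{d}}(\alpha)-C_{\boldsymbol{d}}(\alpha)\equiv\eta\mod 1$ for every $\alpha\in(0,1)$; in particular the value of $\eta$ can be computed at $\alpha=1/2$, giving $\eta\equiv b-a\mod 1$ via formula \eqref{f1}. Since $f'_{\boldsymbol{d}}(\alpha)=D_{\boldsymbol{d}}(\alpha)-C_{\boldsymbol{d}}(\alpha)$ off the countable set $\{d_{i}:i\in I\}$, condition \eqref{kadcond} holds for some $\alpha$ iff $\eta=0$ iff $a-b\in\Z$, and in that case \eqref{kadcond} is automatic at every $\alpha\in(0,1)\setminus\{d_{i}\}$, yielding the ``for some (and hence for almost every)'' clause.

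I do not expect a substantial obstacle; this is essentially a bookkeeping translation. The only place requiring genuine care is the propagation of the finiteness of $f_{\boldsymbol{d}}$ from a single point of $(0,1)$ to the whole interval (so that the stated ``(some) and hence (all)'' dichotomies are honestly verified and not presupposed), together with the handling of the exceptional set $\{d_{i}:i\in I\}$ on which the derivative $f'_{\boldsymbol{d}}$ is undefined.
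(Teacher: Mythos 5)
Your proposal is correct, and it fills a gap the paper leaves open: the paper states Theorem~\ref{Kadison} as a known reformulation of Kadison's theorem from \cite{k1,k2} without supplying a proof. Your translation is the natural one: identifying $a=C_{\boldsymbol{d}}(1/2)$, $b=D_{\boldsymbol{d}}(1/2)$, so $f_{\boldsymbol{d}}(1/2)=\tfrac12(a+b)$; propagating finiteness of $f_{\boldsymbol{d}}$ across $(0,1)$ by noting that $D_{\boldsymbol{d}}(\alpha_0)<\infty$ forces only finitely many terms in $[\alpha_0,\alpha)$ for $\alpha>\alpha_0$ (and symmetrically); and then using Proposition~\ref{f} to reduce \eqref{kadcond} to $b-a\in\Z$. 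The one thing worth saying more carefully is that the ``some~$\iff$~all'' clause for finiteness should be run from an arbitrary base point $\alpha_0\in(0,1)$, not just $\alpha_0=1/2$, since the hypothesis is ``for some $\alpha$''; the argument you give works verbatim with $1/2$ replaced by $\alpha_0$. No substantive gap.
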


\begin{prop}\label{9.2} Let $\boldsymbol{d}=(d_{i})_{i\in I}$ and $\boldsymbol\lambda=(\lambda_{i})_{i\in J}$ be sequences in $[0,1]$ with $f_{\boldsymbol{d}}(\alpha)<\infty$ and $f_{\boldsymbol\lambda}(\alpha)<\infty$ for all $\alpha\in(0,1)$. The following are equivalent:
\begin{enumerate}
\item $f_{\boldsymbol{d}}(\alpha)\geq f_{\boldsymbol\lambda}(\alpha)$ for all $\alpha\in (0,1)$
\item $f_{\boldsymbol{d}}(\lambda_{i})\geq f_{\boldsymbol\lambda}(\lambda_{i})$ for all $i\in J\setminus\{j\in J\colon \lambda_{j}=0\text{ or }1\}$.
\end{enumerate}
\end{prop}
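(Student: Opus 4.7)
The implication (i) $\Rightarrow$ (ii) is immediate by restriction, so I focus on (ii) $\Rightarrow$ (i). My strategy rests on a structural dichotomy: by Proposition~\ref{f} the function $f_{\boldsymbol{d}}$ is concave on $(0,1)$, while $f_{\boldsymbol\lambda}$ is piecewise affine there, with breakpoints occurring only at values in the set $S := \{\lambda_j : j\in J,\ \lambda_j \in (0,1)\}$. Indeed, on any subinterval of $(0,1)$ disjoint from $S$, both $C_{\boldsymbol\lambda}$ and $D_{\boldsymbol\lambda}$ are constant step functions, so the middle-case formula $(1-\alpha)C_{\boldsymbol\lambda}(\alpha) + \alpha D_{\boldsymbol\lambda}(\alpha)$ is affine in $\alpha$. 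Consequently, on such a subinterval the difference $f_{\boldsymbol{d}} - f_{\boldsymbol\lambda}$ is concave (concave minus affine).

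Fix $\alpha \in (0,1)$; the case $S=\varnothing$ is trivial since then $f_{\boldsymbol\lambda}\equiv 0$ on $(0,1)$ and $f_{\boldsymbol{d}}\geq 0$ by definition. Assuming $S\neq\varnothing$, I locate the maximal subinterval of $(0,1)$ disjoint from $S$ that contains $\alpha$ by setting
\[ a^* := \sup\bigl(\,(\overline{S}\cup\{0\}) \cap [0,\alpha]\,\bigr), \qquad b^* := \inf\bigl(\,(\overline{S}\cup\{1\}) \cap [\alpha,1]\,\bigr),\]
so that $0\leq a^* \leq \alpha \leq b^* \leq 1$ and $(a^*,b^*)\cap S = \varnothing$. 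In the degenerate subcase $a^*=b^*=\alpha$, the point $\alpha$ lies in $\overline{S}$, so I can choose $\lambda_{i_n}\in S$ with $\lambda_{i_n}\to\alpha$; applying (ii) at each $\lambda_{i_n}$ and passing to the limit using continuity of both majorization functions (Proposition~\ref{f}) yields $f_{\boldsymbol{d}}(\alpha)\geq f_{\boldsymbol\lambda}(\alpha)$.

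Otherwise $a^* < b^*$, and $f_{\boldsymbol\lambda}$ is affine on $(a^*,b^*)$, hence on $[a^*,b^*]$ by continuity. I then verify that $g := f_{\boldsymbol{d}} - f_{\boldsymbol\lambda}$ is nonnegative at both endpoints: at $a^*=0$ or $b^*=1$, this follows from \eqref{f0} since both functions vanish there; at endpoints lying in $(0,1)\cap\overline{S}$, the same limiting argument based on (ii) applies. Since $g$ is concave on $[a^*,b^*]$ with $g(a^*),g(b^*)\geq 0$, the standard concavity estimate
\[ g(\alpha) \geq \tfrac{b^*-\alpha}{b^*-a^*}\,g(a^*) + \tfrac{\alpha-a^*}{b^*-a^*}\,g(b^*) \geq 0\]
delivers $f_{\boldsymbol{d}}(\alpha)\geq f_{\boldsymbol\lambda}(\alpha)$, completing the argument.

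The only subtlety to watch is ensuring that the pointwise hypothesis at the $\lambda_i$'s extends cleanly to accumulation points of $S$ and to the boundary $\{0,1\}$. Once this is in place, the concave-minus-affine observation does the heavy lifting and the rest is a one-line concavity estimate.
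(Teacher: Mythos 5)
Your proof is correct and takes essentially the same route as the paper's one-line argument: $f_{\boldsymbol d}$ concave plus $f_{\boldsymbol\lambda}$ piecewise affine with breakpoints only at values of $\boldsymbol\lambda$ in $(0,1)$, so nonnegativity at the breakpoints (and at the endpoints $0,1$ via \eqref{f0}) propagates to all of $(0,1)$. One small remark: since $f_{\boldsymbol\lambda}(\alpha)<\infty$ on $(0,1)$ forces $S$ to have no accumulation points inside $(0,1)$ (only at $0$ and $1$), the degenerate case $a^*=b^*=\alpha$ actually forces $\alpha\in S$ itself, so the limiting argument you invoke there (and for interior endpoints) is careful but unnecessary.
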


\begin{proof} (i)\(\Rightarrow\) (ii) is obvious. For the other direction we simply note that \(f_{\boldsymbol{d}}\) is concave, and \(f_{\boldsymbol\lambda}\) is piecewise linear, with knots at each \(\lambda_{i}\in(0,1)\). 
\end{proof}

\begin{prop}\label{fd} Let $\boldsymbol{d}=(d_{i})_{i\in I}$ be a sequence in $[0,1]$ with $f_{\boldsymbol{d}}(\alpha)<\infty$. Then, for any set $J\subset I$ and $\alpha \in (0,1)$ we have
\begin{equation}\label{fd1}
f_{\boldsymbol{d}}(\alpha) \le  (1-\alpha)\sum_{i\in J}d_i + \alpha\sum_{i\in I \setminus J }( 1 - d_i).
\end{equation}
\end{prop}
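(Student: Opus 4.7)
First I would unfold the definition of $f_{\boldsymbol d}(\alpha)$ for $\alpha \in (0,1)$, namely
\[
f_{\boldsymbol d}(\alpha) = (1-\alpha) \sum_{d_i < \alpha} d_i + \alpha \sum_{d_i \ge \alpha}(1-d_i).
\]
Viewed this way, both sides of \eqref{fd1} are sums over $I$ in which each index $i$ contributes one of two quantities, either $(1-\alpha) d_i$ or $\alpha(1-d_i)$. The only difference between the two sides is the rule by which $I$ is partitioned: on the left by the threshold $d_i \lessgtr \alpha$, on the right by the arbitrary set $J$ versus its complement. Since $\boldsymbol{d}$ is $[0,1]$-valued, every such contribution is nonnegative, so all sums are well-defined in $[0,\infty]$ and no cancellation issues arise.

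The case where the right-hand side of \eqref{fd1} is infinite is trivial, so I would reduce to the situation where both sums $\sum_{i \in J} d_i$ and $\sum_{i \in I \setminus J}(1-d_i)$ are finite. Setting
\[
A = \{i \in J : d_i \ge \alpha\} \quad\text{and}\quad B = \{i \in I \setminus J : d_i < \alpha\},
\]
one checks that both $A$ and $B$ are finite, because $\alpha |A| \le \sum_{i \in A} d_i \le \sum_{i \in J} d_i$ and $(1-\alpha)|B| \le \sum_{i \in B}(1-d_i) \le \sum_{i \in I \setminus J}(1-d_i)$. Outside $A \cup B$ the two partitions agree and contribute identically to both sides, while on $A$ and on $B$ they disagree in opposite directions: on $A$ the right side contributes $(1-\alpha) d_i$ where the left contributes $\alpha(1-d_i)$, and on $B$ the roles reverse.

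After absorbing the cancellations, the elementary identity $(1-\alpha) d_i - \alpha(1-d_i) = d_i - \alpha$ reduces the computation to
\[
\text{RHS} - \text{LHS} = \sum_{i \in A}(d_i - \alpha) + \sum_{i \in B}(\alpha - d_i),
\]
which is manifestly nonnegative by the definitions of $A$ and $B$. There is no real obstacle here; the only care required is to ensure that all rearrangements take place among absolutely convergent series, which is guaranteed by the finiteness reductions above together with the fact that $\boldsymbol d$ takes values in $[0,1]$.
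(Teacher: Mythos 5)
Your proof is correct, and it takes a genuinely different route from the paper's. The paper argues variationally: starting from the right-hand side as a function of the set $J$, it observes that moving any $i_0\in J$ with $d_{i_0}>\alpha$ out of $J$, or any $i_0\in I\setminus J$ with $d_{i_0}<\alpha$ into $J$, strictly decreases the right-hand side, and concludes that the minimum over $J$ is attained exactly when $\{i: d_i<\alpha\}\subset J\subset\{i: d_i\le\alpha\}$, at which point the right-hand side equals $f_{\boldsymbol d}(\alpha)$. Your approach instead computes the difference $\mathrm{RHS}-\mathrm{LHS}$ term by term and writes it explicitly as $\sum_{i\in A}(d_i-\alpha)+\sum_{i\in B}(\alpha-d_i)$ with $A$, $B$ the two ``misclassified'' sets, so the nonnegativity is immediate. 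Your version is more self-contained: the paper's ``is minimized'' phrasing implicitly requires noting that only finitely many single-index moves are needed (precisely your observation that $A$ and $B$ are finite when the right-hand side is finite), whereas your direct identity sidesteps any appeal to a minimization argument. The only cosmetic difference is that your $A$ includes indices with $d_i=\alpha$, which contribute $0$ to the difference; this is harmless and matches the paper's observation that the minimizing $J$ is not unique at the threshold.
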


\begin{proof}
We can assume that the right hand side \eqref{fd1} is finite. Suppose that there exists $i_0 \in J$ such that $d_{i_0} > \alpha$. Then the right hand side \eqref{fd1} corresponding to the set $J \setminus \{i_0\}$ decreases by $d_{i_0}-\alpha$. Likewise, if there exists $i_0 \in I \setminus J$ such that $d_{i_0} < \alpha$, then the right hand side \eqref{fd1} corresponding to the set $J \cup \{i_0\}$ decreases by $\alpha - d_{i_0}$. This implies that the right hand side \eqref{fd1} is minimized if and only if
\[
\{i\in I: d_i < \alpha \} \subset J \subset \{i\in I: d_i \le \alpha \}.
\]
In that case \eqref{fd1} becomes an equality.
\end{proof}

\begin{lem}\label{intToComp} Suppose \(\boldsymbol\lambda = (\lambda_{i})_{i\in\N}\) and \(\boldsymbol{d} = (d_{i})_{i\in\N}\) are sequences such that \(f_{\boldsymbol\lambda}(\alpha)+f_{\boldsymbol{d}}(\alpha)<\infty\) for all \(\alpha\in(0,1)\) and \eqref{mainthm2} holds. Fix \(\alpha_{0}\in(0,1)\) such that \(\alpha_{0}\) is not a term of \(\boldsymbol\lambda\) or \(\boldsymbol{d}\). Define \(\boldsymbol\lambda_{0} := (\lambda_{i})_{0\leq \lambda_{i}<\alpha_{0}}\), \(\boldsymbol{d}_{0} := (d_{i})_{0\leq d_{i}<\alpha_{0}}\),
\[\kappa:=f_{\boldsymbol{d}}'(\alpha_{0}) - f_{\boldsymbol\lambda}'(\alpha_0)-\delta_{0}+\delta_{1},\]
and
\[\beta:=f_{\boldsymbol{d}}(\alpha_{0})-f_{\boldsymbol\lambda}(\alpha_{0})+(1-\alpha_{0})\delta_{0} + \alpha_{0}\delta_{1}-\alpha_{0}\kappa.\]
Then,
\[\delta(\alpha,\boldsymbol\lambda_{0},\boldsymbol d_{0})\geq -\kappa\alpha-\beta\quad\text{for all }\alpha\neq 0,\]
and
\[\liminf_{\alpha\searrow0}\delta(\alpha,\boldsymbol\lambda_{0},\boldsymbol d_{0}) = \delta_{0}-\beta.\]
\end{lem}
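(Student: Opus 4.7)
The plan is to derive an exact identity for $\delta(\alpha, \boldsymbol\lambda_0, \boldsymbol d_0)$ in terms of the majorization functions $f_{\boldsymbol\lambda}$ and $f_{\boldsymbol d}$, and then convert the Lebesgue-type hypothesis \eqref{mainthm2} into the affine lower bound $-\kappa\alpha-\beta$ by algebra. As a preliminary, the assumption $f_{\boldsymbol\lambda}(\alpha_0)<\infty$ together with Definition~\ref{d81} forces $C_{\boldsymbol\lambda}(\alpha_0)<\infty$, so $\boldsymbol\lambda_0$ is summable, and likewise $\boldsymbol d_0\in c_0^+$. In particular $\delta(\alpha,\boldsymbol\lambda_0,\boldsymbol d_0)$ is well defined, and because both subsequences take values in $[0,\alpha_0)$, it vanishes identically for $\alpha\leq 0$ and for $\alpha\geq\alpha_0$.

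The heart of the argument is the identity, valid for $\alpha\in(0,\alpha_0)$,
\[
\sum_{\alpha\leq\lambda_i<\alpha_0}(\lambda_i-\alpha) \;=\; f_{\boldsymbol\lambda}(\alpha_0) + (\alpha-\alpha_0)f_{\boldsymbol\lambda}'(\alpha_0) - f_{\boldsymbol\lambda}(\alpha).
\]
I would derive this by splitting $C_{\boldsymbol\lambda}(\alpha_0)=C_{\boldsymbol\lambda}(\alpha)+\sum_{\alpha\leq\lambda_i<\alpha_0}\lambda_i$ and $D_{\boldsymbol\lambda}(\alpha)=D_{\boldsymbol\lambda}(\alpha_0)+\sum_{\alpha\leq\lambda_i<\alpha_0}(1-\lambda_i)$ inside the formula $f_{\boldsymbol\lambda}(\alpha)=(1-\alpha)C_{\boldsymbol\lambda}(\alpha)+\alpha D_{\boldsymbol\lambda}(\alpha)$, and using that $f_{\boldsymbol\lambda}'(\alpha_0)=D_{\boldsymbol\lambda}(\alpha_0)-C_{\boldsymbol\lambda}(\alpha_0)$ is well defined since $\alpha_0$ is not a term of $\boldsymbol\lambda$. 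Subtracting the analogous identity for $\boldsymbol d$ yields
\[
\delta(\alpha,\boldsymbol\lambda_0,\boldsymbol d_0) = [f_{\boldsymbol\lambda}(\alpha_0)-f_{\boldsymbol d}(\alpha_0)] + (\alpha-\alpha_0)[f_{\boldsymbol\lambda}'(\alpha_0)-f_{\boldsymbol d}'(\alpha_0)] - [f_{\boldsymbol\lambda}(\alpha)-f_{\boldsymbol d}(\alpha)].
\]
Bounding the final bracket by $(1-\alpha)\delta_0+\alpha\delta_1$ via \eqref{mainthm2} produces an affine lower bound on $\delta$, and a direct expansion of the definitions of $\kappa$ and $\beta$ shows this lower bound is precisely $-\kappa\alpha-\beta$. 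Outside $(0,\alpha_0)$, where $\delta=0$, the inequality reduces to checking the sign of the linear function $L(\alpha):=-\kappa\alpha-\beta$; its value at $\alpha_0$ is $f_{\boldsymbol\lambda}(\alpha_0)-f_{\boldsymbol d}(\alpha_0)-(1-\alpha_0)\delta_0-\alpha_0\delta_1\leq 0$ by \eqref{mainthm2}.

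For the liminf, I would pass $\alpha\searrow 0$ in the identity. By \eqref{f0} of Proposition~\ref{f}, both $f_{\boldsymbol\lambda}(\alpha)$ and $f_{\boldsymbol d}(\alpha)$ tend to $0$, so
\[
\liminf_{\alpha\searrow 0}\delta(\alpha,\boldsymbol\lambda_0,\boldsymbol d_0) = [f_{\boldsymbol\lambda}(\alpha_0)-f_{\boldsymbol d}(\alpha_0)] - \alpha_0[f_{\boldsymbol\lambda}'(\alpha_0)-f_{\boldsymbol d}'(\alpha_0)],
\]
and an algebraic expansion of $\delta_0-\beta$ using the definitions of $\kappa$ and $\beta$ (in which $\alpha_0\kappa$ exactly absorbs the $\delta_0,\delta_1$ contributions and leaves the tangent-line intercept of $f_{\boldsymbol\lambda}-f_{\boldsymbol d}$ at $\alpha_0$) verifies that the two sides are equal. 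The main obstacle is purely bookkeeping: deriving the key identity cleanly, and then confirming that the somewhat opaque definitions of $\kappa$ and $\beta$ are calibrated precisely so that the expanded lower bound matches $-\kappa\alpha-\beta$ and the intercept matches $\delta_0-\beta$. No deeper input beyond \eqref{mainthm2} and the piecewise-linear structure of $f_{\boldsymbol\lambda},f_{\boldsymbol d}$ is needed.
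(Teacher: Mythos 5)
Your proof is correct and follows essentially the same route as the paper. Both arguments expand $f_{\boldsymbol\lambda}$ and $f_{\boldsymbol d}$ around $\alpha_0$ using the identity \eqref{f1} (equivalently, the piecewise-linear/concave structure from Proposition~\ref{f}), feed this into \eqref{mainthm2}, and identify the intercept via $\sum_{0\le\lambda_i<\alpha_0}\lambda_i-\sum_{0\le d_i<\alpha_0}d_i=\delta_0-\beta$; your packaging of this as a single tangent-line identity
\[
\delta(\alpha,\boldsymbol\lambda_0,\boldsymbol d_0)=\bigl[f_{\boldsymbol\lambda}(\alpha_0)-f_{\boldsymbol d}(\alpha_0)\bigr]+(\alpha-\alpha_0)\bigl[f_{\boldsymbol\lambda}'(\alpha_0)-f_{\boldsymbol d}'(\alpha_0)\bigr]-\bigl[f_{\boldsymbol\lambda}(\alpha)-f_{\boldsymbol d}(\alpha)\bigr]
\]
for $\alpha\in(0,\alpha_0)$ is the same computation presented a bit more cleanly, and the algebra confirming the affine lower bound is $-\kappa\alpha-\beta$ and the intercept is $\delta_0-\beta$ checks out. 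One small caution: like the paper's own proof, your derivation establishes the inequality only for $\alpha\in(0,\alpha_0]$. Your remark that outside this interval it ``reduces to checking the sign of the linear function $L(\alpha)=-\kappa\alpha-\beta$'' at $\alpha_0$ is not quite enough: $L(\alpha_0)\le 0$ does not force $L(\alpha)\le 0$ for all $\alpha>\alpha_0$ (e.g.\ if $\kappa<0$ then $L$ is eventually positive), so the ``for all $\alpha\neq 0$'' in the statement is slightly generous. This is a pre-existing imprecision in the lemma, not a defect you introduced; in the only places it is invoked (Corollaries~\ref{intToComp1} and \ref{intToComp2}) one has $\kappa=\beta=0$, so the bound is $\delta\ge 0$, which holds trivially outside $(0,\alpha_0)$ since $\delta$ vanishes there.
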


\begin{proof} For \(\alpha\leq\alpha_{0}\), by \eqref{f1} we have
\begin{align*}
f_{\boldsymbol\lambda}(\alpha) & = C_{\boldsymbol\lambda}(\alpha)+\alpha (D_{\boldsymbol\lambda}(\alpha)-C_{\boldsymbol\lambda}(\alpha))\\
 & = C_{\boldsymbol\lambda}(\alpha)+\alpha(D_{\boldsymbol\lambda}(\alpha_{0}) - C_{\boldsymbol\lambda}(\alpha_{0}) - \#|\{i : \lambda_{i}\in[\alpha,\alpha_{0})\}|)\\
 & = C_{\boldsymbol\lambda}(\alpha)+\alpha f_{\boldsymbol\lambda}'(\alpha_{0}) - \alpha\#|\{i : \lambda_{i}\in[\alpha,\alpha_{0})\}|.
\end{align*}
The same reasoning for the sequence \(\boldsymbol{d}\) yields
\[f_{\boldsymbol{d}}(\alpha) = C_{\boldsymbol{d}}(\alpha)+\alpha f_{\boldsymbol{d}}'(\alpha_{0}) - \alpha\#|\{i : d_{i}\in[\alpha,\alpha_{0})\}|.\]
Now, the inequality \eqref{mainthm2} becomes
\begin{align*}
C_{\boldsymbol\lambda}(\alpha) &-C_{\boldsymbol{d}}(\alpha) 
\\
& \leq \alpha\big(f_{\boldsymbol{d}}'(\alpha_{0})-f_{\boldsymbol\lambda}'(\alpha_{0})+\#|\{i : d_{i}\in[\alpha,\alpha_{0})\}|-\#|\{i : \lambda_{i}\in[\alpha,\alpha_{0})\}|+\delta_{1}-\delta_{0}\big) + \delta_{0}\\
 & = \alpha\big(\#|\{i : d_{i}\in[\alpha,\alpha_{0})\}|-\#|\{i : \lambda_{i}\in[\alpha,\alpha_{0})\}| + \kappa\big) + \delta_{0}\\
 & = \sum_{d_{i}\in[\alpha,\alpha_{0})}\alpha - \sum_{\lambda_{i}\in[\alpha,\alpha_{0})}\alpha + \alpha\kappa+\delta_{0},
\end{align*}
and
\begin{equation}\label{dd}
\begin{split}
\sum_{0\leq\lambda_{i}<\alpha_{0}}\lambda_{i} - \sum_{0\leq d_{i}<\alpha_{0}}d_{i} & = C_{\boldsymbol\lambda}(\alpha_{0})-C_{\boldsymbol{d}}(\alpha_{0})\\
 & = f_{\boldsymbol\lambda}(\alpha_{0}) - f_{\boldsymbol{d}}(\alpha_{0}) + \alpha_{0}\big(f_{\boldsymbol{d}}'(\alpha_{0})-f_{\boldsymbol\lambda}'(\alpha_{0})\big)\\
 & = (1-\alpha_{0})\delta_{0} + \alpha_{0}\delta_{1}-\alpha_{0}\kappa-\beta +\alpha_{0}(\kappa+\delta_{0}-\delta_{1}) = \delta_{0}-\beta.
\end{split}\end{equation}
Putting these together we have
\[\delta_{0}-\beta \leq \sum_{\lambda_{i}\in[\alpha,\alpha_{0})}(\lambda_{i}-\alpha)-\sum_{d_{i}\in[\alpha,\alpha_{0})}(d_{i}-\alpha)+ \alpha\kappa + \delta_{0}\]
Thus,
\[-\kappa\alpha-\beta\leq \sum_{\lambda_{i}\in[\alpha,\alpha_{0})}(\lambda_{i}-\alpha)-\sum_{d_{i}\in[\alpha,\alpha_{0})}(d_{i}-\alpha) = \delta(\alpha,\boldsymbol\lambda_{0},\boldsymbol d_{0}).\]
Where \(\boldsymbol\lambda_{0} = (\lambda_{i})_{\lambda_{i}\in[0,\alpha_{0})}\) and \(\boldsymbol{d}_{0} = (d_{i})_{d_{i}\in[0,\alpha_{0})}\). By \eqref{dd} and Lemma \ref{dclem} we see that
\[\liminf_{\alpha\searrow0}\delta(\alpha,\boldsymbol\lambda_{0},\boldsymbol d_{0}) = \delta_{0}-\beta.\qedhere\]
\end{proof}

\begin{cor}\label{intToComp1} Suppose \(\boldsymbol\lambda = (\lambda_{i})_{i\in\N}\) and \(\boldsymbol{d} = (d_{i})_{i\in\N}\) are sequences such that \(f_{\boldsymbol\lambda}(\alpha)+f_{\boldsymbol{d}}(\alpha)<\infty\) for all \(\alpha\in(0,1)\). If \eqref{mainthm2} holds with equality at \(\alpha=\alpha_{0}\in(0,1)\), then \(\boldsymbol\lambda_{0} := (\lambda_{i})_{0\leq \lambda_{i}<\alpha_{0}}\) majorizes \(\boldsymbol{d}_{0} := (d_{i})_{0\leq d_{i}<\alpha_{0}}\) in the sense of Definition \ref{deltadef}, that is,
\[\delta(\alpha,\boldsymbol\lambda_{0},\boldsymbol d_{0})\geq 0\quad\text{for all }\alpha\neq 0.\]
Moreover,
\[\liminf_{\alpha\searrow0}\delta(\alpha,\boldsymbol\lambda_{0},\boldsymbol d_{0}) = \delta_{0}.\]
\end{cor}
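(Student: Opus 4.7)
The plan is to apply Lemma \ref{intToComp} directly and then show that the equality hypothesis in \eqref{mainthm2} at $\alpha_0$ forces the auxiliary constants $\kappa$ and $\beta$ in that lemma to both vanish. Once $\kappa = \beta = 0$, the two conclusions of Lemma \ref{intToComp} reduce verbatim to the two claims of the corollary.

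First I would substitute the equality $f_{\boldsymbol\lambda}(\alpha_0) = f_{\boldsymbol d}(\alpha_0) + (1-\alpha_0)\delta_0 + \alpha_0\delta_1$ into the definition
\[
\beta = f_{\boldsymbol d}(\alpha_0) - f_{\boldsymbol\lambda}(\alpha_0) + (1-\alpha_0)\delta_0 + \alpha_0\delta_1 - \alpha_0\kappa
\]
from Lemma \ref{intToComp}; the first four terms cancel and one is left with $\beta = -\alpha_0\kappa$. Hence the first conclusion of Lemma \ref{intToComp} becomes $\delta(\alpha,\boldsymbol\lambda_0,\boldsymbol d_0) \ge \kappa(\alpha_0 - \alpha)$, so it remains only to verify that $\kappa = 0$.

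To obtain $\kappa = 0$, I would introduce the function $g(\alpha) := f_{\boldsymbol\lambda}(\alpha) - f_{\boldsymbol d}(\alpha) - (1-\alpha)\delta_0 - \alpha\delta_1$. By \eqref{mainthm2}, $g \le 0$ on $(0,1)$, and the equality hypothesis yields $g(\alpha_0) = 0$. Thus $\alpha_0$ is an interior maximum of $g$. Under the hypothesis inherited from Lemma \ref{intToComp} that $\alpha_0$ is not a term of $\boldsymbol\lambda$ or $\boldsymbol d$, Proposition \ref{f} guarantees that both $f_{\boldsymbol\lambda}$ and $f_{\boldsymbol d}$ are differentiable at $\alpha_0$, so $g'(\alpha_0) = 0$. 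Unpacking, this is precisely
\[
0 = f_{\boldsymbol\lambda}'(\alpha_0) - f_{\boldsymbol d}'(\alpha_0) + \delta_0 - \delta_1 = -\kappa.
\]
Hence $\kappa = 0$, so $\beta = -\alpha_0\kappa = 0$, and substituting these into Lemma \ref{intToComp} delivers both $\delta(\alpha,\boldsymbol\lambda_0,\boldsymbol d_0) \ge 0$ for all $\alpha \neq 0$ and $\liminf_{\alpha\searrow 0}\delta(\alpha,\boldsymbol\lambda_0,\boldsymbol d_0) = \delta_0 - \beta = \delta_0$.

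The argument is essentially mechanical once one isolates the key observation: equality in the Lebesgue-type majorization \eqref{mainthm2} at an interior point $\alpha_0$ forces the first-order optimality of the difference $g$ at $\alpha_0$, and this first-order condition is exactly the slope-matching identity $\kappa = 0$. I do not foresee any serious obstacle.
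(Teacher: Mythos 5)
Your computation of $\beta = -\alpha_0\kappa$ and the first-order optimality argument giving $\kappa = 0$ are exactly the paper's core argument, and they are correct. However, there is a genuine gap in your set-up: you write that the condition ``$\alpha_0$ is not a term of $\boldsymbol\lambda$ or $\boldsymbol d$'' is ``inherited from Lemma~\ref{intToComp},'' but that condition is a \emph{standing hypothesis} of the lemma, not a conclusion, and Corollary~\ref{intToComp1} does \emph{not} impose it --- its $\alpha_0$ is an arbitrary point of $(0,1)$ at which equality happens to hold. If $\alpha_0$ is a term of $\boldsymbol\lambda$ or $\boldsymbol d$, then $f_{\boldsymbol\lambda}$ or $f_{\boldsymbol d}$ has a knot there, $g$ need not be differentiable at $\alpha_0$, and the first-order condition $g'(\alpha_0)=0$ is unavailable; moreover Lemma~\ref{intToComp} itself cannot be invoked for that $\alpha_0$.

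The paper's proof opens with precisely the reduction your argument omits. Because $f_{\boldsymbol\lambda}(\alpha)+f_{\boldsymbol d}(\alpha)<\infty$ on $(0,1)$, the terms of $\boldsymbol\lambda$ and $\boldsymbol d$ do not accumulate in $(0,1)$, so one can choose $\eps>0$ such that $I_0=(\alpha_0-\eps,\alpha_0)$ contains no terms. On $I_0$ the functions $C_{\boldsymbol\lambda},D_{\boldsymbol\lambda},C_{\boldsymbol d},D_{\boldsymbol d}$ are constant, the majorization functions are affine, and equality in \eqref{mainthm2} persists throughout $I_0$; hence one may replace $\alpha_0$ by a slightly smaller point inside $I_0$ that is not a term of either sequence. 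Crucially, since $I_0$ contains no terms, this shift leaves the sequences $\boldsymbol\lambda_0=(\lambda_i)_{0\le\lambda_i<\alpha_0}$ and $\boldsymbol d_0=(d_i)_{0\le d_i<\alpha_0}$ unchanged, so the conclusion is the same. You should insert this reduction step before invoking Lemma~\ref{intToComp}; after that, your argument goes through exactly as written.
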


\begin{proof} There is a nonempty interval \(I_{0}=(\alpha_{0}-\eps,\alpha_{0})\) which contains no terms of \(\boldsymbol{\lambda}\) or \(\boldsymbol{d}\). Note that \(C_{\boldsymbol\lambda},D_{\boldsymbol\lambda}\), \(C_{\boldsymbol{d}}\), and \(D_{\boldsymbol{d}}\) are all constant on \(I_{0}\), and hence we have equality in \eqref{mainthm2} for all \(\alpha\in I_{0}\). We can decrease \(\alpha_{0}\) to guarantee it is not equal to any term of \(\boldsymbol{\lambda}\) or \(\boldsymbol{d}\), and without changing the definitions of \(\boldsymbol{\lambda}_{0}\) and \(\boldsymbol{d}_{0}\).

Define \(\kappa\) and \(\beta\) as in Lemma \ref{intToComp}. Note that \eqref{mainthm2} is equivalent to
\[f_{\boldsymbol{d}}(\alpha)-f_{\boldsymbol\lambda}(\alpha)+(1-\alpha)\delta_{0} + \alpha\delta_{1}\geq 0\quad\text{for all }\alpha\in(0,1).\]
Equality in \eqref{mainthm2}  at \(\alpha=\alpha_{0}\) is equivalent to \(\beta=-\alpha_{0}\kappa\).
 Moreover, we see that the left-hand side is differentiable at \(\alpha_{0}\). We deduce that the derivative of the left-hand size is \(0\) at \(\alpha=\alpha_{0}\), that is, \(\kappa=0\). Hence, we also have \(\beta=-\alpha_{0}\kappa=0\). Hence applying Lemma \ref{intToComp} to the sequences \(\boldsymbol{\lambda}\) and \(\boldsymbol{d}\) gives the desired conclusions.
\end{proof}

\begin{cor}\label{intToComp2} Suppose \(\boldsymbol\lambda = (\lambda_{i})_{i\in\N}\) and \(\boldsymbol{d} = (d_{i})_{i\in\N}\) are sequences such that \(f_{\boldsymbol\lambda}(\alpha)+f_{\boldsymbol{d}}(\alpha)<\infty\) for all \(\alpha\in(0,1)\). If \eqref{mainthm2} holds with equality at \(\alpha=\alpha_{0}\in(0,1)\), then \(\boldsymbol\lambda_{1} := (1-\lambda_{i})_{\alpha_0 \le \lambda_{i} \le 1}\) majorizes \(\boldsymbol{d}_{1} := (1-d_{i})_{\alpha_0 \leq d_{i}\le 1}\) in the sense of Definition \ref{deltadef}, that is,
\[\delta(\alpha,\boldsymbol\lambda_{1},\boldsymbol d_{1})\geq 0\quad\text{for all }\alpha\neq 0.\]
Moreover,
\[\liminf_{\alpha\searrow0}\delta(\alpha,\boldsymbol\lambda_{1},\boldsymbol d_{1}) = \delta_{1}.\]
\end{cor}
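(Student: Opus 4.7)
The plan is to reduce Corollary \ref{intToComp2} directly to Corollary \ref{intToComp1} via the reflection $t \mapsto 1-t$. Introduce the flipped sequences $\tilde{\boldsymbol\lambda} := (1-\lambda_i)_{i\in\N}$ and $\tilde{\boldsymbol d} := (1-d_i)_{i\in\N}$. As in the opening of the proof of Corollary \ref{intToComp1}, there is a nonempty interval $(\alpha_0-\varepsilon,\alpha_0)$ on which $C_{\boldsymbol\lambda}, D_{\boldsymbol\lambda}, C_{\boldsymbol d}, D_{\boldsymbol d}$ are constant and equality persists in \eqref{mainthm2}; hence we may decrease $\alpha_0$ slightly so that $\alpha_0$ coincides with no term of $\boldsymbol\lambda$ or $\boldsymbol d$, without changing the sets defining $\boldsymbol\lambda_1$ and $\boldsymbol d_1$.

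Next, I would establish the flip identity: for any sequence $\boldsymbol c=(c_i)_{i\in \N}$ in $[0,1]$ with $f_{\boldsymbol c}(\alpha)<\infty$ on $(0,1)$, and for $\alpha\in(0,1)$ such that $1-\alpha$ is not a term of $\boldsymbol c$, one has
\[
C_{\tilde{\boldsymbol c}}(\alpha)=\sum_{1-\alpha<c_i\le 1}(1-c_i)=D_{\boldsymbol c}(1-\alpha),\qquad D_{\tilde{\boldsymbol c}}(\alpha)=\sum_{0\le c_i\le 1-\alpha}c_i=C_{\boldsymbol c}(1-\alpha),
\]
so $f_{\tilde{\boldsymbol c}}(\alpha)=(1-\alpha)D_{\boldsymbol c}(1-\alpha)+\alpha C_{\boldsymbol c}(1-\alpha)=f_{\boldsymbol c}(1-\alpha)$. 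Applying this to $\boldsymbol\lambda$ and $\boldsymbol d$ and substituting $\alpha\mapsto 1-\alpha$ in the hypothesis \eqref{mainthm2} yields
\[
f_{\tilde{\boldsymbol\lambda}}(\alpha)\le f_{\tilde{\boldsymbol d}}(\alpha)+(1-\alpha)\delta_1+\alpha\delta_0\qquad\text{for all }\alpha\in(0,1),
\]
valid in a neighborhood of $\tilde\alpha_0:=1-\alpha_0$ (and in fact at all $\alpha$ except a discrete set where it is still valid by continuity). This is precisely \eqref{mainthm2} for the pair $(\tilde{\boldsymbol\lambda},\tilde{\boldsymbol d})$ with the roles of $\delta_0$ and $\delta_1$ swapped, and equality at $\alpha_0$ translates into equality at $\tilde\alpha_0$.

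Finally, apply Corollary \ref{intToComp1} to $\tilde{\boldsymbol\lambda},\tilde{\boldsymbol d}$ at $\tilde\alpha_0$ with $\tilde\delta_0:=\delta_1$ and $\tilde\delta_1:=\delta_0$. Since $\alpha_0$ is a term of neither $\boldsymbol\lambda$ nor $\boldsymbol d$,
\[
\tilde{\boldsymbol\lambda}_0=(\tilde\lambda_i)_{0\le \tilde\lambda_i<\tilde\alpha_0}=(1-\lambda_i)_{\alpha_0<\lambda_i\le 1}=(1-\lambda_i)_{\alpha_0\le\lambda_i\le 1}=\boldsymbol\lambda_1,
\]
and likewise $\tilde{\boldsymbol d}_0=\boldsymbol d_1$. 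The conclusion of Corollary \ref{intToComp1} then gives $\delta(\alpha,\boldsymbol\lambda_1,\boldsymbol d_1)\ge 0$ for all $\alpha\ne 0$ and $\liminf_{\alpha\searrow 0}\delta(\alpha,\boldsymbol\lambda_1,\boldsymbol d_1)=\tilde\delta_0=\delta_1$, as required.

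The only genuine technical point is the flip identity $f_{\tilde{\boldsymbol c}}(\alpha)=f_{\boldsymbol c}(1-\alpha)$, which fails verbatim at $\alpha$ for which $1-\alpha$ is one of the $c_i$ due to the half-open conventions in Definition \ref{d81}; this is the main (albeit mild) obstacle, and it is neutralized by the same device used in the proof of Corollary \ref{intToComp1}, namely perturbing $\alpha_0$ off the discrete set of values taken by $\boldsymbol\lambda$ and $\boldsymbol d$.
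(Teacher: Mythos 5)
Your proposal is correct and takes essentially the same route as the paper: both reduce to Corollary \ref{intToComp1} by reflecting via $t \mapsto 1-t$, noting that \eqref{mainthm2} holds for $\tilde{\boldsymbol\lambda}, \tilde{\boldsymbol d}$ with $\delta_0$ and $\delta_1$ swapped, and perturbing $\alpha_0$ off the terms of $\boldsymbol\lambda$ and $\boldsymbol d$ so that the half-open conventions cause no discrepancy. You merely spell out the flip identity $f_{\tilde{\boldsymbol c}}(\alpha)=f_{\boldsymbol c}(1-\alpha)$ more explicitly than the paper does.
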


\begin{proof} As in the proof of Corollary \ref{intToComp1} we may assume \(\alpha_{0}\) is in the interior of and interval \(I_{0}\) such that \eqref{mainthm2} holds for all \(\alpha\in I_{0}\) and both \(d_{i},\lambda_{i}\notin I_{0}\) for all \(i\in I\). This implies that \(\boldsymbol d_{1} = (1-d_{i})_{\alpha_{0}<d_{i}\leq 1}\) and \(\boldsymbol\lambda_{1} = (1-\lambda_{i})_{\alpha_{0}<\lambda_{i}\leq 1}\).

Set \(\tilde{\boldsymbol\lambda} = (1-\lambda_{i})_{i\in I}\) and \(\tilde{\boldsymbol{d}} = (1-d_{i})_{i\in I}\). Note that \eqref{mainthm2} holds for these sequences with \(\delta_{0}\) and \(\delta_{1}\) swapped. Hence, the conclusions hold by applying Corollary \ref{intToComp1} to \(\tilde{\boldsymbol\lambda}\) and \(\tilde{\boldsymbol{d}}\).
\end{proof}

\section{Necessity proof of Theorem \ref{mainthm}}\label{necProof}

The goal of this section is to prove the necessary conditions on diagonals of operators with $\ge 2$ points in their essential spectrum. The bulk of necessity direction of Theorem \ref{mainthm} follows from the following theorem. In essence, Theorem \ref{nec} says that if a diagonal sequence $\boldsymbol d$ satisfies the Blaschke condition and the lower and upper excesses are finite, then the operator $E$ is necessarily diagonalizable with eigenvalues satisfying the Blaschke condition. A similar result, albeit with the stronger assumption on $E$ that $E_-$ and $(E-\mathbf I)_+$ are trace class, was shown was Loreaux and Weiss \cite[Corollary 4.5]{lw3}.

\begin{thm}\label{nec}
Let $E$ be a self-adjoint operator on $\mathcal H$ such that $0$ and $1$ are two extreme points of the essential spectrum $\sigma_{ess}(E)$ of $E$, that is,
\begin{equation*}
\{0,1 \} \subset \sigma_{ess}(E) \subset [0,1].
\end{equation*}
Let $\boldsymbol\lambda=(\lambda_{j})_{j\in J}$ be the list of all eigenvalues of $E$ with multiplicity (which is possibly an empty list). 
Let  $\boldsymbol{d}=(d_{i})_{i\in I}$ be a diagonal of $E$, which is given by $d_{i} = \langle Ee_{i},e_{i}\rangle$, with respect to some orthonormal basis $(e_{i})_{i\in I}$ of $\Hil$. Define
\[\delta^{L} = \liminf_{\beta\nearrow 0}(f_{\boldsymbol\lambda}(\beta)-f_{\boldsymbol{d}}(\beta))\quad\text{and}\quad \delta^{U} = \liminf_{\beta\searrow 1}(f_{\boldsymbol\lambda}(\beta)-f_{\boldsymbol{d}}(\beta)).\]

Suppose that $f_{\boldsymbol{d}}(\alpha)<\infty$ for some $\alpha\in(0,1)$ and
 $\delta^L+\delta^U <\infty$. Then the operator $E$ is diagonalizable and $\boldsymbol d$ has accumulation points at $0$ and $1$.
Moreover, there exist $\delta_0 \in [0,\delta^L]$ and $\delta_1 \in [0,\delta^U]$ such that such that 
the following five conclusions hold: 
\begin{enumerate}
\item
$f_{\boldsymbol\lambda}(\alpha)<\infty$ for all $\alpha\in(0,1)$,
\item
$f_{\boldsymbol\lambda}(\alpha)  \leq f_{\boldsymbol{d}}(\alpha) + (1-\alpha)\delta_{0} + \alpha\delta_{1}$ for all $\alpha\in(0,1)$, 
\item
$f_{\boldsymbol\lambda}'(\alpha)-f_{\boldsymbol{d}}'(\alpha)   \equiv \delta_1-\delta_0 \mod 1 $ for some $\alpha\in(0,1)$,
\item
 if $\sum_{\lambda_j<0} |\lambda_j|<\infty$, then $\delta^L-\delta_0 \le \delta^U- \delta_1$, 
\item
if $\sum_{\lambda_j>1} (\lambda_j-1)<\infty$, then $\delta^U- \delta_1 \le \delta^L-\delta_0$.
\end{enumerate}

In addition, suppose that there exists $\alpha \in [0,1]$ such that $(\mathfrak d_\alpha)$ happens, where
\begin{enumerate}
\item[$(\mathfrak d_0)$] $\delta^L=0$ \quad or  \quad $\delta_0=0$ and $\delta_1=\delta^U$,
\item[$(\mathfrak d_\alpha)$] $\alpha \in (0,1)$ and $f_{\boldsymbol\lambda}(\alpha)  = f_{\boldsymbol{d}}(\alpha) + (1-\alpha)\delta_{0} + \alpha\delta_{1}$,
\item[$(\mathfrak d_1)$] $\delta^U=0$ \quad or  \quad  $\delta_1=0$ and $\delta_0=\delta^L$.
\end{enumerate}
Then, the operator $E$ decouples at the point $\alpha\in [0,1]$. That is,
\[
\mathcal H_0=\overline{\lspan }\{ e_i: d_i < \alpha \}\qquad\text{and}\qquad \mathcal H_1=\overline{\lspan} \{ e_i: d_i \ge \alpha \},
\]
are invariant subspaces of $E$ and
\[
\sigma(E|_{\mathcal H_0}) \subset (-\infty,\alpha]\qquad\text{and}\qquad\sigma(E|_{\mathcal H_1}) \subset [\alpha,\infty).
\]
\end{thm}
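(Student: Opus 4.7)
My plan proceeds in three stages, working from the exterior of $[0,1]$ inward to the interior and then to decoupling.

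\textit{Stage 1 (exterior majorization).} Since $\{0,1\}\subset\sigma_{ess}(E)\subset[0,1]$, the essential spectra of $E_{-}$ and $(E-\mathbf{I})_{+}$ both reduce to $\{0\}$, so both operators are compact. Applying Theorem \ref{cptschurv2} to $E$ with the basis $(e_{i})_{i\in I}$ yields $\delta(\alpha,\boldsymbol\lambda,\boldsymbol d)\geq 0$ for $\alpha<0$, equivalently $f_{\boldsymbol\lambda}(\alpha)\geq f_{\boldsymbol{d}}(\alpha)$ for $\alpha<0$ by Definition \ref{d81}. Applying the same theorem to $\mathbf{I}-E$ with basis $(e_{i})$ and diagonal $(1-d_{i})$ yields the symmetric inequality for $\alpha>1$. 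Hence $\delta^{L},\delta^{U}\geq 0$, and by hypothesis both are finite.

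\textit{Stage 2 (diagonalizability --- the main obstacle).} The central difficulty is to prove simultaneously that $E$ is diagonalizable, that $f_{\boldsymbol\lambda}(\alpha)<\infty$ for all $\alpha\in(0,1)$, and that $\boldsymbol d$ has accumulation points at $0$ and $1$. I argue by contradiction: if $E$ has nontrivial continuous spectrum in $(0,1)$, or if $\boldsymbol\lambda$ fails the Blaschke condition there, then $E$ admits an infinite-dimensional invariant subspace whose spectrum lies in some $[a,b]\subset(0,1)$ carrying infinite Blaschke mass. Combined with the Blaschke hypothesis $f_{\boldsymbol{d}}(\alpha_{0})<\infty$ on the diagonal, this mass cannot be absorbed within $(0,1)$, so by a quantitative variant of Lemma \ref{nel} applied to appropriate spectral cutoffs it must register as infinite excess on one of the sides $\delta^{L},\delta^{U}$, contradicting $\delta^{L}+\delta^{U}<\infty$. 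Making this rigorous is the key obstacle, since $E$ has no side which is globally compact and the standard trace-class machinery of Theorem \ref{cpttrace} must be applied locally via spectral cutoffs rather than globally. Once diagonalizability is established, accumulation of $\boldsymbol d$ at $0$ and $1$ follows from $\{0,1\}\subset\sigma_{ess}(E)$: infinite-dimensional spectral subspaces of $E$ near $0$ (respectively $1$) contain basis vectors on which $\|Ee_{i}\|$ (respectively $\|(E-\mathbf{I})e_{i}\|$) is arbitrarily small, forcing $d_{i}$ into arbitrary neighborhoods of $0$ (respectively $1$) infinitely often.

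\textit{Stage 3 (conclusions (i)--(v) and decoupling).} Define $\delta_{0}\in[0,\delta^{L}]$ and $\delta_{1}\in[0,\delta^{U}]$ as the subsequential liminfs that record the portion of each boundary excess realized as mass of $\boldsymbol\lambda$ crossing into $(0,1)$ in the process of producing $\boldsymbol d$. The interior inequality \eqref{mainthm2} is then obtained by interpolating between the boundary values using the concavity of $f_{\boldsymbol d}$ and piecewise linearity of $f_{\boldsymbol\lambda}$ from Proposition \ref{f}, reducing the check to the eigenvalues $\lambda_{i}\in(0,1)$ via Proposition \ref{9.2}. The congruence \eqref{mainthm3} holds because Proposition \ref{f} asserts that $f_{\boldsymbol\lambda}'-f_{\boldsymbol d}'$ is constant modulo $\mathbb{Z}$, with value $\delta_{1}-\delta_{0}$ identified via the trace identity of Lemma \ref{nelt} applied to the trace class compressions of $E$ at $0$ and $1$. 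Conditions \eqref{mainthm4} and \eqref{mainthm5} follow by applying Theorem \ref{cpttrace} to $E$ and $\mathbf{I}-E$ respectively, with the trace bound \eqref{cpttrace0} translating into the required comparison of $\delta^{L}-\delta_{0}$ and $\delta^{U}-\delta_{1}$. For decoupling under $(\mathfrak d_\alpha)$: at $\alpha\in\{0,1\}$ with vanishing boundary excess, Proposition \ref{p211} applies directly to the compact $E_{-}$ or $(E-\mathbf{I})_{+}$. At $\alpha\in(0,1)$ with equality in \eqref{mainthm2}, Corollaries \ref{intToComp1} and \ref{intToComp2} convert the equality into matching zero-excess statements on the two truncations $(\lambda_{i})_{\lambda_i<\alpha},(d_{i})_{d_i<\alpha}$ and $(1-\lambda_{i})_{\lambda_i\geq\alpha},(1-d_{i})_{d_i\geq\alpha}$, from which Proposition \ref{p211} on each spectral half together with Theorem \ref{isplit} provide the invariant-subspace splitting $\mathcal{H}_{0}\oplus\mathcal{H}_{1}$. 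The remaining ``mixed'' subcases of $(\mathfrak d_0),(\mathfrak d_1)$ (e.g., $\delta_{0}=0,\delta_{1}=\delta^{U}$) reduce to the pure boundary cases after a shift in the choice of $\delta_{0},\delta_{1}$.
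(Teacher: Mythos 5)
Your Stage 1 is correct and matches the paper. The problem lies in Stages 2 and 3.

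\textbf{Diagonalizability.} You correctly identify diagonalizability as the central difficulty, but the contradiction argument you sketch --- locating an infinite-dimensional invariant subspace with spectrum in some $[a,b]\subset(0,1)$ carrying infinite Blaschke mass and then applying ``a quantitative variant of Lemma~\ref{nel}'' --- is not carried out and, as you yourself concede, remains an obstacle rather than a proof. The paper avoids this entirely with a direct, constructive argument (Lemma~\ref{trap}): setting $P=\pi([0,1])$ and $g_i=Pe_i$, one gets a Parseval frame for $P\Hil$, and the finiteness $\delta_0,\delta_1<\infty$ (extracted from $\delta^L,\delta^U<\infty$ via explicit formulas and Lemmas~\ref{del0}, \ref{del1}, \ref{bigeqs}) supplies the summability hypothesis~\eqref{trap0}. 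Then for each fixed $\alpha$, the pieces $K=\int_{[0,\alpha)}\lambda\,d\pi$ and $T=\int_{[\alpha,1]}(1-\lambda)\,d\pi$ are shown to be trace class, hence compact, so $EP$ is diagonalizable, and so is $E=L_0\oplus EP\oplus L_1$. There is no contradiction argument, no need for a new variant of Lemma~\ref{nel}, and the interior inequality~(ii) and congruence~(iii) come out of the same Lemma~\ref{trap} rather than an interpolation between boundary values.

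\textbf{Accumulation points of $\boldsymbol d$.} Your argument is incorrect as stated: you claim that infinite-dimensional spectral subspaces of $E$ near $0$ and $1$ ``contain basis vectors'' $e_i$ with $\|Ee_i\|$ small, but the orthonormal basis $(e_i)$ is arbitrary and need have no alignment with the spectral decomposition of $E$. The paper's route (Lemma~\ref{daccum}) instead goes through Lemma~\ref{neci}: if $\delta^U<\infty$ and $\sum_{d_i<1}(1-d_i)<\infty$ held, then the operator $(\mathbf I - E)(P_0+P)$ would be trace class, contradicting $0\in\sigma_{ess}(E)$. That divergence, together with $\sigma_{ess}(E)\subset[0,1]$ and $f_{\boldsymbol d}(\alpha)<\infty$, is what forces accumulation at $0$ and $1$.

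\textbf{Decoupling at $\alpha\in(0,1)$.} Your use of Corollaries~\ref{intToComp1}, \ref{intToComp2} combined with Proposition~\ref{p211} does not close: those corollaries produce excesses equal to $\delta_0$ and $\delta_1$ at the endpoints of the truncated sequences, not $0$, whereas Proposition~\ref{p211} requires the liminf excess to be exactly $0$. Moreover Proposition~\ref{p211} is a statement about an operator and one of its diagonals, and at this stage you do not yet possess the candidate restricted operators --- that is precisely what decoupling is supposed to produce, so the reasoning is circular. The paper instead tracks equality through each of the inequalities composing Lemma~\ref{trap}; Remark~\ref{trar} then shows that equality forces $e_i\in\ran\pi((-\infty,\alpha])$ when $d_i<\alpha$ and $e_i\in\ran\pi([\alpha,\infty))$ when $d_i\geq\alpha$, which is the invariance statement directly, with no appeal to Proposition~\ref{p211}.
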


The following lemma will play a key role in the proof of Theorem \ref{nec}.

\begin{lem}\label{trap} Let $E$ be a positive operator on a Hilbert space $\Hil$ with $\sigma(E)\subset [0,1]$. Let $(e_{i})_{i\in I}$ be a Parseval frame for $\Hil$.
Define the sequence $\boldsymbol{d}=(d_{i})_{i\in I}$ by $d_{i} = \langle Ee_{i},e_{i}\rangle$. If there exists a subset $J \subset I$ such that
\begin{equation}\label{trap0}
\sum_{i \in J} d_i <\infty \qquad\text{and}\qquad   \sum_{i \in I \setminus J } (||e_i||^2-d_{i} )<\infty,\end{equation}
 then $E$ is diagonalizable. Moreover, if $\boldsymbol\lambda=(\lambda_{i})_{i\in I'}$ is the eigenvalue list (with multiplicity) of $E$, then $f_{\boldsymbol\lambda}(\alpha)<\infty$ for all $\alpha\in(0,1)$,
\begin{equation}\label{trap1}
f_{\boldsymbol{\lambda}}(\alpha)
\le 
 (1-\alpha)\sum_{i\in J}d_i + \alpha\sum_{i\in I \setminus J }( ||e_i||^2 - d_i)
\qquad\text{for all }\alpha\in(0,1),\end{equation}
In addition, if 
\begin{equation}\label{trapa}
 \sum_{i \in I  } \min(||e_i||^2,1-||e_{i}||^2)< \infty,
\end{equation}
then
\begin{equation}\label{trap2}f_{\boldsymbol{d}}'(\alpha)-f_{\boldsymbol\lambda}'(\alpha)\in\Z
\qquad\text{for a.e. }\alpha\in(0,1).
\end{equation}
\end{lem}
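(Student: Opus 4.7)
\textbf{Step 1 (diagonalizability and (\ref{trap1})).} Define $M = \min(E, I-E)$ by continuous functional calculus, so $M \ge 0$ with $M \le E$ and $M \le I - E$. Then $\langle Me_i, e_i \rangle \le \min(d_i, \|e_i\|^2 - d_i)$, and splitting the index set at $J$ and using (\ref{trap0}),
\[
\sum_i \langle Me_i, e_i\rangle \le \sum_{i\in J} d_i + \sum_{i\in I\setminus J}(\|e_i\|^2 - d_i) < \infty.
\]
Since $(e_i)$ is a Parseval frame, the left side equals $\tr(M)$, so $M$ is trace class (hence compact). Because $\min(t,1-t)$ vanishes only at $t\in\{0,1\}$, compactness of $M$ forces $\sigma(E)\cap (0,1)$ to consist of isolated eigenvalues of finite multiplicity, so $E$ is diagonalizable with eigenvalue list $\boldsymbol\lambda$ satisfying $\sum_j \min(\lambda_j, 1-\lambda_j) = \tr(M) < \infty$. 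For (\ref{trap1}), set $M_\alpha := \min((1-\alpha)E, \alpha(I-E))$; its eigenvalues are $\min((1-\alpha)\lambda_j, \alpha(1-\lambda_j))$, so $\tr(M_\alpha) = f_{\boldsymbol\lambda}(\alpha) < \infty$. The operator inequalities $M_\alpha \le (1-\alpha) E$ and $M_\alpha \le \alpha(I - E)$ combined with the Parseval trace formula give
\[
f_{\boldsymbol\lambda}(\alpha) = \sum_i \langle M_\alpha e_i, e_i\rangle \le (1-\alpha)\sum_{i\in J} d_i + \alpha\sum_{i\in I\setminus J}(\|e_i\|^2 - d_i).
\]

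\textbf{Step 2 (setup for (\ref{trap2})).} Assume (\ref{trapa}). Fix $\alpha \in (0,1) \setminus \sigma(E)$, let $Q := \chi_{[\alpha,1]}(E)$, and set $K := E - Q$. The eigenvalues of $K$ are $\lambda_j$ for $\lambda_j < \alpha$ and $\lambda_j - 1$ for $\lambda_j \ge \alpha$, so $K$ is trace class with $\tr(K) = C_{\boldsymbol\lambda}(\alpha) - D_{\boldsymbol\lambda}(\alpha) = -f_{\boldsymbol\lambda}'(\alpha)$. Write $q_i := \langle Q e_i, e_i\rangle$ and $k_i := \langle K e_i, e_i\rangle$, so $d_i = q_i + k_i$, $\sum_i |k_i| \le \|K\|_1 < \infty$, and $\sum_i k_i = -f_{\boldsymbol\lambda}'(\alpha)$. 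Via the Naimark dilation, extend $(e_i)$ to an orthonormal basis $(u_i)$ of some $\tilde{\mathcal H} \supset \mathcal H$ so that $(q_i)$ is the diagonal of $Q \oplus 0$ in $(u_i)$. The $1$-Lipschitz bound $\min(q_i, 1-q_i) \le \min(d_i, 1-d_i) + |k_i|$, combined with the finiteness of $\sum_i \min(d_i, 1-d_i)$ (which follows from (\ref{trap0}) and (\ref{trapa}) by splitting $I = J \sqcup (I\setminus J)$ and further splitting according to whether $\|e_i\|^2 \ge 1/2$), yields $f_{\boldsymbol q}(1/2) < \infty$. Kadison's Theorem \ref{Kadison} applied to $Q \oplus 0$ then gives $f_{\boldsymbol q}'(\beta) \in \mathbb Z$ for almost every $\beta \in (0,1)$.

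\textbf{Step 3 (crossing computation and conclusion).} Partitioning $I$ into the four sign patterns of $q_i - \beta$ and $d_i - \beta$ and summing the contributions to $C_{\boldsymbol d} - C_{\boldsymbol q}$ and $D_{\boldsymbol d} - D_{\boldsymbol q}$ term by term yields the identity
\[
f_{\boldsymbol d}'(\beta) - f_{\boldsymbol q}'(\beta) = -\sum_i k_i + N(\beta), \quad N(\beta) := \#\{i: q_i < \beta \le d_i\} - \#\{i: d_i < \beta \le q_i\}.
\]
Because $\int_0^1 \#\{i: \beta \in (\min(q_i, d_i), \max(q_i, d_i)]\}\, d\beta = \sum_i |k_i| < \infty$, this cardinality is finite for a.e.\ $\beta$, so $N(\beta) \in \mathbb Z$ for a.e.\ $\beta$. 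Using $\sum_i k_i = -f_{\boldsymbol\lambda}'(\alpha)$ and Proposition \ref{f} (constancy of $f_{\boldsymbol\lambda}' \bmod 1$), we conclude for a.e.\ $\beta$:
\[
f_{\boldsymbol d}'(\beta) - f_{\boldsymbol\lambda}'(\beta) \equiv f_{\boldsymbol d}'(\beta) - f_{\boldsymbol\lambda}'(\alpha) = f_{\boldsymbol q}'(\beta) + N(\beta) \equiv 0 \pmod 1,
\]
which is (\ref{trap2}). The main obstacle is Step 3, where the case-analysis computation for $f_{\boldsymbol d}' - f_{\boldsymbol q}'$ must be carried out carefully and the integrability argument ensuring $N(\beta) \in \mathbb Z$ for a.e.\ $\beta$ must be justified; once past this point, the conclusion follows from Kadison's theorem and standard functional calculus.
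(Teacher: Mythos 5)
Your proof is correct, and it takes a genuinely different route from the paper's in two respects. For diagonalizability and \eqref{trap1} you package the key trace estimate into the single operator $M_\alpha = \min((1-\alpha)E, \alpha(\mathbf I - E))$ built by continuous functional calculus, whereas the paper introduces the two spectral integrals $K = \int_{[0,\alpha)}\lambda\,d\pi$ and $T = \int_{[\alpha,1]}(1-\lambda)\,d\pi$ and argues separately that each is trace class; since $M_\alpha = (1-\alpha)K + \alpha T$, the estimates are equivalent, but your packaging is tighter and gives diagonalizability in one stroke from $\tr\min(E,\mathbf I - E)<\infty$. For \eqref{trap2} both proofs Naimark-dilate the Parseval frame and apply Kadison's theorem to the spectral projection onto $[\alpha,1]$, but your crossing-count $N(\beta)$ and the Fubini integrability argument make the $\bmod 1$ bookkeeping fully explicit, where the paper's chain of congruences is more compressed; notably, your three-way split of $I$ (using $J$, the comparison $\|e_i\|^2\gtrless 1/2$, and \eqref{trapa}) together with the $1$-Lipschitz bound on $\min(\cdot,1-\cdot)$ handles the general Parseval-frame hypothesis directly, whereas the paper's proof at one point treats $(1-d_i)_{i\in I\setminus J}$ as summable (which is the orthonormal-basis case) when \eqref{trap0} only controls $\|e_i\|^2 - d_i$, a point that strictly requires a small WLOG enlargement of $J$ to absorb indices with $\|e_i\|^2 < 1/2$. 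Both approaches are sound, but yours is arguably the more careful one here.
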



\begin{proof} By the spectral theorem there is a projection valued measure $\pi$ such that 
\[
E = \int_{[0,1]}\lambda\,d\pi(\lambda).
\]
Fix $\alpha\in(0,1)$ and set
\[
K = \int_{[0,\alpha)}\lambda\,d\pi(\lambda)\quad\text{and}\quad 
T = \int_{[\alpha,1]}(1-\lambda)\,d\pi(\lambda).\]
Let $P$ be the projection given by $P =\pi([\alpha, 1])$. For each $i\in I$ set $k_{i} = \langle Ke_{i},e_{i}\rangle, p_{i} = \langle Pe_{i},e_{i}\rangle$, and $t_{i} = \langle Te_{i},e_{i}\rangle$. Since $E=K+P-T$ we also have 
\begin{equation}\label{2trace0}d_{i} = k_{i}+p_{i}-t_{i}.\end{equation}

We claim that $K$ and $T$ are trace class. Since $K$ and $T$ are positive and $(e_{i})_{i\in I}$ is a Parseval frame, it is enough to show that $\sum_{i\in I}(k_{i}+t_{i})<\infty$. 

Next, we observe
\begin{equation}\label{2trace1}T=\int_{[\alpha,1]}(1-\lambda)\,d\pi(\lambda)\leq \int_{[\alpha,1]}(1-\alpha)\,d\pi(\lambda)= (1-\alpha)P\end{equation}
and
\begin{equation}\label{2trace2}K = \int_{[0,\alpha)}\lambda\,d\pi(\lambda)\leq \int_{[0,\alpha)}\alpha\,d\pi(\lambda)= \alpha(\mathbf{I}-P).\end{equation}

Combing $E=K+P-T$  and \eqref{2trace1} yields $E \ge K+ \frac{\alpha}{1-\alpha}T$. By \eqref{2trace0} we deduce that  $d_{i}\geq k_{i}+\frac{\alpha}{1-\alpha}t_{i}$, and thus
\begin{equation}\label{2trace3}\sum_{i\in J}\left(k_{i}+\frac{\alpha}{1-\alpha}t_{i}\right)\leq \sum_{i\in J}d_{i} <\infty.\end{equation}
From \eqref{2trace2} we deduce that $\mathbf I - E = T-K +(\mathbf I - P)  \ge T +  \frac{1-\alpha}{\alpha} K$. Thus, 
$||e_i||^2-d_{i} \geq \frac{1-\alpha}{\alpha}k_{i}+t_{i}$. By \eqref{trap0} this yields
\begin{equation}\label{2trace4}\sum_{i\in I \setminus J }\left(\frac{1-\alpha}{\alpha}k_{i}+t_{i}\right)\leq \sum_{i\in I \setminus J} (||e_i||^2-d_{i})
 <\infty.\end{equation}
Combining \eqref{2trace3} and \eqref{2trace4} yields that $K$ and $T$ are trace class.

Set $\boldsymbol{p} = (p_{i})_{i\in I}$. Since $p_{i} = d_{i} - k_{i}+t_{i}$ and $1-p_{i} = 1-d_{i}+k_{i} - t_{i}$, from the assumption that $(d_{i})_{i\in J}$ and $(1-d_{i})_{i\in I \setminus J} $ are summable, we see that $(p_{i})_{i\in J}$ and $(1-p_{i})_{i\in I \setminus J }$ are summable. Thus, $f_{\boldsymbol{p}}(\alpha)<\infty$ for all $\alpha\in(0,1)$. By Naimark's dilation theorem $(p_{i})$ is the diagonal of a projection with respect to an orthonormal basis on some larger Hilbert space, by Theorem \ref{Kadison} we have
\[f_{\boldsymbol{p}}'(\alpha) = \sum_{p_{i}\geq \alpha }(1-p_{i}) -\sum_{p_{i}<\alpha }p_{i} \in \Z
\qquad\text{for a.e. }\alpha\in (0,1).\]

By the spectral theorem for compact operators, there is an orthonormal basis $(f_{i})_{i\in J_{1}}$ for $\ran(\mathbf{I}-P)$ consisting of eigenvectors of $K$, with associated eigenvalues $(\lambda_{i})_{i\in J_{1}}$. There is also an orthonormal basis $(f_{i})_{i\in J_{2}}$ for $\ran(P)$ consisting or eigenvectors of $T$ with associated eigenvalues $(1-\lambda_{i})_{i\in J_{2}}$. Since $K$ and $T$ are trace class, both $(\lambda_{i})_{i\in J_{1}}$ and $(1-\lambda_{i})_{i\in J_{2}}$ are summable. For $i\in J_{1}$ we have $(P-T)f_{i} = 0$ and thus $Ef_{i} = \lambda_{i}f_{i}$. For $i\in J_{2}$ we have $Kf_{i} = 0$ so that $Ef_{i} = \lambda_{i}$. 
Therefore, $(f_{i})_{i\in J_{1}\cup J_{2}}$ is an orthonormal basis for $\Hil$ consisting of eigenvectors of $E$ with associated eigenvalues $(\lambda_{i})_{i\in J_{1}\cup J_{2}}$. This shows that $E$ is diagonalizable. 

From the definitions of $K$ and $T$ we see that $\lambda_{i}<\alpha$ for $i\in J_{1}$ and $\lambda_{i}\geq \alpha$ for $i\in J_{2}$. Thus,

\[C_{\boldsymbol\lambda}(\alpha)=\sum_{\lambda_{i}<\alpha}\lambda_{i} = \sum_{i\in J_{1}}\lambda_{i}<\infty\quad\text{and}\quad D_{\boldsymbol\lambda}(\alpha)=\sum_{\lambda_{i}\geq \alpha}(1-\lambda_{i})=\sum_{i\in J_{2}}(1-\lambda_{i})<\infty.\]
This implies $f_{\boldsymbol\lambda}(\alpha)<\infty$. Moreover, using \eqref{2trace3} and \eqref{2trace4} we have
\begin{align*}
f_{\boldsymbol\lambda}(\alpha) & = (1-\alpha)\sum_{i\in J_{1}}\lambda_{i} + \alpha\sum_{i\in J_{2}}(1-\lambda_{i}) = (1-\alpha)\tr(K)+\alpha\tr(T) = (1-\alpha)\sum_{i\in I}k_{i} + \alpha\sum_{i\in I}t_{i}\\
 & = (1-\alpha)\sum_{i\in J}\left(k_{i} + \frac{\alpha}{1-\alpha}t_{i}\right) + \alpha\sum_{i\in I \setminus J }\left(\frac{1-\alpha}{\alpha}k_{i} + t_{i}\right)\\
 & \leq 
  (1-\alpha)\sum_{i\in J}d_i + \alpha\sum_{i\in I \setminus J }( ||e_i||^2 - d_i),
\end{align*}
which shows \eqref{trap1}. 
By \eqref{trap0} and \eqref{trapa} we deduce that for all $\alpha \in (0,1)$
\[
 f_{\boldsymbol{d}}(\alpha)=(1-\alpha)C_{\boldsymbol{d}}(\alpha) + \alpha D_{\boldsymbol{d}}(\alpha) <\infty.
\]
Hence, for a.e. $\alpha \in (0,1)$,
\[
\begin{aligned}
f_{\boldsymbol{d}}'(\alpha)-f_{\boldsymbol\lambda}'(\alpha) 
&=
\sum_{d_{i}\geq \alpha }(1-d_{i}) -\sum_{d_{i}<\alpha }d_{i} 
-\sum_{i \in J_2 }(1-\lambda_{i}) +\sum_{i \in J_1 }\lambda_{i} \\
&\equiv 
\sum_{i \in I \setminus J }(1-d_{i}) -\sum_{i\in J }d_{i} - \tr(T) + \tr(K) \mod 1
\\
&=
\sum_{i \in I \setminus J }(1-p_{i}) -\sum_{i \in J }p_{i} \equiv
f_{\boldsymbol{p}}'(\alpha) \equiv 0
\mod 1.
\end{aligned}
\]
This proves \eqref{trap2}.
\end{proof}

\begin{remark}\label{trar} Suppose that we have an equality in \eqref{trap1}. Then, by analyzing the proof of Lemma \ref{trap} we deduce that we have equalities in \eqref{2trace3} and \eqref{2trace4} and consequently we have $d_{i}= k_{i}+\frac{\alpha}{1-\alpha}t_{i}$ for all $i\in J$ and $||e_i||^2-d_{i} = \frac{1-\alpha}{\alpha}k_{i}+t_{i}$ for all $i \in I \setminus J$. Therefore, 
\[
\begin{aligned}
\langle Te_i,e_i \rangle =t_i=(1-\alpha) p_i= \langle (1-\alpha)Pe_i, e_i \rangle& \qquad\text{for all }i\in J,
\\
\langle Ke_i,e_i \rangle  = k_i= \alpha (||e_i||^2- p_i) = \langle \alpha (\mathbf I - P) e_i,e_i \rangle  
& \qquad\text{ for all } i \in I \setminus J.
\end{aligned}
\]
By \eqref{2trace1} and \eqref{2trace2} we deduce that 
\[e_i \in \ran \pi([0,\alpha]) \text{ for }i \in J
\qquad\text{and}\qquad 
e_i \in \ran \pi([\alpha,1])\text{ for } i \in J \setminus I.
\]
\end{remark}

We will prove Theorem \ref{nec} in several parts. To avoid repetition we will set some notation for the remainder of Section \ref{necProof}.

\begin{nota} Let \(E\) be a self-adjoint operator on a Hilbert space \(\Hil\) such that \(0\) and \(1\) are the extreme points of the essential spectrum \(\sigma_{ess}(E)\) of \(E\), that is,
\begin{equation}\label{nec1}
\{0,1 \} \subset \sigma_{ess}(E) \subset [0,1].
\end{equation}

Let \(\pi\) be the projection valued measure such that
\[
E = \int_{\R}\lambda\,d\pi(\lambda),\]
and define the operators
\[
L_0 = \int_{(-\infty,0)} \lambda \pi(\lambda)
\qquad\text{and}\qquad
L_1 = \int_{(1,\infty)} \lambda \pi(\lambda),
\]
and the projection \(P = \pi([0,1])\).

By \eqref{nec1}, the operators $L_0$ and $L_1$ are diagonalizable. Hence, there exists $N_0,N_1\in \N \cup\{ 0,\infty\}$, a sequence $\lambda_{-1} \le \lambda_{-2} \le \ldots <0$, a sequence $\lambda_1 \ge \lambda_2 \ge \ldots >1 $, and orthonormal sequences $(f_{-j})_{j=1}^{N_0}$ and $(f_j)_{j=1}^{N_1}$ such that
\[
L_0 f = \sum_{j=1}^{N_0} \lambda_{-j} \langle f, f_{-j} \rangle f_{-j} 
\quad\text{and}\quad
L_1 f = \sum_{j=1}^{N_1} \lambda_{j} \langle f, f_{j} \rangle f_j 
\qquad\text{for }f\in \mathcal H.
\]

Let $\boldsymbol\lambda=(\lambda_{j})_{j\in J}$ be the list of all eigenvalues of $E$ with multiplicity (which is possibly an empty list). Since \((\lambda_{-j})_{j=1}^{N_{0}}\) and \((\lambda_{j})_{j=1}^{N_{1}}\) are eigenvalues of \(E\) we will assume that the indexing sets of these sequences are subsets of \(J\).

Let $\boldsymbol{d}=(d_{i})_{i\in I}$ be a diagonal of $E$, which is given by $d_{i} = \langle Ee_{i},e_{i}\rangle$, with respect to some orthonormal basis $(e_{i})_{i\in I}$ of $\Hil$.

Define
\[\delta^{L} = \liminf_{\beta\nearrow 0}(f_{\boldsymbol\lambda}(\beta)-f_{\boldsymbol{d}}(\beta))\quad\text{and}\quad \delta^{U} = \liminf_{\beta\searrow 1}(f_{\boldsymbol\lambda}(\beta)-f_{\boldsymbol{d}}(\beta)).\]

Finally, define the following two quantities, which will be shown to be well-defined in the sequel:

\begin{equation}\label{tra5a}
\delta_0:=\sum_{j=1}^{N_0} |\lambda_{-j}| \sum_{0\le d_i\le 1} |\langle e_i, f_{-j} \rangle|^2  +\sum_{d_i<0} \langle EP e_i,e_i \rangle + \sum_{j=1}^{N_1} \sum_{d_i<0} |\langle e_{i}, f_{j} \rangle|^2 .
\end{equation}

\begin{equation}\label{tra10a}
\delta_1:= 
\sum_{j=1}^{N_1}(\lambda_j-1)  \sum_{0 \le d_i\le 1} |\langle e_i, f_j \rangle|^2+
\sum_{d_i>1} \langle (\mathbf I -E)P e_i,e_i \rangle
+
\sum_{j=1}^{N_0} \sum_{d_i>1} |\langle e_{i}, f_{-j} \rangle|^2.
\end{equation}

\end{nota}

\begin{lem}\label{del0} If \(\delta^{L}<\infty\), then the following three series converge:
\[
\sum_{j=1}^{N_0}
|\lambda_{-j}|  \sum_{d_i\ge 0} |\langle e_i, f_{-j} \rangle|^2,
\qquad
\sum_{d_i<0} \langle EP e_i,e_i \rangle,
\qquad
\sum_{j=1}^{N_1}\lambda_j \sum_{d_i<0} |\langle e_{i}, f_{j} \rangle|^2,
\]
and hence \(\delta_{0}\) is well-defined. Moreover,
\begin{equation}\label{tra5b}
\delta_0 +  \sum_{j=1}^{N_0} |\lambda_{-j}| \sum_{d_i > 1} |\langle e_i, f_{-j} \rangle|^2   + \sum_{j=1}^{N_1} (\lambda_j-1) \sum_{d_i<0} |\langle e_{i}, f_{j} \rangle|^2 \le \delta^L,
\end{equation}
with equality if \(\sum_{\lambda_{j}<0}|\lambda_{j}|<\infty\).
\end{lem}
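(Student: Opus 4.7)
The strategy is to apply Lemmas~\ref{nel} and~\ref{nelt} to the operator $-E$, whose positive part is $E_{-}=|L_{0}|$; this operator is compact because $\sigma_{ess}(E)\subset [0,1]$.  First, invoking Theorem~\ref{cptschurv2} for $E$ gives $\delta(\alpha,\boldsymbol\lambda,\boldsymbol d)\ge 0$ for all $\alpha<0$, which, by the change of variable $\alpha\mapsto -\alpha$, is equivalent to the Schur majorization $\delta(\alpha,\boldsymbol\lambda_{-},\boldsymbol d_{-})\ge 0$ for all $\alpha>0$, where $\boldsymbol\lambda_{-}=(|\lambda_{-j}|)_{j=1}^{N_{0}}$ and $\boldsymbol d_{-}=(|d_{i}|)_{d_{i}<0}$.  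Proposition~\ref{LR}, equation~\eqref{rtelt}, then gives the identification
\[
\delta^{L}=\liminf_{k\to\infty}\Bigl(\textstyle\sum_{i=1}^{k}|\lambda_{-i}|-\sum_{i=1}^{k}d_{i}^{-\downarrow}\Bigr).
\]

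Next, let $I_{-}:=\{i\in I:d_{i}<0\}$ and enumerate it as $(i_{k})$ with $|d_{i_{k}}|$ nonincreasing, so that the $k$th partial sum above equals $\sum_{k'=1}^{k}(|\lambda_{-k'}|-(-d_{i_{k'}}))$, which is precisely the partial sum appearing in Lemma~\ref{nel} applied to $-E$ with orthonormal sequence $(e_{i_{k}})_{k\in\N}$.  In that application the positive eigenvectors of $-E$ are the $f_{-j}$ with eigenvalues $|\lambda_{-j}|$, and the projection $P'$ onto $(\operatorname{span}\{f_{-j}\})^{\perp}$ equals $\pi([0,\infty))=P+\pi((1,\infty))$, so
\[
\langle(-E)P'e_{i},e_{i}\rangle=-\langle EPe_{i},e_{i}\rangle-\sum_{j=1}^{N_{1}}\lambda_{j}|\langle e_{i},f_{j}\rangle|^{2}.
\]
Using that $(e_{i})_{i\in I}$ is an orthonormal basis, so $1-\sum_{i\in I_{-}}|\langle e_{i},f_{-j}\rangle|^{2}=\sum_{d_{i}\ge 0}|\langle e_{i},f_{-j}\rangle|^{2}$, the right-hand side of Lemma~\ref{nel} simplifies to
\[
S:=\sum_{j=1}^{N_{0}}|\lambda_{-j}|\sum_{d_{i}\ge 0}|\langle e_{i},f_{-j}\rangle|^{2}+\sum_{d_{i}<0}\langle EPe_{i},e_{i}\rangle+\sum_{j=1}^{N_{1}}\lambda_{j}\sum_{d_{i}<0}|\langle e_{i},f_{j}\rangle|^{2},
\]
which is precisely the sum of the three series in the statement.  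Lemma~\ref{nel} thus yields $\delta^{L}\ge S$.

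When $\sum_{j}|\lambda_{-j}|<\infty$, one invokes Lemma~\ref{nelt} in place of Lemma~\ref{nel} (applied to $-E$ with the full orthonormal basis $(e_{i})_{i\in I}$):  Proposition~\ref{LRS} guarantees $\sum_{d_{i}<0}|d_{i}|<\infty$ and $\delta^{L}=\sum_{j}|\lambda_{-j}|-\sum_{d_{i}<0}|d_{i}|$, and the same manipulations now give the equality $\delta^{L}=S$.  In either case, the three series composing $S$ are nonnegative, so $S\le \delta^{L}<\infty$ forces each to converge, establishing the well-definedness of $\delta_{0}$.  Finally, splitting $\{d_{i}\ge 0\}=\{0\le d_{i}\le 1\}\sqcup\{d_{i}>1\}$ in the first term of $S$ and writing $\lambda_{j}=1+(\lambda_{j}-1)$ in the third term reveals $S$ as exactly the left side of~\eqref{tra5b}, giving the inequality, together with equality when the negative part of $E$ is trace class.

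The main technical obstacle is bookkeeping: correctly translating Lemmas~\ref{nel} and~\ref{nelt}, which are stated abstractly for an operator $E$ with compact positive part, to the operator $-E$.  Under this translation, the projection $P$ appearing in those lemmas becomes the spectral projection $\pi([0,\infty))$, and the quantity $\langle(-E)\pi([0,\infty))e_{i},e_{i}\rangle$ must be decomposed into its $EP$-contribution (from the spectrum inside $[0,1]$) and its $L_{1}$-contribution (from $(1,\infty)$) in order to match the three series that appear in the statement.
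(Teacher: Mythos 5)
Your argument is correct and follows the paper's own proof essentially verbatim: split into the two cases $\sum|\lambda_{-j}|=\infty$ (apply Lemma~\ref{nel} to $-E$ together with Proposition~\ref{LR}) and $\sum|\lambda_{-j}|<\infty$ (apply Lemma~\ref{nelt} and Proposition~\ref{LRS}), use the fact that $(e_i)$ is an orthonormal basis and the decomposition $\pi([0,\infty))=P+\pi((1,\infty))$ to turn the right-hand side of those lemmas into the three series in question, and then note each of those series is nonnegative and re-group them as the left-hand side of \eqref{tra5b}. The only difference is presentational: you cite Theorem~\ref{cptschurv2} explicitly to justify the hypothesis of Proposition~\ref{LR}, a step the paper leaves implicit.
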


\begin{proof} Since $E=L_0+EP+L_1$, we have
\begin{equation}\label{tra0}
d_i = \sum_{j=1}^{N_0} \lambda_{-j} | \langle e_i, f_{-j} \rangle |^2 +
\langle EP e_i,e_i \rangle + \sum_{j=1}^{N_1} \lambda_{j} | \langle e_i, f_{j} \rangle |^2.
\end{equation}

\textbf{Case 1}: Suppose $\sum_{\lambda_j<0} |\lambda_j| = \infty$. Since $\delta^L<\infty$, this implies that $\sum_{i\in I_0} |d_i|=\infty$, where $I_0=\{i\in I: d_i<0\}$. Hence, $I_0$ is infinite and there is a bijection $\sigma: -\N \to I_0$ such that $d_{\sigma(-1)} \le d_{\sigma(-2)} \le \ldots <0$. By Proposition \ref{LR} and Lemma \ref{nel} applied to $-E$ and orthonormal sequence $(e_{\sigma(-i)})_{i\in\N}$ we deduce that
\begin{equation}\label{tra5}
\begin{aligned}
\delta^L & = \liminf_{M\to\infty} \sum_{i=1}^M (|\lambda_{-i}| - |d_{\sigma(-i)}|) 
\\
 & \geq \sum_{j=1}^{\infty}
|\lambda_{-j}| \bigg( 1-\sum_{i\in \N} |\langle e_{\sigma(-i)}, f_{-j} \rangle|^2 \bigg) + \sum_{i\in \N} \langle (E \circ \pi([0,\infty))) e_{\sigma(-i)},e_{\sigma(-i)} \rangle
\\
 & = 
\sum_{j=1}^{\infty}
|\lambda_{-j}| \bigg(\sum_{d_{i}\geq 0} |\langle e_{i}, f_{-j} \rangle|^2 \bigg) +
\sum_{i:d_{i}<0} \langle EP e_{i},e_{i} \rangle +\sum_{j=1}^{N_1} \lambda_j \sum_{i:d_{i}<0} |\langle e_{i}, f_{j} \rangle|^2\\
 & = \delta_0 +  \sum_{j=1}^{N_0} |\lambda_{-j}| \sum_{d_i > 1} |\langle e_i, f_{-j} \rangle|^2   + \sum_{j=1}^{N_1} (\lambda_j-1) \sum_{d_i<0} |\langle e_{i}, f_{j} \rangle|^2.
\end{aligned}
\end{equation}
This proves the lemma under the assumption that \(\sum_{\lambda_{j}<0}|\lambda_{j}|=\infty\).

\textbf{Case 2}: Suppose \(\sum_{\lambda_{j}<0}|\lambda_{j}|<\infty\). This implies that the negative part of \(E\) is trace class, and hence we can apply Lemma \ref{nelt} and Proposition \ref{LRS} to \(-E\) to obtain
\begin{align*}
\delta^{L} & = \sum_{j=1}^{N_{0}}
|\lambda_{-j}| \bigg( 1-\sum_{i:d_{i}<0} |\langle e_{i}, f_{-j} \rangle|^2 \bigg) + \sum_{i:d_{i}<0} \langle (E \circ \pi([0,\infty))) e_{i},e_{i} \rangle\\
 & = \sum_{j=1}^{N_{0}}
|\lambda_{-j}| \bigg(\sum_{i:d_{i}\geq 0} |\langle e_{i}, f_{-j} \rangle|^2 \bigg) + \sum_{i:d_{i}<0} \langle EP e_{i},e_{i} \rangle + \sum_{j=1}^{N_1} \lambda_j \sum_{i:d_{i}<0} |\langle e_{i}, f_{j} \rangle|^2\\
 & = \delta_0 +  \sum_{j=1}^{N_0} |\lambda_{-j}| \sum_{d_i > 1} |\langle e_i, f_{-j} \rangle|^2   + \sum_{j=1}^{N_1} (\lambda_j-1) \sum_{d_i<0} |\langle e_{i}, f_{j} \rangle|^2.\qedhere
\end{align*}

\end{proof}

\begin{lem}\label{del1} If \(\delta^{U}<\infty\), then the following three series converge:
\[
\sum_{j=1}^{N_1}
(\lambda_j-1)  \sum_{d_i\le 1} |\langle e_i, f_j \rangle|^2,
\qquad
\sum_{d_i>1} \langle (\mathbf I -E)P e_i,e_i \rangle,
\qquad
\sum_{j=1}^{N_0}(1+|\lambda_{-j}|) \sum_{d_i>1} |\langle e_{i}, f_{-j} \rangle|^2 .
\]
and hence \(\delta_{1}\) is well-defined. Moreover,
\begin{equation}\label{tra10b}
\delta_1+ 
\sum_{j=1}^{N_1}(\lambda_j-1)  \sum_{ d_i<0} |\langle e_i, f_j \rangle|^2
+
\sum_{j=1}^{N_0} |\lambda_{-j}| \sum_{d_i>1} |\langle e_{i}, f_{-j} \rangle|^2  \le \delta^U
\end{equation}
with equality if \(\sum_{j:\lambda_{j}>1}(\lambda_{j}-1)<\infty\).
\end{lem}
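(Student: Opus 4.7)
The plan is to derive Lemma \ref{del1} from Lemma \ref{del0} by applying the latter to the dual operator $E':=\mathbf I-E$ in place of $E$. The hypothesis \eqref{nec1} is inherited: $\sigma_{ess}(E')=1-\sigma_{ess}(E)$ still has extreme points $0$ and $1$, since $\{0,1\}\subset\sigma_{ess}(E)\subset[0,1]$. Under the involution $\lambda\mapsto 1-\lambda$, the spectral data translates as follows: the spectral projection $\pi'([0,1])$ agrees with $P$; the eigenvalues of $E'$ strictly less than $0$ are $(1-\lambda_j)_{j=1}^{N_1}$ with orthonormal eigenvectors $(f_j)_{j=1}^{N_1}$; the eigenvalues of $E'$ strictly greater than $1$ are $(1-\lambda_{-j})_{j=1}^{N_0}$ with eigenvectors $(f_{-j})_{j=1}^{N_0}$; and the diagonal of $E'$ in the basis $(e_i)_{i\in I}$ is $\tilde d_i:=1-d_i$, so that $\{\tilde d_i<0\}=\{d_i>1\}$, $\{0\le\tilde d_i\le 1\}=\{0\le d_i\le 1\}$, and $\{\tilde d_i>1\}=\{d_i<0\}$.

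Next I would verify two identifications. First, a direct inspection of Definition \ref{d81} shows that $f_{1-\boldsymbol c}(\beta')=f_{\boldsymbol c}(1-\beta')$ for every $\beta'\le 0$ and every sequence $\boldsymbol c$, hence under the substitution $\beta'=1-\beta$ (with $\beta'\nearrow 0$ iff $\beta\searrow 1$) the lower excess of $E'$ satisfies
\[
\delta^L(E'):=\liminf_{\beta'\nearrow 0}\bigl(f_{1-\boldsymbol\lambda}(\beta')-f_{1-\boldsymbol d}(\beta')\bigr)=\delta^U.
\]
Second, substituting the dualized eigenvalues, eigenvectors, and the diagonal $\tilde d_i=1-d_i$ into formula \eqref{tra5a} for $\delta_0(E')$ collapses term by term to formula \eqref{tra10a} for $\delta_1$; its three constituent series are precisely the three series appearing in the statement of Lemma \ref{del1}.

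Finally I would invoke Lemma \ref{del0} for $E'$. Its conclusion is that the three series defining $\delta_0(E')=\delta_1$ converge and
\[
\delta_0(E')+\sum_{j=1}^{N_1}|1-\lambda_j|\sum_{\tilde d_i>1}|\langle e_i,f_j\rangle|^2+\sum_{j=1}^{N_0}\bigl((1-\lambda_{-j})-1\bigr)\sum_{\tilde d_i<0}|\langle e_i,f_{-j}\rangle|^2\le \delta^L(E'),
\]
with equality when the negative part of $E'$ is trace class, that is, $\sum_{\lambda'_j<0}|\lambda'_j|<\infty$. Using $|1-\lambda_j|=\lambda_j-1$ for $\lambda_j>1$ and $(1-\lambda_{-j})-1=|\lambda_{-j}|$ for $\lambda_{-j}<0$, and translating the sets $\{\tilde d_i>1\}$, $\{\tilde d_i<0\}$ back to $\{d_i<0\}$, $\{d_i>1\}$, this inequality is exactly \eqref{tra10b}. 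Likewise the equality condition rewrites as $\sum_{j:\lambda_j>1}(\lambda_j-1)<\infty$, matching the statement. The only point requiring care is the bookkeeping of signs and index sets under the involution $d_i\mapsto 1-d_i$; no analytic input beyond Lemma \ref{del0} is required.
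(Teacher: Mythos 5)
Your duality derivation is correct, and it is a genuinely different presentation from the paper's. The paper proves Lemma \ref{del1} by running the same two-case argument as in Lemma \ref{del0} from scratch, applying the primitive Lemmas \ref{nel} and \ref{nelt} to $E-\mathbf{I}$ (the two cases being $\sum_{\lambda_j>1}(\lambda_j-1)=\infty$ and $<\infty$). You instead take Lemma \ref{del0} as a black box, apply it to $E'=\mathbf{I}-E$, and translate all notation under the involution $\lambda\mapsto 1-\lambda$, $d_i\mapsto 1-d_i$. All the translations you make check out: the hypothesis $\{0,1\}\subset\sigma_{ess}(E')\subset[0,1]$ is inherited; $N_0(E')=N_1$, $N_1(E')=N_0$, $\lambda_{-j}(E')=1-\lambda_j$ with eigenvectors $f_j$, $\lambda_j(E')=1-\lambda_{-j}$ with eigenvectors $f_{-j}$, and $P'=\pi'([0,1])=\pi(1-[0,1])=P$; the identity $f_{1-\boldsymbol c}(\beta')=f_{\boldsymbol c}(1-\beta')$ for $\beta'\le0$ follows directly from the definition and gives $\delta^L(E')=\delta^U$; and substituting into \eqref{tra5a} for $E'$ yields term-by-term exactly \eqref{tra10a}, so $\delta_0(E')=\delta_1$. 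The three convergent series in Lemma \ref{del0} applied to $E'$ transform to precisely the three series in the statement of Lemma \ref{del1} (with $|\lambda_{-j}(E')|=\lambda_j-1$ and $\lambda_j(E')=1+|\lambda_{-j}|$), and \eqref{tra5b} for $E'$ becomes \eqref{tra10b} with the equality condition $\sum_{\lambda_j(E')<0}|\lambda_j(E')|<\infty \iff \sum_{\lambda_j>1}(\lambda_j-1)<\infty$. The advantage of your route is that it avoids re-executing the analytic argument (no new invocations of Lemma \ref{nel}, Lemma \ref{nelt}, or Proposition \ref{LRS}); the cost is that one must verify the dictionary between the spectral data of $E$ and of $\mathbf{I}-E$ meticulously, which you have done correctly. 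The paper repeats the computation explicitly, which is more self-contained but more verbose.
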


\begin{proof}

\textbf{Case 1}: Suppose $\sum_{\lambda_j>1} (\lambda_j-1) = \infty$. Since \(\delta^{U}<\infty\), we have \(\sum_{i\in I_{1}}(d_{i}-1)=\infty\), where \(I_{1} = \{i\in I : d_{i}>1\}\). Hence, \(I_{1}\) is infinte, and there is a bijection \(\sigma:\N\to I_{1}\) such that \(d_{\sigma(1)}\geq d_{\sigma(2)}\geq \ldots>0\). By Proposition \ref{LR} and Lemma \ref{nel} applied to $E-\mathbf{I}$ and orthonormal sequence $(e_{\sigma(i)})_{i\in\N}$ we deduce that
\begin{equation*}\label{tra10}
\begin{aligned}
\delta^U &= \liminf_{M\to\infty} \sum_{i=1}^M (\lambda_{i} - d_{\sigma(i)}) 
\\
& \geq \sum_{j=1}^{\infty}
(\lambda_{j}-1) \bigg( 1-\sum_{i\in \N} |\langle e_{\sigma(i)}, f_{j} \rangle|^2 \bigg) - \sum_{i\in \N} \langle ((E-\mathbf I) \circ \pi((-\infty,1])) e_{\sigma(i)},e_{\sigma(i)} \rangle
\\
& =
\sum_{j=1}^{\infty}
(\lambda_{j}-1) \bigg(\sum_{i:d_{i}\leq 1} |\langle e_{i}, f_{j} \rangle|^2 \bigg) +
\sum_{i:d_{i}>1} \langle (\mathbf I -E)P e_{i},e_{i} \rangle
+ \sum_{j=1}^{\infty}
(1+|\lambda_{-j}|) \sum_{i:d_{i}>1} |\langle e_{i}, f_{-j} \rangle|^2\\
 & = \delta_{1} + \sum_{j=1}^{\infty}|\lambda_{-j}|\sum_{i: d_{i}>1}|\langle e_{i},f_{-j}\rangle|^{2} + \sum_{j=1}^{\infty}(\lambda_{j}-1)\sum_{i:d_{i}<0}|\langle e_{i},f_{j}\rangle|^{2}.
\end{aligned}
\end{equation*}
This proves the claim under the assumption that \(\sum_{\lambda_j>1} (\lambda_j-1) = \infty\).

\textbf{Case 2}: Suppose \(\sum_{j:\lambda_{j}>1}(\lambda_{j}-1)<\infty\). This implies that the positive part of \(E-\mathbf{I}\) is trace class, and hence we can apply Lemma \ref{nelt} and Proposition \ref{LRS} to \(E-\mathbf{I}\) to obtain
\begin{align*}
\delta^U &=  \sum_{j=1}^{N_{1}}
(\lambda_{j}-1) \bigg( 1-\sum_{i:d_{i}>1} |\langle e_{i}, f_{j} \rangle|^2 \bigg) - \sum_{i:d_{i}>1} \langle ((E-\mathbf I) \circ \pi((-\infty,1])) e_{i},e_{i} \rangle
\\
& =
\sum_{j=1}^{N_{1}}
(\lambda_{j}-1) \bigg(\sum_{i:d_{i}\leq 1} |\langle e_{i}, f_{j} \rangle|^2 \bigg) +
\sum_{i:d_{i}>1} \langle (\mathbf I -E)P e_{i},e_{i} \rangle
+ \sum_{j=1}^{N_{0}}
(1+|\lambda_{-j}|) \sum_{i:d_{i}>1} |\langle e_{i}, f_{-j} \rangle|^2\\
 & = \delta_{1} + \sum_{j=1}^{N_{0}}|\lambda_{-j}|\sum_{i: d_{i}>1}|\langle e_{i},f_{-j}\rangle|^{2} + \sum_{j=1}^{N_{1}}(\lambda_{j}-1)\sum_{i:d_{i}<0}|\langle e_{i},f_{j}\rangle|^{2}.\qedhere
\end{align*}
\end{proof}

\begin{lem}\label{neci} If \(\delta^{U}<\infty\), then \(\sum_{d_{i}<1}(1-d_{i})=\infty\). Likewise, if \(\delta^{L}<\infty\), then \(\sum_{d_{i}>0}d_{i}=\infty\).
\end{lem}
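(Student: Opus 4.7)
The plan is to prove the second claim, that $\delta^L<\infty$ implies $\sum_{d_i>0}d_i=\infty$, and then derive the first by symmetry. The involution $E\mapsto\mathbf I-E$ sends the diagonal $\boldsymbol d$ to $(1-d_i)_{i\in I}$ and the eigenvalue list $\boldsymbol\lambda$ to $(1-\lambda_j)_{j\in J}$, and preserves the hypothesis $\{0,1\}\subset\sigma_{ess}(E)\subset[0,1]$. A direct computation shows $f_{(1-\boldsymbol d)}(\beta)=f_{\boldsymbol d}(1-\beta)$ for $\beta\le 0$, and similarly for $\boldsymbol\lambda$; consequently $\delta^L_{\mathbf I-E}=\delta^U_E$, and the first claim follows by applying the second to $\mathbf I-E$.

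For the second claim, I argue by contradiction. Assume $\delta^L<\infty$ and $\sum_{d_i>0}d_i<\infty$. The strategy is to show that $EP$ is trace class, which will then contradict $1\in\sigma_{ess}(E)$. From \eqref{tra0} we can solve for
\[\langle EPe_i,e_i\rangle = d_i+\sum_{j=1}^{N_0}|\lambda_{-j}||\langle e_i,f_{-j}\rangle|^2-\sum_{j=1}^{N_1}\lambda_j|\langle e_i,f_j\rangle|^2.\]
Dropping the nonnegative final term and splitting by the sign of $d_i$, Lemma \ref{del0} (first convergent series) gives
\[\sum_{d_i\ge 0}\langle EPe_i,e_i\rangle\le\sum_{d_i>0}d_i+\sum_{j=1}^{N_0}|\lambda_{-j}|\sum_{d_i\ge 0}|\langle e_i,f_{-j}\rangle|^2<\infty,\]
while the second convergent series of Lemma \ref{del0} bounds $\sum_{d_i<0}\langle EPe_i,e_i\rangle<\infty$ directly. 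Summing, $\tr(EP)=\sum_{i\in I}\langle EPe_i,e_i\rangle<\infty$, so the positive operator $EP$ is trace class.

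To obtain the contradiction, I use that $1\in\sigma_{ess}(E)\subset[0,1]$. Since any spectral values of $E$ in $(1,\infty)$ are isolated eigenvalues of finite multiplicity, the essential spectrum of the restriction $E|_{\ran P}$ still contains $1$; being bounded above by $1$, the point $1$ is either an eigenvalue of $E|_{\ran P}$ of infinite multiplicity, or an accumulation point of $\sigma(E|_{\ran P})$ from the left. In either case $\pi([1/2,1])$ has infinite rank. By the spectral calculus $EP\ge\tfrac12\pi([1/2,1])$, hence $\tr(EP)\ge\tfrac12\dim\ran\pi([1/2,1])=\infty$, a contradiction. The main bookkeeping hurdle is in the sign-case estimates of the middle paragraph; the concluding step, while short, is where the hypothesis that $1$ is an essential spectrum point (as opposed to $0$) is genuinely used, and it is exactly this use which forces the conclusion about $\sum_{d_i>0}d_i$ rather than about $\sum_{d_i<0}|d_i|$.
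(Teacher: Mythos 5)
Your proof is correct and runs along essentially the same lines as the paper's: argue by contradiction, use the convergence conclusions of Lemma \ref{del0} (resp.\ \ref{del1}) together with the identity \eqref{tra0} to show that a compression of $E$ to a spectral subspace is trace class, and then contradict the essential-spectrum hypothesis at the opposite endpoint. The paper proves the $\delta^U$ statement first and dismisses the other as ``analogous,'' whereas you prove the $\delta^L$ statement first and derive the other by the involution $E\mapsto \mathbf I - E$; the paper also works with the slightly larger operator $(\mathbf I - E)(P_0+P)$ where you use $EP$, but both are positive and both contradict infinite rank of $\pi([1/2,1])$ (or $\pi([0,1/2])$) in the same way, so this difference is cosmetic.
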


\begin{proof}
Assume that $\delta^U<\infty$. On the contrary, suppose that
$
\sum_{d_i <1} (1-d_i) <\infty.
$ 
By Lemma \ref{del1} the following two series converge
\[
\begin{aligned}
 \sum_{d_i \le 1} ( \langle \mathbf I - E)P_1 e_i,e_i \rangle & = -\sum_{j=1}^{N_1}
(\lambda_j-1)  \sum_{d_i\le 1} |\langle e_i, f_j \rangle|^2,
\\
\sum_{d_i>1} \langle (\mathbf I -E)(P_0+P) e_i,e_i \rangle & = \sum_{d_i>1} \langle (\mathbf I -E)P e_i,e_i \rangle+\sum_{j=1}^{N_0}(1+|\lambda_{-j}|) \sum_{d_i>1} |\langle e_{i}, f_{-j} \rangle|^2.
\end{aligned}
\]
Using the identity
\[
\sum_{d_i \le 1} (1-d_i) = \sum_{d_i \le 1} \big( \langle (\mathbf I - E)P_1 e_i,e_i \rangle + \langle (\mathbf I - E)(P_0+P) e_i,e_i \rangle \big)
\]
we deduce that
\[
\sum_{i\in I} \langle (\mathbf I - E)(P_0+P) e_i,e_i \rangle<\infty.
\]
This implies that the operator  $(\mathbf I - E)(P_0+P)$ is trace class, which contradicts our assumption that $0 \in \sigma_{ess}(E)$. The other claim with \(\delta^{L}<\infty\) follows by an analogous argument.
\end{proof}

The following corollary shows the necessity of \eqref{mainthm4}. As a corollary we also obtain the necessity of \eqref{mainthm5} by symmetry.

\begin{cor}\label{part45}
Suppose \(\delta^{L}+\delta^{U}<\infty\). If $\sum_{\lambda_j<0} |\lambda_j|<\infty$, then $\delta^L-\delta_0 \le \delta^U- \delta_1$. If $\sum_{\lambda_j>1} (\lambda_j-1)<\infty$, then $\delta^U- \delta_1 \le \delta^L-\delta_0$.
\end{cor}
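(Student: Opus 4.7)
The plan is to extract the inequalities from Lemmas \ref{del0} and \ref{del1}, which are already set up to deliver this corollary essentially for free. The key observation is that both lemmas give bounds that differ from $\delta_0$ (respectively $\delta_1$) by the same pair of ``cross-over'' terms.

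Concretely, I would introduce the abbreviations
\[
A := \sum_{j=1}^{N_0} |\lambda_{-j}| \sum_{d_i > 1} |\langle e_i, f_{-j} \rangle|^2,
\qquad
B := \sum_{j=1}^{N_1} (\lambda_j-1) \sum_{d_i<0} |\langle e_{i}, f_{j} \rangle|^2.
\]
Since $\delta^L + \delta^U < \infty$, Lemma \ref{del0} gives
\[
\delta_0 + A + B \le \delta^L,
\]
with equality under the hypothesis $\sum_{\lambda_j<0}|\lambda_j|<\infty$, while Lemma \ref{del1} gives
\[
\delta_1 + A + B \le \delta^U,
\]
with equality under the hypothesis $\sum_{\lambda_j>1}(\lambda_j-1)<\infty$.

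For the first claim, assume $\sum_{\lambda_j<0}|\lambda_j|<\infty$. The equality case of Lemma \ref{del0} gives $\delta^L - \delta_0 = A+B$, while Lemma \ref{del1} supplies $\delta^U - \delta_1 \ge A + B$, so $\delta^L - \delta_0 \le \delta^U - \delta_1$. The second claim is entirely symmetric: assuming $\sum_{\lambda_j>1}(\lambda_j-1)<\infty$, the equality case of Lemma \ref{del1} gives $\delta^U - \delta_1 = A+B$, while Lemma \ref{del0} supplies $\delta^L - \delta_0 \ge A + B$, giving the reverse inequality.

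There is no real obstacle here; the entire argument is bookkeeping once one recognises that the three extra series appearing in the bounds on $\delta^L$ and $\delta^U$ collapse, after Lemmas \ref{del0} and \ref{del1} are written out, into the same non-negative quantity $A+B$. The work has all been done in proving those two lemmas (via Lemmas \ref{nel} and \ref{nelt} together with Propositions \ref{LR} and \ref{LRS}), and Corollary \ref{part45} is merely the observation that subtracting the two bounds in the appropriate direction eliminates $A+B$.
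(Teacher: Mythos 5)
Your proof is correct and follows essentially the same approach as the paper: the paper sets $\Delta = A + B$ (your two terms bundled into one) and then reads off the two inequalities directly from the equality/inequality cases of Lemmas \ref{del0} and \ref{del1}, exactly as you do.
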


\begin{proof} Set 
\[\Delta = \sum_{j=1}^{N_1}(\lambda_j-1)  \sum_{ d_i<0} |\langle e_i, f_j \rangle|^2
+
\sum_{j=1}^{N_0} |\lambda_{-j}| \sum_{d_i>1} |\langle e_{i}, f_{-j} \rangle|^2.\]
By Lemmas \ref{del0} and \ref{del1}, if $\sum_{\lambda_j<0} |\lambda_j|<\infty$, then \(\delta^{L}-\delta_{0} = \Delta\leq \delta^{U}-\delta_{1}\). Similarly, if $\sum_{\lambda_j>1} (\lambda_j-1)<\infty$, then \(\delta^{L}-\delta_{0} \leq \Delta = \delta^{U}-\delta_{1}\).
\end{proof}

\begin{lem}\label{bigeqs} If \(f_{\boldsymbol d}(\alpha)<\infty\) for some \(\alpha\in(0,1)\) and \(\delta^{L}+\delta^{U}<\infty\), then
\begin{equation}\label{tra7a}
\begin{aligned}
 \sum_{0 \le d_i <\alpha } d_i &+ \delta_0 = \sum_{ d_i <\alpha} \langle EP e_i,e_i \rangle
 \\
 &+
 \sum_{j=1}^{N_0}
|\lambda_{-j}| \sum_{\alpha \le d_i \le 1 } |\langle e_{i}, f_{-j} \rangle|^2
+ \sum_{j=1}^{N_1}
\lambda_{j} \sum_{0 \le d_i < \alpha } |\langle e_{i}, f_{j} \rangle|^2 + \sum_{j=1}^{N_1} \sum_{d_i<0} |\langle e_{i}, f_{j} \rangle|^2,
\end{aligned}
\end{equation}
and
\begin{equation}\label{tra12}
\begin{aligned}
\sum_{\alpha \le d_i \le 1} (1-d_i)  &+\delta_1= \sum_{ d_i \ge \alpha} \langle (\mathbf I - E)P e_i,e_i \rangle 
+ \sum_{j=1}^{N_1}
(\lambda_{j}-1) \sum_{0\le d_i \le \alpha } |\langle e_{i}, f_{j} \rangle|^2
\\
& 
 + \sum_{j=1}^{N_0}
(1+|\lambda_{-j}|) \sum_{  \alpha \le d_i \le 1} |\langle e_{i}, f_{-j} \rangle|^2 + \sum_{j=1}^{N_0} \sum_{d_i>1} |\langle e_{i}, f_{-j} \rangle|^2.
\end{aligned}
\end{equation}
\end{lem}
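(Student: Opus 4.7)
The plan is to derive both \eqref{tra7a} and \eqref{tra12} by applying the spectral decomposition $E = L_0 + EP + L_1$, respectively its complement $\mathbf I - E = (P_0 - L_0) + (\mathbf I - E)P + (P_1 - L_1)$ with $P_0 = \pi((-\infty,0))$ and $P_1 = \pi((1,\infty))$, to each basis vector $e_i$, summing the resulting pointwise identities over the appropriate half of the index set, and then absorbing the definitions \eqref{tra5a} and \eqref{tra10a} of $\delta_0$ and $\delta_1$. Concretely, the two pointwise identities are
\begin{align*}
d_i &= -\sum_{j=1}^{N_0}|\lambda_{-j}||\langle e_i,f_{-j}\rangle|^2 + \langle EPe_i,e_i\rangle + \sum_{j=1}^{N_1}\lambda_j|\langle e_i,f_j\rangle|^2,\\
1-d_i &= \sum_{j=1}^{N_0}(1+|\lambda_{-j}|)|\langle e_i,f_{-j}\rangle|^2 + \langle(\mathbf I-E)Pe_i,e_i\rangle - \sum_{j=1}^{N_1}(\lambda_j-1)|\langle e_i,f_j\rangle|^2,
\end{align*}
and summing them over $\{i:0\le d_i<\alpha\}$ and $\{i:\alpha\le d_i\le 1\}$, respectively, produces all the terms appearing in \eqref{tra7a} and \eqref{tra12}.

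The main task is to justify absolute convergence of each double series that appears, so that Tonelli's theorem permits free interchange of $i$- and $j$-summations. Under $\delta^L + \delta^U < \infty$, Lemma \ref{del0} gives finiteness of $\sum_j|\lambda_{-j}|\sum_{d_i\ge 0}|\langle e_i,f_{-j}\rangle|^2$, $\sum_{d_i<0}\langle EPe_i,e_i\rangle$, and $\sum_j\lambda_j\sum_{d_i<0}|\langle e_i,f_j\rangle|^2$, while Lemma \ref{del1} handles the symmetric trio at the upper endpoint. The remaining nonnegative tails $\sum_{0\le d_i<\alpha}\langle EPe_i,e_i\rangle$, $\sum_j\lambda_j\sum_{0\le d_i<\alpha}|\langle e_i,f_j\rangle|^2$, and their partners over $\{\alpha\le d_i\le 1\}$, are controlled by the hypothesis $f_{\boldsymbol d}(\alpha)<\infty$ (which by Proposition \ref{f} forces $f_{\boldsymbol d}<\infty$ throughout $(0,1)$): summing the first pointwise identity over $\{0\le d_i<\alpha\}$ and isolating its nonnegative parts yields
\[
\sum_{0\le d_i<\alpha}\langle EPe_i,e_i\rangle + \sum_{j}\lambda_j\sum_{0\le d_i<\alpha}|\langle e_i,f_j\rangle|^2 = C_{\boldsymbol d}(\alpha) + \sum_j|\lambda_{-j}|\sum_{0\le d_i<\alpha}|\langle e_i,f_{-j}\rangle|^2,
\]
which is finite by $f_{\boldsymbol d}(\alpha)<\infty$ and Lemma \ref{del0}; hence each nonnegative summand on the left is individually finite. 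The symmetric manipulation applied to $1-d_i$ summed over $\{\alpha\le d_i\le 1\}$ supplies the finiteness of the remaining two tails via $D_{\boldsymbol d}(\alpha)<\infty$ and Lemma \ref{del1}.

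With absolute convergence in hand, \eqref{tra7a} follows by summing the $d_i$-identity over $\{i:0\le d_i<\alpha\}$, interchanging sums, and adding \eqref{tra5a}; the three groups of terms reorganize via the set identities $\sum_{0\le d_i\le 1} = \sum_{0\le d_i<\alpha}+\sum_{\alpha\le d_i\le 1}$ and $\sum_{d_i<\alpha}=\sum_{d_i<0}+\sum_{0\le d_i<\alpha}$. Equation \eqref{tra12} is derived by the parallel manipulation applied to the $(1-d_i)$-identity summed over $\{i:\alpha\le d_i\le 1\}$ and combined with \eqref{tra10a}, using $\sum_{0\le d_i\le 1}=\sum_{0\le d_i<\alpha}+\sum_{\alpha\le d_i\le 1}$ and $\sum_{d_i\ge\alpha}=\sum_{\alpha\le d_i\le 1}+\sum_{d_i>1}$. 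The only real obstacle is the convergence bookkeeping of the middle step; once every double series is absolutely convergent, both identities are automatic algebraic consequences of the two pointwise spectral decompositions.
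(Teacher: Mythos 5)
Your proposal is correct and follows essentially the same route as the paper: start from the pointwise spectral decomposition $d_i = \sum_j \lambda_{-j}|\langle e_i,f_{-j}\rangle|^2 + \langle EPe_i,e_i\rangle + \sum_j\lambda_j|\langle e_i,f_j\rangle|^2$ (the paper's \eqref{tra0}), sum over the relevant index subsets, interchange the $i$- and $j$-summations, and then rewrite via the definitions of $\delta_0$ and $\delta_1$. The only difference is one of emphasis: the paper simply notes $\sum_{0\le d_i<\alpha}d_i<\infty$ and declares the interchange permissible, whereas you spell out the supporting convergence bookkeeping explicitly, deducing finiteness of the individual nonnegative tails from $f_{\boldsymbol d}(\alpha)<\infty$ together with Lemmas \ref{del0} and \ref{del1}. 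This is a useful expansion of the terse justification in the paper, but it is the same argument.
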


\begin{proof} By \eqref{tra0} we have
\[
\sum_{0\le d_i <\alpha} d_i = \sum_{0 \le d_i <\alpha} \bigg( \sum_{j=1}^{N_0} \lambda_{-j} | \langle e_i, f_{-j} \rangle |^2 +
\langle EP e_i,e_i \rangle  + \sum_{j=1}^{N_1} \lambda_{j} | \langle e_i, f_{j} \rangle |^2 \bigg).
\]
Since $\sum_{0\le d_i <\alpha} d_i <\infty$, we can interchange the order of summation
\begin{equation}\label{tra6}
\begin{aligned}
\sum_{0 \le d_i <\alpha} \langle EP e_i,e_i \rangle
&=  \sum_{0\le d_i <\alpha} d_i + \sum_{j=1}^{N_0}
|\lambda_{-j}| \sum_{0\le d_ i  \le 1 } |\langle e_i, f_{-j} \rangle|^2 
\\
&
\quad - \sum_{j=1}^{N_0}
|\lambda_{-j}| \sum_{\alpha \le d_i \le 1 } |\langle e_{i}, f_{-j} \rangle|^2
- \sum_{j=1}^{N_1}
\lambda_{j} \sum_{0 \le d_i < \alpha } |\langle e_{i}, f_{j} \rangle|^2.
\end{aligned}
\end{equation}
From \eqref{tra6} and the definition of \(\delta_{0}\) we obtain \eqref{tra7a}.

Again, by \eqref{tra0} we have
\[
\sum_{\alpha \le d_i \le 1} (1-d_i) = \sum_{\alpha \le d_i \le 1} \bigg( \sum_{j=1}^{N_1} (1-\lambda_j) | \langle e_i, f_{j} \rangle |^2 +
\langle (\mathbf I - E)P e_i,e_i \rangle 
+ 
 \sum_{j=1}^{N_0} (1+|\lambda_{-j}| ) | \langle e_i, f_{-j} \rangle |^2
\bigg).
\]
Hence,
\begin{equation}\label{tra11}
\begin{aligned}
\sum_{\alpha \le d_i \le 1} \langle (\mathbf I -E)P & e_i,e_i  \rangle
=  \sum_{\alpha \le d_i \le 1} (1-d_i)  + \sum_{j=1}^{N_1}
(\lambda_{j}-1) \sum_{0\le d_i \le 1} |\langle e_{i}, f_{j} \rangle|^2
\\
&+\sum_{j=1}^{N_1}
(\lambda_{j}-1) \sum_{0\le d_i \le \alpha } |\langle e_{i}, f_{j} \rangle|^2
 - \sum_{j=1}^{N_0}
(1+|\lambda_{-j}|) \sum_{  \alpha \le d_i \le 1} |\langle e_{i}, f_{-j} \rangle|^2.
\end{aligned}
\end{equation}
Combining \eqref{tra11} and the definition of \(\delta_{1}\) yields \eqref{tra12}.
\end{proof}

\begin{lem}\label{part2} If \(f_{\boldsymbol d}(\alpha)<\infty\) for some \(\alpha\in(0,1)\) and \(\delta^{L}+\delta^{U}<\infty\), then \(E\) is diagonalizable, and 
\begin{equation}\label{tra21} f_{\boldsymbol\lambda}(\alpha)  \leq f_{\boldsymbol{d}}(\alpha) + (1-\alpha)\delta_{0} + \alpha\delta_{1}\end{equation}
for all $\alpha\in(0,1)$. Moreover, if equality holds in \eqref{tra21} for some \(\alpha\in(0,1)\), then \(E\) decouples at \(\alpha\).
\end{lem}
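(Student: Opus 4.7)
The plan is to apply Lemma~\ref{trap} to the compression $\tilde E := EP|_{\ran(P)}$, which is a positive contraction on $\ran(P)$. The sequence $(Pe_i)_{i\in I}$ is a Parseval frame for $\ran(P)$, and the associated diagonal values $\tilde d_i := \langle EPe_i, e_i\rangle$ are non-negative. Since $f_{\boldsymbol d}(\alpha_0) < \infty$ for some $\alpha_0 \in (0,1)$, identity \eqref{f1} shows that $C_{\boldsymbol d}(\alpha)$ and $D_{\boldsymbol d}(\alpha)$ are finite throughout $(0,1)$, and hence so is $f_{\boldsymbol d}$.

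To verify the hypotheses of Lemma~\ref{trap}, I invoke the identities \eqref{tra7a} and \eqref{tra12} of Lemma~\ref{bigeqs}. Setting $J := \{i : d_i < \alpha\}$ for any $\alpha \in (0,1)$, every summand on the right of \eqref{tra7a} is non-negative, so its left-hand side bounds $\sum_{i\in J}\tilde d_i$ from above by $C_{\boldsymbol d}(\alpha) + \delta_0 < \infty$. Symmetrically, \eqref{tra12} gives $\sum_{i\in I\setminus J}(\|Pe_i\|^2 - \tilde d_i) \le D_{\boldsymbol d}(\alpha) + \delta_1 < \infty$. Lemma~\ref{trap} then yields the diagonalizability of $\tilde E$; combined with the diagonalizability of $L_0$ and $L_1$ afforded by $\sigma_{ess}(E) \subset [0,1]$, this forces $E = L_0 \oplus \tilde E \oplus L_1$ to be diagonalizable. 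Because the eigenvalues of $E$ lying in $[0,1]$ are exactly those of $\tilde E$, we have $f_{\boldsymbol\lambda}(\alpha) = f_{\boldsymbol\mu}(\alpha)$ for all $\alpha\in(0,1)$, where $\boldsymbol\mu$ denotes the eigenvalue list of $\tilde E$.

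Inequality \eqref{tra21} then follows by chaining the bound \eqref{trap1} of Lemma~\ref{trap} with the two estimates above; the slack is exactly $(1-\alpha)A_0 + \alpha A_1 \ge 0$, where $A_0, A_1$ denote the bundles of non-negative remainder terms on the right-hand sides of \eqref{tra7a} and \eqref{tra12}. If equality holds in \eqref{tra21} at some $\alpha_0\in(0,1)$, then both $A_0 = A_1 = 0$ and \eqref{trap1} must hold with equality at $\alpha_0$. Unpacking $A_0 = A_1 = 0$ yields $\langle e_i, f_j\rangle = 0$ whenever $d_i < \alpha_0$ (and thus $\pi((1,\infty))e_i = 0$), and $\langle e_i, f_{-j}\rangle = 0$ whenever $d_i \ge \alpha_0$ (and thus $\pi((-\infty,0))e_i = 0$). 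Applying Remark~\ref{trar} to $\tilde E$ with the Parseval frame $(Pe_i)$ gives $Pe_i \in \ran\pi([0,\alpha_0])$ for $i\in J$ and $Pe_i \in \ran\pi([\alpha_0,1])$ for $i\in I\setminus J$. Combining, $e_i \in \ran\pi((-\infty,\alpha_0])$ when $d_i < \alpha_0$ and $e_i \in \ran\pi([\alpha_0,\infty))$ when $d_i \ge \alpha_0$, which is precisely decoupling at $\alpha_0$ in the sense of Definition~\ref{decouple}.

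The main obstacle I anticipate is the bookkeeping in the equality case: one must isolate the two distinct sources of slack (the non-negative remainders in Lemma~\ref{bigeqs} together with the one in \eqref{trap1}), show they each vanish independently under the single equality hypothesis, and then correctly match them against the orthogonality relations for $L_0, L_1$ and the spectral containment for $\tilde E$ needed to reassemble the full decoupling of $E$.
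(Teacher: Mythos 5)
Your proof follows the paper's own argument essentially step for step: apply Lemma~\ref{trap} to the compression $EP$ restricted to $\ran(P)$ with the Parseval frame $(Pe_i)$, verify hypothesis \eqref{trap0} via the two identities of Lemma~\ref{bigeqs}, chain \eqref{trap1} with those identities to get \eqref{tra21}, and in the equality case force the non-negative remainders in Lemma~\ref{bigeqs} to vanish while invoking Remark~\ref{trar} to localize each $e_i$ spectrally. The only point both you and the paper pass over quickly is that the containments $e_i \in \ran\pi((-\infty,\alpha])$ for $d_i<\alpha$ and $e_i \in \ran\pi([\alpha,\infty))$ for $d_i\ge\alpha$ really do force $\mathcal H_0$ and $\mathcal H_1$ to be $E$-invariant (it requires noting that $\ran\pi((-\infty,\alpha))\subset\mathcal H_0$, $\ran\pi((\alpha,\infty))\subset\mathcal H_1$, and the residual pieces sit inside the eigenspace $\ran\pi(\{\alpha\})$), but this is a detail rather than a gap in the approach.
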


\begin{proof} Define $g_i=Pe_i$, $i\in\N$. Then, $(g_i)_{i\in\N}$ is a Parseval frame for the subspace $\mathcal K=P(\mathcal H) \subset \mathcal H$. Let $\tilde d_i = \langle EP e_i, e_i \rangle$ for $i\in I$ and $J=\{i\in I: d_i<\alpha\}$.
By Lemma \ref{bigeqs} we deduce that
\begin{equation}\label{tra7}\sum_{i:d_{i}\in[0,\alpha)}d_{i} + \delta_{0}\geq \sum_{i:d_{i}<\alpha}\langle EPe_{i},e_{i}\rangle,\end{equation}
and
\begin{equation}\label{tra13}\sum_{i:d_{i}\in[\alpha,1]}(1-d_{i}) + \delta_{1}\geq \sum_{i:d_{i}\geq\alpha}\langle(\mathbf{I}-E)Pe_{i},e_{i}\rangle.\end{equation}
Thus, the assumption \eqref{trap0} in Lemma \ref{trap} is met for the restriction of the operator $E$ to $\mathcal K$ and a Parseval frame $(g_i)_{i\in\N}$.
Hence, we deduce that $EP$ is diagonalizable. Since the operator $E$ is block diagonal sum $E=L_0+EP+L_1$, it is diagonalizable as well.

Let $\boldsymbol\lambda=(\lambda_{j})_{j\in J}$ be the eigenvalue list of $E$. 
By Lemma \ref{trap}, \eqref{tra7}, and \eqref{tra13} we have for all $\alpha\in(0,1)$,
\begin{equation}\label{tra15}
f_{\boldsymbol{\lambda}}(\alpha) \le  (1-\alpha)\sum_{i\in J}\tilde d_i + \alpha\sum_{i\in I \setminus J }( ||g_i||^2 - \tilde d_i) \le
f_{\boldsymbol{d}}(\alpha) + (1-\alpha)\delta_{0} + \alpha\delta_{1}.
\end{equation}

Suppose that for some $\alpha \in (0,1)$ we have an equality in \eqref{tra21}. This means that we have equalities in the two inequalities appearing in \eqref{tra15}. In particular, we have equalities in \eqref{tra7} and \eqref{tra13}. By \eqref{tra7a} and \eqref{tra12} we deduce that
\[
\begin{aligned}
\langle e_i, f_j \rangle &= 0 \qquad\text{for }i\in I, d_i<\alpha \text{ and } j \ge 1,
\\
\langle e_i, f_{-j} \rangle &= 0 \qquad\text{for }i\in I, d_i\ge \alpha \text{ and } j \ge 1.
\end{aligned}
\]
On the other hand, 
Remark \ref{trar} yields
\[g_i=Pe_i \in \begin{cases}
\ran \pi([0,\alpha]) & \text{if }  d_i<\alpha,\\ 
 \ran \pi([\alpha,1]) & \text{if } d_i\ge \alpha.
 \end{cases}
\]
Combining these observations yields
\[
e_i \in \begin{cases}
\ran \pi((-\infty,\alpha]) & \text{if }  d_i < \alpha,\\ 
 \ran \pi([\alpha,\infty)) & \text{if } d_i\ge \alpha.
 \end{cases}
\]
In addition, if $d_i=\alpha$, then $e_i \in  \ran \pi(\{\alpha\})$. Hence, the spaces $\mathcal H_0=\overline{\lspan }\{ e_i: d_i < \alpha \}$ and $\mathcal H_1=\overline{\lspan} \{ e_i: d_i \ge \alpha \}$  are invariant under $E$. 
\end{proof}

\begin{lem}\label{part3} If \(f_{\boldsymbol d}(\alpha)<\infty\) for some \(\alpha\in(0,1)\) and \(\delta^{L}+\delta^{U}<\infty\), then \(f_{\boldsymbol\lambda}(\alpha)<\infty\) for all \(\alpha\in(0,1)\), and there exists \(\alpha\in(0,1)\) such that
\begin{equation}\label{inttrace}f_{\boldsymbol\lambda}'(\alpha) - f_{\boldsymbol d}'(\alpha) \equiv \delta_{1}-\delta_{0} \mod 1.\end{equation}
\end{lem}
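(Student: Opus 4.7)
The condition $f_{\boldsymbol{d}}(\alpha_{0}) < \infty$ at a single $\alpha_{0} \in (0,1)$ propagates to all $\alpha \in (0,1)$: the finiteness of $D_{\boldsymbol{d}}(\alpha_{0})$ forces only finitely many $d_{i}$ in any subinterval $[\alpha_{0},\alpha]$ with $\alpha<1$ (each such term contributes at least $1-\alpha$ to $D_{\boldsymbol{d}}(\alpha_{0})$), and a symmetric argument using $C_{\boldsymbol{d}}(\alpha_{0})<\infty$ handles $\alpha<\alpha_{0}$. Combined with the inequality $f_{\boldsymbol{\lambda}}(\alpha) \le f_{\boldsymbol{d}}(\alpha) + (1-\alpha)\delta_{0} + \alpha\delta_{1}$ supplied by Lemma \ref{part2}, this yields $f_{\boldsymbol{\lambda}}(\alpha) < \infty$ for every $\alpha \in (0,1)$.

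\textbf{Part (b).} To produce a single $\alpha \in (0,1)$ satisfying \eqref{inttrace}, the plan is to apply Kadison's Theorem \ref{Kadison} to the diagonal of the spectral projection $Q_{\alpha} = \pi([\alpha,1])$. Fix $\alpha \in (0,1)$ avoiding all terms of $\boldsymbol{d}$ and $\boldsymbol{\lambda}$. Since $f_{\boldsymbol{\lambda}}(\alpha) < \infty$, the operators $K_{\alpha} = \int_{[0,\alpha)}\lambda\,d\pi$ and $T_{\alpha} = \int_{[\alpha,1]}(1-\lambda)\,d\pi$ are trace class with $\tr K_{\alpha} = C_{\boldsymbol{\lambda}}(\alpha)$ and $\tr T_{\alpha} = D_{\boldsymbol{\lambda}}(\alpha)$. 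From the decomposition $E = L_{0} + K_{\alpha} + Q_{\alpha} - T_{\alpha} + L_{1}$ one obtains $d_{i} = \ell^{-}_{i} + k_{i} + q_{i} - t_{i} + \ell^{+}_{i}$ with $q_{i} = \langle Q_{\alpha}e_{i},e_{i}\rangle \in [0,1]$. Expanding
\[f'_{\boldsymbol{d}}(\alpha) - f'_{\boldsymbol{\lambda}}(\alpha) = \bigl(D_{\boldsymbol{d}}(\alpha) - C_{\boldsymbol{d}}(\alpha)\bigr) - \bigl(\tr T_{\alpha} - \tr K_{\alpha}\bigr)\]
through this decomposition, the trace-class contributions from $K_{\alpha}$ and $T_{\alpha}$ cancel up to a Kadison-type residue in $(q_{i})$, while the remaining $\ell^{\pm}_{i}$ pieces are matched modulo $\Z$ to $\delta_{0} - \delta_{1}$ using the formulas \eqref{tra5a} for $\delta_{0}$ and \eqref{tra10a} for $\delta_{1}$. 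Finally, changing the indexing from $\{i:d_{i}\ge\alpha\}$ to $\{i:q_{i}\ge\alpha\}$ alters the residue only by an integer, so
\[f'_{\boldsymbol{d}}(\alpha) - f'_{\boldsymbol{\lambda}}(\alpha) \equiv f'_{\boldsymbol{q}}(\alpha) + \delta_{0} - \delta_{1} \pmod{1}.\]
Kadison's theorem forces $f'_{\boldsymbol{q}}(\alpha) \in \Z$, yielding \eqref{inttrace}.

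\textbf{Main obstacle.} The delicate step is verifying the hypothesis of Kadison's theorem, namely that $f_{\boldsymbol{q}}(\beta)<\infty$ for some $\beta\in(0,1)$. The bound $\sum_{d_{i}<\alpha}q_{i}<\infty$ follows quickly from $\alpha q_{i}\le\langle EPe_{i},e_{i}\rangle$ combined with \eqref{tra7}. The complementary bound $\sum_{d_{i}\ge\alpha}(1-q_{i})<\infty$ is more subtle: one decomposes $1 - q_{i}$ into spectral contributions from $(-\infty,0)$, $[0,\alpha)$, and $(1,\infty)$, and then uses \eqref{tra12}, \eqref{tra5b}, and \eqref{tra10b} together with the finiteness of $\delta^{L}+\delta^{U}$ to bound each piece. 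An arguably cleaner route is to apply Kadison via a Naimark dilation to the Parseval-frame diagonal of $Q_{\alpha}|_{\mathcal{K}}$ already developed in the proof of Lemma \ref{part2}, invoking \eqref{trap2} of Lemma \ref{trap}; the hypothesis \eqref{trapa} for the frame $(Pe_{i})$ reduces to the same coefficient-level bookkeeping along the eigenvectors of $E$ outside $[0,1]$.
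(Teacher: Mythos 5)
Your Part (a) is correct and matches the paper: the single-point finiteness of $f_{\boldsymbol d}$ propagates, and the inequality from Lemma~\ref{part2} then yields finiteness of $f_{\boldsymbol\lambda}$ on all of $(0,1)$.

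Part (b), however, contains a genuine gap exactly at the step you flagged as the ``main obstacle.'' Applying Kadison's theorem directly to $(q_i)$ with $Q_\alpha=\pi([\alpha,1])$ requires $f_{\boldsymbol q}(\beta)<\infty$, but this can fail under the hypotheses of the lemma. The culprit is the contribution from $P_1=\pi((1,\infty))$: since $1-q_i\geq\langle P_1 e_i,e_i\rangle$, one would need $\sum_{d_i\geq\alpha}\langle P_1 e_i,e_i\rangle<\infty$, yet the finiteness of $\delta^U$ controls only the $(\lambda_j-1)$-weighted sum $\sum_j(\lambda_j-1)\sum_{d_i\leq 1}|\langle e_i,f_j\rangle|^2$ (cf.\ Lemma~\ref{del1}), which gives no control when $\lambda_j\to 1^+$. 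Concretely, one can arrange eigenvectors $f_j$ with $\lambda_j=1+j^{-3}$ and $g_j$ with $\mu_j=1-j^{-2}\in(\alpha,1)$, and take $e_j=\cos\theta_j f_j+\sin\theta_j g_j$, $e_j'=-\sin\theta_j f_j+\cos\theta_j g_j$ with $\cos^2\theta_j=1/\ln j$; then $1-d_j\sim j^{-2}$ and $1-d_j'\sim j^{-2}(\ln j)^{-1}$ are summable and $\delta^U\leq\sum j^{-3}<\infty$, but $q_j=\sin^2\theta_j$ gives $\min(q_j,1-q_j)=1/\ln j$, so $\sum\min(q_i,1-q_i)=\infty$ and $f_{\boldsymbol q}\equiv\infty$. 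Thus Kadison's case (ii) is unavailable, and case (i) yields no residue information. Your suggested alternative (applying \eqref{trap2} of Lemma~\ref{trap} to the Parseval frame $(Pe_i)$) founders on precisely the same example: hypothesis \eqref{trapa} becomes $\sum\min(1-\langle(P_0+P_1)e_i,e_i\rangle,\,\langle(P_0+P_1)e_i,e_i\rangle)<\infty$, again failing because of the $P_1$ mass.

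The paper sidesteps this by working with the positive operator $EP+P_1$ (spectrum in $[0,1]$) rather than with $Q_\alpha$ or $EP|_{\mathcal K}$: it computes $f_{\boldsymbol d}'(\alpha)+\delta_1-\delta_0=\sum_{d_i\geq\alpha}\langle(\mathbf I-EP-P_1)e_i,e_i\rangle-\sum_{d_i<\alpha}\langle(EP+P_1)e_i,e_i\rangle$ using \eqref{tra7a} and \eqref{tra12}, then applies Lemma~\ref{trap} to $EP+P_1$ with the \emph{orthonormal basis} $(e_i)$, so that \eqref{trapa} is vacuous ($\|e_i\|^2=1$) and the relevant projection is $Q_\alpha+P_1$, whose diagonal absorbs the troublesome $P_1$ mass (in the example above, $\langle(Q_\alpha+P_1)e_j,e_j\rangle=1$). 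Since $E$ and $EP+P_1$ share the same eigenvalues in $(0,1)$, this yields \eqref{inttrace}. The missing ingredient in your argument is precisely this passage from $Q_\alpha$ to $Q_\alpha+P_1$, i.e., from $E$ to $EP+P_1$.
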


\begin{proof} By \eqref{tra7a} and \eqref{tra12}, for a.e. $\alpha \in (0,1)$ we have
\begin{equation}\label{tra20}
\begin{aligned}
& f_{\boldsymbol{d}}'(\alpha) + \delta_1-\delta_0=  \sum_{\alpha \le d_i \le 1} (1-d_i)  -  \sum_{0 \le d_i <\alpha } d_i + \delta_1 -\delta_0 
\\   
&=  \sum_{ d_i \ge \alpha} \langle (\mathbf I - E)P e_i,e_i \rangle 
+
\sum_{j=1}^{N_0}  \sum_{d_i\ge \alpha } |\langle e_{i}, f_{-j} \rangle|^2  
- \sum_{ d_i <\alpha} \langle EP e_i,e_i \rangle  
-
\sum_{j=1}^{N_1} \sum_{ d_i<\alpha} |\langle e_i, f_j \rangle|^2.
\end{aligned}
\end{equation}
By \eqref{tra20} we have
\[
\begin{aligned}
f_{\boldsymbol{d}}'(\alpha) + \delta_1-\delta_0 &=
 \sum_{ d_i \ge \alpha} ( \langle (\mathbf I - E)P e_i,e_i \rangle + \langle P_0e_{i}, e_i\rangle ) 
- \sum_{ d_i <\alpha} ( \langle EP e_i,e_i \rangle + \langle P_1e_{i}, e_i\rangle )
\\
&=
 \sum_{ d_i \ge \alpha} \langle (\mathbf I - EP -P_1) e_i,e_i \rangle  
- \sum_{ d_i <\alpha}  \langle (EP+P_1) e_i,e_i \rangle 
.
\end{aligned}
\]
Let \(\widehat{\boldsymbol\lambda}\) be the eigenvalue list of \(EP+P_{1}\). Applying Lemma \ref{trap} to the operator $EP+P_1$ and orthonormal basis $(e_{i})_{i\in I}$ implies that the diagonal $\widehat{\boldsymbol{d}}=(\langle (EP+P_1) e_i, e_i \rangle)_{i\in I}$, satisfies
\begin{equation}
  f_{\boldsymbol{d}}'(\alpha) + \delta_1-\delta_0 
  \equiv f_{\widehat{\boldsymbol{d}}}'(\alpha)
  \equiv f_{\widehat{\boldsymbol\lambda}}'(\alpha) \mod 1
\qquad\text{for a.e. }\alpha\in (0,1).
\end{equation}
Finally, \eqref{inttrace} follows from the fact that operators $E$ and $EP+P_1$ have the same eigenvalues in the open interval $(0,1)$, and consequently $f_{\boldsymbol\lambda}(\alpha)=f_{\widehat{\boldsymbol\lambda}}(\alpha)$ for $\alpha \in (0,1)$.
\end{proof}

\begin{lem}\label{daccum} If \(f_{\boldsymbol d}(\alpha)<\infty\) for some \(\alpha\in(0,1)\) and \(\delta^{L}+\delta^{U}<\infty\), then \(\boldsymbol d\) has accumulation points at \(0\) and \(1\).
\end{lem}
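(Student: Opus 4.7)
By symmetry (replace $(E,\boldsymbol d)$ with $(\mathbf I-E,(1-d_i)_{i\in I})$, which swaps the roles of $0$ and $1$), it suffices to show $\boldsymbol d$ accumulates at $0$. I argue by contradiction: suppose $\{i\in I:|d_i|<\eps\}$ is finite for some $\eps\in(0,1/2)$; by further shrinking $\eps$ I may assume that no $d_i$ lies in $(-\eps,0)\cup(0,\eps)$, and in particular $\{i:d_i=0\}$ is finite. For every $\alpha\in(0,\eps)$ one then has $C_{\boldsymbol d}(\alpha)=0$ while $D_{\boldsymbol d}(\alpha)$ equals a fixed finite constant $D_0$, so $f_{\boldsymbol d}(\alpha)=\alpha D_0$ is linear. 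By Lemmas~\ref{part2} and~\ref{part3}, $E$ is diagonalizable with eigenvalue list $\boldsymbol\lambda$ satisfying $f_{\boldsymbol\lambda}<\infty$ on $(0,1)$ and the interior majorization \eqref{tra21}; since the essential spectrum of a diagonalizable self-adjoint operator is exactly the set of accumulation points of its eigenvalue list, $0\in\sigma_{ess}(E)$ implies $0$ is an accumulation point of $\boldsymbol\lambda$. Substituting the linear form of $f_{\boldsymbol d}$ into \eqref{tra21} yields
\[
f_{\boldsymbol\lambda}(\alpha)\le \delta_0+\alpha(D_0+\delta_1-\delta_0)\qquad\text{for all }\alpha\in(0,\eps).
\]

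The accumulation of $\boldsymbol\lambda$ at $0$ must arise from one or more of: (A) a subsequence of $\boldsymbol\lambda$ in $(0,1]$ tending to $0^+$; (B) $0$ being an eigenvalue of infinite multiplicity; (C) a subsequence of eigenvalues of the compact operator $L_0$ tending to $0^-$. In case~(A), the concavity of $f_{\boldsymbol\lambda}$ (Proposition~\ref{f}) together with the unit integer jumps of $f_{\boldsymbol\lambda}'$ at each eigenvalue in $(0,\eps)$ forces $f_{\boldsymbol\lambda}'(\alpha)\to\infty$, and hence $f_{\boldsymbol\lambda}(\alpha)/\alpha\to\infty$, as $\alpha\searrow 0$; dividing the displayed bound by $\alpha$ then forces $\delta_0>0$ and shows that the bound is asymptotically sharp. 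My plan is to extract from this near-sharpness equality in \eqref{tra21} at a suitable $\alpha_0\in(0,\eps)$ (equivalently, equality in the Trap Lemma bound used in the proof of Lemma~\ref{part2}), and then invoke the equality case of Lemma~\ref{part2} together with Theorem~\ref{isplit} to decouple $E$ at $\alpha_0$. The resulting direct summand capturing the eigenvalues of $\boldsymbol\lambda$ in $[0,\alpha_0)$ is trace class positive by the Blaschke condition on $\boldsymbol\lambda$, and Theorem~\ref{intropv3} applied to its diagonal $(d_i)_{0\le d_i<\alpha_0}$ forces this subsequence of $\boldsymbol d$ to accumulate at $0$, contradicting our assumption.

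In cases (B) and (C) the accumulation does not involve eigenvalues in $(0,1]$ near $0$, so $f_{\boldsymbol\lambda}'(\alpha)$ stays bounded as $\alpha\searrow 0$. Here I would use that $L_0$ is compact to apply Proposition~\ref{p211}: either $\delta^L=0$ directly, in which case $E$ decouples at $0$, or else a trace-class cutoff analysis via Theorem~\ref{cpttrace} applied to the negative/kernel piece of $E$ produces decoupling. Once $E$ decouples at $0$, Lemma~\ref{card} and Schur's theorem for the resulting compact nonpositive summand force $(d_i)_{d_i\le 0}$ to accumulate at $0$, again contradicting the hypothesis.

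The principal obstacle is the rigidity argument in case~(A): converting the asymptotic relation $f_{\boldsymbol\lambda}(\alpha)/\alpha\to\infty$ together with the bound $f_{\boldsymbol\lambda}(\alpha)\le\delta_0+O(\alpha)$ into a genuine interior equality in \eqref{tra21} is delicate, since equality is not immediate at any finite $\alpha$. A plausible alternative is to bypass decoupling entirely by working with the compact trace-class operator $E\cdot\pi([0,\alpha])$ for small $\alpha>0$ in place of $E$ and invoking the Kaftal--Loreaux-type necessary conditions (Theorem~\ref{kp}) on its induced diagonal, which are strict enough to force accumulation of the restricted diagonal at $0$ and hence yield the desired contradiction.
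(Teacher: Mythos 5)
Your argument has a genuine gap, which you yourself flag at the end. In case (A) you establish only that $f_{\boldsymbol\lambda}(\alpha)/\alpha\to\infty$ and $f_{\boldsymbol\lambda}(\alpha)\le \delta_0+\alpha(D_0+\delta_1-\delta_0)$. The first fact forces $\delta_0>0$, but that is all: as $\alpha\searrow 0$ the left side tends to $0$ while the right side tends to $\delta_0>0$, so the bound is \emph{strict with gap $\to\delta_0$} near the left endpoint, and nothing in your hypotheses forces equality in \eqref{tra21} at any $\alpha_0\in(0,\eps)$. The equality case of Lemma~\ref{part2} and Theorem~\ref{isplit} therefore get no foothold, and "near-sharpness" cannot be promoted to decoupling. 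The claim that the bound "is asymptotically sharp" is in fact false. Your proposed alternative of applying Theorem~\ref{kp} to $E\pi([0,\alpha])$ also does not work as stated: the diagonal of $E\pi([0,\alpha])$ with respect to $(e_i)$ is $(\langle E\pi([0,\alpha])e_i,e_i\rangle)_i$, which is not a subsequence of $\boldsymbol d$, so Theorem~\ref{kp} constrains the wrong sequence. Cases (B) and (C) are left at the level of an outline and are not completed.

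The intended proof is much shorter and sidesteps all of this. Since $\sigma_{ess}(E)\subset[0,1]$, the exterior majorization of Theorem~\ref{cptschurv2} gives $f_{\boldsymbol d}(\alpha)<\infty$ for $\alpha\notin[0,1]$, and combined with $f_{\boldsymbol d}(\alpha_0)<\infty$ for some $\alpha_0\in(0,1)$ one sees that $\boldsymbol d$ can accumulate only at $0$ or $1$. Lemma~\ref{neci} (which you never invoke) yields $\sum_{d_i>0}d_i=\sum_{d_i<1}(1-d_i)=\infty$. Under your contradiction hypothesis that all but finitely many $d_i$ lie in $\{0\}\cup[\eps,\infty)$, the sum
\[
\sum_{d_i<1}(1-d_i)=\sum_{d_i<0}(1-d_i)+\#|\{i:d_i=0\}|+\sum_{\eps\le d_i<1}(1-d_i)
\]
is finite, since the first two terms involve only finitely many indices and the third is bounded by $D_{\boldsymbol d}(\eps)<\infty$. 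This directly contradicts Lemma~\ref{neci}; the accumulation at $1$ follows symmetrically from $\sum_{d_i>0}d_i=\infty$. Your initial reduction (pushing $d_i$ out of $(-\eps,\eps)\setminus\{0\}$) is exactly the right first move; what is needed after that is Lemma~\ref{neci}, not interior majorization and decoupling.
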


\begin{proof} Note that the assumptions \eqref{nec1} and \(f_{\boldsymbol d}(\alpha)<\infty\) for some \(\alpha\in(0,1)\) imply that \(0\) and \(1\) are the only possible accumulation points of \(\boldsymbol d\). From Lemma \ref{neci} we see that \(\sum_{d_{i}<1}(1-d_{i}) = \sum_{d_{i}>0}d_{i} = \infty\), and hence both \(0\) and \(1\) are accumulation points of \(\boldsymbol d\).
\end{proof}

\begin{lem}\label{decoup01} Assume \(f_{\boldsymbol d}(\alpha)<\infty\) for some \(\alpha\in(0,1)\) and \(\delta^{L}+\delta^{U}<\infty\). If \(\delta_{0}=0\) and \(\delta_{1}=\delta^{U}\), then \(E\) decouples at \(0\). If \(\delta_{1}=0\) and \(\delta_{0}=\delta^{L}\), then \(E\) decouples at \(1\). 
\end{lem}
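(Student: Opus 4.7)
The plan is to extract, from the vanishing of the nonnegative summands defining $\delta_0$ and $\delta_1$ in \eqref{tra5a} and \eqref{tra10a} together with the sharp inequalities \eqref{tra5b} and \eqref{tra10b} from Lemmas \ref{del0} and \ref{del1}, a collection of orthogonality relations between the basis vectors $e_i$ and the eigenvectors $f_{\pm j}$, and then to verify the decoupling definition directly. I will treat the first case ($\delta_0 = 0$, $\delta_1 = \delta^U$) in detail; the second case is handled by the parallel argument.

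First I would observe that each of the three terms in \eqref{tra5a} is separately nonnegative: the first since $|\lambda_{-j}| > 0$; the second since $EP = \int_{[0,1]} \lambda\, d\pi(\lambda) \ge 0$; the third trivially. Hence $\delta_0 = 0$ forces all three sums to vanish, yielding: (i) $\langle e_i, f_{-j}\rangle = 0$ whenever $0 \le d_i \le 1$; (ii) $EPe_i = 0$, equivalently $Pe_i \in \ran\pi(\{0\})$, whenever $d_i < 0$; and (iii) $\langle e_i, f_j\rangle = 0$ whenever $d_i < 0$. Next, since $\delta_1 = \delta^U < \infty$, Lemma \ref{del1} forces the two nonnegative error terms in \eqref{tra10b} to vanish, giving additionally (iv) $\langle e_i, f_{-j}\rangle = 0$ whenever $d_i > 1$. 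Because $(f_{-j})_{j=1}^{N_0}$ and $(f_j)_{j=1}^{N_1}$ are orthonormal bases of $\ran\pi((-\infty,0))$ and $\ran\pi((1,\infty))$ respectively, combining (i) and (iv) yields $e_i \in \ran\pi([0,\infty))$ whenever $d_i \ge 0$, while combining (ii) with (iii) gives $\pi((0,\infty))e_i = 0$, hence $e_i \in \ran\pi((-\infty,0])$ whenever $d_i < 0$.

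The inclusions $\mathcal H_0 \subset \ran\pi((-\infty,0])$ and $\mathcal H_1 \subset \ran\pi([0,\infty))$ then give the spectral containments $\sigma(E|_{\mathcal H_0})\subset(-\infty,0]$ and $\sigma(E|_{\mathcal H_1})\subset[0,\infty)$ as soon as invariance is established. To show $E\mathcal H_0 \subset \mathcal H_0$ it suffices, since $\mathcal H_0^{\perp} = \mathcal H_1$, to verify $\langle Ee_i, e_k\rangle = 0$ for $d_i<0$ and $d_k\ge0$, which I would do via the decomposition $E=L_0+EP+L_1$: the $L_0$ contribution vanishes because $\langle e_k, f_{-j}\rangle = 0$ for all $j$ by (i)+(iv); the $L_1$ contribution vanishes because $\langle e_i, f_j\rangle = 0$ for all $j$ by (iii); and the $EP$ contribution vanishes because $EPe_i = 0$ by (ii). Invariance of $\mathcal H_1$ then follows from self-adjointness.

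For the second case ($\delta_1 = 0$, $\delta_0 = \delta^L$), the parallel reasoning using $(\mathbf I - E)P \ge 0$ together with \eqref{tra5b} yields $\langle e_i, f_j\rangle = 0$ for $d_i \le 1$, $Pe_i \in \ran\pi(\{1\})$ (hence $\pi([0,1))e_i = 0$) for $d_i > 1$, and $\langle e_i, f_{-j}\rangle = 0$ for $d_i > 1$, leading to $\mathcal H_0 \subset \ran\pi((-\infty,1])$ and $\mathcal H_1 \subset \ran\pi([1,\infty))$. The sole subtlety is the invariance check at $\alpha=1$: the $EP$-term in $\langle Ee_i, e_k\rangle$ is no longer automatically zero, and collapses to $\langle \pi(\{1\})e_i, \pi(\{1\})e_k\rangle$. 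I would dispatch this via a short orthogonality calculation, using that $e_i\perp e_k$ combined with $\pi((-\infty,1))e_i \perp \pi((1,\infty))e_k$ to conclude $\langle \pi(\{1\})e_i, \pi(\{1\})e_k\rangle = 0$; the borderline case $d_k = 1$ additionally requires the preliminary observation that $d_i = 1$ forces $e_i \in \ran\pi(\{1\})$, which follows from $\langle Ee_i, e_i\rangle = 1$ combined with $e_i \in \ran\pi((-\infty,1])$. This asymmetry between the two endpoints is the only real obstacle, and it is handled in a few lines.
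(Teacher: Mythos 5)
Your proof is correct and follows the same approach as the paper: from the vanishing of the nonnegative summands in the definitions of $\delta_0$ and $\delta_1$ together with Lemmas \ref{del0} and \ref{del1}, extract the orthogonality relations $\langle e_i,f_{\pm j}\rangle=0$ and $EPe_i=0$ (resp.\ $(\mathbf I-E)Pe_i=0$), convert them into spectral-subspace containments for the $e_i$, and verify invariance. The paper treats the symmetric case ($\delta_1=0$, $\delta_0=\delta^L$) and states the final invariance step without elaboration; your explicit handling of the overlap at $\ran\pi(\{1\})$ fills that gap correctly, though one can also reach invariance more directly from the sandwich $\ran\pi((-\infty,1))\subset\mathcal H_0\subset\ran\pi((-\infty,1])$, which follows at once from $\mathcal H_0=\mathcal H_1^\perp$ and $\mathcal H_1\subset\ran\pi([1,\infty))$, since $E$ preserves $\ran\pi((-\infty,1))$ and acts as the identity on $\ran\pi(\{1\})$.
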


\begin{proof} We will only prove the second implication. The other follows by a symmetric argument. Suppose that $\delta_0=\delta^L$ and $\delta_1=0$. By \eqref{tra5b} we deduce that $e_i \in \pi((-\infty,1])$ if $d_i \le 1$ and $e_i\in \pi([1,\infty))$ if $d_i \ge 1$. In particular, if $d_i=1$, then $e_i \in \pi(\{1\})$ is an eigenvector of $E$. Hence, the spaces $\mathcal H_0=\overline{\lspan }\{ e_i: d_i < 1\}$ and $\mathcal H_1=\overline{\lspan} \{ e_i: d_i \ge 1\}$  are invariant under $E$. 
\end{proof}

Now we are ready to give the proof of Theorem \ref{nec}.

\begin{proof}[Proof of Theorem \ref{nec}] That \(E\) is diagonalizable and conclusions (i) and (ii) follow from Lemma \ref{part2}. Lemma \ref{daccum} shows that \(\boldsymbol d\) has accumulation points at \(0\) and \(1\). Conclusion (iii) follows from Lemma \ref{part3}, whereas (iv) and (v) follow from Corollary \ref{part45}. 
The operator \(E\) decouples at \(\alpha=0\) when \(\delta^{L}=0\), and at \(\alpha=1\) when \(\delta^{U}=0\) in light of Proposition \ref{p211}.
Hence, the ``In addition'' part of the theorem follows from Lemmas \ref{part2} and \ref{decoup01}.
\end{proof}

We finish the section with some necessary conditions that follow when $(\mathfrak d_\alpha)$ in Theorem \ref{nec} happens for some $\alpha \in [0,1]$. The decoupling of $E$ yields the following additional list of necessary conditions.

\begin{thm}\label{necdec} Under the same assumptions as in Theorem \ref{nec} suppose one of the following happens:
\begin{enumerate}
\item[$(\mathfrak d_0)$] holds. 
If $\sum_{0 < d_i < 1} (1-d_i)<\infty$, then 
$$
\sum_{0 <\lambda_j < 1} (1-\lambda_j) \le \delta_1+\sum_{0 < d_i < 1} (1-d_i),$$
and additionally if $\sum_{\lambda_j>0} \lambda_j <\infty$, then we also have
$$
 \delta_0+\sum_{ 0\le d_i <1} (1-d_i) \le  \sum_{0\le \lambda_j< 1} (1-\lambda_j) ,
$$
If
$\sum_{0 < d_i < 1} (1-d_i)=\infty$, then $\sum_{0\le \lambda_j< 1}(1- \lambda_j) =\infty.
$

\item[$(\mathfrak d_\alpha)$] holds for some  $\alpha \in (0,1)$. Then
 there exists $z_1,z_2 \in \N \cup \{0\}$ such that:
 \begin{itemize}
 \item
$\#|\{j:\lambda_j=\alpha\}| = z_1+z_2+ \#|\{i: d_i=\alpha\}|$,
\item
$(d_i)_{d_i<\alpha}$ is a diagonal of a compact operator with an eigenvalue list $(\lambda_j)_{\lambda_j<\alpha} \oplus \alpha 1_{z_1}$, where \(1_{z_{1}}\) is the sequence consisting of \(z_{1}\) terms equal to \(1\), and
\item $(d_i-1)_{d_i>\alpha}$ is a diagonal of a compact operator with an eigenvalue list\break $(\lambda_{j}- 1)_{\lambda_j>\alpha}\oplus (\alpha-1)1_{z_2}$.
\end{itemize}

\item[$(\mathfrak d_1)$] holds. If $\sum_{0 < d_i < 1} d_i<\infty$, then 
$$
\sum_{0 <\lambda_j < 1} \lambda_j \le \delta_0+\sum_{0 < d_i < 1} d_i,$$ 
and additionally if $\sum_{\lambda_j<0} |\lambda_j|<\infty$, then we also have
$$
 \delta_0+\sum_{ 0<d_i \le1} d_i \le  \sum_{0<\lambda_j\le 1} \lambda_j.
$$
If
$\sum_{0 < d_i < 1} d_i=\infty$, then $\sum_{0<\lambda_j\le 1} \lambda_j =\infty.
$
\end{enumerate}
\end{thm}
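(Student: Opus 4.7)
My plan is to handle the three cases $(\mathfrak d_0)$, $(\mathfrak d_\alpha)$, and $(\mathfrak d_1)$ separately, with a common opening move: first I will invoke Theorem \ref{nec} to obtain decoupling of $E$ at the relevant point $\alpha\in[0,1]$, and then apply Theorem \ref{isplit} to produce a splitting $(E_1,E_2)$ of $E$ at $\alpha$ such that $(d_i)_{d_i<\alpha}$ is a diagonal of $E_1$, $(d_i)_{d_i>\alpha}$ is a diagonal of $E_2$, and the kernel-dimension identity \eqref{z102} holds. The middle case $(\mathfrak d_\alpha)$ with $\alpha\in(0,1)$ will then follow almost by inspection: Theorem \ref{nec} guarantees that $E$ is diagonalizable, and the containments $\sigma(E_1)\subset(-\infty,\alpha]$ and $\sigma(E_2)\subset[\alpha,\infty)$ together with $\sigma_{ess}(E)\subset[0,1]$ force $\sigma_{ess}(E_1)\subset\{0\}$ and $\sigma_{ess}(E_2)\subset\{1\}$. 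Hence $E_1$ and $E_2-\mathbf I$ are both compact, and the eigenvalue lists asserted in the theorem follow by setting $z_1=\dim\ker(\alpha\mathbf I-E_1)$, $z_2=\dim\ker(\alpha\mathbf I-E_2)$, and translating the diagonal of $E_2$ by $-1$.

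For the boundary cases $(\mathfrak d_0)$ and $(\mathfrak d_1)$, which are linked by the symmetry $d_i\leftrightarrow 1-d_i$, $\lambda_j\leftrightarrow 1-\lambda_j$, I will treat $(\mathfrak d_0)$ in detail. Decoupling at $0$ yields $E_2$ with $\sigma(E_2)\subset[0,\infty)$, whose diagonal is $(d_i)_{d_i>0}$ and whose eigenvalue list consists of $(\lambda_j)_{\lambda_j>0}$ together with $z_2$ zeros. The three asserted inequalities are trace-type relations on $E_2$ near the spectral value $1$, and my plan is to derive them by applying Theorem \ref{cpttrace} to $\mathbf I-E_2$, together with the trace identities in Lemmas \ref{nel} and \ref{nelt} and the two forms of majorization in Propositions \ref{LR} and \ref{LRS}. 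The constant $\delta_1$ will enter as the upper excess identified in Lemma \ref{del1}---precisely the portion of the mass of $\boldsymbol\lambda$ that has migrated across $1$ into the diagonal $\boldsymbol d$. Under the stronger hypothesis $\sum_{\lambda_j>0}\lambda_j<\infty$, the second assertion will fall into the trace-class regime of Lemma \ref{nelt}, which together with Proposition \ref{LRS} produces the matching lower bound; the third assertion is the contrapositive of the first combined with the finiteness of $\delta_1$.

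The hard part will be the boundary cases, where $\mathbf I-E_2$ (respectively $E_1$) need not have compact positive part: its essential spectrum may still contain $1$ (respectively $0$), either because eigenvalues of $E_2$ cluster at $0$ or because $\ker E_2$ is infinite-dimensional. In the first sub-case the first asserted inequality is vacuous, as $\sum_{0<\lambda_j<1}(1-\lambda_j)=\infty$; in the second sub-case I plan to bypass Theorem \ref{cpttrace} and argue directly from the trace identity of Lemma \ref{nel} or \ref{nelt} applied to the restriction of $\mathbf I-E_2$ to the orthogonal complement of $\ker E_2$. Verifying that the resulting trace inequalities identify exactly $\delta_1$---rather than $\delta^U$ or $\delta^U-\delta_1$---will rely on the explicit decomposition of $\delta_1$ into non-negative pieces in Lemma \ref{del1}, which separates the various routes by which eigenvalue mass can be transferred across $1$.
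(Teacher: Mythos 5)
Your treatment of the middle case $(\mathfrak d_\alpha)$ with $\alpha\in(0,1)$ is exactly the paper's argument (decoupling, then Theorem \ref{isplit}, then observing the two halves are compact up to a shift), and that part is fine.

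For the boundary cases, however, there is a genuine sign error that generates spurious difficulties. Taking $(\mathfrak d_0)$: after decoupling at $0$, the paper applies Theorem \ref{cpttrace} (up to symmetry $E\mapsto\mathbf I-E$) to the operator $E-\mathbf I$ restricted to $\overline{\lspan}\{e_i:d_i>0\}$, \emph{not} to $\mathbf I-E_2$. Theorem \ref{cpttrace} requires the \emph{positive} part of the input operator to be compact and then controls the \emph{negative} part; since the quantity we want to bound is $\sum_{0<\lambda_j<1}(1-\lambda_j)=\tr\bigl((\mathbf I - E)_+\big|_{\dots}\bigr)$, which is the \emph{negative} part of $E-\mathbf I$, the theorem must be applied to $E-\mathbf I$ (restricted), whose positive part is a restriction of $(E-\mathbf I)_+$ and hence automatically compact because $\sigma_{ess}(E)\subset[0,1]$. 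Applied to $\mathbf I-E_2$ as you propose, the theorem would require $(\mathbf I-E_2)_+$ to be compact (which can indeed fail, as you worry) and would only produce a bound on $(\mathbf I-E_2)_-=(E_2-\mathbf I)_+$, i.e.\ on $\sum_{\lambda_j>1}(\lambda_j-1)$, which is not the quantity in the statement. So the two "hard sub-cases" you introduce (eigenvalues clustering at $0$, infinite-dimensional kernel) are artifacts of applying the wrong sign; with the correct sign neither arises.

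Two further issues stem from this. First, your claim that "in the first sub-case the first asserted inequality is vacuous, as $\sum_{0<\lambda_j<1}(1-\lambda_j)=\infty$" has the logic inverted: when the hypothesis $\sum_{0<d_i<1}(1-d_i)<\infty$ holds, concluding $\sum_{0<\lambda_j<1}(1-\lambda_j)=\infty$ would make the asserted inequality \emph{false}, not vacuous. The inequality is vacuous only when its antecedent fails, which is the third assertion's situation, not the first's. Second, the third assertion is not the contrapositive of the first; the paper proves it directly by observing that $\sum_{0<d_i<1}(1-d_i)=\infty$ (together with $f_{\boldsymbol d}(1/2)<\infty$) forces $0$ to be an accumulation point of $(d_i)_{0<d_i<1}$, hence the relevant positive-operator restriction is non-compact, hence its trace (which is dominated by $\sum_{0\le\lambda_j<1}(1-\lambda_j)$) is infinite. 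Once the sign in the application of Theorem \ref{cpttrace} is corrected, the argument collapses to the paper's short case split on whether $\sum_{0<d_i<1}(1-d_i)$ is finite, with Theorem \ref{cpttrace} and its trace-class equality clause handling the first two assertions and the non-compactness observation handling the third.
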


\begin{proof}[Proof of $(\mathfrak d_\alpha)$] This is a consequence of Theorem \ref{nec}, which shows that the operator $E$ decouples at $\alpha$. More precisely, let
\[
\mathcal H_0=\overline{\lspan }\{ e_i: d_i < \alpha \}\qquad\text{and}\qquad \mathcal H_1=\overline{\lspan} \{ e_i: d_i > \alpha \},
\]
Since $d_i=\alpha$ implies that $e_i$ is an eigenvector of $E$ with eigenvalue $\alpha$, the operator $E|_{\mathcal H_0 \oplus \mathcal H_1}$ has eigenvalue list $(\lambda_j)_{\lambda_j\ne \alpha}\oplus \alpha 1_z$, where $z=\#|\{j: \lambda_j = \alpha\}| - \#|\{i: d_i = \alpha\}|$. Let $z_k$, $k=0,1$, be the multiplicity of the eigenvalue $\alpha$ of the operator $E|_{\mathcal H_k}$. Clearly, $z=z_0+z_1$.
Then, $E|_{\mathcal H_0}$ is a compact self-adjoint operator with eigenvalue list $(\lambda_j)_{\lambda_j< \alpha}\oplus \alpha 1_{z_0}$. Likewise, $E|_{\mathcal H_1}$ is a self-adjoint operator with eigenvalue list $(\lambda_j)_{\lambda_j> \alpha}\oplus \alpha 1_{z_1}$. Hence, its essential spectrum contains only the point $1$. This yields the required conclusion.
\end{proof}

\begin{proof}[Proof of $(\mathfrak d_1)$]
We need to consider two possible scenarios. Recall that the spaces $\mathcal H_0=\overline{\lspan }\{ e_i: d_i < 1\}$ and $\mathcal H_1=\overline{\lspan} \{ e_i: d_i \ge 1\}$  are invariant under $E$. 
If $\sum_{0<d_i<1} d_i <\infty$, then by Theorem \ref{cpttrace} the positive part of $E|_{\mathcal H_0}$ is trace class and
\[
 \sum_{0<\lambda_j<1} \lambda_j  \le \tr((E|_{\mathcal H_0})_+) \le \delta^L + \sum_{ 0<d_i<1} d_i.
\]
If we assume additionally that $\sum_{\lambda_j<0}|\lambda_j|<\infty$, then  by Theorem \ref{cpttrace} we have the equality
\[
\delta^L + \sum_{ 0<d_i<1} d_i = \tr((E|_{\mathcal H_0})_+)  .\]
Combing this with the fact that $e_i$ is an eigenvector of $E$ when $d_i = 1$, yields
\[
\delta^L + \sum_{ 0<d_i \le 1} d_i =  \tr((E|_{\mathcal H_0})_+) + \#|\{i: d_i=1\}| \le \sum_{0<\lambda_j\le 1} \lambda_j.
\]

On the other hand, if $\sum_{0<d_i<1} d_i =\infty$, then  $1$ is an accumulation point of the sequence $\{d_i: 0<d_i<1\}$, and hence, the positive part of $E|_{\mathcal H_0}$ is not compact. Therefore,
 \[
  \sum_{0<\lambda_j\le 1} \lambda_j  \ge \tr((E|_{\mathcal H_0})_+) =\infty.
 \]
Conclusion $(\mathfrak d_0)$ is shown by symmetric considerations.
\end{proof}

In addition, we have the following cardinality estimates for an operator as in Theorem \ref{nec}, which are consequences of decoupling:
\begin{enumerate}
\item 
If $(\mathfrak d_0)$ holds, then 
\[
\quad \#|\{j:\lambda_{j}=0\}|\geq \#|\{i:d_{i}=0\}|
\qquad\text{and}\qquad
\#|\{j:\lambda_{j} \le 0\}|\geq \#|\{i:d_{i} \le0\}|.
\]
\item 
If $(\mathfrak d_1)$ holds, then
\[
 \#|\{j:\lambda_{j}=1\}|\geq \#|\{i:d_{i}=1\}|
\qquad\text{and}\qquad
\#|\{j:\lambda_{j} \ge 1\}|\geq \#|\{i:d_{i} \ge1\}|.
\]
\item 
Finally, if both $(\mathfrak d_0)$ and $(\mathfrak d_1)$ hold, then
\[
 \#|\{i:0<d_{i} <1\}|
\ge \#|\{j:0< \lambda_{j} <1\}|
\quad\text{and}\quad 
\#|\{j:0\le \lambda_{j} \le 1\}|  \ge  \#|\{i:0\le d_{i} \le 1\}|. 
\]
\end{enumerate}

Since these additional necessary conditions are not exhaustive, in each of the subsequent sufficiency results we shall assume that decoupling condition $(\mathfrak d_\alpha)$ does not happen.

\section{Equivalence of Riemann and Lebesgue majorization}\label{S16}

In this section we show the equivalence of Riemann and Lebesgue interior majorization for nondecreasing sequences in \([0,1]\) satisfying the Blaschke condition. The concept of \textit{Lebesgue interior majorization} was introduced in \cite[Definition 4.2]{npt} in the case where \(\boldsymbol\lambda\) takes only finitely many values in \([0,1]\). The following definition is a generalization to the case where \(\boldsymbol\lambda\) takes possibly infinitely many values.

\begin{defn}\label{maj} Let $\boldsymbol{d} = (d_{i})_{i\in I}$ and let $\boldsymbol{\lambda} = (\lambda_{i})_{i\in J}$ be sequences in $[0,1]$. We say that $\boldsymbol{d}$ satisfies {\it interior majorization} by $\boldsymbol{\lambda}$ if $f_{\boldsymbol{d}}(\alpha)<\infty$ for all $\alpha\in(0,1)$ and both of the following hold:
\begin{equation}\label{maj1}f_{\boldsymbol{d}}(\alpha)\geq f_{\boldsymbol{\lambda}}(\alpha)\quad\text{for all }\alpha\in(0,1),\end{equation}
and 
\begin{equation}\label{maj2}f_{\boldsymbol{d}}'(\alpha)\equiv f_{\boldsymbol\lambda}'(\alpha)\mod 1\quad\text{for some }\alpha\in(0,1).\end{equation}

\end{defn}

Similarly, \textit{Riemann interior majorization} \cite[Definition 5.2]{npt} was also introduced for the case where \(\boldsymbol\lambda\) takes only finitely many values.

\begin{defn}\label{rim}
Let $\boldsymbol{d}=(d_{i})_{i\in\Z}$ and $\boldsymbol\lambda=(\lambda_{i})_{i\in\Z}$ be nondecreasing seqeunces in $[0,1]$ with $f_{\boldsymbol{d}}(\alpha)<\infty$ and $f_{\boldsymbol\lambda}(\alpha)<\infty$ for all $\alpha\in(0,1)$. We say that $\boldsymbol{d}$ is {\it{Riemann majorized}} by $\boldsymbol\lambda$ if there is some $k\in\Z$ such that the following two hold:
\begin{align}
\label{rim2}\delta_{m}:=\sum_{i=-\infty}^{m}(d_{i-k}-\lambda_{i}) &\geq  0\qquad \text{for all }m\in\Z,
\\
\label{rim3}\lim_{m\to\infty}\delta_{m} & = 0.
\end{align}
\end{defn}

\begin{thm}\label{eqmajs} Let $\boldsymbol{d}=(d_{i})_{i\in\Z}$ and $\boldsymbol\lambda=(\lambda_{i})_{i\in\Z}$ be nondecreasing seqeunces in $[0,1]$ with $f_{\boldsymbol{d}}(\alpha)<\infty$ and $f_{\boldsymbol\lambda}(\alpha)<\infty$ for all $\alpha\in(0,1)$. The sequence $\boldsymbol{d}$ is Riemann majorized by $\boldsymbol\lambda$ if and only if $\boldsymbol{d}$ satisfies interior majorization by $\boldsymbol\lambda$.
\end{thm}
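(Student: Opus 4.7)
The plan is to derive explicit formulas for $f_{\boldsymbol d}(\alpha) - f_{\boldsymbol \lambda}(\alpha)$ in terms of the Riemann partial sums, parametrized by $n_d(\alpha) := \sup\{i \in \Z : d_i < \alpha\}$ and $n_\lambda(\alpha) := \sup\{i \in \Z : \lambda_i < \alpha\}$, both finite for $\alpha \in (0,1)$ because the Blaschke hypothesis $f_{\boldsymbol d}, f_{\boldsymbol \lambda} < \infty$ forces $d_i, \lambda_i \to 0$ as $i \to -\infty$ and $d_i, \lambda_i \to 1$ as $i \to +\infty$.

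For the ($\Leftarrow$) direction, since $f_{\boldsymbol d}$ is invariant under re-indexing, I may replace $\boldsymbol d$ by $(d_{i-k})_{i\in\Z}$ and work with $k=0$. Expanding $f_{\boldsymbol d}(\alpha) = (1-\alpha)\sum_{i\le n_d}d_i + \alpha\sum_{i>n_d}(1-d_i)$ and using $\sum_{i > n}(d_i - \lambda_i) = -\delta_n$ (a consequence of $\lim_{m\to\infty}\delta_m = 0$), elementary algebra yields
\[
f_{\boldsymbol d}(\alpha) - f_{\boldsymbol \lambda}(\alpha) = \delta_{n_d(\alpha)} + \sum_{n_d(\alpha) < i \le n_\lambda(\alpha)}(\alpha - \lambda_i) \quad \text{when } n_d \le n_\lambda,
\]
with the symmetric formula $\delta_{n_d(\alpha)} + \sum_{n_\lambda < i \le n_d}(\lambda_i - \alpha)$ when $n_d \ge n_\lambda$. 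Both summands are nonnegative in each case, giving $f_{\boldsymbol d} \ge f_{\boldsymbol \lambda}$. A parallel computation yields $f'_{\boldsymbol d}(\alpha) - f'_{\boldsymbol \lambda}(\alpha) = n_\lambda(\alpha) - n_d(\alpha) \in \Z$ at every $\alpha$ avoiding the sequence values, establishing \eqref{maj2}.

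For the ($\Rightarrow$) direction, I will first verify that $T := \sum_{i\in\Z}(d_i - \lambda_i)$ converges absolutely: choosing $n_0, n_1 \in \Z$ with $d_i, \lambda_i < 1/2$ for $i \le n_0$ and $d_i, \lambda_i \ge 1/2$ for $i \ge n_1$, the Blaschke assumption bounds $|d_i - \lambda_i|$ by $d_i + \lambda_i$ on the left tail and by $(1-d_i) + (1-\lambda_i)$ on the right tail. Applying the derivative formula from the previous paragraph to the unshifted sequences gives $f'_{\boldsymbol d}(\alpha) - f'_{\boldsymbol \lambda}(\alpha) = n_\lambda(\alpha) - n_d(\alpha) - T$, so \eqref{maj2} forces $T \in \Z$. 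Setting $k := T$, a telescoping calculation shows that the shifted sequence $(d_{i-k})_i$ has Riemann-side sum $T - k = 0$, which provides $\delta_m \to 0$ as $m \to \infty$; the corresponding limit at $-\infty$ is immediate from the Blaschke summability. This yields \eqref{rim3}.

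The principal obstacle is verifying \eqref{rim2}, i.e.\ $\delta_m \ge 0$ for every $m$. Since $(\delta_m)$ is bounded by absolute convergence and vanishes at both ends of $\Z$, any violation would produce a weak local minimum $m_0$ with $\delta_{m_0} < 0$, satisfying $d_{m_0} \le \lambda_{m_0}$ and $d_{m_0+1} \ge \lambda_{m_0+1}$. In the generic case $\lambda_{m_0} < \lambda_{m_0+1}$, one has $n_d(\lambda_{m_0+1}) = n_\lambda(\lambda_{m_0+1}) = m_0$, and the formula from the easy direction gives $\delta_{m_0} = f_{\boldsymbol d}(\lambda_{m_0+1}) - f_{\boldsymbol \lambda}(\lambda_{m_0+1}) \ge 0$ by \eqref{maj1}, contradicting $\delta_{m_0} < 0$. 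In the cluster case $\lambda_{m_0} = \lambda_{m_0+1} =: c$, I will evaluate the formula at $\alpha = c$; the non-$\delta$ contribution vanishes on the intersection of the $d$- and $\lambda$-clusters at $c$ (where $d_i - \lambda_i = 0$), and combining this with the increment identity $\delta_m - \delta_{m-1} = d_m - \lambda_m$ on the portions of each cluster outside the intersection identifies $f_{\boldsymbol d}(c) - f_{\boldsymbol \lambda}(c)$ with $\delta_{m_0}$, again forcing $\delta_{m_0} \ge 0$ by continuity of $f_{\boldsymbol d} - f_{\boldsymbol \lambda}$ (Proposition \ref{f}) and \eqref{maj1}. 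Navigating the four sub-cases determined by how the endpoints of the $d$- and $\lambda$-clusters at $c$ are ordered is the delicate part of the argument.
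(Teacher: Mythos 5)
Your proof is correct, and it takes a genuinely different route from the paper's. The paper introduces the increasing sequence $(A_r)$ of distinct values of $\boldsymbol\lambda$, the counters $m_r$ and $\sigma_r$, and the auxiliary condition \eqref{eqmajs2}, then proves the equivalence of \eqref{eqmajs2} and \eqref{rim2} by a direct three-case argument. You instead derive the single closed-form identity
\[
f_{\boldsymbol d}(\alpha)-f_{\boldsymbol\lambda}(\alpha)=\delta_{n_d(\alpha)}+\sum_{n_d(\alpha)<i\le n_\lambda(\alpha)}(\alpha-\lambda_i)
\quad\text{(with the symmetric form when $n_d\ge n_\lambda$)},
\]
valid once $\delta_m\to 0$, which I verified: both terms are manifestly nonnegative under \eqref{rim2}, so the direction Riemann $\Rightarrow$ interior follows at once, and differentiation gives $f'_{\boldsymbol d}-f'_{\boldsymbol\lambda}=n_\lambda-n_d\in\Z$. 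Your converse is also a different idea: rather than the paper's forward case analysis, you argue by contradiction at a (global, hence local) minimum $m_0$ of $(\delta_m)$, which forces $d_{m_0-k}\le\lambda_{m_0}$ and $d_{m_0+1-k}\ge\lambda_{m_0+1}$; evaluating the identity at $\alpha=\lambda_{m_0+1}$ (or at the cluster value $c$) then identifies $\delta_{m_0}$ with a value of $f_{\boldsymbol d}-f_{\boldsymbol\lambda}$, contradicting \eqref{maj1}. Both proofs hinge on the same total-sum invariant ($T$ here, $k_0$ in the paper) to link \eqref{maj2} and \eqref{rim3}. Your route is shorter and more transparent for the easy direction because the identity does the majorization bookkeeping automatically, at the price of some delicacy in the cluster sub-cases (which do check out on careful inspection); the paper's route avoids the local-minimum argument and instead handles the boundary values of $\boldsymbol\lambda$ by an explicit reindexing, which it also uses to dispose of the edge cases where $0$ or $1$ is a term of $\boldsymbol\lambda$ — a point you should spell out (the cases $c\in\{0,1\}$ are resolved directly, e.g.\ $c=1$ forces $d_{i-k}=\lambda_i=1$ for $i>m_0$ and hence $\delta_{m_0}=0$), but it is not a genuine gap.
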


\begin{proof} First, we will assume that \(\boldsymbol\lambda\) is a sequence in \((0,1)\). Let \((A_{j})_{j=-\infty}^{\infty}\) be an increasing sequence in \((0,1)\) consisting of all of the distinct values of the sequence \(\boldsymbol\lambda\). Moreover, set \(N_{j} = \#|\{i\in\Z : \lambda_{i} = A_{j}\}|\).

Without loss of generality, by shifting the indices of \(\boldsymbol d\) and \(\boldsymbol\lambda\), we may assume that \(\lambda_{i}<A_{1}\) \(\iff\) \(i\leq 0\) and  $d_{i}<A_{1}$ $\iff$ $i\leq 0$. For each $r\in\Z$ set
\[m_{r} = \max\{i\in\Z: d_{i}<A_{r}\}\quad\text{and}\quad \sigma_{r} = \max\{i\in\Z:\lambda_{i}<A_{r}\}.\]

With this notation we have
\begin{align}
\label{eqmajsa} A_{r-1}\leq d_i < A_r & \iff  m_{r-1}+1\leq i \le m_r, \\
\label{eqmajsb} \lambda_i = A_{r-1} & \iff \sigma_{r-1} +1 \le i \le \sigma_r.
\end{align}
Moreover, note that the shifts of \(\boldsymbol d\) and \(\boldsymbol\lambda\) we chose imply \(m_{1}=\sigma_{1}=0\). Therefore, we have
\begin{equation}\label{eqmajs3}
C_{\boldsymbol{d}}(A_r) = \sum_{i=-\infty}^{m_r} d_{i},\qquad 
D_{\boldsymbol{d}}(A_r) = \sum_{i=m_{r}+1}^{\infty}(1-d_{i}).
\end{equation}
Since $f_{\boldsymbol{d}}(\alpha)<\infty$ and $f_{\boldsymbol\lambda}(\alpha)<\infty$ for all $\alpha\in(0,1)$, we see that $C_{\boldsymbol{d}}(A_{1})<\infty$, $D_{\boldsymbol{d}}(A_{1})<\infty$, $\sum_{i=-\infty}^{0}\lambda_{i}<\infty$, and $\sum_{i=1}^{\infty}(1-\lambda_{i})<\infty$.

Next, we define
\begin{equation}\label{eqmajs4}k_{0}:=\sum_{i=-\infty}^{\infty}(d_{i}-\lambda_{i}).\end{equation}
From this we have
\begin{equation}\label{eqmajs6}
\begin{aligned}
C_{\boldsymbol{d}}(A_{1}) - D_{\boldsymbol{d}}(A_{1}) & = \sum_{i=-\infty}^{0}d_{i} - \sum_{i=1}^{\infty}(1-d_{i}) = \sum_{i=-\infty}^{0}\lambda_{i} - \sum_{i=1}^{\infty}(1-\lambda_{i}) + k_{0}\\
 & = C_{\boldsymbol\lambda}(A_{1}) - D_{\boldsymbol\lambda}(A_{1}) + k_{0}.
\end{aligned}
\end{equation}
By Proposition \ref{f} the difference \(f_{\boldsymbol{d}}'(\alpha)-f_{\boldsymbol\lambda}'(\alpha)\equiv k_{0}\mod 1\) for all points \(\alpha\in(0,1)\) at which \(f_{\boldsymbol{d}}\) and \(f_{\boldsymbol\lambda}\) are both differentiable. Hence, \eqref{maj2} is equivalent to \(k_{0}\) being a integer.

Next, note that for \(r,k\in\Z\) we have
\[\sum_{i=-\infty}^{r}(d_{i-k}-\lambda_{i}) = \sum_{i=-\infty}^{r-k}(d_{i}-\lambda_{i}) - \sum_{i=r-k+1}^{r}\lambda_{i}.\]
Letting $r\to\infty$ and using the assumption that $\lambda_{i}\to 1$ as $i\to\infty$ we have
\begin{equation}\label{eqmajs5}\sum_{i=-\infty}^{\infty}(d_{i-k}-\lambda_{i}) = \sum_{i=-\infty}^{\infty}(d_{i}-\lambda_{i})-k.\end{equation}

If we assume \eqref{maj2}, then \(k_{0}\in\Z\). From \eqref{eqmajs5} we deduce that \eqref{rim3} holds with \(k=k_{0}\). Conversely, if  \eqref{rim3} holds for some \(k\in\Z\), then \eqref{eqmajs5} implies \(k_{0}=k\in\N\), and hence \eqref{maj2} holds. This proves the equivalence of \eqref{maj2} and \eqref{rim3}. For the rest of the proof we will assume that both \eqref{maj2} and \eqref{rim3} hold. 

Next, we claim that the inequality \eqref{maj1} with \(\alpha=A_{r}\) is equivalent to
\begin{equation}\label{eqmajs2}\sum_{i=-\infty}^{m_{r}}d_{i}\geq \sum_{j=-\infty}^{\sigma_{r}}\lambda_{i} + A_{r}(k_{0}-\sigma_{r}+m_{r}).\end{equation}

From \eqref{f1}, \eqref{eqmajsa}, and \eqref{eqmajsb} we deduce that for all \(r\in\Z\)
\begin{equation}\label{eqmajs7}C_{\boldsymbol\lambda}(A_{r}) - D_{\boldsymbol\lambda}(A_{r}) =  C_{\boldsymbol\lambda}(A_{1})-D_{\boldsymbol\lambda}(A_{1})  + \sigma_{r}.\end{equation}
and
\begin{equation}\label{eqmajs8}C_{\boldsymbol{d}}(A_{r}) - D_{\boldsymbol{d}}(A_{r}) =  C_{\boldsymbol{d}}(A_{1})-D_{\boldsymbol{d}}(A_{1}) + m_{r}.\end{equation}

Using \eqref{eqmajs6}, \eqref{eqmajs7}, and \eqref{eqmajs8} we have
\begin{equation}\label{eqmajs1}C_{\boldsymbol{d}}(A_{r})-D_{\boldsymbol{d}}(A_{r}) = C_{\boldsymbol\lambda}(A_{r}) - D_{\boldsymbol\lambda}(A_{r})  + k_{0}-\sigma_{r}+m_{r}.\end{equation}
Using the identity \eqref{eqmajs1}, we can rewrite \eqref{maj1} with \(\alpha=A_{r}\), as the equivalent inequality
\[C_{\boldsymbol{d}}(A_{r})\geq C_{\boldsymbol\lambda}(A_{r}) + A_{r}(k_{0}-\sigma_{r}+m_{r}).\]
This is exactly \eqref{eqmajs2}, and hence if \eqref{maj1} holds, then \eqref{eqmajs2} holds for all \(r\in\Z\). Conversely, by Proposition \ref{9.2}, if \eqref{eqmajs2} holds for all \(r\in\Z\), then \eqref{maj1} holds.


To complete the proof we will show that \eqref{eqmajs2} holds for all \(r\in\Z\) if and only if \eqref{rim2} holds.

Assume \eqref{eqmajs2} holds. We will show that for each $r\in\Z$ we have $\delta_{m}\geq 0$ for $m=\sigma_{r}+1,\ldots,\sigma_{r+1}$. There are three cases to consider:

{\bf Case 1.} Assume that $\sigma_{r}+1\leq m_{r}+k_{0}\leq \sigma_{r+1}$. First we will show that $\delta_{m_{r}+k_{0}}\geq 0$. Using \eqref{eqmajs2} and then \eqref{eqmajsb}
\begin{align*}
\delta_{m_{r}+k_{0}} & = \sum_{i=-\infty}^{m_{r}+k_{0}}(d_{i-k_{0}}-\lambda_{i}) = \sum_{i=-\infty}^{m_{r}}d_{i} - \sum_{i=-\infty}^{m_{r}+k_{0}}\lambda_{i}
 \geq \sum_{i=-\infty}^{\sigma_{r}}\lambda_{i} + A_{r}(k_{0}-\sigma_{r}+m_{r}) - \sum_{i=-\infty}^{m_{r}+k_{0}}\lambda_{i}\\
 & = \sum_{i=-\infty}^{\sigma_{r+1}}\lambda_{i} + A_{r}(k_{0}-\sigma_{r+1}+m_{r}) - \sum_{i=-\infty}^{m_{r}+k_{0}}\lambda_{i} = A_{r}(k_{0}-\sigma_{r+1}+m_{r}) + \sum_{i=m_{r}+k_{0}+1}^{\sigma_{r+1}}A_{r}  = 0.
\end{align*}
By \eqref{eqmajsa} and \eqref{eqmajsb}
\[
\begin{aligned}
d_{i-k_{0}} - \lambda_i \ge A_r - \lambda_i  \ge 0 & \qquad\text{for } m_{r}+k_{0}\leq i\leq \sigma_{r+1},
\\
d_{i-k_{0}} - \lambda_i < A_r - \lambda_i =0 & \qquad\text{for } \sigma_{r}+1\leq i\leq m_{r}+k_{0}.
\end{aligned}
\]
Combining this with $\delta_{m_{r}+k_{0}}\geq 0$ implies that $\delta_{m}\geq 0$ for all $m = \sigma_{r}+1,\ldots,\sigma_{r+1}$.

{\bf Case 2.} Assume $m_{r}+k_{0}>\sigma_{r+1}$. Using \eqref{eqmajs2} and then \eqref{eqmajsa}
\begin{align*}
\delta_{\sigma_{r+1}} & = \sum_{i=-\infty}^{\sigma_{r+1}}(d_{i-k_{0}}-\lambda_{i})
  = \sum_{i=-\infty}^{\sigma_{r+1}-k_{0}}d_{i} - \sum_{i=-\infty}^{\sigma_{r}} \lambda_{i} - (\sigma_{r+1}-\sigma_{r})A_{r}\\
	& \geq   \sum_{i=-\infty}^{\sigma_{r+1}-k_{0}}d_{i} + A_{r}(k_{0}-\sigma_{r+1}+m_{r}) - \sum_{i=-\infty}^{m_{r}}d_{i} = - \sum_{i=\sigma_{r+1}-k_{0}+1}^{m_{r}}d_{i} + A_{r}(k_{0}-\sigma_{r+1}+m_{r})\\
	& > -\sum_{i=\sigma_{r+1}-k_{0}+1}^{m_{r}}A_{r} + A_{r}(k_{0}-\sigma_{r+1}+m_{r})= 0.
\end{align*}
For all $i$ such that $\sigma_{r}+1 \le i \le \sigma_{r+1}$, \eqref{eqmajsa} and \eqref{eqmajsb} imply $d_{i-k_{0}}-\lambda_{i} < A_r - \lambda_i=0$.
Combining this with $\delta_{\sigma_r+1}\geq 0$ implies that $\delta_{m}\geq 0$ for all $m = \sigma_{r}+1,\ldots,\sigma_{r+1}$.

{\bf Case 3.} Assume $m_{r}+k_{0}\leq \sigma_{r}$. First, we calculate
\begin{align*}
\delta_{\sigma_{r}} & = \sum_{i=-\infty}^{\sigma_{r}}(d_{i-k_{0}}-\lambda_{i}) = \sum_{i=-\infty}^{m_{r}}d_{i} + \sum_{i=m_{r}+1}^{\sigma_{r}-k_{0}}d_{i} - \sum_{i=-\infty}^{\sigma_{r}}\lambda_{i}\\
 & \geq A_{r}(k_{0} - \sigma_{r}+m_{r}) + \sum_{i=m_{r}+1}^{\sigma_{r}-k_{0}}d_{i} \geq  A_{r}(k_{0} - \sigma_{r}+m_{r}) + \sum_{i=m_{r}+1}^{\sigma_{r}-k_{0}}A_{r} = 0.
\end{align*}
For $i$ such that $\sigma_{r}+1\leq i\leq \sigma_{r+1}$, using \eqref{eqmajsa} and \eqref{eqmajsb} we have $d_{i-k_{0}}-\lambda_{i}>A_{r}-\lambda_{i} = 0$. Since we have already demonstrated that $\delta_{\sigma_{r}}\geq 0$ this shows that $\delta_{m}\geq 0$ for $m=\sigma_{r}+1,\ldots,\sigma_{r+1}$.

Finally, assume \eqref{rim2} holds for some \(k\in\Z\). Fix $r\in\Z$ and assume $\sigma_{r}\geq m_{r}+k$. Using $\delta_{m_{r}+k}\geq 0$ and the fact that $\lambda_i \le A_{r-1}<A_{r}$ for $i \leq \sigma_{r}$, we have
\begin{equation*}
\sum_{i=-\infty}^{m_{r}}d_{i} = \sum_{i=-\infty}^{m_{r}+k}d_{i-k} \geq \sum_{i=-\infty}^{m_{r}+k}\lambda_{i} = \sum_{i=-\infty}^{\sigma_{r}}\lambda_{i} - \sum_{i=m_{r}+k+1}^{\sigma_{r}}\lambda_{i} \geq \sum_{i=-\infty}^{\sigma_{r}}\lambda_{i} - (\sigma_{r}-m_{r}-k)A_{r}.
\end{equation*}
Next assume $\sigma_{r}<m_{r}+k$. Using $\delta_{m_{r}+k}\geq 0$ and the fact $\lambda_i \ge A_r$ for $i \ge \sigma_{r}+1$, we have
\[\sum_{i=-\infty}^{m_{r}}d_{i} = \sum_{i=-\infty}^{m_{r}+k}d_{i-k} \geq \sum_{i=-\infty}^{m_{r}+k}\lambda_{i} = \sum_{i=-\infty}^{\sigma_{r}}\lambda_{i} + \sum_{i=\sigma_{r}+1}^{m_{r}+k}\lambda_{i} \geq \sum_{i=-\infty}^{\sigma_{r}}\lambda_{i} + (m_{r}+k-\sigma_{r})A_{r}.\]
This proves that $(d_{i})$ satisfies \eqref{eqmajs2} for each \(r\in\Z\).

To complete the proof we must consider the cases where \(0\) and \(1\) are terms of \(\boldsymbol\lambda\). If both \(0\) and \(1\) are terms of \(\boldsymbol\lambda\), then \(\boldsymbol\lambda\) takes only finitely many values, and the result is a reformulation of Theorem 5.2 from \cite{npt}. If \(0\) is a term of \(\boldsymbol\lambda\) and \(1\) is not, then we define the sequence \((A_{j})\), but indexed by \(\N\cup\{0\}\) and with \(A_{0}=0\), and use the same proof as above by restricting \(r\) to \(\N\). For the final case where \(1\) is a term of \(\boldsymbol\lambda\) and \(0\) follows from a symmetric proof.
\end{proof}


\section{Sufficiency of interior majorization}\label{S17}

In this section we will show that interior majorization is sufficient for the diagonal-to-diagonal result for sequences in \((0,1)\) satisfying the Blaschke condition. To achieve this we will show that Riemann interior majorization is sufficient for nondecreasing sequences.

\begin{thm}\label{rimsuff} Let $(\lambda_{i})_{i\in\Z}$ and $(d_{i})_{i\in\Z}$ be two nondecreasing sequences in $[0,1]$ such that $(d_{i})_{i=-\infty}^{0}$, $(\lambda_{i})_{i=-\infty}^{0}$, $(1-d_{i})_{i=1}^{\infty}$, and $(1-\lambda_{i})_{i=1}^{\infty}$ are summable.
Define
\[\delta_{m} = \sum_{i=-\infty}^{m}(d_{i}-\lambda_{i})\quad\text{for }m\in\Z.\]
Suppose that
\begin{equation}\label{rmaj1}\delta_{m}\geq 0\quad\text{for all }m\in\Z\end{equation}
and
\begin{equation}\label{rmaj2}\lim_{m\to\infty}\delta_{m} = 0.\end{equation}
If $(\lambda_{i})$ is a diagonal of a self-adjoint operator $E$, then so is $(d_{i})$.
\end{thm}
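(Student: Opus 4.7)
The plan is to reduce Theorem~\ref{rimsuff} to the Kaftal--Weiss diagonal-to-diagonal statement Proposition~\ref{kwd2d} for $c_0^+$ sequences, combined with the splitting lemma~\ref{subdiag}. The case analysis is driven by whether $\delta_m$ vanishes at some integer.

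\emph{Base case: $\delta_{m_0}=0$ for some $m_0 \in \Z$.} Partition $\Z = I_1 \sqcup I_2$ with $I_1 = \{i \leq m_0\}$ and $I_2 = \{i > m_0\}$. On $I_1$, reverse the order via $\tilde\lambda_j = \lambda_{m_0 - j+1}$, $\tilde d_j = d_{m_0 - j+1}$ for $j \in \N$: these become nonincreasing, positive, summable sequences. The hypothesis $\delta_m \geq 0$ for $m \leq m_0$ together with $\delta_{m_0} = 0$ and the automatic fact $\delta_{-n} \to 0$ (from summability of $(\lambda_i)_{i\leq 0}$ and $(d_i)_{i\leq 0}$) translate into $\tilde d \prec \tilde \lambda$ (in the sense of Definition~\ref{rmajdef}) with $\sum \tilde d_j = \sum \tilde\lambda_j$. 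Hence Proposition~\ref{kwd2d} gives that for any self-adjoint operator with diagonal $(\lambda_i)_{i\in I_1}$, the sequence $(d_i)_{i\in I_1}$ is also a diagonal. On $I_2$, applying the same reasoning to $(1-\lambda_i)_{i\in I_2}$ and $(1-d_i)_{i\in I_2}$ with respect to $\mathbf{I}-E$ yields the analogous abstract implication. Lemma~\ref{subdiag} then glues the two pieces.

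\emph{General case: $\delta_m > 0$ for every $m \in \Z$.} The plan is to first modify the $\lambda$-diagonal by a finite-rank unitary acting on basis vectors in a finite block $B = \{N_- + 1, \dots, N_+\}$, with $N_\pm$ chosen large enough that $\delta_{N_\pm}$ are small. Applying a finite Schur--Horn redistribution to $P_B E P_B$ produces a new diagonal $(\lambda_i')$ of $E$ that agrees with $(\lambda_i)$ off $B$ and satisfies $\delta'_{m_0} = 0$ for some $m_0 \in B$ (where $\delta'_m = \sum_{i\leq m}(d_i - \lambda_i')$), while preserving the trace and the global nonnegativity $\delta'_m \geq 0$. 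The base case then applies to $(\lambda_i')$ and $(d_i)$ to produce $(d_i)$ as a diagonal of $E$.

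The main obstacle will be arranging the finite Schur--Horn redistribution in the general case so that $\delta'_{m_0}=0$ is achieved at some internal index $m_0 \in B$, while simultaneously preserving $\delta'_m \geq 0$ for all $m$. This requires sufficient spectral spread of the compression $P_B E P_B$, which is controlled by taking $B$ large and using that the block eigenvalues of $E$ majorize $(\lambda_i)_{i\in B}$ in the finite sense (Schur's theorem); the extra flexibility this provides, combined with the vanishing of $\delta_{N_\pm}$ at the block boundaries, should allow the exact redistribution of $\delta_0$ units of mass across the block and the preservation of all required partial sum inequalities.
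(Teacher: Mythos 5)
Your base case is essentially the same as the paper's (split at $m_{0}$, reverse the order on the lower block, glue with Lemma~\ref{subdiag}), modulo a small technical point: you invoke Proposition~\ref{kwd2d}, which is stated for \emph{positive} sequences, whereas the relevant terms may equal $0$ (or $1$ on the upper block). The paper instead invokes Proposition~\ref{posSH}, which requires only monotonicity, so the issue never arises; your choice would need a small patch to handle the zeros.

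The genuine gap is in your general case $\delta_{m}>0$. You propose to modify the diagonal inside a finite block $B$ by a unitary supported on $\overline{\lspan}\{e_{i}:i\in B\}$ and a finite Schur--Horn redistribution of $P_{B}EP_{B}$, in order to force some $\delta'_{m_{0}}=0$ while keeping $\delta'_{m}\ge 0$. You explicitly flag that achieving this requires ``sufficient spectral spread of the compression,'' and that is exactly where the argument is missing. The set of achievable new diagonals on $B$ is the permutohedron of the eigenvalues $\boldsymbol\mu$ of $P_{B}EP_{B}$, and the constraints you need (a prescribed partial sum $\sum_{i\in B,\,i\le m_{0}}(\lambda'_{i}-\lambda_{i})=\delta_{m_{0}}$ together with $\sum_{i\in B,\,i\le m}(\lambda'_{i}-\lambda_{i})\le\delta_{m}$ for every $m$) must be shown to intersect that permutohedron. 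Nothing you have written controls $\boldsymbol\mu$ beyond $\boldsymbol\mu$ majorizing $(\lambda_{i})_{i\in B}$, and that bound goes in the unhelpful direction: majorization by $\boldsymbol\mu$ allows you to move toward the average, not to concentrate arbitrarily. Moreover, even if the redistribution exists, the new block diagonal $(\lambda'_{i})_{i\in B}$ produced by Schur--Horn need not be nondecreasing, so your base case (which requires monotone sequences to feed Proposition~\ref{posSH}/\ref{kwd2d}) does not directly apply. The paper avoids the compression entirely: because $(\lambda_{i})_{i\le0}$ is summable and $(1-\lambda_{i})_{i\ge1}$ is summable, $(\lambda_{i})$ is not eventually constant, so there is an index $i_{0}$ with a strict jump $\lambda_{i_{0}-1}<\lambda_{i_{0}}$; one then picks $k_{0}\ll i_{0}$ with $\delta_{k_{0}}$ small and minimal on $[k_{0},i_{0}]$ and small enough that $d_{k_{0}}+\delta_{k_{0}}<\lambda_{i_{0}}-\lambda_{i_{0}-1}$, defines the explicit intermediate sequence $\tilde\lambda_{i}$ equal to $d_{i}$ on $(-\infty,k_{0}]$, to $\lambda_{i_{0}}-\delta_{k_{0}}$ at $i_{0}$, and to $\lambda_{i}$ elsewhere, and establishes both ``$\tilde{\boldsymbol\lambda}$ is a diagonal of $E$'' and ``$\boldsymbol{d}$ is a diagonal of $E$'' by two further applications of Proposition~\ref{posSH} and Lemma~\ref{subdiag} on one-sided tails where monotonicity is manifest. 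That construction is entirely elementary and does not touch the spectrum of any compression.
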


\begin{proof} First, suppose there exists \(m_{0}\in\Z\) such that \(\delta_{m_{0}}=0\). Set \(I_{1} = J_{1} = \Z\cap(-\infty,m_{0}]\) and \(I_{2} = J_{1} = \Z\cap (m_{0},\infty)\). For \(k=1,2\), let \(E_{k}\) be a self-adjoint operator with diagonal \((\lambda_{i})_{i\in I_{k}}\). The sequences \((\lambda_{m_{0}+1-i})_{i=1}^{\infty}\) and \((d_{m_{0}+1-i})_{i=1}^{\infty}\) are nonincreasing, and for each \(n\in \N\) we have
\[\delta_{n}^{(1)}:=\sum_{i=1}^{n}(\lambda_{m_{0}+1-i}-d_{m_{0}+1-i}) = \delta_{m_{0}+1-n}-\delta_{m_{0}} = \delta_{m_{0}+1-n}\geq 0.\]
It is also clear that \(\delta_{n}^{(1)}\to0\) as \(n\to\infty\). By Proposition \ref{posSH}, \((d_{i})_{i\in I_{1}}\) is also a diagonal of \(E_{1}\). Similarly, the sequences \((\lambda_{m_{0}+i})_{i=1}^{\infty}\) and \((d_{m_{0}+i})_{i=1}^{\infty}\) are nondecreasing, and for each \(n\in\N\)
\[\delta_{n}^{(2)} = \sum_{i=1}^{n}(d_{m_{0}+i}-\lambda_{m_{0}+i}) = \delta_{m_{0}+n}-\delta_{m_{0}} = \delta_{m_{0}+n}\geq 0.\]
Again, it is clear that \(\delta_{n}^{(2)}\to 0\) as \(n\to\infty\). By a symmetric version of Proposition \ref{posSH} (\cite[Theorem 3.6]{unbound}) \((d_{i})_{i\in I_{2}}\) is also a diagonal of \(E_{2}\). Since \(E_{1}\) and \(E_{2}\) were arbitrary, by Lemma \ref{subdiag} if \(E\) is any self-adjoint operator with diagonal \((\lambda_{i})_{i\in\Z}\), then \((d_{i})_{i\in\Z}\) is also a diagonal of \(E\).

We may now assume that \(\delta_{m}>0\) for all \(m\in\Z\). Let \(E\) be a self-adjoint operator with diagonal \((\lambda_{i})_{i\in\Z}\). Fix \(i_{0}\in\Z\) such that \(\lambda_{i_{0}-1}<\lambda_{i_{0}}\). Since \(\delta_{m}\to 0\) as \(m\to-\infty\), we can choose \(k_{0}<i_{0}\) such that \(\delta_{k_{0}}\leq \delta_{m}\) for \(m=k_{0}+1,k_{0}+2,\ldots,i_{0}\). Additionally, we may choose \(k_{0}\) small enough that 
\begin{equation}\label{rmaj3}d_{k_{0}}+\delta_{k_{0}}<\lambda_{i_{0}}-\lambda_{i_{0}-1}.\end{equation}
Define the sequence \((\tilde{\lambda}_{i})_{i\in\Z}\) by
\[\tilde{\lambda}_{i} = \begin{cases} d_{i} & i\leq k_{0},\\ \lambda_{i} & i\in\{k_{0}+1,\ldots,i_{0}-1\}\cup\{i_{0}+1,i_{0}+2,\ldots\},\\ \lambda_{i_{0}} - \delta_{k_{0}} & i=i_{0}.\end{cases}\]

Set \(I_{1} = J_{1} = \{i_{0},k_{0},k_{0}-1,k_{0}-2,\ldots\}\), and \(I_{2} = J_{2} = \Z\setminus I_{1}\). For \(k=1,2\) let \(E_{k}\) be a self-adjoint operator with diagonal \((\lambda_{i})_{i\in I_{k}}\). It is clear that \((\tilde{\lambda}_{i})_{i\in I_{2}} = (\lambda_{i})_{i\in I_{2}}\) is a diagonal of \(E_{2}\). Define the sequences
\[\boldsymbol{\mu} = (\lambda_{i_{0}},\lambda_{k_{0}},\lambda_{k_{0}-1},\ldots)\quad\text{and}\quad \tilde{\boldsymbol\mu} = (\lambda_{i_{0}}-\delta_{k_{0}},d_{k_{0}},d_{k_{0}-1},\ldots).\]
By the assumption that \(d_{k_{0}}+\delta_{k_{0}}<\lambda_{i_{0}}\) we see that both \(\boldsymbol\mu\) and \(\tilde{\boldsymbol\mu}\) are nonincreasing. Moreover, for each \(n\in \N\) we have
\[\delta_{n}^{\mu}:=\sum_{i=1}^{n}(\mu_{i} - \tilde{\mu}_{i}) = \delta_{k_{0}+1-n}\geq 0,\]
and clearly \(\tilde{\delta}_{n}\to 0\) as \(n\to\infty\). Note that \(\boldsymbol\mu\) is a diagonal of \(E_{1}\). By Proposition \ref{posSH}, \(\tilde{\boldsymbol\mu}\) is also a diagonal of \(E_{1}\). Since \(E_{1}\) and \(E_{2}\) were arbitrary, by Lemma \ref{subdiag}  \((\tilde{\lambda}_{i})_{i\in\Z}\) is also a diagonal of \(E\).

For the final application of Lemma \ref{subdiag} we define the sets \(I_{1}=J_{1} = \{k_{0}+1,k_{0}+2,\ldots\}\) and \(I_{2}=J_{2} = \Z\setminus I_{1}\). For \(k=1,2\) let \(E_{k}\) be a self-adjoint operator with diagonal \((\tilde{\lambda}_{i})_{i\in I_{k}}\). It is clear that \((d_{i})_{i\in I_{2}}\) is a diagonal of \(E_{2}\). Define the sequences
\[\boldsymbol{\eta} = (\lambda_{k_{0}+1},\lambda_{k_{0}+2},\ldots,\lambda_{i_{0}-1},\lambda_{i_{0}}-\delta_{k_{0}},\lambda_{i_{0}+1},\ldots)\quad\text{and}\quad \tilde{\boldsymbol\eta} = (d_{k_{0}+1},d_{k_{0}+2},\ldots).\]
From \eqref{rmaj3} we see that \(\boldsymbol\eta\) is nondecreasing. For each \(n\in\N\) we have
\[\delta_{n}^{\eta} := \sum_{i=1}^{n}(\tilde{\eta}_{i} - \eta_{i}) = \begin{cases} \delta_{k_{0}+n}-\delta_{k_{0}} & n=1,\ldots,i_{0}-1,\\ \delta_{k_{0}+n} & n\geq i_{0}.\end{cases}\]
By our choice of \(k_{0}\) we see that \(\delta_{n}^{\eta}\geq 0\) for all \(n\in\N\). Moreover, \(\delta_{n}^{\eta} \to0\) as \(n\to\infty\). By a symmetric version of Proposition \ref{posSH} we conclude that \(\tilde{\boldsymbol\eta}\) is a diagonal of \(E_{1}\), and thus \((d_{i})_{i\in I_{1}}\) is a diagonal of \(E_{1}\). Since \(E_{1}\) and \(E_{2}\) were arbitrary, by Lemma \ref{subdiag}  \((d_{i})_{i\in\Z}\) is also a diagonal of \(E\).
\end{proof}

\begin{thm}\label{archaic}
Let $\boldsymbol \lambda=(\lambda_{i})_{i\in\N}$ and $\boldsymbol d=(d_{i})_{i\in\N}$ be two sequences in $(0,1)$ which have accumulation points at $0$ and $1$.
Assume that
\[
f_{\boldsymbol \lambda}(\alpha)  \le f_{\boldsymbol d}(\alpha)< \infty
\qquad\text{for all }\alpha \in (0,1),
\]
and
\[
f_{\boldsymbol\lambda}'(\alpha) \equiv f_{\boldsymbol{d}}'(\alpha) \mod 1\qquad \text{for some }\alpha\in(0,1).
\]
If $\boldsymbol \lambda$ is a diagonal of a self-adjoint operator $E$, then so is $\boldsymbol d$.
\end{thm}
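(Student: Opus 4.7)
The plan is to reduce Theorem \ref{archaic} to Theorem \ref{rimsuff} (sufficiency of Riemann majorization) by way of Theorem \ref{eqmajs} (equivalence of Riemann and Lebesgue interior majorization). The key observation is that the hypotheses are precisely the Lebesgue interior majorization condition from Definition \ref{maj}, so once both sequences are rearranged as nondecreasing sequences indexed by $\Z$, a direct appeal to Theorem \ref{eqmajs} converts them into the Riemann form, and then Theorem \ref{rimsuff} finishes the job.

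First I would set up the rearrangements. Since $\boldsymbol\lambda$ and $\boldsymbol d$ lie in $(0,1)$ and each has accumulation points at $0$ and $1$, and since finiteness of $f_{\boldsymbol d}(1/2)$ (resp.\ $f_{\boldsymbol\lambda}(1/2)$, which is at most $f_{\boldsymbol d}(1/2)$) is exactly the Blaschke condition $\sum_i \min(d_i, 1-d_i) < \infty$ (resp.\ for $\boldsymbol\lambda$), both sequences can be rearranged as nondecreasing sequences $\tilde{\boldsymbol\lambda} = (\tilde\lambda_i)_{i\in\Z}$ and $\tilde{\boldsymbol d} = (\tilde d_i)_{i\in\Z}$ with $\tilde\lambda_i,\tilde d_i \to 0$ as $i \to -\infty$ and $\tilde\lambda_i,\tilde d_i \to 1$ as $i \to \infty$. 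Since $f_{\boldsymbol\lambda}$ and $f_{\boldsymbol d}$ depend only on the multiset of terms, $f_{\tilde{\boldsymbol\lambda}} = f_{\boldsymbol\lambda}$ and $f_{\tilde{\boldsymbol d}} = f_{\boldsymbol d}$; and being a diagonal of $E$ is invariant under bijective relabeling of the index set, so $\tilde{\boldsymbol\lambda}$ is a diagonal of $E$ and it suffices to show that $\tilde{\boldsymbol d}$ is.

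Next, the hypotheses $f_{\boldsymbol\lambda}(\alpha) \le f_{\boldsymbol d}(\alpha) < \infty$ for all $\alpha\in(0,1)$ and $f'_{\boldsymbol\lambda}(\alpha) \equiv f'_{\boldsymbol d}(\alpha) \mod 1$ for some $\alpha\in(0,1)$ say precisely that $\tilde{\boldsymbol d}$ satisfies interior majorization by $\tilde{\boldsymbol\lambda}$ in the sense of Definition \ref{maj}. Theorem \ref{eqmajs} then yields an integer $k\in\Z$ such that the sequence $\tilde{\boldsymbol d}' = (\tilde d_{i-k})_{i\in\Z}$, which is itself a nondecreasing rearrangement of $\boldsymbol d$ with the same limits at $\pm\infty$, satisfies
\[
\delta_m := \sum_{i=-\infty}^m (\tilde d'_i - \tilde\lambda_i) \ge 0 \quad \text{for all } m\in\Z, \qquad \lim_{m\to\infty} \delta_m = 0.
\]
The summability hypotheses of Theorem \ref{rimsuff} (namely that $(\tilde\lambda_i)_{i\le 0}$, $(\tilde d'_i)_{i\le 0}$, $(1-\tilde\lambda_i)_{i\ge 1}$, and $(1-\tilde d'_i)_{i\ge 1}$ are summable) are exactly the finiteness of $f_{\boldsymbol\lambda}(1/2)$ and $f_{\boldsymbol d}(1/2)$, which we have. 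Applying Theorem \ref{rimsuff} to the pair $\tilde{\boldsymbol\lambda}, \tilde{\boldsymbol d}'$ yields that $\tilde{\boldsymbol d}'$ is a diagonal of $E$, and since rearrangement preserves being a diagonal, so is $\boldsymbol d$.

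There is no serious obstacle; the work has already been done in Theorems \ref{eqmajs} and \ref{rimsuff}. The only mild care required is bookkeeping: ensuring that the accumulation-point hypothesis and the Blaschke condition together really do permit rearrangement as a nondecreasing sequence indexed by $\Z$ (rather than by $\N$), and that the shift $k$ produced by Theorem \ref{eqmajs} can be absorbed into the index to match the precise form required by Theorem \ref{rimsuff}.
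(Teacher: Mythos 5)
Your proposal is correct and takes essentially the same route as the paper: reindex both sequences nondecreasingly by $\Z$ (justified by the finiteness of $f_{\boldsymbol d}$, $f_{\boldsymbol\lambda}$ and the accumulation points at $0$ and $1$), observe the hypotheses are exactly interior majorization in the sense of Definition \ref{maj}, invoke Theorem \ref{eqmajs} to obtain Riemann majorization, and finish with Theorem \ref{rimsuff}. Your explicit tracking of the shift $k$ is slightly more careful bookkeeping than the paper's terse statement, but the argument is the same.
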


\begin{proof} The assumptions that \(f_{\boldsymbol{d}}(\alpha)\) and \(f_{\boldsymbol\lambda}(\alpha)\) are finite imply that we can reindex \(\boldsymbol d\) and \(\boldsymbol\lambda\) by \(\Z\) in nondecreasing order. Theorem \ref{eqmajs} implies that \(\boldsymbol d\) is Riemann majorized by \(\boldsymbol\lambda\). Finally, Theorem \ref{rimsuff} shows that if \(\boldsymbol d\) is a diagonal of some self-adjoint operator, then \(\boldsymbol\lambda\) is a diagonal as well.
\end{proof}

\begin{thm}\label{archaic2}
Let $\boldsymbol \lambda=(\lambda_{i})_{i\in I}$ and $\boldsymbol d=(d_{i})_{i\in J}$ be two sequences in $(0,1)$.
Assume that
\[
f_{\boldsymbol \lambda}(\alpha)  \le f_{\boldsymbol d}(\alpha)< \infty
\qquad\text{for all }\alpha \in (0,1),
\]
and
\begin{equation}\label{archaic2.0}
f_{\boldsymbol\lambda}'(\alpha) \equiv f_{\boldsymbol{d}}'(\alpha) \mod 1 \qquad \text{for some }\alpha\in(0,1).
\end{equation}
There exists a sequence \(\tilde{\boldsymbol\lambda}\) obtained by appending some number of zeros and ones to \(\boldsymbol\lambda\) such that if $\tilde{\boldsymbol \lambda}$ is a diagonal of a self-adjoint operator $E$, then so is $\boldsymbol d$.
\end{thm}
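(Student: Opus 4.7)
The strategy is to reduce Theorem \ref{archaic2} to Theorem \ref{archaic} (or, more generally, to Theorem \ref{rimsuff}) by a suitable padding of \(\boldsymbol\lambda\) with zeros and ones. The key preliminary observation is that appending zeros or ones to \(\boldsymbol\lambda\) does not alter \(f_{\boldsymbol\lambda}(\alpha)\) for \(\alpha\in(0,1)\), nor does it alter \(f_{\boldsymbol\lambda}'(\alpha)\) modulo \(1\). Indeed, directly from Definition \ref{d81}, a term \(\lambda_i=0\) is excluded from \(D_{\boldsymbol\lambda}(\alpha)\) and contributes \(0\) to \(C_{\boldsymbol\lambda}(\alpha)\), while a term \(\lambda_i=1\) contributes \(0\) to \(D_{\boldsymbol\lambda}(\alpha)\) and is excluded from \(C_{\boldsymbol\lambda}(\alpha)\); likewise, each such term changes the counting quantity in \eqref{f1} by an integer. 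Consequently, both the hypothesis \(f_{\boldsymbol\lambda}\le f_{\boldsymbol d}\) and the congruence \eqref{archaic2.0} automatically persist after padding, so any \(\tilde{\boldsymbol\lambda}\) constructed by appending zeros and ones is still interior-majorized by \(\boldsymbol d\) in the sense of Definition \ref{maj}.

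I would next exploit the structural restriction imposed by finiteness of \(f_{\boldsymbol d}\): because \(C_{\boldsymbol d}(\alpha)\) and \(D_{\boldsymbol d}(\alpha)\) are finite for every \(\alpha\in(0,1)\), the sequence \(\boldsymbol d\) has only finitely many terms in each compact subinterval of \((0,1)\), so its only possible accumulation points in \([0,1]\) are \(0\) and \(1\); the same holds for \(\boldsymbol\lambda\). The padding amounts \(k_0\) and \(k_1\) are then chosen based on the accumulation pattern of \(\boldsymbol d\): when \(\boldsymbol d\) accumulates at \(0\) I take \(k_0=\infty\) (and finite otherwise, chosen to equalize the cardinalities of terms below \(1/2\)), and symmetrically for \(k_1\). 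With this choice both \(\tilde{\boldsymbol\lambda}\) and \(\boldsymbol d\) admit compatible nondecreasing reindexings on a common subset of \(\Z\), equal to all of \(\Z\) when \(\boldsymbol d\) accumulates at both endpoints.

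In the case that the common indexing is all of \(\Z\), Theorem \ref{eqmajs} converts the preserved interior majorization into Riemann majorization (Definition \ref{rim}), and Theorem \ref{rimsuff} directly yields that whenever \(\tilde{\boldsymbol\lambda}\) is a diagonal of a self-adjoint operator \(E\), so is \(\boldsymbol d\). When \(\boldsymbol d\) fails to accumulate at one of the endpoints, the common index set is only a one-sided subset of \(\Z\), and I would split \(E\) via Lemma \ref{subdiag}: the finite tail on the non-accumulating side is handled directly by Proposition \ref{posSH}, while the remaining infinite piece reduces to the two-sided case treated above.

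The main obstacle I anticipate is the counting argument needed to verify that the required padding amounts \(k_0\) and \(k_1\) are nonnegative, that is, that \(\boldsymbol\lambda\) does not have strictly more terms below (or above) a chosen threshold than \(\boldsymbol d\) does. The remedy is to select the threshold at a point \(\alpha_0\in(0,1)\) avoiding all terms of both sequences and to use the identity \eqref{f1} together with the congruence \(f_{\boldsymbol\lambda}'(\alpha)\equiv f_{\boldsymbol d}'(\alpha)\bmod 1\), which pins down the difference of cardinalities up to integer adjustments that can be absorbed by the zeros and ones in \(\tilde{\boldsymbol\lambda}\); the majorization inequality then ensures the sign of the adjustment is compatible with appending rather than removing terms.
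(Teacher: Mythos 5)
Your argument for the main case — when $\boldsymbol d$ accumulates at both $0$ and $1$, pad $\boldsymbol\lambda$ with infinitely many zeros and/or ones so that both sequences reindex over $\Z$, then convert interior majorization to Riemann majorization by Theorem~\ref{eqmajs} and conclude via Theorem~\ref{rimsuff} — is correct and is exactly the paper's Case~1. Your preliminary observations (padding with $\{0,1\}$ preserves $f_{\boldsymbol\lambda}$ on $(0,1)$ and $f_{\boldsymbol\lambda}'$ modulo $1$; $\boldsymbol d$ can only accumulate at $0$ or $1$) are also right and used by the paper.

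The gap is in your treatment of the one-sided cases. When $\boldsymbol d$ does \emph{not} accumulate at $1$ (say), there is no "remaining infinite piece" of $\boldsymbol d$ that "reduces to the two-sided case": no subsequence of $\boldsymbol d$ accumulates at $1$, so Theorem~\ref{eqmajs}/\ref{rimsuff} simply never become applicable. Moreover the split you propose is not available: knowing $\tilde{\boldsymbol\lambda}$ is the diagonal of $E$ with respect to some basis does \emph{not} let you decompose $E$ into invariant blocks indexed by the original terms versus the appended ones — the spans $\overline{\operatorname{span}}\{e_i : i\ \text{original}\}$ and $\overline{\operatorname{span}}\{e_i : \tilde\lambda_i\ \text{appended}\}$ are in general not $E$-invariant (a basis vector with diagonal entry $1$ need not be an eigenvector, since $\sigma(E)$ is unconstrained), so Lemma~\ref{subdiag} cannot be invoked that way. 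In the paper's Cases~2 and~3 the argument stays one-sided throughout: one shows that $1$ is also not an accumulation point of $\boldsymbol\lambda$, determines the number $k$ of ones to append from the \emph{trace} identity $\sum d_i = k + \sum\lambda_i$ (a consequence of \eqref{archaic2.0} together with the majorization), applies Lemma~\ref{intToComp} to shift $\delta(\alpha,\boldsymbol\lambda,\boldsymbol d)$ by $k(1-\alpha)$ so that it becomes nonnegative with $\liminf 0$, then appends zeros to match cardinalities, and finally invokes Proposition~\ref{posSH} (or the finite Schur--Horn theorem) — never the two-sided machinery. Your proposal to pick the padding amounts "to equalize cardinalities of terms below $1/2$" also does not track the right invariant: in the one-sided case it is the trace difference, not a cardinality count at $1/2$, that determines how many ones to append; cardinality matching only enters afterwards, through the zeros, once the majorization and trace conditions for Proposition~\ref{posSH} are already in place.
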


\begin{proof} First we will show that \(\#|I|\leq \#|J|\). Suppose \(\#|J|<\infty\). Choose \(\alpha_{0}\in(0,1)\) such that \(\alpha_{0}<d_{i}\) for all \(i\in J\). On \((0,\alpha_{0})\) the function \(f_{\boldsymbol{d}}\) is linear and has slope \(D_{\boldsymbol{d}}(\alpha_{0}) = \sum_{i\in J}(1-d_{i})\). For \(\alpha\in(0,\alpha_{0})\) we have
\[\sum_{i\in J}(1-d_{i}) = \frac{f_{\boldsymbol{d}}(\alpha)}{\alpha} \geq \frac{f_{\boldsymbol\lambda}(\alpha)}{\alpha} = \frac{1-\alpha}{\alpha}C_{\boldsymbol\lambda}(\alpha) + D_{\boldsymbol\lambda}(\alpha)\geq D_{\boldsymbol\lambda}(\alpha) =  \sum_{\lambda_{i}\geq\alpha}(1-\lambda_{i}).\]
Letting \(\alpha\to 0\) we have
\[\sum_{i\in J}(1-d_{i})\geq \sum_{i\in I}(1-\lambda_{i}).\]
A symmetric argument shows
\begin{equation}\label{archaic2.1}\sum_{i\in J}d_{i}\geq \sum_{i\in I}\lambda_{i}.\end{equation}
From these we deduce that \(\#|I|<\infty\), and adding these inequalities shows \(\#|J|\geq \#|I|\).


\textbf{Case 1.} Suppose both \(0\) and \(1\) are accumulation points of \(\boldsymbol{d}\). Define \(\smash{\tilde{\boldsymbol\lambda}}\) by appending infinitely many zeros to \(\boldsymbol\lambda\) if \(0\) is not an accumulation point of \(\boldsymbol\lambda\), and appending infinitely many ones to \(\boldsymbol\lambda\) if \(1\) is not an accumulation point of \(\boldsymbol\lambda\). Thus, the sequence \(\tilde{\boldsymbol\lambda}\) can be reindexed by \(\Z\) in nondecreasing order, \(\tilde{\boldsymbol\lambda} = (\lambda_{i})_{i\in\Z}\). Similarly, since both \(0\) and \(1\) are accumulation points of \(\boldsymbol{d}\) and \(C_{\boldsymbol{d}}(\alpha),D_{\boldsymbol{d}}(\alpha)<\infty\) for \(\alpha\in(0,1)\), we can reindex \(\boldsymbol{d}\) by \(\Z\) in nondecreasing order, \(\boldsymbol{d}=(d_{i})_{i\in\Z}\). By Theorem \ref{eqmajs} we see that \(\boldsymbol{d}\) is Riemann majorized by \(\tilde{\boldsymbol\lambda}\), and Theorem \ref{rimsuff} gives the desired conclusion.\medskip

\textbf{Case 2.} Suppose \(1\) is the not an accumulation point of \(\boldsymbol{d}\). From \eqref{archaic2.1} we deduce that \(1\) is also not an accumulation point of \(\boldsymbol\lambda\).

 Choose \(\alpha_{0}\in(0,1)\) such that \(\alpha_{0}>d_{i}\) for all \(i\in J\) and \(\alpha_{0}>\lambda_{i}\) for all \(i\in I\). By Proposition \ref{f} we have
\[f_{\boldsymbol{d}}'(\alpha_{0}) = -C_{\boldsymbol{d}}(\alpha_{0}) = -\sum_{i\in J}d_{i}\quad\text{and}\quad f_{\boldsymbol\lambda}'(\alpha_{0}) = -C_{\boldsymbol\lambda}(\alpha_{0}) = -\sum_{i\in J}\lambda_{i}.\]
From \eqref{archaic2.0} and \eqref{archaic2.1} we see that there is some \(k\in\N\cup\{0\}\) such that
\begin{equation}\label{archaic2.2}\sum_{i\in J}d_{i} = k+\sum_{i\in I}\lambda_{i}.\end{equation}

Applying Lemma \ref{intToComp} to the sequences \(\boldsymbol{\lambda}\) and \(\boldsymbol{d}\) we deduce from \eqref{archaic2.2} that 
\(\kappa = -k\) and \(\beta=k\) (Note that since the sequences are in \((0,1)\) we have \(\delta^{U}=\delta^{L}=\delta_{0}=\delta_{1} = 0\)).
Hence, 
\[\liminf_{\alpha\searrow 0}\delta(\alpha,\boldsymbol{\lambda},\boldsymbol{d}) = -k
\quad
\text{and}
\quad
\delta(\alpha,\boldsymbol{\lambda},\boldsymbol{d})\geq -k(1-\alpha)\quad\text{for all }\alpha\neq 0.\]

Define \(\smash{\tilde{\boldsymbol\lambda}}\) by appending \(k\) ones, and appending and \(m\in\N\cup\{0\}\cup\{\infty\}\) zeros to \(\boldsymbol\lambda\), where \(m\) will be chosen later. Now, we note that for \(\alpha<1\) we have
\[\delta(\alpha,\boldsymbol{\tilde\lambda},\boldsymbol{d}) = \delta(\alpha,\boldsymbol{\lambda},\boldsymbol{d}) + k(1-\alpha).\]
Hence
\[\liminf_{\alpha\searrow 0}\delta(\alpha,\boldsymbol{\tilde\lambda},\boldsymbol{d}) = 0
\quad
\text{and}
\quad
\delta(\alpha,\boldsymbol{\tilde\lambda},\boldsymbol{d})\geq 0\quad\text{for all }\alpha\neq 0.\]
Proposition \ref{LR} (or its finite variant if \(\boldsymbol{d}\) is a finite sequence) implies
\[\sum_{i=1}^{k}d_{i}^{\downarrow}\leq \sum_{i=1}^{k}\tilde{\lambda}_{i}^{\downarrow}\quad\text{for all }k.\]
and from \eqref{archaic2.2} we see that these sequences have the same (finite) sum. Together these imply that the number of positive terms of \(\boldsymbol{d}\) is at least the number of positive terms in \(\boldsymbol\lambda\). Choose \(m\) so that the sequences have the same number of terms. Finally, by applying Proposition \ref{posSH} (or the Schur-Horn Theorem if \(\boldsymbol{d}\) is a finite sequence) we see that if \(\boldsymbol{\tilde\lambda}\) is a diagonal of some self-adjoint operator \(E\), then \(\boldsymbol{d}\) is as well.

\textbf{Case 3.} Suppose \(0\) is not an accumulation point of \(\boldsymbol{d}\). Apply Case 2 to the sequences \((1-\lambda_{i})\) and \((1-d_{i})\), and we get a sequence \((1-\tilde{\lambda}_{i})\) which is obtained by appending zeros and ones to \((1-\lambda_{i})\). Set \(\tilde{\boldsymbol\lambda}:=(\tilde{\lambda}_{i})\), and note that this sequence is obtained by appending zeros and ones to \(\boldsymbol\lambda\). If \(E\) is any self-adjoint operator with diagonal  \(\boldsymbol{\tilde\lambda}\), then \(\mathbf{I}-E\) has diagonal \((1-\tilde{\lambda}_{i})\). Hence \((1-d_{i})\) is also a diagonal of \(\mathbf{I}-E\), and thus \(\boldsymbol{d}\) is a diagonal of \(E\).

\end{proof}

\section{Blaschke condition diagonal-to-diagonal result}\label{S18}

In this section we prove the diagonal-to-diagonal result for sequences having accumulation points only at \(0\) and \(1\), and satisfying the Blaschke condition. We start by proving a preliminary result under restrictive conditions on sequences 
$\boldsymbol\lambda$ and $\boldsymbol d$.

\begin{lem}\label{lsuff}
Let $\boldsymbol\lambda=(\lambda_{i})_{i\in \N}$ and  $\boldsymbol{d} = (d_{i})_{i\in \N}$ be two sequences, which have  accumulation points only at $0$ and $1$, and such that
 $f_{\boldsymbol \lambda}(1/2) <\infty$, $f_{\boldsymbol d}(1/2) <\infty$. Assume that the sequence $\boldsymbol \lambda$ has infinitely many terms $0<\lambda_i<1/2$ and infinitely many terms $1/2<\lambda_i<1$.
Define
\[\delta^{L} = \liminf_{\beta\nearrow 0}(f_{\boldsymbol\lambda}(\beta)-f_{\boldsymbol{d}}(\beta))\quad\text{and}\quad \delta^{U} = \liminf_{\beta\searrow 1}(f_{\boldsymbol\lambda}(\beta)-f_{\boldsymbol{d}}(\beta)).\]
Assume that $\delta^L+\delta^U<\infty$,
\begin{equation*}
 f_{\boldsymbol{d}}(\alpha)  \leq f_{\boldsymbol\lambda}(\alpha)  
\qquad\text{for all }\alpha\in\R\setminus[0,1],
\end{equation*}
and there exist $\delta_0 \in [0,\delta^L]$ and $\delta_1 \in [0,\delta^U]$ such that 
\begin{align*}
f_{\boldsymbol\lambda}(\alpha)  &\le f_{\boldsymbol{d}}(\alpha) + (1-\alpha)\delta_{0} + \alpha\delta_{1} & \text{for all }\alpha\in(0,1),\\
f_{\boldsymbol\lambda}'(\alpha)-f_{\boldsymbol{d}}'(\alpha)  & \equiv \delta_1-\delta_0 \mod 1 & \text{for some }\alpha\in(0,1).
\end{align*}
In addition, assume one of the following holds:
\begin{enumerate}[(i)]
\item \(\lambda_{i},d_{i}>0\) for all \(i\in\N\), and \(\delta^{U}=\delta_{1}>0\),
\item \(\lambda_{i},d_{i}<1\) for all \(i\in\N\), and \(\delta^{L}=\delta_{0}>0\),
\item  $\delta^L>0$, $\delta^U>0$, and $\delta^L-\delta_0=\delta^U - \delta_1$.
\end{enumerate}
If $\boldsymbol \lambda $ is a diagonal of a self-adjoint operator $E$, then so is $\boldsymbol d$.
\end{lem}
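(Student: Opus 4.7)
The plan is to invoke Lemma \ref{subdiag} after partitioning the index sets of $\boldsymbol\lambda$ and $\boldsymbol d$ into two matched pieces. One piece will consist of interior terms living in $(0,1)$ and will be handled by Theorem \ref{archaic2}; the other piece, after the translations $\lambda \mapsto -\lambda$ on $(-\infty,0]$ and $\lambda \mapsto \lambda - 1$ on $[1,\infty)$, will live in $c_0$ and be handled by Theorem \ref{step5}, Proposition \ref{kwd2d}, or Proposition \ref{posSH}, depending on which of cases (i)--(iii) we are in.

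The key construction uses the hypothesis that $\boldsymbol\lambda$ has infinitely many terms in each of $(0, 1/2)$ and $(1/2, 1)$: I will extract two auxiliary subsequences $(\lambda_{n_k}) \subset (0, 1/2)$ accumulating at $0$ and $(\lambda_{m_k}) \subset (1/2, 1)$ accumulating at $1$, with total masses $\sum_k \lambda_{n_k} = \delta_0$ and $\sum_k (1 - \lambda_{m_k}) = \delta_1$; such an extraction is possible because partial sums of such subsequences are dense in the set of achievable totals. Removing these from the interior of $\boldsymbol\lambda$ yields a residual interior sequence $\boldsymbol\lambda^{\mathrm{int}} \subset (0,1)$ whose majorization function satisfies $f_{\boldsymbol\lambda^{\mathrm{int}}}(\alpha) = f_{\boldsymbol\lambda}(\alpha) - (1-\alpha)\delta_0 - \alpha\delta_1$ on a subinterval of $(0,1)$ sweeping out everything but arbitrarily small neighborhoods of $0$ and $1$. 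Setting $\boldsymbol d^{\mathrm{int}} := (d_i)_{0 < d_i < 1}$, the interior majorization hypothesis becomes the pure interior majorization $f_{\boldsymbol\lambda^{\mathrm{int}}} \le f_{\boldsymbol d^{\mathrm{int}}}$ on $(0,1)$, and the trace congruence $f_{\boldsymbol\lambda}' - f_{\boldsymbol d}' \equiv \delta_1 - \delta_0 \pmod 1$ descends to $f_{\boldsymbol\lambda^{\mathrm{int}}}' \equiv f_{\boldsymbol d^{\mathrm{int}}}' \pmod 1$. Theorem \ref{archaic2} then gives the diagonal-to-diagonal conclusion for this interior piece after appending suitable zeros and ones.

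The exterior sub-problem now pairs $(\lambda_{n_k}) \cup (\lambda_i)_{\lambda_i \le 0}$ against $(d_i)_{d_i \le 0}$ on the lower side, and analogously $(1 - \lambda_{m_k}) \cup (\lambda_i - 1)_{\lambda_i \ge 1}$ against $(d_i - 1)_{d_i \ge 1}$ on the upper side. After the sign/translation changes, both sides become $c_0$ sequences with prescribed excesses determined by $\delta^L - \delta_0$ and $\delta^U - \delta_1$. In case (iii) with $\sigma := \delta^L - \delta_0 = \delta^U - \delta_1 > 0$, the combined $c_0$ problem has equal positive excesses on both sides of $0$, and Theorem \ref{step5} applies directly. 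In case (iii) with $\sigma = 0$, as well as in cases (i) and (ii), the exterior problem decouples at $0$ in the translated picture, reducing each side to a one-sided compact majorization with zero excess and matching totals, handled by Proposition \ref{kwd2d} (or Proposition \ref{posSH} when one side is empty as in (i) or (ii)). The appending of zeros and ones to $\boldsymbol\lambda^{\mathrm{int}}$ required by Theorem \ref{archaic2} is absorbed into this exterior piece on the appropriate side.

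The main obstacle is maintaining the integer trace congruence through the extraction step. Removing non-integer masses $\delta_0$ and $\delta_1$ generally shifts $f_{\boldsymbol\lambda^{\mathrm{int}}}' - f_{\boldsymbol d^{\mathrm{int}}}'$ by a non-integer amount, so the extracted subsequences $(\lambda_{n_k})$ and $(\lambda_{m_k})$ must be chosen with a carefully tuned integer number of terms beyond what mass-matching alone requires, exploiting Proposition \ref{f} to convert a prescribed integer shift of the derivative into a prescribed integer count of extracted terms. A secondary delicate point is verifying that the translated exterior problem satisfies the $\boldsymbol\lambda$-is-a-diagonal hypothesis for each relevant $c_0$-theorem; this follows by first observing that $\boldsymbol\lambda$ is a diagonal of $E$, and then realizing the splitting at the level of an orthonormal basis so that each exterior piece is a diagonal of an operator with the matching sub-diagonal, as required by Lemma \ref{subdiag}.
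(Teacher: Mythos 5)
Your proposal takes a genuinely different route from the paper's: you extract mass from $\boldsymbol\lambda$, whereas the paper \emph{inserts} auxiliary terms into $\boldsymbol d$ (forming a sequence $\tilde{\boldsymbol d}$) and then splits the index set at the midpoint $1/2$ rather than at the boundary $\{0,1\}$. Unfortunately the extraction route has several gaps that your plan does not resolve.

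First, the extraction step is not always possible. You need an infinite subsequence $(\lambda_{n_k}) \subset (0,1/2)$ with $\sum_k \lambda_{n_k} = \delta_0$ \emph{exactly}. The set of subsums of a $c_0^+$ sequence is not in general an interval (e.g.\ $a_i=3^{-i}$ has a gap), and worse, $\delta_0 \le \delta^L$ is defined from the exterior behaviour $\beta<0$ while the available mass $C_{\boldsymbol\lambda}(1/2)=\sum_{0<\lambda_i<1/2}\lambda_i$ is a separate finite Blaschke quantity; nothing in the hypotheses forces $\delta_0 \le C_{\boldsymbol\lambda}(1/2)$. In contrast, the paper's $\tilde{\boldsymbol d}$ is built from freely chosen new terms (plus the interior terms of $\boldsymbol\lambda$), so no such constraint arises.

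Second, even granting the extraction, the interior majorization $f_{\boldsymbol\lambda^{\mathrm{int}}}(\alpha)\le f_{\boldsymbol d^{\mathrm{int}}}(\alpha)$ is not established on all of $(0,1)$. Writing out the contribution of the removed terms, the hypothesis $f_{\boldsymbol\lambda}\le f_{\boldsymbol d}+(1-\alpha)\delta_0+\alpha\delta_1$ descends to $f_{\boldsymbol\lambda^{\mathrm{int}}}\le f_{\boldsymbol d}$ if and only if
\[
\sum_{\lambda_{n_k}\ge\alpha}(\lambda_{n_k}-\alpha)\ \le\ \sum_{\lambda_{m_k}<\alpha}(\lambda_{m_k}-\alpha).
\]
The left side is $\ge 0$, the right side is $\le 0$, and the inequality forces both sides to vanish, i.e.\ it holds only on the middle interval $(\sup_k\lambda_{n_k},\ \inf_k\lambda_{m_k})$. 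Near $0$ and $1$ the hypothesis gives you nothing, and your ``sweeping out everything but arbitrarily small neighborhoods'' remark does not fill this hole: Theorem~\ref{archaic2} needs the inequality for \emph{all} $\alpha\in(0,1)$. The paper dodges exactly this problem by choosing $\alpha_0,\alpha_1$ after the fact and forcing $\tilde{\boldsymbol d}$ to contain \emph{all} terms of $\boldsymbol\lambda$ in $(0,\alpha_0)$ and $(\alpha_1,1)$, so that near $0$ and $1$ the comparison is done term-by-term rather than through the majorization hypothesis.

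Third, your exterior sub-problem has mismatched excesses. Pairing $(\lambda_{n_k})\cup(\lambda_i)_{\lambda_i\le0}$ against $(d_i)_{d_i\le0}$ yields $\liminf_{\alpha\nearrow 0}\delta(\alpha,\cdot,\cdot)=\delta^L$ but $\liminf_{\alpha\searrow 0}\delta(\alpha,\cdot,\cdot)=\sum_k\lambda_{n_k}=\delta_0$, and in case (iii) generally $\delta^L>\delta_0$, so Theorem~\ref{step5} does not apply. The paper's $\boldsymbol\mu$ deliberately includes $(d_i)_{0<d_i<1/2}$ as well as the inserted mass of total $\delta^L$ (not $\delta_0$); it is precisely this inclusion of half the interior of $\boldsymbol d$ and the choice of total mass $\delta^L$ that makes the two one-sided liminfs of $\delta(\alpha,\boldsymbol\mu,(d_i)_{d_i<1/2})$ agree, so that Theorem~\ref{step5} applies. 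As written, your splitting into a single ``interior'' piece handled by Theorem~\ref{archaic2} and an isolated ``exterior'' piece cannot produce that cancellation, and combining the lower and upper exterior pieces into one $c_0$ problem, as you suggest, does not make sense since they accumulate at different points.
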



\begin{proof}
We present a parallel proof that works simultaneously for each of the mutually exclusive assumptions $(i)$--$(iii)$. However, $(iii)$ has the most interesting proof since the first two cases are simplified modifications of this argument.

Suppose momentarily that $\delta^L>0$.
Let \(0<\alpha_{0}<1/2\) be small enough that 
\[\sum_{\lambda_{i}<\alpha_{0}}\lambda_{i}<\frac{\delta^{L}}{2}\quad\text{and}\quad\N\cap\left[\frac{1}{1-\alpha_{0}}\sum_{\lambda_{i}\geq\alpha_{0}}(1-\lambda_{i}),\frac{\delta^{L}}{2\alpha_{0}}\right]\neq \varnothing.\]
The first inequality clearly holds for all \(\alpha_{0}>0\) sufficiently small. For the second condition we note that the ratio of the endpoints of the given interval is
\[\frac{\delta^{L}(1-\alpha_{0})}{2\alpha_{0}\sum_{\lambda_{i}\geq\alpha_{0}}(1-\lambda_{i})}.\]
By Proposition \ref{f} the above ratio goes to \(\infty\) as \(\alpha_{0}\searrow 0\). Since the left endpoint of the interval is also going to \(\infty\) as \(\alpha\searrow0\) we see that the second condition is also satisfied for all sufficiently small \(\alpha_{0}\). Hence, we let \(\alpha_{0}\) be sufficiently small, and let \(N\in\N\) such that 
\[\frac{1}{1-\alpha_{0}}\sum_{\lambda_{i}\geq\alpha_{0}}(1-\lambda_{i})\leq N\leq \frac{\delta^{L}}{2\alpha_{0}}.\]
Let \((\tilde{d}_{i})_{i\in I_{0}}\) be an infinte positive sequence in \((0,\alpha_{0}]\) containing all terms of \(\boldsymbol\lambda\) in \((0,\alpha_{0})\), \(N\) copies of \(\alpha_{0}\), and a finite sequence of additional terms such that 
\[
\sum_{i\in I_0} \tilde d_i = \delta^L.
\]
In the case when $\delta^L=0$, the above construction is mute and we let $\alpha_0=0$ and $I_0 = \varnothing$.

Likewise, suppose momentarily that $\delta^U>0$. By a symmetric argument we can find \(\alpha_{1}\in(1/2,1)\) sufficiently close to \(1\) and \(M\in\N\) such that
\[\sum_{\lambda_{i}\geq \alpha_{1}}(1-\lambda_{i})<\frac{1}{2}\delta^{U}\quad\text{and}\quad \frac{1}{\alpha_{1}}\sum_{\lambda_{i}<\alpha_{1}}\lambda_{i}\leq M\leq \frac{\delta^{U}}{2(1-\alpha_{1})}.\]
Let \((\tilde{d}_{i})_{i\in I_{1}}\) be an infinte positive sequence in \([\alpha_{1},1)\) containing all terms of \(\boldsymbol\lambda\) in \([\alpha_{1},1)\), \(M\) copies of \(\alpha_{1}\), and a finite sequence of additional terms such that 
\[
\sum_{i\in I_1} (1-\tilde{d}_i) = \delta^{U}.
\]
In the case when $\delta^U=0$, we let $\alpha_1=0$ and $I_1 = \varnothing$.

Let $\tilde {\boldsymbol d}= (d_{i})_{ 0<d_i < 1 } \oplus (\tilde{d}_{i})_{ i\in I_0 }\oplus (\tilde{d}_{i})_{ i\in I_1} $. Each of the assumptions $(i)$--$(iii)$ implies that $I_0 \cup I_1$ is infinite, and hence $\tilde {\boldsymbol d}$ is an infinite sequence.
For \(\alpha\in(\alpha_{0},\alpha_{1})\) we have
\[f_{\tilde{\boldsymbol{d}}}(\alpha) = f_{\boldsymbol{d}}(\alpha) + (1-\alpha)\delta^{L}+\alpha\delta^{U} \geq f_{\boldsymbol{d}}(\alpha) + (1-\alpha)\delta_{0}+\alpha\delta_{1}\geq f_{\boldsymbol{\lambda}}(\alpha).\]
If $\alpha_0>0$, then for \(\alpha\in(0,\alpha_{0}]\) we have
\begin{align*}
f_{\boldsymbol\lambda}(\alpha) & = (1-\alpha)\sum_{\lambda_{i}<\alpha}\lambda_{i} + \alpha \sum_{\alpha\leq \lambda_{i}< \alpha_{0}}(1-\lambda_{i}) + \alpha\sum_{\lambda_{i}\geq\alpha_{0}}(1-\lambda_{i})\\
 & \leq (1-\alpha)\sum_{\tilde{d}_{i}<\alpha}\tilde{d}_{i} + \alpha \sum_{\alpha\leq \tilde{d}_{i}< \alpha_{0}}(1-\tilde{d}_{i}) + \alpha(1-\alpha_{0})N\\
 & \leq (1-\alpha)\sum_{\tilde{d}_{i}<\alpha}\tilde{d}_{i} + \alpha \sum_{\alpha\leq \tilde{d}_{i}< \alpha_{0}}(1-\tilde{d}_{i}) + \alpha\sum_{\tilde{d}_{i}\geq\alpha_{0}}(1-\tilde{d}_{i}) = f_{\tilde{\boldsymbol{d}}}(\alpha)
\end{align*}
By a symmetric argument we will also have \(f_{\boldsymbol\lambda}(\alpha)\leq f_{\tilde{\boldsymbol{d}}}(\alpha)\) for all \(\alpha\in[\alpha_{1},1)\) if $\alpha_1<1$. Combining these estimates yields
\[
f_{\boldsymbol\lambda}(\alpha)  \le f_{\tilde {\boldsymbol{d}}}(\alpha) \qquad \text{for all }\alpha\in(0,1).
\]
Moreover, we calculate
\[
f_{\boldsymbol\lambda}'(\alpha)-f_{\tilde {\boldsymbol{d}}}'(\alpha) \equiv f_{\boldsymbol\lambda}'(\alpha)-f_{\boldsymbol{d}}'(\alpha)   + \delta^L- \delta^U \equiv \delta_1-\delta_0 + \delta^L- \delta^U = 0 \mod 1 \qquad \text{for a.e.\ }\alpha\in(0,1).
\]
By Theorem \ref{archaic}, if a self-adjoint operator has diagonal $(\lambda_{j})_{0<\lambda_j <1}$, then $\tilde {\boldsymbol d}$ is also its diagonal. Hence, $\boldsymbol \eta = (\lambda_{j})_{\lambda_j \le 0} \oplus \tilde {\boldsymbol d} \oplus (\lambda_{j})_{ \lambda_j \ge 1}$ is a diagonal of $E$.

We claim that if a self-adjoint operator has diagonal $\boldsymbol \mu = (\lambda_{j})_{ \lambda_j \le 0} \oplus (d_i)_{ 0< d_i<1/2 } \oplus (\tilde d_i)_{ i\in I_0}$, then $ (d_{i})_{ d_i<1/2 } $ is also its diagonal. Indeed, 
\[
\delta( \alpha, \boldsymbol \mu ,  (d_{i})_{ d_i<1/2 } ) =
\begin{cases} \delta(\alpha,\boldsymbol \lambda, \boldsymbol d) & \alpha< 0,
\\
 \sum_{i\in I_0, \tilde d_i > \alpha} (\tilde d_i-\alpha) & \alpha >0.
\end{cases}
\]
By Lemma \ref{dclem}
\[
\liminf_{\alpha \nearrow 0} \delta( \alpha, \boldsymbol \mu ,  (d_{i})_{ d_i<1/2 } ) = \delta^L =\sum_{i\in I_0} \tilde d_i= \liminf_{\alpha \searrow 0} \delta( \alpha, \boldsymbol \mu ,  (d_{i})_{ d_i<1/2 } ).
\]
Hence, Theorem \ref{step5} applies and yields the claim if $\delta^L>0$. If $\delta^L=0$, then the claim holds trivially by $(i)$ since $\boldsymbol \mu =  ( d_i)_{ d_i<1/2 }$.

By a similar argument involving Theorem \ref{step5} or the assumption $(ii)$, if a self-adjoint operator has diagonal $\boldsymbol \nu =(d_i)_{ 1/2 \le d_i<1 } \oplus (\tilde d_i)_{ i\in I_1} \oplus  (\lambda_{j})_{ \lambda_j \ge 1} $, then $(d_{i})_{ d_i \ge 1/2} $ is also its diagonal. Since $\boldsymbol \eta = \boldsymbol \mu \oplus \boldsymbol \nu$ is a diagonal of $E$, by Lemma \ref{subdiag} so is $\boldsymbol d$.
\end{proof}

We use Lemma \ref{lsuff} to show the following diagonal-to-diagonal result under more general conditions on sequences $\boldsymbol\lambda$ and $\boldsymbol d$, which are nearly identical with the necessary conditions in Theorem \ref{nec}.

\begin{thm}\label{suff}
Let $\boldsymbol\lambda=(\lambda_{i})_{i\in \N}$ and  $\boldsymbol{d} = (d_{i})_{i\in \N}$ be two sequences, which have  accumulation points only at $0$ and $1$, and such that
 $f_{\boldsymbol \lambda}(1/2) <\infty$ and $f_{\boldsymbol d}(1/2) <\infty$.
Define
\[\delta^{L} = \liminf_{\beta\nearrow 0}(f_{\boldsymbol\lambda}(\beta)-f_{\boldsymbol{d}}(\beta))\quad\text{and}\quad \delta^{U} = \liminf_{\beta\searrow 1}(f_{\boldsymbol\lambda}(\beta)-f_{\boldsymbol{d}}(\beta)).\]
Assume that $\delta^L+\delta^U<\infty$,
\begin{equation}\label{suff1}
 f_{\boldsymbol{d}}(\alpha)  \leq f_{\boldsymbol\lambda}(\alpha)  
\qquad\text{for all }\alpha\in\R\setminus[0,1],
\end{equation}
and there exist $\delta_0 \in [0,\delta^L]$ and $\delta_1 \in [0,\delta^U]$ such that 
\begin{align}
\label{suff2}
f_{\boldsymbol\lambda}(\alpha)  &\le f_{\boldsymbol{d}}(\alpha) + (1-\alpha)\delta_{0} + \alpha\delta_{1} & \text{for all }\alpha\in(0,1),\\
\label{suff3}f_{\boldsymbol\lambda}'(\alpha)-f_{\boldsymbol{d}}'(\alpha)  & \equiv \delta_1-\delta_0 \mod 1 & \text{for some }\alpha\in(0,1),
\\
\label{suff4}
\sum_{\lambda_i<0} |\lambda_i|<\infty &\implies
\delta^L-\delta_0 \le \delta^U- \delta_1,
\\
\label{suff5}
\sum_{\lambda_i>1} (\lambda_i-1)<\infty
&\implies \delta^U- \delta_1 \le \delta^L-\delta_0.
\end{align}
In addition, in the case when $\delta_0=\delta^L$ or $\delta_1=\delta^U$ assume that
\begin{equation}\label{suff6}
 \delta_0, \delta_1>0 \qquad\text{and}\qquad 
f_{\boldsymbol\lambda}(\alpha)  < f_{\boldsymbol{d}}(\alpha) + (1-\alpha)\delta_{0} + \alpha\delta_{1}  \text{ for all }\alpha\in(0,1).
\end{equation}
If $\boldsymbol \lambda $ is a diagonal of a self-adjoint operator $E$, then so is $\boldsymbol d$.
\end{thm}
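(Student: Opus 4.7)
My plan is to reduce the result to Lemma \ref{lsuff}(iii), handling the gap between $\delta^L-\delta_0$ and $\delta^U-\delta_1$ by a Theorem \ref{intropv3} argument in place of the equal-excess Theorem \ref{step5} used in Lemma \ref{lsuff}.

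First I would observe that condition \eqref{suff6} forces $\delta^L>0$ and $\delta^U>0$. Indeed, if $\delta^L=0$, then $\delta_0\in[0,\delta^L]$ forces $\delta_0=0=\delta^L$, triggering \eqref{suff6} which demands $\delta_0>0$, a contradiction; the argument for $\delta^U>0$ is symmetric. In particular, cases (i) and (ii) of Lemma \ref{lsuff} are never the entry point -- only case (iii) is relevant. If moreover $\delta^L-\delta_0=\delta^U-\delta_1$, then the hypothesis of Lemma \ref{lsuff}(iii) is satisfied verbatim and the conclusion follows immediately.

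Otherwise, by replacing $(\boldsymbol\lambda,\boldsymbol d)$ with $(1-\boldsymbol\lambda,1-\boldsymbol d)$ if necessary, I may assume $\eta:=(\delta^L-\delta_0)-(\delta^U-\delta_1)>0$. The contrapositive of \eqref{suff5} then yields $\sum_{\lambda_j>1}(\lambda_j-1)=\infty$, giving infinitely many eigenvalues of $E$ strictly above $1$. The plan is to run the construction in the proof of Lemma \ref{lsuff}(iii): build $\tilde{\boldsymbol d}$ by adjoining to $(d_i)_{0<d_i<1}$ a finite block of filler terms in $(0,\alpha_0]$ with sum $\delta^L$, and a symmetric block in $[\alpha_1,1)$ with $\sum(1-\tilde d_i)=\delta^U$, for suitably chosen $\alpha_0\in(0,1/2)$ and $\alpha_1\in(1/2,1)$. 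On the middle interval $(\alpha_0,\alpha_1)$ one has $f_{\tilde{\boldsymbol d}}(\alpha)=f_{\boldsymbol d}(\alpha)+(1-\alpha)\delta^L+\alpha\delta^U$, which dominates $f_{\boldsymbol\lambda}(\alpha)$ via \eqref{suff2} together with $\delta^L\ge\delta_0$ and $\delta^U\ge\delta_1$. At the endpoints the usual estimate controls $f_{\tilde{\boldsymbol d}}$ against $f_{\boldsymbol\lambda}$, using the strict inequality in \eqref{suff6} if it applies, so that Theorem \ref{archaic} produces $\tilde{\boldsymbol d}$ as a diagonal of any operator with diagonal $(\lambda_j)_{0<\lambda_j<1}$.

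The final reduction step -- converting the sequence $\boldsymbol\mu=(\lambda_j)_{\lambda_j\le 0}\oplus(d_i)_{0<d_i<1/2}\oplus(\tilde d_i)_{i\in I_0}$ into the true diagonal $(d_i)_{d_i<1/2}$, and the symmetric step near $1$ -- is where the unequal excesses enter. In Lemma \ref{lsuff}(iii) this was done by Theorem \ref{step5}, which requires the two one-sided liminfs of $\delta(\alpha,\boldsymbol\mu,(d_i)_{d_i<1/2})$ to agree. Here the two liminfs differ by $\eta$ on one side and match on the other, so Theorem \ref{step5} no longer directly applies; I instead invoke Theorem \ref{intropv3}, whose sufficiency portion permits unequal excesses $\sigma_+\ne\sigma_-$ provided the summability implications \eqref{p3v3}--\eqref{p4v3} hold. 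The majorization inputs \eqref{p1v3}--\eqref{p2v3} are delivered by \eqref{suff1} together with Lemmas \ref{intToComp}--\ref{intToComp2}, and the critical implications \eqref{p3v3}--\eqref{p4v3} will translate, under the direction of inequality we have arranged, into exactly conditions \eqref{suff4}--\eqref{suff5}; Lemma \ref{subdiag} then stitches the two one-sided conclusions together with Theorem \ref{archaic}'s interior output. The hard part is bookkeeping the modular trace condition \eqref{suff3} across the modified filler blocks: the residue $\eta\bmod 1$ produced by the asymmetric construction must be reabsorbed by increasing the size of one filler block by an integer amount, which is legitimate precisely because $\sum_{\lambda_j>1}(\lambda_j-1)=\infty$ leaves unbounded room to do so while preserving \eqref{suff1}.
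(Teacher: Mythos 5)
Your overall strategy---reduce to Lemma~\ref{lsuff}(iii) and use the nonsummability supplied by \eqref{suff4}/\eqref{suff5} to absorb the mismatch $\eta=(\delta^L-\delta_0)-(\delta^U-\delta_1)$---is in the right spirit, but as written the plan contains substantive errors and would not go through.

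First, the direction is wrong. After arranging $\eta>0$, i.e.\ $\delta^L-\delta_0>\delta^U-\delta_1$, the contrapositive that fires is that of \eqref{suff4}, not \eqref{suff5}; it gives $\sum_{\lambda_j<0}|\lambda_j|=\infty$, not $\sum_{\lambda_j>1}(\lambda_j-1)=\infty$. The room to modify is on the \emph{negative} side of the spectrum (and, since $\delta^L<\infty$, this also forces $\sum_{d_i<0}|d_i|=\infty$). Second, your diagnosis of where Lemma~\ref{lsuff} breaks down is off. If the lower filler $(\tilde d_i)_{i\in I_0}$ is built with $\sum\tilde d_i=\delta^L$ exactly as in Lemma~\ref{lsuff}, then $\delta(\alpha,\boldsymbol\mu,(d_i)_{d_i<1/2})$ has one-sided liminfs equal to $\delta^L$ on \emph{both} sides of $0$; they do not differ by $\eta$, and Theorem~\ref{step5} applies to that sub-step with no trouble. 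The thing that actually fails with unequal excesses is the trace condition needed for Theorem~\ref{archaic} on the interior: one has $f_{\boldsymbol\lambda}'(\alpha)-f_{\tilde{\boldsymbol d}}'(\alpha)\equiv(\delta_1-\delta_0)+(\delta^L-\delta^U)=\eta\pmod 1$, which is nonzero unless $\eta\in\Z$. Since $\eta$ is an arbitrary nonnegative real, your plan to ``reabsorb the residue by increasing one filler block by an integer amount'' cannot work; there is no integer to reabsorb, and in any case the filler blocks are internal to the construction, so they cannot threaten or preserve \eqref{suff1}---that remark is a non sequitur. Third, you never address the standing hypothesis of Lemma~\ref{lsuff} that $\boldsymbol\lambda$ has infinitely many terms in $(0,1/2)$ and infinitely many in $(1/2,1)$; this is the content of the paper's Step~1.

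For comparison, the paper's Step~2 handles the imbalance by modifying $\boldsymbol\lambda$ \emph{before} invoking the filler construction: using $\sum_{\lambda_j<0}|\lambda_j|=\infty$ and Proposition~\ref{kwd2d}, the negative tail $(\lambda_j)_{\lambda_j<0}$ is replaced with $(d_i)_{d_i<0}$ plus a finite block summing to $-\tilde\delta^L$ where $\tilde\delta^L:=\delta_0+\delta^U-\delta_1=\delta^L-\eta$, producing a new diagonal sequence $\tilde{\boldsymbol\lambda}$ of $E$ for which $\tilde\delta^L-\delta_0=\delta^U-\delta_1$. Then Lemma~\ref{lsuff}(iii) applies verbatim and the trace residue is exactly $0$. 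A variant of your idea could be made to work---shrink the lower filler to sum $\delta^L-\eta$ so that the interior trace condition holds, and then the filler step has genuinely unequal excesses $\sigma_-=\delta^L$, $\sigma_+=\delta^L-\eta$, with \eqref{p4v3} vacuous because $\sum_{d_i<0}|d_i|=\infty$---but that is not what your proposal says, and the argument you do give conflates the filler-step excess question (not the obstruction) with the trace condition (the real obstruction) and points at the wrong half of the spectrum.
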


\begin{remark}
Assume that $E$ is an operator satisfying hypotheses of Theorem \ref{nec}. Then, if the last assumption \eqref{suff6} of Theorem \ref{suff} is {\bf not} met, then we necessarily land in one of the following three scenarios:
\begin{itemize}
\item $(\mathfrak d_0)$ holds,
\item $(\mathfrak d_\alpha)$ holds for some $\alpha \in (0,1)$, or
\item $(\mathfrak d_1)$ holds.
\end{itemize}
By Theorem \ref{nec} we deduce that the operator $E$ decouples at $\alpha \in [0,1]$. Hence, we have additional necessary conditions on eigenvalue list $\boldsymbol \lambda $ and diagonal $\boldsymbol d$, see Theorem \ref{necdec}, which we need to impose to obtain diagonal-to-diagonal result.
\end{remark}

\begin{proof}
Without loss of generality we can assume that $\delta_0<\delta^L$ and $\delta_1<\delta^U$ by decreasing $\delta_0$ and $\delta_1$ by the same quantity if necessary. Indeed, if $\delta_0=\delta^L$ or $\delta_1=\delta^U$, then by \eqref{suff6} and Proposition \ref{f}, there exists a constant $0<c \le \min(\delta_0,\delta_1)$ such that
\[
f_{\boldsymbol\lambda}(\alpha) + c \le  f_{\boldsymbol{d}}(\alpha) + (1-\alpha)\delta_{0} + \alpha\delta_{1}  \qquad\text{for all }\alpha \in (0,1).
\]
Hence, replacing $\delta_0$ and $\delta_1$ by $\delta_0-c$ and $\delta_1-c$, respectively, does not affect assumptions \eqref{suff2}--\eqref{suff5}.

{\bf Step 1.} We reduce to the case when there are infinitely many $\lambda_j \in (0,1/2)$ and infinitely many $\lambda_j \in (1/2,1)$. 
If there are only finitely many $\lambda_j \in (0,1/2)$, then there are infinitely many $\lambda_j \le 0$, because $0$ was assumed to be an accumulation point of $\boldsymbol \lambda$. 

Take $0<\epsilon< \min(\delta^L-\delta_0,\delta^U - \delta_1)$. By increasing $\delta_0$ and $\delta_1$ by $\epsilon$ we can assume that $\delta_0,\delta_1 \ge \epsilon$ and
\begin{equation}\label{suff9}
f_{\boldsymbol\lambda}(\alpha) + \epsilon \le  f_{\boldsymbol{d}}(\alpha) + (1-\alpha)\delta_{0} + \alpha\delta_{1}  \qquad\text{for all }\alpha \in (0,1).
\end{equation}
Choose $\alpha_0<0$ such that
\begin{equation}\label{suff10}
\delta(\alpha,\boldsymbol \lambda, \boldsymbol d) \ge \epsilon \qquad\text{for } \alpha_0<\alpha<0.
\end{equation}
Likewise, choose $\alpha_1>1$ such that 
\begin{equation}\label{suff12}
\delta(\alpha,\boldsymbol \lambda, \boldsymbol d) \ge \epsilon
\qquad\text{for all }1<\alpha<\alpha_1.
\end{equation}
If there are infinitely many $\lambda_j>1$, then we choose $j_0$ such that $1<\lambda_{j_0}<\alpha_1$. Otherwise, we choose for $\lambda_{j_0}$ the smallest $\lambda_j>1$. We also choose an infinite subset $J_0$ such that $\alpha_0 \le \lambda_j \le 0$ for all $j\in J_0$ and $\sum_{j\in J_0} |\lambda_j | < \min(\epsilon/2,\lambda_{j_0}-1)$. 

Take any $\eta$ such that $\sum_{j\in J_0} |\lambda_j | < \eta< \min(\epsilon/2,\lambda_{j_0}-1)$ and let $\tilde \lambda_{j_0} = \lambda_{j_0}- \eta$. Choose a positive sequence $(\tilde \lambda_j)_{j\in J_0}$ and such that 
\[
\sum_{j\in J_0} \tilde \lambda_j= \eta -\sum_{j\in J_0} |\lambda_j | = \lambda_{j_0} - \tilde \lambda_{j_0} + \sum_{j\in J_0} \lambda_j .
\]
By Theorem \ref{step5} if $(\lambda_{j})_{j\in J_0 \cup \{j_0\}}$ is a diagonal of a self-adjoint operator, then so is 
$(\tilde \lambda_j)_{j\in J_0 \cup \{j_0\}}$. Define $\tilde \lambda_j=\lambda_j$ for $j \not \in J_0 \cup\{ j_0\}$. Then, the sequences $\tilde {\boldsymbol \lambda}$ and $\boldsymbol d$ satisfy the assumptions \eqref{suff1}--\eqref{suff5} of Theorem \ref{suff} with 
\begin{equation}\label{suff14}
\tilde \delta^L= \delta^L - \sum_{j\in J_0} |\lambda_j | ,
\quad
 \tilde \delta_0= \delta_0 - \sum_{j\in J_0} |\lambda_j |,
 \quad
 \tilde \delta^U = \delta^U - \eta, 
 \quad
 \tilde \delta^1= \delta^1 -\eta.
\end{equation}
Indeed, \eqref{suff10} and \eqref{suff12} imply that \eqref{suff1} holds for $\tilde {\boldsymbol \lambda}$ and $\boldsymbol d$ in the range $\alpha<0$ and $\alpha>1$, respectively. Likewise, \eqref{suff9} and \eqref{suff14} imply that \eqref{suff2} holds.
Indeed, for any $\alpha \in (0,1)$ we have by Proposition \ref{fd}
\[\begin{aligned}
f_{\tilde {\boldsymbol\lambda}}(\alpha) \le f_{\boldsymbol\lambda}(\alpha) + (1-\alpha)\sum_{j\in J_0} \tilde \lambda_j 
&\le    f_{\boldsymbol{d}}(\alpha) + (1-\alpha)\delta_{0} + \alpha\delta_{1} + \eta -\epsilon
\\
&\le f_{\boldsymbol{d}}(\alpha) + (1-\alpha)\delta_{0} + \alpha\delta_{1} - \eta 
\le   f_{\boldsymbol{d}}(\alpha) + (1-\alpha) \tilde \delta_{0} + \alpha\tilde \delta_{1}  .
\end{aligned}
\]
Perhaps the least obvious is \eqref{suff3}
\[
f_{\tilde {\boldsymbol\lambda}}'(\alpha)-f_{\boldsymbol{d}}'(\alpha)  =
f_{\boldsymbol\lambda}'(\alpha) - \sum_{j\in J_0} \tilde \lambda_j -f_{\boldsymbol{d}}'(\alpha)
\equiv \delta_1-\delta_0 + \sum_{j\in J_0} |\lambda_j | -\eta =
\tilde \delta_1- \tilde \delta_0 \mod 1.
\]
Finally, \eqref{suff4} and \eqref{suff5} follow immediately from the definition of $\boldsymbol \lambda$ and \eqref{suff14}.

This process modifies the diagonal sequence $\boldsymbol \lambda$ to another diagonal sequence $\tilde {\boldsymbol \lambda}$ such that $\tilde \lambda_j \in (0,1/2)$ for infinitely many $j$ and the assumptions of Theorem \ref{suff} for $\tilde {\boldsymbol \lambda}$ and $\boldsymbol d$ are met. Therefore, without loss of generality we can assume that there are infinitely many $\lambda_j \in (0,1/2)$. By a similar procedure we can assume that there are infinitely many $\lambda_j \in (1/2,1)$.

{\bf Step 2.} Next we reduce to the case when $\delta^L-\delta_0=\delta^U - \delta_1$. Suppose that $\delta^L-\delta_0>\delta^U - \delta_1$. By \eqref{suff4} this implies that $\sum_{j \in J_0} |\lambda_j|=\infty$, where $J_0=\{j: \lambda_j<0\}$. Since $\delta^L<\infty$ we also have $\sum_{i\in I_0} |d_i|=\infty$, where $I_0=\{i: d_i<0\}$.
Define $\tilde \delta^L:=\delta_0+ \delta^U - \delta_1$. Since $\tilde \delta^L<\delta^L$, there exists $\alpha_0<0$ such that
\begin{equation}\label{suff20}
\delta(\alpha, \boldsymbol \lambda,\boldsymbol d ) \ge \tilde \delta^L \qquad\text{for all } \alpha\in (\alpha_0,0).
\end{equation}
Let $\boldsymbol \nu$ be the sequence $(d_{i})_{i\in I_0}$ appended by a finite sequence which consists of terms in $(\alpha_0,0)$ and adds up to $-\tilde \delta^L$. Define a sequence $\tilde{\boldsymbol \lambda}=(\lambda_j)_{j\in \N \setminus J_0} \oplus \boldsymbol \nu$. 

By \eqref{suff20} we have
\[
\delta(\alpha, \boldsymbol \lambda, \tilde{\boldsymbol \lambda} ) = \delta(\alpha, (\lambda_j)_{j\in J_0}, \boldsymbol \nu ) \ge 0 \qquad\text{for all } \alpha<0.
\]
By Proposition \ref{kwd2d}, if $(\lambda_{j})_{j\in J_0}$ is a diagonal of a self-adjoint operator, then $\boldsymbol \nu$ is also a diagonal. Hence, by Lemma \ref{subdiag} we deduce that $\tilde{\boldsymbol \lambda}$ is a diagonal of $E$.

On the other hand,
\[
\delta(\alpha, \tilde{ \boldsymbol \lambda} , \boldsymbol d)  =  \delta(\alpha, \boldsymbol \nu , (d_{i})_{i\in I_0} ) \ge 0 \qquad\text{for all } \alpha<0.
\]
By Lemma \ref{dclem} we have
\[
\liminf_{\alpha \nearrow 0} \delta(\alpha, \tilde{ \boldsymbol \lambda} , \boldsymbol d) = \tilde \delta^L.
\]
Therefore, sequences $\tilde{\boldsymbol \lambda}$ and $\boldsymbol d$ satisfy assumptions \eqref{suff1}-\eqref{suff3} with the exception that $\delta^L$ is replaced by $\tilde \delta^L=\delta_0+ \delta^U - \delta_1$. In a similar way we deal with the case when $\delta^L-\delta_0>\delta^U - \delta_1$ using \eqref{suff5}.

{\bf Step 3.} By two previous reductions we can assume that:
\begin{itemize}
\item the sequence $\boldsymbol \lambda$ has infinitely many terms $0<\lambda_i<1/2$ and infinitely many terms $1/2<\lambda_i<1$, 
\item $\delta^L,\delta^U>0$, and
\item  $\delta^L-\delta_0=\delta^U - \delta_1>0$.
\end{itemize}
Hence, the assumption $(iii)$ of Lemma \ref{lsuff} is met, which completes the proof.
\end{proof}

We can also use Lemma \ref{lsuff} to show the following diagonal-to-diagonal result when exactly one of the exterior parts is missing.

\begin{thm}\label{csuff}
Let $\boldsymbol\lambda=(\lambda_{i})_{i\in \N}$ and  $\boldsymbol{d} = (d_{i})_{i\in \N}$ be two sequences, which satisfy the same assumptions \eqref{suff1}--\eqref{suff5}, as in Theorem \ref{suff}.
In addition, assume one of the following holds:
\begin{enumerate}[(i)]
\item \(\lambda_{i},d_{i}>0\) for all \(i\in\N\), and \(\delta_{1}>0\),
\item \(\lambda_{i},d_{i}<1\) for all \(i\in\N\), and \(\delta_{0}>0\),
\end{enumerate}
If $\boldsymbol \lambda $ is a diagonal of a self-adjoint operator $E$, then so is $\boldsymbol d$.
\end{thm}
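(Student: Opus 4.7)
The plan is to reduce each case to the corresponding one-sided branch of Lemma \ref{lsuff}, mimicking on a single side the two reductions used in the proof of Theorem \ref{suff}. Case (ii) follows from case (i) by the symmetry $E\mapsto\mathbf{I}-E$, $(\lambda_i,d_i)\mapsto(1-\lambda_i,1-d_i)$, which swaps $\delta^L\leftrightarrow\delta^U$ and $\delta_0\leftrightarrow\delta_1$, so I work entirely in case (i). Since every $\lambda_i,d_i>0$, I have $f_{\boldsymbol\lambda}(\alpha)=f_{\boldsymbol d}(\alpha)=0$ for $\alpha\le 0$, so $\delta^L=0$ and hence the assumption $\delta_0\in[0,\delta^L]$ forces $\delta_0=0$. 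The target is Lemma \ref{lsuff}(i), whose hypotheses additionally require $\boldsymbol\lambda$ to have infinitely many terms in each of $(0,1/2)$ and $(1/2,1)$ and to satisfy $\delta^U=\delta_1$.

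The first reduction mirrors Step 1 of the proof of Theorem \ref{suff}. The infinitude of $\{j:\lambda_j\in(0,1/2)\}$ is automatic because $0$ is an accumulation point and all $\lambda_i>0$. If on the other hand only finitely many $\lambda_j$ lie in $(1/2,1)$, then the accumulation of $\boldsymbol\lambda$ at $1$ forces infinitely many $\lambda_j>1$, and I would invoke Theorem \ref{step5} to replace a sparse tail of those terms by terms in $(1/2,1)$, at the cost of a tiny upward perturbation of a single term in $(0,1/2)$, exactly as in the construction of $(\tilde\lambda_i)$ in Step 1 of Theorem \ref{suff}. Since all original entries are positive, the modified entries can also be chosen positive, and the quantitative hypotheses \eqref{suff2}--\eqref{suff5} survive the swap.

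The second reduction mirrors Step 2 of Theorem \ref{suff}, transposed to the right endpoint. If $\delta^U>\delta_1$, then $\delta^L-\delta_0=0<\delta^U-\delta_1$, so hypothesis \eqref{suff5} forces $\sum_{\lambda_j>1}(\lambda_j-1)=\infty$, and since $\delta^U<\infty$ also $\sum_{d_i>1}(d_i-1)=\infty$. Setting $J_1=\{j:\lambda_j>1\}$, I would pick $\alpha_1>1$ sufficiently close to $1$ with $\delta(\alpha,\boldsymbol\lambda,\boldsymbol d)\ge\delta_1$ on $(1,\alpha_1)$, and form $\boldsymbol\nu$ by appending to $(d_i)_{d_i>1}$ a finite sequence in $(1,\alpha_1)$ whose mass is calibrated so that, after subtracting $1$, the two positive $c_0$ sequences $(\lambda_j-1)_{j\in J_1}$ and $(\nu_i-1)_i$ stand in the strong-majorization relation required by Proposition \ref{kwd2d}. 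That proposition then yields that $\boldsymbol\nu$ is a diagonal of any self-adjoint operator with diagonal $(\lambda_j)_{j\in J_1}$, and Lemma \ref{subdiag} promotes this to the statement that $\tilde{\boldsymbol\lambda}:=(\lambda_j)_{j\notin J_1}\oplus\boldsymbol\nu$ is a diagonal of $E$. The new sequence is again positive throughout, the hypotheses \eqref{suff2}--\eqref{suff5} continue to hold, and now $\tilde\delta^U=\delta_1$. At this stage the pair $(\tilde{\boldsymbol\lambda},\boldsymbol d)$ verifies every hypothesis of Lemma \ref{lsuff}(i), which immediately gives that $\boldsymbol d$ is a diagonal of $E$.

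The main obstacle will be the bookkeeping in the second reduction: choosing the finite appended sequence so that the two shifted positive $c_0$ sequences (both of infinite sum) stand in the exact strong-majorization relation required by Proposition \ref{kwd2d}, and simultaneously checking that all four conditions \eqref{suff2}--\eqref{suff5} remain in force after replacing the tail $(\lambda_j)_{j\in J_1}$ by $\boldsymbol\nu$, with $\delta^U$ having dropped precisely to $\delta_1$. These verifications are parallel to the corresponding computations in Step 2 of the proof of Theorem \ref{suff}, merely reflected from the left endpoint $0$ to the right endpoint $1$.
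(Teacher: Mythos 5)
Your overall plan is essentially the paper's: reduce to the appropriate one-sided branch of Lemma \ref{lsuff} by the reflection of Step~2 of Theorem~\ref{suff}. The paper proves case~(ii) directly; you prove case~(i), which is its mirror image, and you correctly note $\delta^{L}=\delta_{0}=0$ and that $\{j:\lambda_{j}\in(0,1/2)\}$ is automatically infinite. The Step~2--type reduction, which drops $\delta^{U}$ to $\delta_{1}$ via Proposition~\ref{kwd2d} and Lemma~\ref{subdiag} when $\sum_{\lambda_{j}>1}(\lambda_{j}-1)=\infty$, is exactly what the paper does (mirrored at the opposite end), and your proposed bookkeeping there is fine.

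The point where you depart from the paper, and where the argument is not yet solid, is the ``first reduction'' you insert to secure the hypothesis of Lemma~\ref{lsuff} that $\boldsymbol\lambda$ has infinitely many terms in $(1/2,1)$. The paper's proof of Theorem~\ref{csuff} does not perform any Step~1--type reduction at all; after the Step~2 reduction it invokes Lemma~\ref{lsuff} directly. Your instinct that the hypothesis needs attention is not unreasonable, but the construction you sketch is \emph{not} ``exactly as in the construction of $(\tilde\lambda_{i})$ in Step~1 of Theorem~\ref{suff}.'' Step~1 there moves terms from $[\alpha_{0},0]$ into $(0,\alpha_{0})$ and pays for it by lowering a single $\lambda_{j_{0}}>1$; the symmetric version you would need here (moving terms from $[1,\alpha_{1}]$ into $(1/2,1)$) would pay for it by raising a single $\lambda_{j_{0}}<0$, and no such term exists in case~(i). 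Substituting a ``tiny upward perturbation of a single term in $(0,1/2)$'' is a genuinely different move whose effect on $f_{\boldsymbol\lambda}$ inside $(0,1)$ you have not checked: raising a term $\lambda_{j_{0}}\in(0,1/2)$ while simultaneously pulling mass from above $1$ into $(1/2,1)$ \emph{increases} $f_{\tilde{\boldsymbol\lambda}}(\alpha)$ near $\alpha=\lambda_{j_{0}}$, and the margin that absorbs this in Step~1 of Theorem~\ref{suff} is $\min(\delta^{L}-\delta_{0},\delta^{U}-\delta_{1})>0$, supplied there by \eqref{suff6}. Theorem~\ref{csuff} does not assume \eqref{suff6}, and in your setting $\delta^{L}-\delta_{0}=0$, so you must argue separately (e.g., that in the relevant regime $\delta^{U}-\delta_{1}>0$ and the perturbation is controlled by it, and that you perform the Step~1--type modification \emph{before} the Step~2 reduction) that the inequality \eqref{suff2} survives with a suitably decreased $\delta_{1}$. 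As written, ``the quantitative hypotheses \eqref{suff2}--\eqref{suff5} survive the swap'' is an assertion, not a verification, and it is not the part of the argument that can be taken for granted.
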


\begin{proof} Assume \textit{(ii)} holds. If \(\delta^{L}=\delta_{0}\), then the conclusion follows by Lemma \ref{lsuff}. Hence, we may assume that \(\delta^{L}>\delta_{0}\) and we will proceed as in the proof of Step 2 in Theorem \ref{suff}.
By \eqref{suff4}, we have \(\sum_{j\in J_0 }|\lambda_{j}|=\infty\), where $J_0=\{j: \lambda_{j}<0\}$. Since \(\delta_{L}<\infty\), we also have \(\sum_{i\in I_0}|d_{i}|=\infty\), where $I_0=\{i : d_i<0\}$. There exists \(\alpha_{0}<0\) such that 
\begin{equation}\label{suff30}
\delta(\alpha,\boldsymbol\lambda,\boldsymbol d)\geq \delta_{0}\quad \text{for all }\alpha\in(\alpha_{0},0).
\end{equation}

Let $\boldsymbol \nu$ be the sequence $(d_{i})_{i\in I_0}$ appended by a finite sequence which consists of terms in $(\alpha_0,0)$ and adds up to $- \delta_0$. Define a sequence $\tilde{\boldsymbol \lambda}=(\lambda_{j})_{j\in \N \setminus J_0} \oplus \boldsymbol \nu$. 

By \eqref{suff30} we have
\[
\delta(\alpha, \boldsymbol \lambda, \tilde{\boldsymbol \lambda} ) = \delta(\alpha, (\lambda_{j})_{j\in J_0}, \boldsymbol \nu ) \ge 0 \qquad\text{for all } \alpha<0.
\]
By Proposition \ref{kwd2d}, if $(\lambda_{j})_{j\in J_0}$ is a diagonal of a self-adjoint operator, then $\boldsymbol \nu$ is also a diagonal. Hence, by Lemma \ref{subdiag} we deduce that $\tilde{\boldsymbol \lambda}$ is a diagonal of $E$.

On the other hand,
\[
\delta(\alpha, \tilde{ \boldsymbol \lambda} , \boldsymbol d)  =  \delta(\alpha, \boldsymbol \nu , (d_{i})_{i\in I_0} ) \ge 0 \qquad\text{for all } \alpha<0.
\]
By Lemma \ref{dclem} we have
\[
\liminf_{\alpha \nearrow 0} \delta(\alpha, \tilde{ \boldsymbol \lambda} , \boldsymbol d) = \delta_0.
\]
Therefore, sequences $\tilde{\boldsymbol \lambda}$ and $\boldsymbol d$ satisfy assumptions \eqref{suff1}-\eqref{suff3} with the exception that $\delta^L$ is replaced by $\tilde \delta^L=\delta_0$. Hence, the assumption (ii) in Lemma \ref{lsuff}. Consequently, $\boldsymbol d$ is a diagonal of $E$.
\end{proof}

\section{Non-Blaschke diagonal result}\label{S19}

In this section we show that all sequences that satisfy exterior majorization, but do not satisfy the Blaschke condition in the interior, are diagonals of self-adjoint operators with at least two points in the essential spectrum. Moreover, in the case that the exterior excess is infinite we show the same result even if the Blaschke condition holds.

M\"uler and Tomilov \cite[Theorem 1.1]{mt} have shown the following result about the existence of diagonals satisfying non-Blaschke condition. Their results also hold for $n$-tuples of self-adjoint operators, but we will not need such level of generality. 
For convenience we assume that $0$ and $1$ are two extreme points of the essential spectrum, but they can be replaced by any pair of distinct real numbers.

\begin{thm}\label{tomilov}
Let $E$ be a self-adjoint operator on $\mathcal H$ such that $0$ and $1$ are two extreme points of the essential spectrum $\sigma_{ess}(E)$ of $E$, that is,
\begin{equation}\label{ess}
\{0,1 \} \subset \sigma_{ess}(E) \subset [0,1].
\end{equation}
Let  $\boldsymbol{d}=(d_{i})_{i\in I}$ be a sequence in $(0,1)$ such that 
\[
\sum_{i\in I} \min(d_i,1-d_i) = \infty.
\]
Then, $\boldsymbol d$ is a diagonal of $E$.
\end{thm}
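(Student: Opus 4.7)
The strategy is to reduce the statement directly to the Müller--Tomilov theorem \cite{mt} (recalled in the introduction via the non-Blaschke condition \eqref{nbla}). Concretely, that result says: if $(d_i)_{i\in I}$ lies in the interior of the essential numerical range $W_e(E)$ and satisfies
\[
\sum_{i\in I} \operatorname{dist}(d_i, \R \setminus W_e(E)) = \infty,
\]
then $(d_i)_{i\in I}$ is a diagonal of $E$. So there are really only two things to verify: that every $d_i$ is an interior point of $W_e(E)$, and that the divergent sum in our hypothesis dominates the non-Blaschke sum above.

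The first step is to locate $W_e(E)$. Since $E$ is self-adjoint, $W_e(E)$ is a convex subset of $\R$ containing $\sigma_{ess}(E)$; combined with the hypothesis $\{0,1\} \subset \sigma_{ess}(E)$, this gives $[0,1] \subset W_e(E)$. The second step is then immediate: for each $d_i \in (0,1)$, we have $d_i$ in the interior of $[0,1]$, hence in the interior of $W_e(E)$, and
\[
\operatorname{dist}(d_i, \R \setminus W_e(E)) \;\ge\; \operatorname{dist}(d_i, \R \setminus [0,1]) \;=\; \min(d_i, 1-d_i).
\]
Summing over $i \in I$ and invoking the hypothesis $\sum_{i\in I} \min(d_i, 1-d_i) = \infty$ yields the non-Blaschke condition \eqref{nbla}.

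With both hypotheses of the Müller--Tomilov theorem verified, we conclude that $\boldsymbol d$ is a diagonal of $E$. The only conceptual content of the proof is the observation that the convex hull of $\sigma_{ess}(E)$ sits inside $W_e(E)$, which guarantees that the cheaper-to-check Blaschke sum using distance to $[0,1]^c$ (which is what the hypothesis is phrased in terms of) is dominated by the genuine non-Blaschke sum of Müller--Tomilov. There is no real obstacle here; this theorem is cited in the paper as a black-box input, and the proof is simply the unpacking above.
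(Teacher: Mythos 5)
Your proposal is correct, and it takes exactly the same route as the paper: the paper states Theorem \ref{tomilov} as a specialization of M\"uller--Tomilov \cite[Theorem 1.1]{mt} and does not give an independent proof, merely noting the normalization to $\{0,1\}$. Your argument is just the explicit unpacking of that citation — observing that $W_e(E)=\operatorname{conv}(\sigma_{ess}(E))\supseteq[0,1]$ for a self-adjoint $E$ (Fillmore--Stampfli--Williams), that each $d_i\in(0,1)$ is therefore interior to $W_e(E)$, and that $\operatorname{dist}(d_i,\R\setminus W_e(E))\ge\min(d_i,1-d_i)$, so the non-Blaschke hypothesis \eqref{nbla} follows from the divergence of $\sum\min(d_i,1-d_i)$. (In fact, under the stated hypotheses $\sigma_{ess}(E)\subset[0,1]$ forces $W_e(E)=[0,1]$, so your displayed inequality is an equality, but the one-sided bound suffices.)
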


\begin{lem}\label{nbdL}
Let $E$ be a self-adjoint operator on $\mathcal H$ such that $0$ and $1$ are two extreme points of the essential spectrum $\sigma_{ess}(E)$ of $E$, that is,
\begin{equation*}
\{0,1 \} \subset \sigma_{ess}(E) \subset [0,1].
\end{equation*}
In addition, assume \(\sigma(E)\subset(-\infty,1]\).
Let $\boldsymbol\lambda=(\lambda_{j})_{j\in J}$ be the list of all eigenvalues of $E$ with multiplicity (which is possibly an empty list). 
If  $\boldsymbol{d}=(d_{i})_{i\in I}$ is a sequence in \((-\infty,1)\) such that \(f_{\boldsymbol{d}}(\alpha)=\infty\) for some \(\alpha\in(0,1),\)
\[\delta^{L} = \liminf_{\beta\nearrow 0}(f_{\boldsymbol\lambda}(\beta)-f_{\boldsymbol{d}}(\beta))>0,\]
and
\begin{equation*}
 f_{\boldsymbol{d}}(\alpha)  \leq f_{\boldsymbol\lambda}(\alpha)  
\qquad\text{for all }\alpha<0,
\end{equation*}
then \(\boldsymbol{d}\) is a diagonal of \(E\).
\end{lem}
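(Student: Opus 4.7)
The plan is to split $E$ according to its spectral measure $\pi$: set $A = E|_{\pi((-\infty,0))\mathcal H}$ and $B = E|_{\pi([0,1])\mathcal H}$, so $E = A \oplus B$. Since $\sigma_{ess}(E) \cap (-\infty,0) = \varnothing$, the operator $A$ is compact with eigenvalue list $\boldsymbol\lambda_{\rm neg} := (\lambda_j)_{\lambda_j<0}$, while $B$ satisfies $\{0,1\} \subset \sigma_{ess}(B) \subset [0,1]$. The strategy is to split $\boldsymbol d$ into two subsequences — one a diagonal of $A$ and the other a diagonal of $B$ — then invoke Lemma \ref{subdiag}.

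For the $B$-part, the non-Blaschke assumption $f_{\boldsymbol d}(\alpha_0) = \infty$ for some $\alpha_0 \in (0,1)$ directly yields $\sum_{0<d_i<1} \min(d_i, 1-d_i) = \infty$. Hence $(d_i)_{0<d_i<1} \subset (0,1)$ satisfies the hypotheses of the M\"uller--Tomilov Theorem \ref{tomilov}, so it is a diagonal of $B$; any zero entries of $\boldsymbol d$ can be absorbed into the $B$-diagonal using the fact that $0 \in \sigma_{ess}(B)$.

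For the $A$-part, Proposition \ref{LR} translates the exterior majorization assumption on $\alpha<0$ into the HLP-majorization $(|d_i|)_{d_i<0} \prec (|\lambda_j|)_{\lambda_j<0}$ with excess $\sigma_- = \delta^L > 0$; note $\sigma_+ = 0$ since $\boldsymbol\lambda_{\rm neg}$ has no positive terms. If $\sum_{d_i<0}|d_i| = \infty$, condition \eqref{p4v3} in Theorem \ref{intropv3} is vacuous, the remaining cardinality and majorization conditions are trivial, and $\sigma_+ + \sigma_- = \delta^L > 0$, so $(d_i)_{d_i<0}$ is a diagonal of $A$ directly, completing the proof via Lemma \ref{subdiag}.

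The main obstacle is the case $\sum_{d_i<0}|d_i| < \infty$, where \eqref{p4v3} would force $\sigma_+ \ge \sigma_- = \delta^L > 0$, contradicting $\sigma_+ = 0$. To overcome this, I will introduce an auxiliary positive sequence $\boldsymbol\nu = (\nu_k)$ with sum chosen to close the mass gap (equal to $\delta^L$ when $|\boldsymbol\lambda_{\rm neg}|$ is summable, infinite otherwise), arranged so that $(|d_i|)_{d_i<0} \oplus \boldsymbol\nu \prec |\boldsymbol\lambda_{\rm neg}|$ with matching totals. Then Proposition \ref{kwd2d} (or the appropriate variant of Theorem \ref{intropv3}) yields that $(d_i)_{d_i<0} \oplus (-\nu_k)$ is a diagonal of $A$, while $(d_i)_{0<d_i<1} \oplus (\nu_k)$ remains in $(0,1)$ and satisfies non-Blaschke, so by Theorem \ref{tomilov} it is a diagonal of $B$. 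Each auxiliary pair $(-\nu_k, \nu_k)$ spans a two-dimensional subspace on which $E$ acts as $\mathrm{diag}(-\nu_k, \nu_k)$; by the two-dimensional Schur--Horn theorem, this block admits any diagonal $(a,-a)$ with $|a| \le \nu_k$. The final bookkeeping step — which is where the proof is delicate — pairs these rotated $(a,-a)$ blocks with entries of $\boldsymbol d$ summing to zero, or absorbs them as $(0,0)$ into zero entries of $\boldsymbol d$, so that the assembled orthonormal basis yields $\boldsymbol d$ as the diagonal of $E$ without introducing extraneous entries.
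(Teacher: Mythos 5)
Your overall split $E = A \oplus B$ with $A$ compact and $B$ having two-point essential spectrum is the right starting point, and your analysis of why the direct Theorem~\ref{intropv3} route fails when $\sum_{d_i<0}|d_i|<\infty$ (forced $\sigma_+\ge\sigma_->0$ contradiction) is also correct. But the workaround you propose — introduce auxiliary pairs $(-\nu_k,\nu_k)$, declare diagonals of $A$ and $B$ separately, and then ``rotate each pair away'' — has a genuine gap in the final bookkeeping. After you place all of $(d_i)_{d_i<0}$ into the $A$-diagonal and all of $(d_i)_{0<d_i<1}$ into the $B$-diagonal, there is nothing left of $\boldsymbol d$ to pair with the surviving $(-\nu_k,\nu_k)$ blocks: the only unplaced entries of $\boldsymbol d$ are the zeros, and their number need not match. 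If $\boldsymbol d$ has no zero terms at all (a situation fully allowed by the hypotheses), each rotated block contributes a $(0,0)$ pair that has no home, so the diagonal you build is $\boldsymbol d$ with infinitely many extraneous zeros appended, not $\boldsymbol d$ itself. Even when zeros are present, there is no reason their cardinality should equal twice the number of terms in $\boldsymbol\nu$, which is dictated by $\delta^L$, not by $\boldsymbol d$.

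The paper avoids this entirely by reversing the order of operations. It first exhibits an \emph{intermediate} diagonal of $E$: for an infinite positive sequence $\boldsymbol\nu$ summing to $\delta^L$, Theorem~\ref{tomilov} shows that $\boldsymbol\nu\oplus(d_i)_{0<d_i<1}$ is a diagonal of $B$ (more precisely of $EP|_{\mathcal H_0}$ with $P=\pi(0,\infty)$), so the concatenation $(\lambda_j)_{\lambda_j\le 0}\oplus\boldsymbol\nu\oplus(d_i)_{0<d_i<1}$ is a diagonal of $E$. At that point $(\lambda_j)_{\lambda_j\le 0}\oplus\boldsymbol\nu$ is a diagonal of a genuine invariant block of $E$, and the excess at $0$ from both sides equals $\delta^L>0$; Theorem~\ref{step5} then upgrades this to the desired diagonal $(d_i)_{d_i\le 0}$ of that block, and Lemma~\ref{subdiag} assembles the pieces. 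This is a diagonal-to-diagonal step on an invariant subspace, which is precisely what Lemma~\ref{subdiag} requires, rather than a basis rotation inside a 2-dimensional \emph{non}-invariant compression. (A separate Case~2 handles $\{i:d_i\le0\}$ finite with the classical Schur--Horn theorem.) If you want to repair your argument, replace the rotation-and-absorb step by this intermediate-diagonal trick.
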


\begin{proof} We need consider two cases.

{\bf Case 1.} Suppose that the sequence $(d_{i})_{d_i \le 0}$ is infinite. 
Let $\boldsymbol \nu$ be an infinite sequence in $(0,1)$ that sums to $\delta^L$. 
By Theorem \ref{tomilov} the sequence $\boldsymbol \nu \oplus (d_{i})_{0<d_i<1}$ is a diagonal of $EP|_{\mathcal H_0}$, where $\pi$ is the spectral measure of $E$,  $P=\pi(0,\infty)$, and $\mathcal H_0=\ran(P)$. Hence, the sequence $(\lambda_{j})_{\lambda_j \le 0} \oplus  \boldsymbol \nu \oplus (d_{i})_{0<d_i<1} $ is a diagonal of $E$.
In the case that $\delta^L<\infty$, by Theorem \ref{step5} we conclude that if $(\lambda_{j})_{\lambda_j \le 0} \oplus \boldsymbol \nu $ is a diagonal of a self-adjoint operator, then so is $(d_{i})_{d_i \le 0}$. We deduce the same conclusion in the case that $\delta^L=\infty$ using Theorem \ref{step5}.
Hence, by Lemma \ref{subdiag} we conclude that $\boldsymbol d$ is a diagonal of $E$.

{\bf Case 2.} Suppose that $I_0=\{i: d_i \le 0\}$ is finite. Let $J_0$ be a finite set with at most $\#|I_0|$ elements such that $(\lambda_{j})_{j\in J_0}$ contains the smallest elements of $(\lambda_{j})_{\lambda_j \le 0}$ and 
\[
\delta(\alpha,(\lambda_{j})_{j\in J_0}, (d_{i})_{i\in I_0}) \ge 0\qquad\text{for } \alpha<0.
\]
Hence,
\[
{\tilde \delta}^L= \sum_{j\in J_0} |\lambda_j| - \sum_{i\in I_0} |d_i| \ge 0
\]
and the inequality is strict if $N=\#|I_0|-\#|J_0|>0$. In this case we choose $\alpha \in (0,1)$ such that $N\alpha<\tilde \delta^L$. Otherwise, if $N=0$ we let $\alpha=0$. Next we choose a finite set $K_0$ such that $0<d_i<1$ for $i\in K_0$ and $\sum_{i\in K_0} (1-d_i)> \tilde \delta^L$. Choose $d_i \le \tilde d_i <1$, $i\in K_0$, such that 
\[
\sum_{i\in K_0} (\tilde d_i - d_i) = {\tilde \delta}^L -\alpha N.
\]

By the Schur-Horn theorem, if $(\lambda_{j})_{j\in J_0} \oplus \alpha 1_{N} \oplus (\tilde{d}_{i})_{i\in K_0}$ is a diagonal of a self-adjoint (finite-dimensional) operator then, so is $(d_{i})_{i \in I_0 \cup K_0}$. Here, $1_N$ denotes the the constant sequence of $N$ terms equal to $1$.
 By Theorem \ref{tomilov} the sequence $\alpha 1_{N} \oplus (\tilde{d}_{i})_{i\in K_0} \oplus (d_{i})_{i\in I \setminus (I_0 \cup K_0) }$ is a diagonal of $E|_{\mathcal H_0}$, where $\mathcal H_0$ is the orthogonal complement of eigenvectors corresponding to eigenvalues $\lambda_j$, $j\in J_0$. 
Hence, the sequence $(\lambda_{j})_{j\in J_0} \oplus  \alpha 1_{N} \oplus (\tilde{d}_{i})_{i\in K_0} \oplus (d_{i})_{i\in I \setminus (I_0 \cup K_0) }$ is a diagonal of $E$.  Therefore, by Lemma \ref{subdiag} we conclude that $\boldsymbol d$ is a diagonal of $E$.
\end{proof}

\begin{thm}\label{nbd}
Let $E$ be a self-adjoint operator on $\mathcal H$ such that $0$ and $1$ are two extreme points of the essential spectrum $\sigma_{ess}(E)$ of $E$, that is,
\begin{equation}
\{0,1 \} \subset \sigma_{ess}(E) \subset [0,1].
\end{equation}
Let $\boldsymbol\lambda=(\lambda_{j})_{j\in J}$ be the list of all eigenvalues of $E$ with multiplicity (which is possibly an empty list). 
Let  $\boldsymbol{d}=(d_{i})_{i\in I}$ be a sequence such that
\begin{equation}
 f_{\boldsymbol{d}}(\alpha)  \leq f_{\boldsymbol\lambda}(\alpha)  
\qquad\text{for all }\alpha\in\R\setminus[0,1].
\end{equation}
Define
\[\delta^{L} = \liminf_{\beta\nearrow 0}(f_{\boldsymbol\lambda}(\beta)-f_{\boldsymbol{d}}(\beta))\quad\text{and}\quad \delta^{U} = \liminf_{\beta\searrow 1}(f_{\boldsymbol\lambda}(\beta)-f_{\boldsymbol{d}}(\beta)).\]
Assume that one of the following happens:
\begin{enumerate}[(a)]
\item
$\delta^L,\delta^U> 0$ and $f_{\boldsymbol{d}}(\alpha)=\infty$ for some $\alpha\in(0,1)$,
\item
$\delta^L> 0$, $\delta^U=\infty$, and $\sum_{d_i>0} d_i=\infty$,
\item[(b')]
$\delta^{L}=0$, $\delta^{U}=\infty$, $\sum_{d_i>0} d_i=\infty$, and $d_{i}>0$ for all $i\in I$
\item
$\delta^U> 0$, $\delta^L=\infty$, and $\sum_{d_i<1} (1-d_i)=\infty$,
\item[(c')]
$\delta^U = 0$, $\delta^L=\infty$, $\sum_{d_i<1} (1-d_i)=\infty$, and $d_{i}<1$ for all $i\in I$
\item
$\delta^L=\delta^U =\infty$.
\end{enumerate} 
Then, $\boldsymbol d$ is a diagonal of $E$.
\end{thm}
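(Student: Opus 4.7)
The plan is to combine the M\"uller--Tomilov theorem (Theorem \ref{tomilov}) applied to the middle spectral block of $E$, Theorem \ref{step5} applied to each exterior block, and Lemma \ref{subdiag} for gluing. The symmetry $E \mapsto \mathbf I - E$, $\boldsymbol d \mapsto (1-d_i)$, $\boldsymbol \lambda \mapsto (1-\lambda_j)$, which swaps $0 \leftrightarrow 1$ and $\delta^L \leftrightarrow \delta^U$, reduces cases (c), (c$'$) to (b), (b$'$), while case (e) is a straightforward combination of (b) and (c). So it suffices to treat (a), (b), and (b$'$). I would begin by writing $E = E_L \oplus E_M \oplus E_U$ according to the spectral projections $\pi((-\infty, 0))$, $\pi([0, 1])$, and $\pi((1, \infty))$. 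The outer blocks $E_L$ and $E_U$ are diagonalizable with eigenvalue lists $\boldsymbol \lambda^{(L)} = (\lambda_j)_{\lambda_j < 0}$ and $\boldsymbol \lambda^{(U)} = (\lambda_j)_{\lambda_j > 1}$, while $E_M$ retains $\{0,1\} \subset \sigma_{ess}(E_M) \subset [0,1]$ after, if necessary, absorbing any isolated $0$- or $1$-eigenvalue of $E$ into the adjacent outer block. Split $\boldsymbol d = \boldsymbol d^{(L)} \oplus \boldsymbol d^{(M)} \oplus \boldsymbol d^{(U)}$ with parts respectively $<0$, in $(0,1)$, and $>1$; the boundary values $d_i \in \{0,1\}$ are absorbed into the corresponding eigenspace of $E_M$.

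Next, I would choose auxiliary sequences $\boldsymbol \mu \subset (0, 1/2)$ and $\boldsymbol \eta \subset (1/2, 1)$ with prescribed sums: in case (a), $\sum_k \mu_k = \delta^L$ and $\sum_k (1-\eta_k) = \delta^U$ (interpreted as $\infty$ when the corresponding excess is infinite); in case (b), the same on the left but $\sum_k (1-\eta_k) = \infty$; in case (b$'$), $\boldsymbol \mu$ is omitted (the left block is empty) and $\sum_k (1-\eta_k) = \infty$. The augmented middle sequence $\boldsymbol \mu \oplus \boldsymbol d^{(M)} \oplus \boldsymbol \eta$ lies in $(0,1)$ and satisfies the non-Blaschke condition $\sum \min(\cdot, 1-\cdot) = \infty$: in case (a) this follows from $f_{\boldsymbol d}(\alpha) = \infty$ for some $\alpha \in (0,1)$, which already forces non-Blaschke for $\boldsymbol d^{(M)}$ alone (by splitting into the $C_{\boldsymbol d}(\alpha) = \infty$ and $D_{\boldsymbol d}(\alpha) = \infty$ subcases); in (b), (b$'$) it follows from $\sum(1-\eta_k) = \infty$. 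Applying Theorem \ref{tomilov} to $E_M$ gives that $\boldsymbol \mu \oplus \boldsymbol d^{(M)} \oplus \boldsymbol \eta$ is a diagonal of $E_M$, so concatenating with the tautological diagonals $\boldsymbol \lambda^{(L)}$ of $E_L$ and $\boldsymbol \lambda^{(U)}$ of $E_U$ exhibits $\boldsymbol \lambda^{(L)} \oplus \boldsymbol \mu \oplus \boldsymbol d^{(M)} \oplus \boldsymbol \eta \oplus \boldsymbol \lambda^{(U)}$ as a diagonal of $E$. Theorem \ref{step5} applied to the pair $(\boldsymbol \lambda^{(L)} \oplus \boldsymbol \mu, \boldsymbol d^{(L)})$ then replaces the left subdiagonal by $\boldsymbol d^{(L)}$: the exterior hypothesis $f_{\boldsymbol d}(\alpha) \le f_{\boldsymbol \lambda}(\alpha)$ for $\alpha < 0$ provides $\delta(\cdot,\boldsymbol \lambda^{(L)} \oplus \boldsymbol \mu,\boldsymbol d^{(L)}) \ge 0$ there with $\liminf_{\alpha \nearrow 0} = \delta^L$, while for $\alpha > 0$ the function $\delta$ is the partial sum of $\boldsymbol \mu$ with $\liminf_{\alpha \searrow 0} = \sum_k \mu_k = \delta^L$, so both liminfs agree and are positive. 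A reflected application of Theorem \ref{step5} to $(\boldsymbol \eta \oplus \boldsymbol \lambda^{(U)}, \boldsymbol d^{(U)})$ replaces the right subdiagonal by $\boldsymbol d^{(U)}$, and Lemma \ref{subdiag} concludes that $\boldsymbol d$ is a diagonal of $E$.

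The main technical hurdle is the case-by-case bookkeeping required to synchronize the $\liminf$ values at $0$ from both sides so that Theorem \ref{step5} applies cleanly, especially when $\delta^L$ or $\delta^U$ is infinite and the corresponding $\sum_k \mu_k$ or $\sum_k(1-\eta_k)$ must also equal $\infty$. Additional care is needed in the degenerate cases (b$'$) and (c$'$) where an exterior block is entirely absent, and to verify that the spectral splitting preserves $\{0,1\} \subset \sigma_{ess}(E_M)$ by reassigning any isolated boundary eigenvalue of $E$.
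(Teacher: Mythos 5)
Your overall blueprint---three-way spectral split, M\"uller--Tomilov (Theorem \ref{tomilov}) for the interior, Theorem \ref{step5} for each exterior, and Lemma \ref{subdiag} to glue---is the same strategy the paper uses inside its key Lemma \ref{nbdL}, and the symmetry and case-reduction bookkeeping you outline is essentially right. You do skip the paper's intermediate step of first splitting $E$ into two operators each still satisfying \eqref{ess} for case (a), going instead directly to the three-way decomposition, which in principle is fine. However, there are two substantive gaps.

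First, the handling of $d_i\in\{0,1\}$ is wrong. You propose to ``absorb the boundary values $d_i\in\{0,1\}$ into the corresponding eigenspace of $E_M$,'' but nothing in the hypotheses bounds $\#|\{i:d_i=0\}|$ by $\dim\ker E$; indeed $\ker E$ may be trivial while $\boldsymbol d$ has $d_i=0$ terms, and the conclusion can still hold because $E$ has negative eigenvalues (so $\langle Ee,e\rangle=0$ does not force $e\in\ker E$). The paper instead groups $(d_i)_{d_i\le 0}$ with $(\lambda_j)_{\lambda_j\le 0}\oplus\boldsymbol\nu$ for the Theorem \ref{step5} application, putting all $d_i=0$ (and $\lambda_j=0$) terms in the left exterior rather than trying to pin them to kernel vectors, and symmetrically at $1$; you need to do the same.

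Second, your application of Lemma \ref{subdiag} breaks down whenever one of the exterior diagonal subsequences (say $\{i:d_i\le 0\}$) is finite but nonempty. You pair the finite $\boldsymbol d^{(L)}$ with the necessarily infinite $\boldsymbol\lambda^{(L)}\oplus\boldsymbol\mu$, but Lemma \ref{subdiag} needs both to be diagonals of the same operator $E_1$, which is impossible when the cardinalities differ. Likewise Theorem \ref{step5} only treats pairs of $c_0$ sequences indexed by an infinite set. The paper resolves this (Case 2 of Lemma \ref{nbdL}) by a separate argument: choose $J_0\subset\{j:\lambda_j\le 0\}$ with $\#|J_0|\le\#|I_0|$, pad with $N=\#|I_0|-\#|J_0|$ terms $\alpha\mathbf 1_N$, and modify a finite block $K_0$ of interior $d_i$'s to balance the sums so that the finite Schur--Horn theorem applies to $(\lambda_j)_{J_0}\oplus\alpha\mathbf 1_N\oplus(\tilde d_i)_{K_0}$ versus $(d_i)_{I_0\cup K_0}$, then feeds the rest to M\"uller--Tomilov. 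This is a genuine extra case, not bookkeeping, and your sketch does not supply it.
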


\begin{proof}[Proof of (a)] We claim that there exists two orthogonal invariant subspaces $\mathcal K_0$ and $\mathcal K_1$ of $E$ and a partition $\{I_0,I_1\}$ of the index set $I$ such that:
\begin{enumerate}
\item
$\mathcal H = \mathcal K_0 \oplus \mathcal K_1$ and
the operators $E_0= E|_{\mathcal K_0}$ and $E_1= E|_{\mathcal K_1}$ satisfy \eqref{ess}
\item 
$d_i <1$ for all $i \in I_0$ and $d_i>0$ for all $i\in I_1$,
\item
letting $\boldsymbol d^k=(d_{i})_{i\in I_k}$, we have
$f_{\boldsymbol d^k}(\alpha)=\infty$ for $k=0,1$ and $\alpha \in (0,1)$,
\item 
letting $\boldsymbol \lambda^k$ be the the list of all eigenvalues of $E_k$ with multiplicity, we have
\begin{align}\label{k0}
f_{\boldsymbol \lambda^0}(\alpha) \ge f_{\boldsymbol d^0}(\alpha)\quad\text{for all }\alpha<0 &\qquad\text{and}\qquad
\liminf_{\beta\nearrow 0}(f_{\boldsymbol\lambda^0}(\beta)-f_{\boldsymbol{d}^0}(\beta))>0,
\\
\label{k1}
f_{\boldsymbol \lambda^1}(\alpha) \ge f_{\boldsymbol d^1}(\alpha)\quad\text{for all }\alpha>1
&\qquad\text{and}\qquad
 \liminf_{\beta\searrow 1}(f_{\boldsymbol\lambda^1}(\beta)-f_{\boldsymbol{d}^1}(\beta))>0.
 \end{align}

\end{enumerate}
To prove the claim
we can partition a sequence $(d_{i})_{0<d_i<1}$ into two subsequences, both of which satisfy non-Blaschke condition. Then we append all the terms $d_i \le 0$ to the first subsequence to form $\boldsymbol d^0=(d_{i})_{i\in I_0}$. Likewise, we append all the terms $d_i \ge  1$ to the second subsequence to form $\boldsymbol d^1=(d_{i})_{i\in I_1}$. This shows (ii) and (iii). 

Let $\pi$ be the spectral measure of $E$.
If the sequence $(\lambda_{j})_{\lambda_j \le 0}$ is infinite, then we require that the subspaces $\mathcal K_0$ and $\mathcal K_1$ each contain infinitely many eigenvectors corresponding to eigenvalues $\lambda_j\le 0$ and \eqref{k0} holds. This is possible since we assume that $\delta^L>0$. Otherwise, if $(\lambda_{j})_{\lambda_j \le 0}$ is finite, then we require that $\mathcal K_0$ contains all eigenvectors $\lambda_j \le 0$. In this case, the rank of the projection $\pi(0,\delta)$ is infinite for all $\delta>0$. Hence, we can partition the interval $(0,1/2)$ into two Borel sets $B_0$ and $B_1$ such that $\operatorname{rank}(\pi(B_k))=\infty$, $k=0,1$, to guarantee that $0\in \sigma_{ess}(E_0)$ and $0\in \sigma_{ess}(E_1)$.
Likewise, if the sequence $(\lambda_{j})_{\lambda_j \ge 1}$ is infinite, then we require that the subspaces $\mathcal K_0$ and $\mathcal K_1$ each contain infinitely many eigenvectors corresponding to eigenvalues $\lambda_j\ge 1$ and \eqref{k1} holds. 
 Otherwise, we require that $\mathcal K_1$ contains all eigenvectors $\lambda_j \ge 1$. This yields (i) and (iv).

By the claim, we can reduce the proof of Theorem \ref{nbd} to the case when the sequence $\boldsymbol{d}=(d_{i})_{i\in I}$ satisfies $d_i<1$ for all $i\in I$, albeit $\delta^U$ could take value $0$. This is exactly Lemma \ref{nbdL}.
\end{proof}

\begin{proof}[Proof of (c)]  In light of \textit{(a)}, it suffices to consider the case when $f_{\boldsymbol{d}}(\alpha)<\infty$ for some $\alpha\in(0,1)$. Together with the assumption that  $\sum_{d_i<1}(1-d_i)=\infty$, this implies $0$ is an accumulation point of the subsequence $(d_{i})_{d_i<1}$. Hence, there exists an infinite subset $I_0 \subset I$ such that $0$ is the only accumulation point of $(d_{i})_{i\in I_0}$ and
\begin{equation}\label{k5}
\{i\in I: d_i \le 0\} \subset I_0.
\end{equation}
Choose a sequence $\tilde{\boldsymbol d} = (\tilde{d}_{i})_{i\in \N}$ such that $\tilde d_i\in(0,1)$ for all $i\in \N$, $\lim \tilde d_i =0$, and $\sum\tilde d_i=\infty$. Choose an infinite subset $J_0 \subset \{j\in J: \lambda_j<0\}$ such that $\sum_{j\in J_0} |\lambda_j|<\infty$ and $\delta(\alpha, (\lambda_{j})_{j\in J \setminus J_0}, \boldsymbol d) \ge 0$ for all $\alpha <0$. Consequently,
\begin{equation}\label{k6}
\liminf_{\alpha \nearrow 0} \delta(\alpha, (\lambda_{j})_{j\in J_1}, \boldsymbol d) = \infty,
\qquad\text{where  } J_1=\{j\in J\setminus J_0 : \lambda_j<0\}.
\end{equation}

By part \textit{(a)} the sequence $(d_{i})_{d_i>0} \oplus \tilde{\boldsymbol d}$ is a diagonal of $E|_{\mathcal H_0}$, where $\mathcal H_0$ is the orthogonal complement of the eigenvectors corresponding to eigenvalues $\lambda_j$, $j\in J_1$. 
Hence, the sequence $(d_{i})_{d_i>0} \oplus \tilde{\boldsymbol d} \oplus (\lambda_{j})_{j\in J_1}$ is a diagonal of $E$.
Let $I_1=\{i \in I_0: d_i>0\}$. By Theorem \ref{step5} and \eqref{k6}, if $(d_{i})_{i\in I_1} \oplus \tilde{\boldsymbol d} \oplus  (\lambda_{j})_{j\in J_1}$ is a diagonal of a self-adjoint operator, then so is $(d_{i})_{i\in I_0}$. By \eqref{k5} we have
\[\{i\in I : d_{i}>0\}\setminus I_{1} = I\setminus I_{0}.\]
Therefore, by Lemma \ref{subdiag}  we conclude that $(d_{i})_{i\in I_0} \oplus (d_{i})_{i\in I \setminus I_0} =\boldsymbol d$ is a diagonal of $E$.
\end{proof}

\begin{proof}[Proof of (c')] The proof is identical to the proof of \textit{(c)}, with the two applications of (a) replaced with Lemma \ref{nbdL}. 
\end{proof}

\begin{proof}[Proof of (b), (b'), and (d)] By symmetry, parts \textit{(c)} and \textit{(c')} imply parts \textit{(b)} and \textit{(b')}, respectively. Assume that $\delta^L=\delta^U =\infty$. Since $\boldsymbol d$ is an infinite sequence, one of the sums  $\sum_{d_i>0} d_i$ or $\sum_{d_i<1} (1-d_i)$ must be infinite. Hence, part \textit{(d)} follows immediately from parts \textit{(b)} and \textit{(c)}.
\end{proof}

\section{Main result for operators with at least \(2\)-point essential spectrum} \label{S8}

In this section we formulate and prove our main result, Theorem \ref{pmt}, on diagonals of self-adjoint operators with $\ge 2$ points in their essential spectrum. To achieve this we combine necessity results from Section \ref{necProof} and sufficiency results from Sections \ref{S16}--\ref{S19}. The statement of Theorem \ref{pmt} uses the concept of decoupling, see Definition \ref{decouple}.
Finally, we also prove Theorem \ref{mainthm}, which yields a characterization of $\mathcal D(T)$ for the class of self-adjoint operators sharing the same spectral measure as $T$ with a possible exception of the multiplicities of the eigenvalues $0$ and $1$. While the proof of Theorem \ref{pmt} merely combines earlier results, the proof of Theorem \ref{mainthm} requires additional work.

\begin{thm}\label{pmt}
Let $E$ be a self-adjoint operator on $\mathcal H$ such that $0$ and $1$ are two extreme points of the essential spectrum $\sigma_{ess}(E)$ of $E$, that is,
\begin{equation}\label{pmt0}
\{0,1 \} \subset \sigma_{ess}(E) \subset [0,1].
\end{equation}
Let $\boldsymbol\lambda=(\lambda_{j})_{j\in J}$ be the list of all eigenvalues of $E$ with multiplicity (which is possibly an empty list). 
Let $\boldsymbol{d} = (d_{i})_{i\in \N}$ be a sequence of reals. Define
\[\delta^{L} = \liminf_{\beta\nearrow 0}(f_{\boldsymbol\lambda}(\beta)-f_{\boldsymbol{d}}(\beta))\quad\text{and}\quad \delta^{U} = \liminf_{\beta\searrow 1}(f_{\boldsymbol\lambda}(\beta)-f_{\boldsymbol{d}}(\beta)).\]

\noindent {\rm (Necessity)} If $\boldsymbol{d}$  is a diagonal of $E$, then
\begin{equation}
\label{pmt1}
 f_{\boldsymbol{d}}(\alpha)  \leq f_{\boldsymbol\lambda}(\alpha)  
\qquad\text{for all }\alpha\in\R\setminus[0,1],
\end{equation}
and one of the following five conditions holds:
\begin{enumerate}
\item
$\delta^L=\delta^U =\infty$,
\item
$\delta^L<\infty$, $\delta^U=\infty$, and $\sum_{d_i>0} d_i=\infty$,
\item
$\delta^U<\infty$, $\delta^L=\infty$, and $\sum_{d_i<1} (1-d_i)=\infty$,
\item 
$\delta^L+\delta^U <\infty$ and $f_{\boldsymbol d}(1/2) =\infty$, or
\item
$\delta^L+\delta^U <\infty$, $f_{\boldsymbol \lambda}(1/2) <\infty$, $f_{\boldsymbol d}(1/2) <\infty$, $\boldsymbol d$ has accumulation points at $0$ and $1$, and there exists $\delta_0 \in [0,\delta^L]$ and $\delta_1 \in [0,\delta^U]$ such that:
\begin{align}
\label{pmt2}
f_{\boldsymbol\lambda}(\alpha)  &\leq f_{\boldsymbol{d}}(\alpha) + (1-\alpha)\delta_{0} + \alpha\delta_{1} & \text{for all }\alpha\in(0,1),\\
\label{pmt3}f_{\boldsymbol\lambda}'(\alpha)-f_{\boldsymbol{d}}'(\alpha)  & \equiv \delta_1-\delta_0 \mod 1 & \text{for some }\alpha\in(0,1),
\\
\label{pmt4}
\sum_{\lambda_i<0} |\lambda_i|<\infty &\implies
\delta^L-\delta_0 \le \delta^U- \delta_1,
\\
\label{pmt5}
\sum_{\lambda_i>1} (\lambda_i-1)<\infty
&\implies \delta^U- \delta_1 \le \delta^L-\delta_0.
\end{align}
\end{enumerate}

In addition, the operator $E$ decouples at a point $\alpha\in [0,1]$ if \((\mathfrak{d}_{\alpha})\) holds, where
\begin{enumerate}
\item[$(\mathfrak d_0)$] $\delta^L=0$ \quad or  \quad $\delta_0=0$ and $\delta_1=\delta^U$,
\item[$(\mathfrak d_\alpha)$] $\alpha \in (0,1)$ and $f_{\boldsymbol\lambda}(\alpha)  = f_{\boldsymbol{d}}(\alpha) + (1-\alpha)\delta_{0} + \alpha\delta_{1}$,
\item[$(\mathfrak d_1)$] $\delta^U=0$ \quad or  \quad  $\delta_1=0$ and $\delta_0=\delta^L$.
\end{enumerate}

\noindent {\rm (Sufficiency)}  Conversely, if exterior majorization \eqref{pmt1} holds, one of conditions (i)--(v) holds, and $(\mathfrak d_\alpha)$ does {\bf not} hold for any $\alpha \in [0,1]$, then $\boldsymbol{d}$  is a diagonal of $E$.
\end{thm}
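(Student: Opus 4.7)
The plan is to combine the necessity results of Section \ref{necProof} with the sufficiency results of Sections \ref{S17}--\ref{S19}. For necessity, I would derive the exterior majorization \eqref{pmt1} from Theorem \ref{cptschurv2}: since $\sigma_{ess}(E) \subset [0,1]$, the negative part of $E$ is compact, which yields the inequality for $\alpha < 0$; applying the same theorem to $\mathbf I - E$ (whose negative part is compact) gives it for $\alpha > 1$. The cases (i)--(v) form an exhaustive classification based on the finiteness of $\delta^{L}$, $\delta^{U}$, and $f_{\boldsymbol{d}}(1/2)$, and the extra summability clauses in (ii) and (iii) are direct consequences of Lemma \ref{neci}. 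All the intricate quantitative conditions \eqref{pmt2}--\eqref{pmt5} in case (v), together with the existence of admissible $\delta_{0}, \delta_{1}$ and the claimed accumulation of $\boldsymbol{d}$ at $0$ and $1$, are direct conclusions of Theorem \ref{nec}, and the decoupling clause at $\alpha \in [0,1]$ is its ``In addition'' part.

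For sufficiency, cases (i)--(iv) reduce uniformly to the non-Blaschke diagonal result Theorem \ref{nbd}. Case (i) is precisely part (d). Case (ii) matches part (b) once I observe that failure of $(\mathfrak{d}_{0})$, which in the absence of the parameters $\delta_{0}, \delta_{1}$ collapses to the single condition ``$\delta^{L} = 0$'', forces $\delta^{L} > 0$. Case (iii) is the symmetric application of part (c). For case (iv), failure of both $(\mathfrak{d}_{0})$ and $(\mathfrak{d}_{1})$ yields $\delta^{L}, \delta^{U} > 0$, and $f_{\boldsymbol{d}}(1/2) = \infty$ trivially supplies an $\alpha \in (0,1)$ with $f_{\boldsymbol{d}}(\alpha) = \infty$, so part (a) applies.

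The heart of the sufficiency proof is case (v), which I would deduce from Theorem \ref{suff}. The main obstacle is verifying the supplementary hypothesis \eqref{suff6}, which demands both positivity of $\delta_{0}, \delta_{1}$ and strict inequality in \eqref{pmt2} whenever $\delta_{0} = \delta^{L}$ or $\delta_{1} = \delta^{U}$. Failure of $(\mathfrak{d}_{\alpha})$ for every interior $\alpha \in (0,1)$ immediately gives the strict interior inequality, and failure of $(\mathfrak{d}_{0})$ and $(\mathfrak{d}_{1})$ each yields $\delta^{L} > 0$ and $\delta^{U} > 0$. The positivity of $\delta_{0}, \delta_{1}$ then follows from a short case analysis: if $\delta_{0} = \delta^{L}$, then $\delta_{0} > 0$ automatically, and were $\delta_{1} = 0$, the second alternative of the failing $(\mathfrak{d}_{1})$ (namely ``$\delta_{1} > 0$ or $\delta_{0} \neq \delta^{L}$'') would force $\delta_{0} \neq \delta^{L}$, a contradiction; hence $\delta_{1} > 0$. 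The case $\delta_{1} = \delta^{U}$ is handled symmetrically, and if neither equality holds then \eqref{suff6} is vacuous. In all scenarios the full hypothesis of Theorem \ref{suff} is met, so $\boldsymbol{d} \in \mathcal{D}(E)$, completing the proof.
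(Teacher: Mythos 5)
Your proposal tracks the paper's proof very closely on the necessity side (Theorem \ref{cptschurv2} for \eqref{pmt1}, the finiteness trichotomy on $\delta^L,\delta^U$, Lemma \ref{neci} for the summability clauses in (ii)--(iii), and Theorem \ref{nec} for (v) and the decoupling conclusions) and on the sufficiency of cases (i)--(iv) via the four parts of Theorem \ref{nbd}. Your case analysis deducing $\delta_0,\delta_1>0$ and strictness in \eqref{pmt2} from the failure of all $(\mathfrak d_\alpha)$ is also essentially the same as the paper's verification of \eqref{suff6}.

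However, case (v) has a gap. Theorem \ref{suff} is a diagonal-to-diagonal result; its conclusion reads ``if $\boldsymbol\lambda$ is a diagonal of a self-adjoint operator $E$, then so is $\boldsymbol d$.'' To invoke it, you must first establish that the eigenvalue list $\boldsymbol\lambda$ actually \emph{is} a diagonal of $E$, which amounts to showing that $E$ is diagonalizable. Your proposal jumps directly to verifying \eqref{suff6} and never addresses this prerequisite. The paper handles it explicitly at the very start of case (v): it asserts that under condition (v), Theorem \ref{nec} implies $E$ is diagonalizable, hence $\boldsymbol\lambda$ is a diagonal of $E$, and only then applies Theorem \ref{suff}. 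Since the hypothesis $f_{\boldsymbol\lambda}(1/2)<\infty$ is a Blaschke condition on the eigenvalue list alone and $\sigma_{ess}(E)\subset[0,1]$ leaves room a priori for continuous spectrum inside $(0,1)$, the diagonalizability of $E$ is a nontrivial step that your argument must supply before Theorem \ref{suff} becomes applicable.
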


\begin{remark}\label{pmt6}
Note that the numbers \(\delta_{0}\) and \(\delta_{1}\) are only defined when (v) holds. Hence, in the case that (v) does not hold, the statements \((\mathfrak{d}_{\alpha})\) must be properly interpreted using the following convention. If one of the conditions (i)--(iv) holds, then \(\delta_{0}\) and \(\delta_{1}\) are not defined, and we interpret any equality involving them in the statements \((\mathfrak{d}_{\alpha})\) as false. With this convention, we note that if one of (i)--(iv) holds, then \((\mathfrak{d}_{\alpha})\) is false for all \(\alpha\in(0,1)\), whereas the statement \((\mathfrak{d}_{0})\) becomes \(\delta^{L}=0\), and  \((\mathfrak{d}_{1})\) becomes \(\delta^{U}=0\).
\end{remark}

\begin{proof} (Necessity) First, note that \eqref{pmt1} follows by Theorem \ref{cptschurv2}. There are now four mutually exclusive possibilities, depending on whether \(\delta^{L}\) and \(\delta^{U}\) are finite or infinite. If both are infinite, then there is nothing to show. If exactly one of them is finite, then the desired conclusion follows from Lemma \ref{neci}. Finally, if both are finite, then either \(f_{\boldsymbol d}(1/2)=\infty\), in which case there is nothing to show, otherwise Theorem \ref{nec} yields exactly conclusion (v) of Theorem \ref{pmt} and conditions that guarantee that the operator \(E\) decouples at \(\alpha\in[0,1]\). 

(Sufficiency)
Suppose that \eqref{pmt1} holds. Since \((\mathfrak{d}_{\alpha})\) does not hold for \(\alpha\in\{0,1\}\) we see that \(\delta^{U},\delta^{L}>0\). If any of the conditions (i)-(iv) hold, then \(\boldsymbol d\) is a diagonal of \(E\) by Theorem \ref{nbd}. If condition (v) holds, then Theorem \ref{nec} implies \(E\) is diagonalizable, and hence \(\boldsymbol\lambda\) is a diagonal of \(E\). That \((\mathfrak{d}_{0})\) and \((\mathfrak{d}_{1})\) do not hold shows that if either \(\delta_{0}=\delta^{L}\) or \(\delta_{1}=\delta^{U}\), then \(\delta_{0},\delta_{1}>0\). The assumption that \((\mathfrak{d}_{\alpha})\) does not hold for any \(\alpha\in(0,1)\) implies that have strict inequality in \eqref{pmt2}. Hence, if \(\delta_{0}=\delta^{L}\) or \(\delta_{1}=\delta^{U}\) holds, then \eqref{suff6} also holds. Thus, we can apply Theorem \ref{suff} to deduce that \(\boldsymbol{d}\) is a diagonal of \(E\).
\end{proof}

\begin{proof}[Proof of Theorem \ref{mainthm}]
The necessity direction follows directly from the necessity direction of Theorem \ref{pmt}. For the sufficiency direction, note that if any of \textit{(i)-(iv)} hold, then by Theorem \ref{pmt} the sequence \(\boldsymbol{d}\) is a diagonal of \(T\). Thus we may assume that \textit{(v)} holds. In particular, this implies that \(T\) is diagonalizable. Moreover, an operator \(T'\) is unitarily equivalent modulo \(\{0,1\}\) to \(T\) if and only if \(T'\) is diagonalizable, and the eigenvalue lists of \(T\) and \(T'\) are identical apart from \(0\) and \(1\). Hence, to complete the proof, we must construct an operator \(T'\) with diagonal \(\boldsymbol{d}\) such that the eigenvalue lists of \(T\) and \(T'\) are identical apart from \(0\) and \(1\).

In the case that \(\delta_{0}<\delta^{L}\) and \(\delta_{1}<\delta^{U}\), the conclusion follows from Theorem \ref{suff}. Thus, we can assume either $\delta_0=\delta^L$ or $\delta^1=\delta^U$ (or both). By symmetry, it suffices to consider the case where \(\delta_{0}=\delta^{L}\).

\textbf{Case 1.} Suppose we have equality in \eqref{mainthm2} for some \(\alpha_{0}\in(0,1)\). Define the sequences \(\boldsymbol\lambda_{0}:=(\lambda_{i})_{\lambda_{i}<\alpha_{0}}\), \(\boldsymbol\lambda_{1}:=(1-\lambda_{i})_{\lambda_{i}\geq\alpha_{0}}\), \(\boldsymbol{d}_{0}:=(d_{i})_{d_{i}<\alpha_{0}}\), and \(\boldsymbol{d}_{1}:=(1-d_{i})_{d_{i}\geq\alpha_{0}}\). By Corollary \ref{intToComp1}, Corollary \ref{intToComp2}, and Proposition \ref{LR} we see that
\[(\boldsymbol d_{0})_{+}\prec (\boldsymbol\lambda_{0})_{+},\ (\boldsymbol d_{0})_{-}\prec (\boldsymbol\lambda_{0})_{-},\ (\boldsymbol d_{1})_{+}\prec (\boldsymbol\lambda_{1})_{+},\ (\boldsymbol d_{1})_{-}\prec (\boldsymbol\lambda_{1})_{-}\]
\[\liminf_{\alpha\nearrow 0}\delta(\alpha,\boldsymbol\lambda_{0},\boldsymbol d_{0}) = \delta^{L},\quad \liminf_{\alpha\searrow 0}\delta(\alpha,\boldsymbol\lambda_{0},\boldsymbol d_{0}) = \delta_{0},\]
\[\liminf_{\alpha\nearrow 0}\delta(\alpha,\boldsymbol\lambda_{1},\boldsymbol d_{1}) = \delta^{U},\quad\text{and}\quad \liminf_{\alpha\searrow 0}\delta(\alpha,\boldsymbol\lambda_{1},\boldsymbol d_{1}) = \delta_{1}.\]
Note that \(\delta_{0}= \delta^{L}\) and \(\delta_{1}\leq \delta^{U}\). Moreover, if \((\boldsymbol d_{1})_{-}\in\ell^{1}\), then since \(\delta^{U}<\infty\) we also have \((\boldsymbol\lambda_{1})_{-}\in\ell^{1}\) and hence \eqref{pmt5} implies that \(\delta^{U} = \delta_{1}\). Hence, by Theorem \ref{intropv2} there are compact self-adjoint operators \(T_{0},S_{0},T_{1},\) and \(S_{1}\) such that for \(i=0,1\)
\begin{itemize}
\item \(T_{i}\) has diagonal \(\boldsymbol d_{i}\),
\item \(S_{i}\) has eigenvalue list \(\boldsymbol\lambda_{i}\),
\item \(T_{i}\oplus\boldsymbol{0}\) and \(S_{i}\oplus\boldsymbol{0}\) are unitarily equivalent, where \(\boldsymbol{0}\) is the zero operator on an infinite-dimensional Hilbert space.
\end{itemize}
Finally, we note that the operator \(T'=T_{0}\oplus(\mathbf{I}-T_{1})\) is diagonalizable, has diagonal \(\boldsymbol d\), and both \(T\) and \(T'\)  both have identical eigenvalue lists apart from \(0\) and \(1\). This completes Case 1.

For the final three cases we will assume we have strict inequality in \eqref{mainthm2} for all \(\alpha\in(0,1)\). If \(\delta_{0},\delta_{1}>0\), then Theorem \ref{suff} gives the desired conclusion. Thus, we need only consider the case that \(\delta_{0}=0\) or \(\delta_{1}=0\) (or both).

\textbf{Case 2.} Suppose \(\delta_{0}=0\), and \(\delta_{1}>0\). This implies \(\delta^{L}=0\). By Proposition \ref{LR} \(\boldsymbol{d}_{-}\prec \boldsymbol\lambda_{-}\). From this we deduce that the number of negative terms in the sequence \(\boldsymbol{d}\) is at least the number of negative terms in \(\boldsymbol\lambda\). If \(\boldsymbol{d}\) has \(N<\infty\) negative terms, then by the Schur-Horn Theorem there is an operator \(T_{0}\) with diagonal \((d_{i})_{d_{i}<0}\) and eigenvalues \((-\lambda_{i}^{-\downarrow})_{i=1}^{N}\) (note that this is all of the negative eigenvalues of \(T\) and possibly some zeros). If \(\boldsymbol{d}\) has infinintely may negative terms, then by Proposition \ref{posSH} there is a self-adjoint operator \(S_{0}\) with eigenvalue list \((\boldsymbol\lambda_{-})^{\downarrow}\) and diagonal \((\boldsymbol{d}_{-})^{\downarrow}\). Set \(T_{0} = -S_{0}\). Thus, in either case, we have a diagonalizable negative operator \(T_{0}\) with diagonal \((d_{i})_{d_{i}<0}\) and eigenvalues consisting of \((\lambda_{i})_{\lambda_{i}<0}\) and possibly some zeros. By Theorem \ref{csuff} there is a self-adjoint operator \(T_{1}\) eigenvalue list \((\lambda_{i})_{\lambda_{i}>0}\) and diagonal \((d_{i})_{d_{i}>0}\). Finally, let \(\boldsymbol{0}\) be the zero operator on a space of dimension \(\#|\{i : d_{i}=0\}|\) and \(T'=T_{0}\oplus\boldsymbol{0}\oplus T_{1}\) is the desired operator.

\textbf{Case 3.} Suppose \(\delta_{0}>0\), and \(\delta_{1}=0\). By \eqref{mainthm5}, either \((\lambda_{i}-1)_{\lambda_{i}>1}\) is nonsummable, or \(\delta_{U}=\delta_{1}=0\). In the nonsummable case we apply Proposition \ref{kwd2d} to obtain a compact self-adjoint operator \(T_{1}\) with diagonal \((d_{i}-1)_{d_{i}>1}\) and eigenvalues \((\lambda_{i}-1)_{\lambda_{i}>1}\). If \(\delta^{U}=0\), then, as in the previous case, either by the Schur-Horn Theorem or Proposition \ref{posSH} there is a self-adjoint operator \(T_{1}\) with diagonal \((d_{i}-1)_{d_{i}>1}\) and eigenvalues \((\lambda_{i}-1)_{\lambda_{i}>1}\) and possibly some zeros. By Theorem \ref{csuff} there is a self-adjoint operator \(T_{0}\) eigenvalue list \((\lambda_{i})_{\lambda_{i}<1}\) and diagonal \((d_{i})_{d_{i}<1}\). Finally, let \(\mathbf{I}'\) be the identity operator on a space of dimension \(\#|\{i : d_{i}=1\}|\) and \(T'=T_{0}\oplus\mathbf{I}'\oplus (T_{1}+\mathbf{I})\) is the desired operator.

\textbf{Case 4.} Suppose \(\delta_{0}=\delta_{1}=0\).	Since \(\delta_{0}=0\) we can construct the operator \(T_{0}\) as in Case 2. Since \(\delta_{1}=0\) we can construct the operator \(T_{1}\) as in Case 3. Define the sequences \(\boldsymbol{d}_{I} = (d_{i})_{d_{i}\in(0,1)}\) and \(\boldsymbol{\lambda}_{I} = (\lambda_{i})_{\lambda_{i}\in(0,1)}\). By our assumption that \(\delta_{0}=\delta_{1} =0\) we see that \(\boldsymbol{d}_{I}\) and \(\boldsymbol\lambda_{I}\) satisfy the assumptions of Theorem \ref{archaic2}. Hence, there is a self-adjoint operator \(T_{I}\) with diagonal \(\boldsymbol{d}_{I}\) and eigenvalues consisting of \(\boldsymbol\lambda_{I}\), possibly with extra zeros and ones. As before, let \(\boldsymbol{0}\)  be the zero operator on a space of dimension \(\#|\{i : d_{i}=0\}|\), and let \(\mathbf{I}'\) be the identity operator on a space of dimension \(\#|\{i : d_{i}=1\}|\). The operator \(T'=T_{0}\oplus\boldsymbol{0}\oplus T_{I}\oplus \mathbf{I}'\oplus (\mathbf{I}+T_{1})\) is as desired.
\end{proof}

\section{Algorithm for determining diagonals of self-adjoint operators \\
with at least $3$-point essential spectrum} \label{S21}

In this section we give an algorithm for determining whether a given sequence is a diagonal of a self-adjoint operator $E$ with at least three points in essential spectrum. We introduce the concept of pruning, which allows us to reduce the problem to a simpler operator $E'$ with fewer points in its essential spectrum.

\begin{defn} Let \(E\) be a self-adjoint operator on a Hilbert space \(\Hil\) with projection-valued measure \(\pi\) such that 
\[E = \int_{\R}\lambda\,d\pi(\lambda).\]
The \textit{right essential spectrum} of \(E\) is the set
\[\sigma_{ess}^{+}(E): = \big\{\lambda\in\R : \operatorname{rank}\pi([\lambda,\lambda+\eps])=\infty\text{ for all }\eps>0\big\}.\]
Similarly, the \textit{left essential spectrum} of \(E\) is the set \(\sigma_{ess}^{-}(E) := \{\lambda : -\lambda\in\sigma_{ess}^{+}(-E)\}\).
\end{defn}

Note that \(\lambda\notin\sigma_{ess}^{+}(E)\) if and only if \(\dim\ker (E-\lambda\mathrm{I})<\infty\) and there exists \(\eps>0\) such that \((\lambda,\lambda+\eps)\cap\sigma(E)=\varnothing\).

\begin{thm}\label{mainthm2.}
Let $E$ be a self-adjoint operator on $\mathcal H$ such that $0$ and $1$ are two extreme points of the essential spectrum $\sigma_{ess}(E)$ of $E$, that is,
\begin{equation}\label{mainthm2.0}
\{0,1 \} \subset \sigma_{ess}(E) \subset [0,1].
\end{equation}
Let $\boldsymbol\lambda=(\lambda_{j})_{j\in J}$ be the list of all eigenvalues of $E$ with multiplicity (which is possibly an empty list). If \(\sigma_{ess}(E)=\{0,1\}\), then additionally assume \(f_{\boldsymbol\lambda}(1/2)=\infty\).
Let $\boldsymbol{d} = (d_{i})_{i\in \N}$. Define
\[\delta^{L} = \liminf_{\beta\nearrow 0}(f_{\boldsymbol\lambda}(\beta)-f_{\boldsymbol{d}}(\beta))\quad\text{and}\quad \delta^{U} = \liminf_{\beta\searrow 1}(f_{\boldsymbol\lambda}(\beta)-f_{\boldsymbol{d}}(\beta)).\]
{\rm (Necessity)} If $\boldsymbol{d}$  is a diagonal of $E$, then
\begin{equation}
\label{mainthm2.1}
 f_{\boldsymbol{d}}(\alpha)  \leq f_{\boldsymbol\lambda}(\alpha)  
\qquad\text{for all }\alpha\in\R\setminus[0,1],
\end{equation}
\begin{equation}
\label{mainthm2.2}
\delta^{L}=0\implies \#|\{i: d_{i}= 0\}|\leq \#|\{j:\lambda_{j}= 0\}| \text{ and } \#|\{i: d_{i}\leq 0\}|\leq \#|\{j:\lambda_{j}\leq 0\}|,
\end{equation}
\begin{equation}
\label{mainthm2.3}
\delta^{U}=0\implies \#|\{i: d_{i}= 1\}|\leq \#|\{j:\lambda_{j}= 1\}| \text{ and } \#|\{i: d_{i}\geq 1\}|\leq \#|\{j:\lambda_{j}\geq 1\}|,
\end{equation}
and one of the following four conditions holds:
\begin{enumerate}
\item
$\delta^L=\delta^U =\infty$,
\item
$\delta^L<\infty$, $\delta^U=\infty$, and $\sum_{d_i>0} d_i=\infty$, 
\item
$\delta^U<\infty$, $\delta^L=\infty$, and $\sum_{d_i<1} (1-d_i)=\infty$, or
\item 
$\delta^L+\delta^U <\infty$ and $f_{\boldsymbol d}(1/2) =\infty$.
\end{enumerate}

\noindent {\rm (Sufficiency)}  Conversely, if \eqref{mainthm2.1}--\eqref{mainthm2.3} hold, one of the conditions (i)--(iv) holds,
\begin{equation}\label{mainthm2.4}\delta^{L}=0\implies 0\in\sigma_{ess}^{+}(E),\quad\text{and}\quad \delta^{U}=0\implies 1\in\sigma_{ess}^{-}(E),\end{equation}
then \(\boldsymbol{d}\) is a diagonal of \(E\).
\end{thm}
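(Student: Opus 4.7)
The plan is to derive Theorem~\ref{mainthm2.} from Theorem~\ref{pmt} (supplemented by Lemma~\ref{card}) for necessity, and from a case split invoking Theorem~\ref{pmt} in the interior regime and an explicit decoupling argument with Theorem~\ref{nbd} plus compact-diagonal results in the boundary regime for sufficiency.

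For necessity, the exterior majorization \eqref{mainthm2.1} is immediate from Theorem~\ref{cptschurv2}, since $\sigma_{ess}(E) \subset [0,1]$ forces both $E_-$ and $(\mathbf I - E)_+$ to be compact. The dichotomy into (i)--(iv) is the corresponding assertion of Theorem~\ref{pmt}, once its case (v) is excluded under our hypotheses: if $\sigma_{ess}(E) = \{0, 1\}$ the assumed $f_{\boldsymbol\lambda}(1/2) = \infty$ contradicts the finiteness required by (v); otherwise there is $\alpha_0 \in \sigma_{ess}(E) \cap (0, 1)$, and since case (v) forces $E$ to be diagonalizable by Theorem~\ref{nec}, $\alpha_0$ must be either an infinite-multiplicity eigenvalue or an accumulation point of eigenvalues, yielding $f_{\boldsymbol\lambda}(\alpha) = \infty$ for some $\alpha \in (0,1)$ and again contradicting Theorem~\ref{nec}(i). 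The cardinality inequalities \eqref{mainthm2.2}--\eqref{mainthm2.3} then come from Theorem~\ref{pmt}'s decoupling assertion---$\delta^L = 0$ triggers $(\mathfrak d_0)$ so $E$ decouples at $0$---combined with Lemma~\ref{card}\eqref{card2}; the case $\delta^U = 0$ is symmetric using Lemma~\ref{card}\eqref{card1} applied to $\mathbf I - E$.

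For sufficiency, I would split on whether $\delta^L$ and $\delta^U$ vanish. When both are strictly positive, none of the $(\mathfrak d_\alpha)$ statements in Theorem~\ref{pmt} holds: the endpoint ones fail by $\delta^L, \delta^U > 0$, and the interior $(\mathfrak d_\alpha)$ is vacuous in cases (i)--(iv) by Remark~\ref{pmt6}. Hence Theorem~\ref{pmt}'s sufficiency applies directly, using $\boldsymbol d$ satisfying \eqref{mainthm2.1} and one of (i)--(iv). The essential new work is the case $\delta^L = 0$ (with $\delta^U = 0$ handled symmetrically), where I would use the hypothesis $0 \in \sigma_{ess}^+(E)$ to split $E = E_1 \oplus E_2$ with $E_1$ compact, $\sigma(E_1) \subset (-\infty, 0]$ and $\dim E_1 = \#|\{i : d_i < 0\}|$, and $E_2$ retaining $\{0, 1\} \subset \sigma_{ess}(E_2) \subset [0, 1]$: the condition $0 \in \sigma_{ess}^+(E)$ is precisely what keeps $0$ in $\sigma_{ess}(E_2)$ after excising the non-positive part, and \eqref{mainthm2.2} makes the dimension count for $E_1$ feasible. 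Then $(d_i)_{d_i < 0}$ is realized on $E_1$ via Proposition~\ref{kwd2d} when sums are finite (with equality of sums supplied by $\delta^L = 0$) or Theorem~\ref{intropv3} otherwise, while $(d_i)_{d_i \ge 0}$ is realized on $E_2$ by the matching branch of Theorem~\ref{nbd} (one of (a), (b$'$), (c$'$), or (d), depending on which of (i)--(iv) applies). Lemma~\ref{subdiag} then glues the two diagonals.

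The main obstacle I anticipate is the kernel problem on $E_1$: when $\#|\{j : \lambda_j < 0\}| < \#|\{i : d_i < 0\}|$ with finite gap, the split forces a nontrivial finite-dimensional kernel on $E_1$, invoking the trailing majorization condition \eqref{kp4b} of Theorem~\ref{kp} that is not supplied by our hypotheses. I expect to dissolve this by leveraging $0 \in \sigma_{ess}^+(E)$ to shift the kernel mismatch onto $E_2$---for instance, via a Schur--Horn exchange on a two-dimensional subspace spanned by a negative eigenvector and a near-zero positive spectral vector---so that $E_1$'s kernel becomes either trivial or genuinely infinite-dimensional, in which cases Proposition~\ref{kwd2d} or Theorem~\ref{intropv3} applies cleanly (modulo a small perturbation providing $\sigma_+ + \sigma_- > 0$ when needed).
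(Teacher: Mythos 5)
Your necessity argument is essentially the paper's and correct: Theorem~\ref{cptschurv2} gives \eqref{mainthm2.1}; Theorem~\ref{pmt}(v) is excluded by $f_{\boldsymbol\lambda}(1/2)=\infty$ (which, as you observe, also holds automatically when $\#|\sigma_{ess}(E)|\ge 3$, making the explicit hypothesis needed only for the $2$-point case); and decoupling (Proposition~\ref{p211}) plus Lemma~\ref{card} give \eqref{mainthm2.2}--\eqref{mainthm2.3}. One slip: for \eqref{mainthm2.3} you should apply Lemma~\ref{card}\eqref{card1} to $E-\mathbf I$ (positive part compact) or \eqref{card2} to $\mathbf I-E$ (negative part compact); your stated combination, \eqref{card1} applied to $\mathbf I-E$, does not apply because $(\mathbf I-E)_+$ is not compact, since $1\in\sigma_{ess}(\mathbf I-E)$.

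On sufficiency, the non-boundary case and the split $E=E_1\oplus E_2$ match the paper's structure, but there are genuine gaps in the $\delta^L=0$ branch. First, your proposed tools for realizing $(d_i)_{d_i<0}$ on $E_1$ do not cover the general case: Proposition~\ref{kwd2d} requires strictly positive sequences with equal finite sums, and therefore fails when $J_1$ must include zero eigenvalues of $E$ to match $\#|\{i:d_i<0\}|$, or when the negative parts are not trace class; and Theorem~\ref{intropv3}'s sufficiency direction requires $\sigma_++\sigma_->0$, whereas for the negative-part sequences with $\delta^L=0$ one has $\sigma_+=\sigma_-=0$, so it never applies here. The paper instead uses \emph{Proposition~\ref{posSH}} (or Schur--Horn in the finite-dimensional case): after first discarding the zeros of $\boldsymbol d$ together with a matching set of zero eigenvalues of $E$ (feasible by \eqref{mainthm2.2}), one selects $J_1$ with $\{j:\lambda_j<0\}\subset J_1\subset\{j:\lambda_j\le 0\}$ and $\#|J_1|=\#|\{i:d_i<0\}|$; the diagonalizable $E_1$ with eigenvalue list $(\lambda_j)_{j\in J_1}$ trivially has that list as a diagonal, and Proposition~\ref{posSH}, being a diagonal-to-diagonal result, transfers it to $(d_i)_{d_i<0}$ with no kernel obstruction at all. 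Your anticipated ``kernel-problem'' obstacle is an artifact of insisting on eigenvalue-list-to-diagonal statements like Theorem~\ref{kp}; the speculative Schur--Horn exchange is unnecessary. Second, you omit the initial reduction removing zero terms, which is what makes the branches (b$'$) and (c$'$) of Theorem~\ref{nbd} (requiring $d_i>0$ or $d_i<1$ strictly) applicable to the remainder. Third, you do not address the sub-case where condition (iv) holds simultaneously with $\delta^U=0$: there $E_2$ must itself be split further into $E_3\oplus E_4$, with Theorem~\ref{tomilov} invoked for the middle piece and a symmetric Proposition~\ref{posSH} argument for the part above $1$; none of the branches (a)--(d) of Theorem~\ref{nbd} apply directly to $E_2$ in that situation.
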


\begin{proof}[Proof of Theorem \ref{mainthm2.}]
The necessity direction follows directly from the necessity direction of Theorem \ref{pmt}. Indeed, by our assumption $f_{\boldsymbol{\lambda}}(1/2)=\infty$, the conclusion (v) in Theorem \ref{pmt} cannot happen, whereas conclusions \eqref{mainthm2.2} and \eqref{mainthm2.3} follow from Proposition \ref{p211} and Lemma \ref{card}.

For the sufficiency direction, note that if any of \textit{(i)-(iv)} hold and $\delta^L,\delta^U>0$, then by the necessity direction of Theorem \ref{pmt} the sequence \(\boldsymbol{d}\) is a diagonal of \(E\). 

Next suppose that
 $\delta^L=0$. We claim that we can reduce to the case $d_i \ne 0$ for all $i$. Indeed, suppose momentarily that Theorem \ref{mainthm2.} is true for sequences $\boldsymbol d=(d_{i})_{i\in \N}$ such that $d_i \ne 0$ for all $i$. Take an arbitrary sequence $\boldsymbol d$ satisfying the assumptions of Theorem \ref{mainthm2.}. Let $I_0=\{i: d_i = 0\}$. If $I_0$ is finite, then by \eqref{mainthm2.2} there exists $J_0\subset J$ such that $\#|I_0|=\#|J_0|$ and $\lambda_j=0$ for all $j\in J_0$. If $I_0$ is infinite, then by \eqref{mainthm2.2} there exists an infinite subset $J_0\subset J$ such that $\lambda_j=0$ for all $j\in J_0$, $\lambda_j=0$ for infinitely many  $j\in J \setminus J_0$, and
 \begin{equation}\label{mtt1}
\#| \{i \in \N \setminus I_0: d_i \le 0\}| \le \# | \{j \in J \setminus J_0: \lambda_i \le 0 \}|.
\end{equation}
Either way, \eqref{mtt1} holds. Next we decompose the operator $E=\mathbf 0 \oplus E_0$, where $\mathbf 0$ is the zero operator on a space of dimension $\#|J_0|$, and $0 \in \sigma_{ess}(E_0)$.  Applying Theorem \ref{mainthm2.} in the case of nonzero diagonal sequences yields that  $(d_{i})_{i \in \N \setminus I_0}$ is a diagonal of the operator $E_0$. Clearly, $(d_{i})_{i \in I_0}$ is a diagonal of the operator $\mathbf 0$. Hence, $\boldsymbol d$ is a diagonal of $E$, which completes the reduction step. 
Hence, without loss of generality we can assume that $d_i \ne 0$ for all $i$. By a similar argument, we can also assume $d_i \ne 1$ for all $i$ when $\delta^U=0$.

By \eqref{mainthm2.1} and $\delta^L=0$, Proposition \ref{LR} yields $\boldsymbol d^\downarrow \preccurlyeq \boldsymbol \lambda^\downarrow$, and hence 
\[
\#|\{j:\lambda_{j}<0\}| \le \#|\{i: d_{i}< 0\}|.
\]
Thus, there exists a set $J_1$ satisfying
\[
\{j: \lambda_j < 0 \} \subset J_1 \subset \{j: \lambda_j \le 0\}
\]
such that $I_1=\{i: d_i < 0\}$ and $J_1$ have the same cardinality. Moreover, we can guarantee that \(\{j\in J\setminus J_{1} : \lambda_{j}=0\}\) is infinite if \(\{j\in J : \lambda_{j}=0\}\) is infinite.
We can decompose the operator $E=E_1 \oplus E_2$ such that $E_1$ is a diagonalizable operator with eigenvalue list $(\lambda_{j})_{j\in J_1}$. In particular, this implies that $\sigma(E_2) \subset [0,\infty)$ and by \eqref{mainthm2.4} we also have \(0\in\sigma_{ess}(E_{2})\).

Since $\delta^L=0$ either (ii) or (iv) holds. If (ii) holds, then by Theorem \ref{nbd}(b') we deduce that $(d_{i})_{i \in \N \setminus I_1}$ is a diagonal of $E_2$. Next, we assume that (iv) holds. If $\delta^U>0$, then by the symmetric variant of Lemma \ref{nbdL} we deduce that $(d_{i})_{i \in \N \setminus I_1}$ is a diagonal of $E_2$. If $\delta^U=0$, then we can also assume $d_i \ne 1$ for all $i$. Let $I_3 = \{i: d_i >1\}$ and $I_4=\{i: 0<d_i <1\}$. By a similar argument as in previous paragraph there exists a subset $J_3 \subset \{j: \lambda_j\ge 1\}$ and a decomposition of the operator $E_2=E_3 \oplus E_4$, such that $E_3$ is a diagonalizable operator with eigenvalue list $(\lambda_{j})_{j\in J_3}$, and $\sigma(E_4) \subset [0,1]$. By \eqref{mainthm2.4} we also have \(1\in\sigma_{ess}(E_{4})\), and hence \(\{0,1\}\subset\sigma_{ess}(E_{4})\). Since
\[
f_{\boldsymbol d}(1/2) = \sum_{i\in I_4} \min(d_i,1-d_i) = \infty,
\]
by Theorem \ref{tomilov} we have that $(d_{i})_{i \in I_4}$ is a diagonal of $E_4$. 

If $I_1$ is finite, then by the Schur-Horn theorem $(d_{i})_{i\in I_1}$ is a diagonal of $E_1$. If $I_1$ is infinite, then Proposition \ref{posSH} $(d_{i})_{i\in I_1}$ is a diagonal of $E_1$. Likewise, \((d_{i})_{i\in I_{3}}\) is a diagonal of \(E_{3}\). Hence, \(\boldsymbol{d}=(d_{i})_{i\in I_{1}\cup I_{3}\cup I_{4}}\) is a diagonal of \(E=E_{1}\oplus E_{3}\oplus E_{4}\).
\end{proof}

As an immediate corollary of Theorem \ref{mainthm2.} we have the following characterization of diagonals of a large class of operators. Part (i) of Corollary \ref{BE} was already shown by Loreaux and Weiss \cite[Theorem 4.6]{lw3}. 

\begin{cor}\label{BE} Let \(E\) be a self-adjoint operator on \(\Hil\). Set \(a=\inf\sigma_{ess}(E)\) and \(b=\sup\sigma_{ess}(E)\), and suppose that $\sigma(E)\subset [a,b]$. Assume that either
\begin{enumerate}
\item $\#|\sigma_{ess}(E)|\geq 3$, or
\item $\#|\sigma_{ess}(E)|=2$ and the list of eigenvalues $\boldsymbol \lambda= (\lambda_j)_{j\in J}$ of $E$ satisfies
\[\sum_{j\in J}\min(\lambda_j-a,b-\lambda_{j})=\infty.\]
\end{enumerate}
Let \(\boldsymbol{d}=(d_{i})_{i\in I}\) be a sequence in \([a,b]\). Then, \(\boldsymbol{d}\) is a diagonal of \(E\) if and only if
\[\#|\{i: d_{i}= a\}|\leq \dim\ker(E-a\mathbf{I}),\quad \#|\{i: d_{i}= b\}|\leq \dim\ker(E-b\mathbf{I}),\quad\text{and}\]
\[\sum_{i\in I}\min(d_{i}-a,b-d_{i})=\infty.\]

\end{cor}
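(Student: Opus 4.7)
The plan is to deduce this as a direct corollary of Theorem \ref{mainthm2.} after an affine rescaling. First, by applying the affine map $t \mapsto (t-a)/(b-a)$ to both the operator and the diagonal sequence, we may assume that $a=0$ and $b=1$, so $\{0,1\} \subset \sigma_{ess}(E) \subset [0,1]$ and the additional hypothesis $\sigma(E) \subset [a,b]$ becomes $\sigma(E) \subset [0,1]$. Consequently, all eigenvalues $\lambda_j \in [0,1]$ and all terms $d_i \in [0,1]$. This is the setting of Theorem \ref{mainthm2.}, so the entire proof reduces to verifying that the hypotheses and conclusions of that theorem translate into the statement of the corollary.

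Next, I would unpack what the containment $\sigma(E) \cup \boldsymbol{d} \subset [0,1]$ forces on the majorization functions. From Definition \ref{d81}, both $f_{\boldsymbol\lambda}$ and $f_{\boldsymbol d}$ vanish identically on $\R \setminus [0,1]$, so \eqref{mainthm2.1} holds automatically and $\delta^L = \delta^U = 0$. In case (i) of the corollary, $\#|\sigma_{ess}(E)| \geq 3$, and Theorem \ref{mainthm2.} has no extra hypothesis on $f_{\boldsymbol\lambda}(1/2)$. In case (ii) we have $\#|\sigma_{ess}(E)| = 2$, and the assumed divergence $\sum_j \min(\lambda_j, 1-\lambda_j) = \infty$ is exactly $f_{\boldsymbol\lambda}(1/2) = \infty$, matching the remaining hypothesis of Theorem \ref{mainthm2.}. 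I also need to verify \eqref{mainthm2.4}: since $\sigma(E) \subset [0,1]$ and $0 \in \sigma_{ess}(E)$, we have $\pi((-\eps,0)) = 0$ for all $\eps > 0$, so $\operatorname{rank}\pi([0,\eps)) = \infty$ and hence $0 \in \sigma_{ess}^+(E)$; symmetrically $1 \in \sigma_{ess}^-(E)$.

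Now I would translate the remaining conditions. Because $\delta^L = \delta^U = 0$, options (i)--(iii) in Theorem \ref{mainthm2.} are impossible, leaving only (iv): $f_{\boldsymbol d}(1/2) = \infty$. By Definition \ref{d81} this is equivalent to
\[
\sum_{d_i < 1/2} d_i + \sum_{d_i \geq 1/2}(1-d_i) = \infty,
\]
which is precisely $\sum_i \min(d_i, 1-d_i) = \infty$, i.e.\ the rescaled Blaschke-violation condition in the corollary. The cardinality conditions \eqref{mainthm2.2} and \eqref{mainthm2.3}, activated by $\delta^L=\delta^U=0$, simplify since $d_i \leq 0 \iff d_i = 0$ and $d_i \geq 1 \iff d_i = 1$ (and similarly for $\boldsymbol\lambda$), yielding exactly the two cardinality inequalities stated in the corollary (noting $\#|\{j:\lambda_j = 0\}| = \dim\ker E$ and $\#|\{j:\lambda_j = 1\}| = \dim\ker(E-\mathbf{I})$).

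With these translations in hand, the necessity direction of Theorem \ref{mainthm2.} delivers the necessity of the three listed conditions, and the sufficiency direction delivers the converse (using the already-verified \eqref{mainthm2.4}), completing the proof. There is no genuine obstacle here: the argument is essentially bookkeeping, and the only place a little care is needed is verifying that the corollary's dichotomy (i)/(ii) aligns with the parallel dichotomy hidden inside the hypothesis of Theorem \ref{mainthm2.} concerning $f_{\boldsymbol\lambda}(1/2)$.
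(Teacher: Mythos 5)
Your proof is correct and takes the same route as the paper, which states the corollary as an immediate consequence of Theorem \ref{mainthm2.} without spelling out the verification. Your bookkeeping — that the containment $\sigma(E)\cup\boldsymbol d\subset[0,1]$ forces $f_{\boldsymbol\lambda}$ and $f_{\boldsymbol d}$ to vanish off $(0,1)$, hence $\delta^L=\delta^U=0$ and \eqref{mainthm2.1} holds trivially, that \eqref{mainthm2.4} follows because $\pi$ vanishes below $0$ and above $1$, that only option (iv) of Theorem \ref{mainthm2.} survives, and that \eqref{mainthm2.2}–\eqref{mainthm2.3} collapse to the stated kernel-dimension bounds — is exactly the translation the paper intends.
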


To describe our algorithm, it is convenient to introduce the concept of a pruning of an operator.

\begin{defn} Let $E$ be a self-adjoint operator such that \eqref{mainthm2.0} holds. Let $\boldsymbol{d} = (d_{i})_{i\in \N}$. Suppose that 
\[
\delta^L=0 \qquad\text{and} \qquad 0 \not\in \sigma^+_{ess}(E).
\]
Let $\pi$ be the spectral measure of $E$.
A {\it pruning} of $E$ at $0$ is the pair $(E',\boldsymbol d')$, where $E'$ is the restriction of $E$ to the range of the projection $\pi([0,\infty))$ and $\boldsymbol d' = (d_{i})_{d_i \ge 0}$.
Likewise, if 
\[
\delta^U=0 \qquad\text{and}\qquad 1 \not\in \sigma^-_{ess}(E),
\]
a pruning of $E$ at $1$ is the pair $(E',\boldsymbol d')$, where $E'$ is the restriction of $E$ to the range of the projection $\pi((-\infty,1])$ and $\boldsymbol d' = (d_{i})_{d_i \le 1}$.
\end{defn}

The following lemma two describe what happens when the necessity condition in Theorem \ref{mainthm2.} holds, but sufficiency condition fails.

\begin{lem}\label{prune}
Let $E$ be a self-adjoint operator on $\mathcal H$ such that \eqref{mainthm2.0} holds. Let $\boldsymbol{d} = (d_{i})_{i\in \N}$ be such that \eqref{mainthm2.1}, \eqref{mainthm2.2}, and one of the four conditions (i)--(iv) in Theorem \ref{mainthm2.} hold. Suppose that 
\[
\delta^L=0 \qquad\text{and} \qquad 0 \not\in \sigma^+_{ess}(E).
\]
Let $(E',\boldsymbol d')$ be a pruning of $(E,\boldsymbol d)$ at 0. Let $\alpha_0 = \min \sigma_{ess}(E') $. Let $\boldsymbol\lambda'$ be the list of all eigenvalues of $E'$ with multiplicity (which is possibly an empty list). Suppose that
\[
{\delta'}^L:= \liminf_{\alpha\nearrow \alpha_0} \delta(\alpha,\boldsymbol\lambda',\boldsymbol d') \le 0.
\]
Then, $\boldsymbol{d}$ is a diagonal of $E$ $\iff$ $\boldsymbol{d}'$ is a diagonal of $E'$.
\end{lem}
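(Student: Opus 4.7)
The plan is to prove the two implications of the biconditional separately, relying on the orthogonal decomposition $E \cong E_{neg} \oplus E'$, where $E_{neg} := E|_{\pi((-\infty, 0))}$. Since $\sigma_{ess}(E) \subset [0,1]$, the operator $E_{neg}$ is compact and non-positive, with eigenvalue list $(\lambda_j)_{\lambda_j < 0}$ accumulating only at $0$.

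For the forward direction ($\boldsymbol{d}'$ diagonal of $E'$ $\Rightarrow$ $\boldsymbol{d}$ diagonal of $E$), I would first show that $(d_i)_{d_i < 0}$ is a diagonal of $E_{neg}$. The hypothesis $\delta^L = 0$ combined with exterior majorization \eqref{mainthm2.1} gives, via Proposition \ref{LR}, strong majorization of the decreasing rearrangement of $(-d_i)_{d_i<0}$ by that of $(-\lambda_j)_{\lambda_j<0}$. Proposition \ref{posSH} (applied to $-E_{neg}$) then yields $(d_i)_{d_i<0}$ as a diagonal of $E_{neg}$. Concatenating this with the assumed diagonalization of $E'$ by $\boldsymbol{d}'$ produces $\boldsymbol{d}$ as a diagonal of $E = E_{neg} \oplus E'$.

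For the reverse direction ($\boldsymbol{d}$ diagonal of $E$ $\Rightarrow$ $\boldsymbol{d}'$ diagonal of $E'$), I would first invoke Proposition \ref{p211}: since $E_-$ is compact and $\delta^L = 0$, the operator $E$ decouples at $0$ with respect to $\boldsymbol{d}$. By Theorem \ref{isplit}, $\boldsymbol{d}' = (d_i)_{d_i \geq 0}$ is a diagonal of $\tilde{E}_2 := E|_{\mathcal{H}_1}$, where $\mathcal{H}_1 = \overline{\operatorname{span}}\{e_i: d_i \geq 0\}$. One then has $E' \cong \tilde{E}_2 \oplus \mathbf{0}_{z_1}$, where $z_1 := \dim(\mathcal{H}_0 \cap \ker E) \geq 0$ counts the zero eigenvectors of $E$ that are placed into $\mathcal{H}_0$ by the decoupling.

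The main obstacle is to bridge this gap when $z_1 > 0$, since $\boldsymbol{d}'$ has only $z_0 := \#|\{i: d_i = 0\}|$ zero entries, while $E'$ has $z_0 + z_1$ zero eigenvalues. The hypothesis ${\delta'}^L \leq 0$ plays the crucial role here. Applying Theorem \ref{cptschurv2} to the shifted operator $\tilde{E}_2 - \alpha_0$ (whose negative part is compact, since $\alpha_0 = \min \sigma_{ess}(\tilde{E}_2) = \min\sigma_{ess}(E')$) yields the necessary inequality $\liminf_{\alpha\nearrow \alpha_0}\delta(\alpha,\boldsymbol\lambda(\tilde{E}_2),\boldsymbol{d}') \geq 0$; since zero eigenvalues do not contribute to $\delta(\alpha,\cdot,\cdot)$ for $\alpha > 0$, this coincides with ${\delta'}^L \geq 0$, and combined with the hypothesis forces ${\delta'}^L = 0$. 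A second application of Proposition \ref{p211} to $\tilde{E}_2 - \alpha_0$ then forces $\tilde{E}_2$ to further decouple at $\alpha_0$, and Theorem \ref{kp} applied to the resulting compact non-negative ``low'' part, augmented with $\mathbf{0}_{z_1}$, allows one to absorb the extra zeros and conclude that $\boldsymbol{d}'$ is indeed a diagonal of $E'$.
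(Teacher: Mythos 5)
Your argument for the direction ``$\boldsymbol{d}'$ diagonal of $E'$ $\Rightarrow$ $\boldsymbol{d}$ diagonal of $E$'' is fine and essentially matches the paper's; the paper cites the Kaftal--Weiss trivial-kernel case (a special case of Theorem~\ref{kp}) where you invoke Propositions~\ref{LR} and~\ref{posSH}, but these are equivalent routes.

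The other direction, however, contains a misstep that also renders the rest of your construction unnecessary in the paper's version and unsound in yours. You claim that ``zero eigenvalues do not contribute to $\delta(\alpha,\cdot,\cdot)$ for $\alpha>0$,'' so that the necessary inequality from Theorem~\ref{cptschurv2} applied to $\tilde{E}_2 - \alpha_0\mathbf{I}$ already equals $\delta'^L\ge 0$. That is wrong: the lower excess at $\alpha_0$ is the $\liminf$ of $\sum_{\lambda_i\le\alpha}(\alpha-\lambda_i)-\sum_{d'_i\le\alpha}(\alpha-d'_i)$ as $\alpha\nearrow\alpha_0$ (this is the shifted form of the $\alpha<0$ branch of Definition~\ref{deltadef}, which is what Theorem~\ref{cptschurv2} controls), and under this formula every eigenvalue $\lambda_i=0$ contributes $\alpha_0$ in the limit. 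Consequently the lower excess of $\tilde{E}_2=E|_{\mathcal H_1}$ at $\alpha_0$ equals $\delta'^L - z_1\alpha_0$, \emph{not} $\delta'^L$. This $z_1\alpha_0$ shift is precisely the mechanism the paper exploits: Theorem~\ref{cptschurv2} gives $\delta'^L - z_1\alpha_0\ge 0$, and together with the hypothesis $\delta'^L\le 0$ and $\alpha_0>0$ this forces $z_1=0$ at once, so $\tilde{E}_2=E'$ and one is done. By missing the shift, you only extract $\delta'^L=0$ while leaving $z_1$ unconstrained, and the ensuing attempt to ``absorb the extra zeros'' via a second decoupling plus Theorem~\ref{kp} does not close the gap: Theorem~\ref{kp}'s sufficiency requires the extra majorization condition \eqref{kp4b}, which you have not verified and which does not follow automatically (this is exactly the kernel problem), and when $(d'_i)_{d'_i<\alpha_0}$ is finite the dimension count already makes the augmented operator $\tilde{E}_2^{\mathrm{low}}\oplus\mathbf{0}_{z_1}$ the wrong size. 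The correct proof needs no second decoupling and no appeal to Theorem~\ref{kp}; it only needs the observation that each spurious zero eigenvalue raises the lower excess at $\alpha_0$ by $\alpha_0$.
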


\begin{proof} 
Note that $0 \not\in \sigma^+_{ess}(E)$ implies that $\alpha_0>0$.
Suppose first that $\boldsymbol{d}$ is a diagonal of $E$. Since $\delta^L=0$, the operator $E$ decouples at $0$ with respect to $\boldsymbol d$ as in Definition \ref{decouple}. In particular, the operator $E|_{\mathcal H_1}$, which is a restriction of $E'$, has diagonal $\boldsymbol d'$. The operators $E'$ and $E|_{\mathcal H_1}$ may only differ by the multiplicity of the eigenvalue 0. That is the eigenvalue list of $E|_{\mathcal H_1}$ is the same as that of $E'$ with the exception that it might contain fewer zero eigenvalues.
By Theorem \ref{cptschurv2} applied to  $E|_{\mathcal H_1}$, the lower excess of $E|_{\mathcal H_1}$ at $\alpha_0$ is $\ge 0$ and at most ${\delta'}^L$. This yields that ${\delta'}^L=0$ and $E|_{\mathcal H_1}=E'$. Hence, $\boldsymbol d'$ is a diagonal of $E'$. Conversely, suppose that $\boldsymbol d'$ is a diagonal of $E'$. By a special case of Theorem \ref{kp}, a characterization of diagonals of positive compact operators with trivial kernel due to Kaftal and Weiss \cite{kw}, $(d_{i})_{d_i<0}$ is a diagonal of the restriction of $E$ to the range of $\pi(-\infty,0)$. Hence, $\boldsymbol d$ is a diagonal of $E$.
\end{proof}

\begin{lem}\label{prune2}
Let $E$ be a self-adjoint operator on $\mathcal H$ such that $\#|\sigma_{ess}(E)| \ge 5$ and \eqref{mainthm2.0} holds. Let $\boldsymbol{d} = (d_{i})_{i\in \N}$ be such that \eqref{mainthm2.1}, \eqref{mainthm2.2}, \eqref{mainthm2.3}, and one of the four conditions (i)--(iv) in Theorem \ref{mainthm2.} hold. Suppose that 
\[
\delta^L=0 \quad\text{and} \quad 0 \not\in \sigma^+_{ess}(E)
\quad\text{and}\quad
\delta^U=0 \quad\text{and} \quad 1 \not\in \sigma^-_{ess}(E).
\]
Let $(E',\boldsymbol d')$ be a pruning of $(E,\boldsymbol d)$ both at 0 and $1$. Let $\alpha_0 = \min \sigma_{ess}(E')$ and $\alpha_1 = \max \sigma_{ess}(E')$. Let $\boldsymbol\lambda'$ be the list of all eigenvalues of $E'$ with multiplicity (which is possibly an empty list). Suppose that 
\[
{\delta'}^L:= \liminf_{\alpha\nearrow \alpha_0} \delta(\alpha,\boldsymbol\lambda',\boldsymbol d') > 0
\quad\text{and}\quad
{\delta'}^U:= \liminf_{\alpha\searrow \alpha_1} \delta(\alpha,\boldsymbol\lambda',\boldsymbol d') > 0.
\]
Then, $\boldsymbol{d}$ is a diagonal of $E$ $\iff$ $\boldsymbol{d}'$ is a diagonal of $E'$.
\end{lem}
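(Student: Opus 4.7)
I would prove the biconditional as two separate implications. The converse (from $\boldsymbol{d}'$ being a diagonal of $E'$) uses spectral decomposition and Kaftal--Weiss-type diagonal-to-diagonal results on the compact spectral pieces of $E$, while the forward direction applies the sufficiency portion of Theorem~\ref{mainthm2.} to $(E',\boldsymbol{d}')$ after an affine rescaling.

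For the converse, assume $\boldsymbol{d}'$ is a diagonal of $E'$ and write $E=E_-\oplus E'\oplus E_+^{>1}$ via the spectral projections $\pi((-\infty,0))$, $\pi([0,1])$, $\pi((1,\infty))$, where $E_-$ is a compact negative operator with no kernel and eigenvalue list $(\lambda_j)_{\lambda_j<0}$, and $E_+^{>1}$ is its analogue above $1$. By Lemma~\ref{subdiag} it suffices to show that $(d_i)_{d_i<0}$ is a diagonal of $E_-$ and that $(d_i)_{d_i>1}$ is a diagonal of $E_+^{>1}$. The exterior majorization $f_{\boldsymbol{d}}\le f_{\boldsymbol{\lambda}}$ restricted to $\alpha<0$, rewritten via Proposition~\ref{LR} as a majorization statement on the positive sequences $(-d_i)_{d_i<0}$ and $(-\lambda_j)_{\lambda_j<0}$, gives $(-d_i)_{d_i<0}\prec(-\lambda_j)_{\lambda_j<0}$; both sequences lie in $c_0^+$ because $\sigma_{ess}(E)\subset[0,1]$ prevents diagonal entries from accumulating at any point in $(-\infty,0)$, which would contradict exterior majorization. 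The hypothesis $\delta^L=0$, combined with Proposition~\ref{LRS} in the summable case or a direct argument if both sums diverge, yields equal total sums, so Proposition~\ref{kwd2d} produces the required diagonal of $E_-$. A symmetric argument applied to $\mathbf{I}-E$ handles the top piece.

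For the forward direction, assume $\boldsymbol{d}$ is a diagonal of $E$. Let $E''$ be the affine rescaling of $E'$ sending $[\alpha_0,\alpha_1]$ to $[0,1]$, and let $\boldsymbol{d}''$ be the corresponding rescaled sequence. Since $\#|\sigma_{ess}(E)|\ge 5$, we have $\{0,1\}\subset\sigma_{ess}(E'')\subset[0,1]$ and $\#|\sigma_{ess}(E'')|\ge 3$, so Theorem~\ref{mainthm2.} is applicable to $E''$. The strict positivity of the pruned excesses ${\delta'}^L,{\delta'}^U>0$ is invariant under rescaling and makes the cardinality hypotheses \eqref{mainthm2.2}, \eqref{mainthm2.3} and the sufficiency hypothesis \eqref{mainthm2.4} of Theorem~\ref{mainthm2.} vacuous. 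The remaining tasks are to verify the exterior majorization for $(E'',\boldsymbol{d}'')$---equivalently, $f_{\boldsymbol{d}'}(\alpha)\le f_{\boldsymbol{\lambda}'}(\alpha)$ on the strip $[0,\alpha_0)\cup(\alpha_1,1]$---and that one of conditions (i)--(iv) holds. For the latter, $\delta^L=\delta^U=0$ forces $(E,\boldsymbol{d})$ into case (iv) of Theorem~\ref{mainthm2.}, so $f_{\boldsymbol{d}}(1/2)=\infty$; a case analysis on whether this divergence is concentrated within or outside $[\alpha_0,\alpha_1]$ determines whether $(E'',\boldsymbol{d}'')$ lands in (iv) or in one of (i)--(iii) through having an infinite pruned excess. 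Applying the sufficiency direction of Theorem~\ref{mainthm2.} then concludes that $\boldsymbol{d}''$ is a diagonal of $E''$, hence $\boldsymbol{d}'$ is a diagonal of $E'$.

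The main obstacle will be the forward-direction bookkeeping for the exterior majorization on the strip $[0,\alpha_0)\cup(\alpha_1,1]$: this is exterior for $E'$ but interior for $E$, so the needed inequality cannot be read off directly from the exterior majorization of $(E,\boldsymbol{d})$. Most likely one must carry out a Kaftal--Weiss-style majorization argument for the subsequences $(d_i)_{0\le d_i<\alpha_0}$ and $(\lambda_j)_{0\le\lambda_j<\alpha_0}$ (and their mirror at $\alpha_1$), combined with an analog of Lemma~\ref{intToComp} adapted to pruning, in order to propagate majorization cleanly across this strip.
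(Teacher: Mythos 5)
The converse direction (from $\boldsymbol{d}'$ to $\boldsymbol{d}$) is correct and matches the paper's argument: decompose $E$ by the spectral projections below $0$, on $[0,1]$, and above $1$, and use the Kaftal--Weiss diagonal-to-diagonal result (Proposition~\ref{kwd2d}, after converting the exterior majorization via Proposition~\ref{LR} and using $\delta^L=\delta^U=0$ to match sums) on the two compact tails, then glue with Lemma~\ref{subdiag}.

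The forward direction has a genuine gap, and you have correctly diagnosed where it is but not how to fill it. You try to verify the hypotheses of Theorem~\ref{mainthm2.} for the rescaled pair $(E'',\boldsymbol{d}'')$ by majorization bookkeeping, and you observe that the exterior majorization for $E''$ on $[0,\alpha_0)\cup(\alpha_1,1]$ is interior majorization for $E$ and "cannot be read off directly." This is not just an obstacle; the approach as stated doesn't close. The missing idea is \emph{decoupling}. Since you are assuming $\boldsymbol{d}$ is a diagonal of $E$ and $\delta^L=\delta^U=0$, Proposition~\ref{p211} (together with its symmetric variant at $1$) implies that $E$ decouples at both $0$ and $1$ with respect to $\boldsymbol{d}$: the subspaces $\mathcal{H}_0=\overline{\lspan}\{e_i : d_i<0\}$, $\mathcal{H}_{[0,1]}=\overline{\lspan}\{e_i : 0\le d_i\le 1\}$, and $\mathcal{H}_1=\overline{\lspan}\{e_i : d_i>1\}$ are $E$-invariant with the spectrum of the restrictions sitting in $(-\infty,0]$, $[0,1]$, and $[1,\infty)$, respectively. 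Hence $\boldsymbol{d}'$ is automatically a diagonal of $E|_{\mathcal{H}_{[0,1]}}$, an operator that can differ from $E'$ only by multiplicities of the eigenvalues $0$ and $1$. This sidesteps the bookkeeping entirely: because $\boldsymbol{d}'$ \emph{is} a diagonal of the normalized $E|_{\mathcal{H}_{[0,1]}}$, the \emph{necessity} direction of Theorem~\ref{mainthm2.} applied to that pair hands you exterior majorization and one of (i)--(iv) for free, and these conditions persist for $E'$ since $E'$ can only have extra eigenvalues at the endpoints. Finally, the assumed strict positivity ${\delta'}^L,{\delta'}^U>0$ together with $\#|\sigma_{ess}(E')|\ge 3$ activates the sufficiency direction of Theorem~\ref{mainthm2.} for $(E',\boldsymbol{d}')$.
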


\begin{proof}
Note that the assumptions $0 \not\in \sigma^+_{ess}(E)$, $1 \not\in \sigma^-_{ess}(E)$, and $\#|\sigma_{ess}(E)| \ge 5$, imply that $0<\alpha_0<\alpha_1<1$. 
Suppose first that $\boldsymbol{d}$ is a diagonal of $E$ with respect to an orthonormal basis $(e_{i})$ of $\mathcal H$. Since $\delta^L=\delta^U=0$, the operator $E$ decouples at $0$ and at $1$ with respect to $\boldsymbol d$ as in Definition \ref{decouple}. That is, the spaces 
\[
\mathcal H_0=\overline{\lspan }\{ e_i: d_i < 0 \}, \qquad \mathcal H_{[0,1]}=\overline{\lspan} \{ e_i: 0 \le d_i \le 1 \},
\qquad \mathcal H_{1}=\overline{\lspan} \{ e_i:  d_i \ge 1 \}
\]
are invariant subspaces of $E$ and
\[
\sigma(E|_{\mathcal H_0}) \subset (-\infty,0]
,\qquad
\sigma(E|_{\mathcal H_{[0,1]}}) \subset [0,1]
,\qquad\sigma(E|_{\mathcal H_1}) \subset [1,\infty).
\]
In particular, the operator $E|_{\mathcal H_{[0,1]}}$ has diagonal $\boldsymbol d'$. The operators $E'$ and $E|_{\mathcal H_{[0,1]}}$ may only differ by the multiplicity of the eigenvalues 0 and 1. That is, the eigenvalue list of $E|_{\mathcal H_{[0,1]}}$ is the same as that of $E'$ with the exception that it might contain fewer eigenvalues equal to $0$ or $1$. Hence, both the lower excess of $E'$ at $0$ and the upper excess of $E'$ at $1$ are positive. Since the normalization of the operator $E|_{\mathcal H_{[0,1]}}$ satisfies the necessary conditions of Theorem \ref{mainthm2.} for the normalization of the diagonal $\boldsymbol d'$, so does the normalization of the operator $E'$. Since both upper and lower excesses of $E'$ are positive and $\#|\sigma_{ess}(E')| \ge 3$, by the sufficiency of Theorem \ref{mainthm2.}, the sequence $\boldsymbol d'$ is a diagonal of $E'$. 

Conversely, suppose that $\boldsymbol d'$ is a diagonal of $E'$. By a special case of Theorem \ref{kp}, a characterization of diagonals of positive compact operators with trivial kernel due to Kaftal and Weiss \cite{kw}, $(d_{i})_{d_i<0}$ is a diagonal of the restriction of $E$ to the range of $\pi(-\infty,0)$. Likewise, $(d_{i})_{d_i>1}$ is a diagonal of the restriction of $E$ to the range of $\pi(1,\infty)$. Hence, $\boldsymbol d$ is a diagonal of $E$.
\end{proof}

\begin{figure}
\centering
\resizebox{5.8in}{!}{
\begin{tikzpicture}[node distance=2cm]

\node (input) [io] {Input: self-adjoint operator $E$ such that $\#|\sigma_{ess}(E)| \ge 5 $ and a sequence $(d_{i})$};

\node (pro7) [process, below of=input] {Normalize $E$ so that $\{0,1\} \subset \sigma_{ess}(E) \subset [0,1]$};
\node (dec3a) [decision, below of=pro7, yshift=-1cm] {$\!\#|\sigma_{ess}(E)|\! \ge \! 5\!$};
\node (pro8) [decision, right of=dec3a, xshift=3cm] {(Necessity) Theorem \ref{mainthm2.}};
\node (dec5) [decision, below of=pro8, yshift=-3cm] {$\delta^L=0$,  $0 \not\in \sigma^+_{ess}(E)$, and ${\delta'}^L \le 0$};
\node (no3) [no, below of=dec3a, yshift=-3cm] {$(d_{i})$ is not a \\ diagonal of $E$};
\node (pro9) [process, right of=dec5, xshift=1.5cm, yshift=2.5cm] {Prune $E$ at $0$~or~$1$, respectively};
\node (dec6) [decision, below of=dec5, yshift=-3cm] {$\delta^U=0$, $1 \not\in \sigma^-_{ess}(E)$, and ${\delta'}^U \le 0$};
\node (dec7) [decision, left of=dec6, xshift=-3cm] {$\delta^L=0$ and $0 \not\in \sigma^+_{ess}(E)$};
\node (dec8) [decision, below of=dec7, yshift=-2.5cm] {$\delta^U=0$ and $1 \not\in \sigma^-_{ess}(E)$};
\node (pro12) [process, left of=dec7, xshift=-3cm, yshift=-2cm] {Prune $E$ at $0$ and normalize $E$ so that $\{0,1\} \subset \sigma_{ess}(E) \subset [0,1]$};
\node (pro13) [process, below of=pro12, yshift=-2.5cm] {Prune $E$ at $1$ and normalize $E$ so that $\{0,1\} \subset \sigma_{ess}(E) \subset [0,1]$};
\node (dec13) [decision, below of=dec8, yshift=-2.5cm] {(Necessity) Theorem \ref{mainthm2.}};
\node (no4) [no, left of=dec13, xshift=-3cm] {$(d_{i})$ is not a \\ diagonal of $E$};
\node (pro11) [stop, right of=dec13, xshift=3cm] {$(d_{i})$ is a \\ diagonal of $E$};

\node (output) [kernel, left of=dec3a, xshift=-3cm] {Apply 4pt algorithm};

\draw [arrow] (input) -- (pro7);
\draw [arrow] (pro7) -- (dec3a);
\draw [arrow] (dec3a) -- node [yshift=0.25cm] {yes} (pro8);
\draw [arrow] (dec3a) -- node [yshift=0.25cm] {no} (output);
\draw [arrow] (pro8) -- node [xshift=0.45cm] {yes} (dec5);
\draw [arrow] (dec5) -- node [xshift=0.45cm] {no} (dec6);
\draw [arrow] (dec5) -| node [xshift=-1cm, yshift=0.25cm] {yes} (pro9);
\draw [arrow] (pro9) |-  (pro7);

\draw [arrow] (dec6) -- node [yshift=0.25cm] {no} (dec7);
\draw [arrow] (dec6) -| node [xshift=-1cm, yshift=0.25cm] {yes} (pro9);

\draw [arrow] (dec7) -| node [yshift=0.25cm, xshift=2cm] {yes} (pro12);
\draw [arrow] (dec7) -- node [xshift=0.45cm] {no} (dec8);
\draw [arrow] (pro12) -- (dec8);
\draw [arrow] (dec8) -| node [yshift=0.25cm, xshift=2cm] {yes} (pro13);
\draw [arrow] (dec8) -- node [xshift=0.45cm] {no} (dec13);
\draw [arrow] (pro13) -- (dec13);
\draw [arrow] (dec13) -- node [yshift=0.25cm] {yes} (pro11);
\draw [arrow] (dec13) -- node [yshift=0.25cm] {no} (no4);

\draw [arrow] (pro8) -- node [yshift=0.25cm] {no} (no3);

\end{tikzpicture}
}

\caption{Algorithm for pruning operators with $\ge 5$-point essential spectrum by countably transfinite recursive process.}
\label{diagram4}
\end{figure}
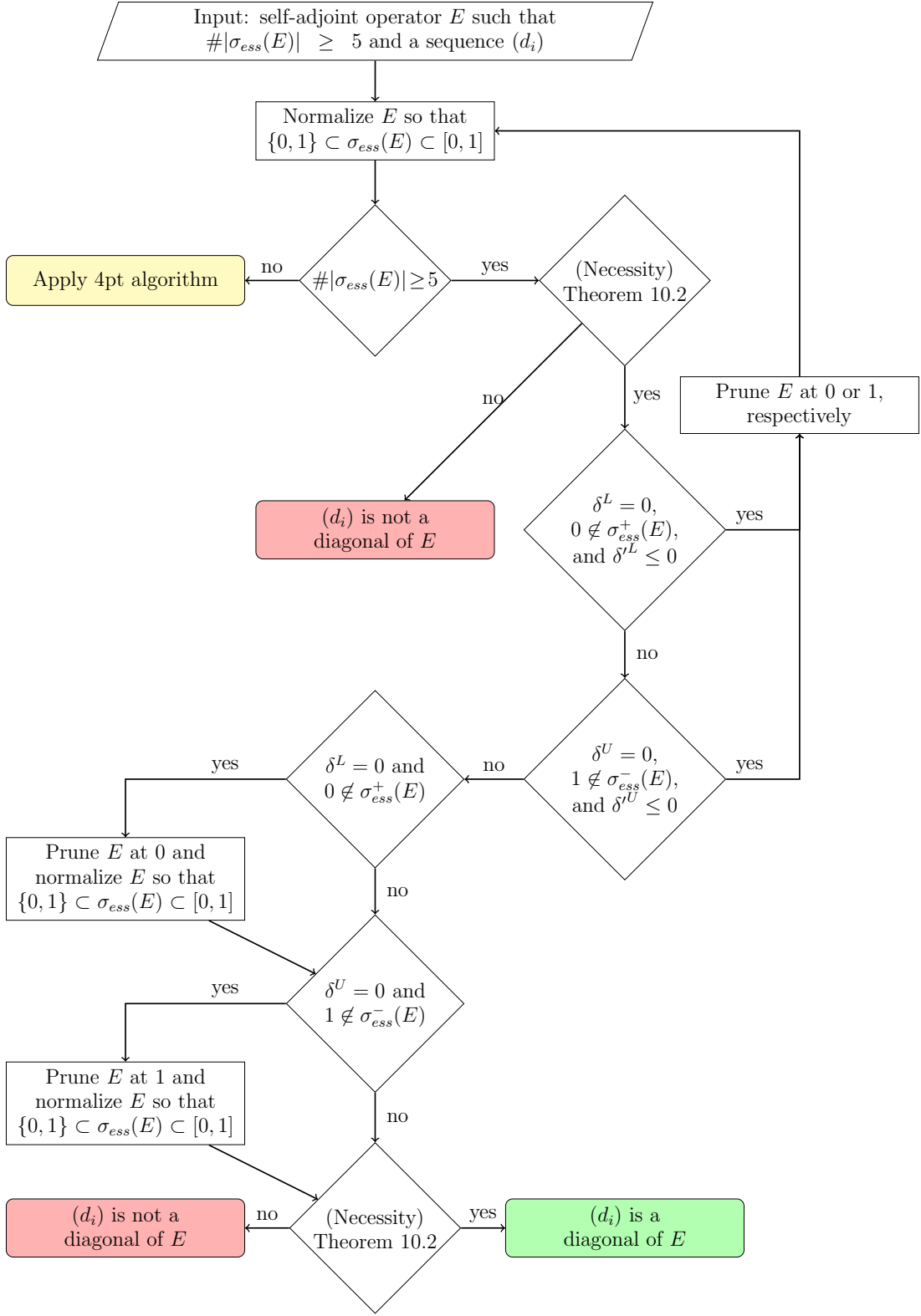

\begin{figure}
\resizebox{5in}{!}{
\begin{tikzpicture}[node distance=2cm]

\node (input) [io] {Input: self-adjoint operator $E$ such that $\#|\sigma_{ess}(E)|=k$, where $k=3,4$, and a sequence $(d_{i})$};
\node (pro7) [process, below of=input] {Normalize $E$ so that $\{0,1\} \subset \sigma_{ess}(E) \subset [0,1]$};
\node (pro8) [decision, below of=pro7,  yshift=-1cm] {(Necessity) Theorem \ref{mainthm2.}};
\node (no3) [no, right of=pro8, xshift=3cm] {$(d_{i})$ is not a \\ diagonal of $E$};
\node (dec5) [decision, below of=pro8, yshift=-2.5cm] {$\delta^L=0$ and $0 \not\in \sigma^+_{ess}(E)$};
\node (pro9) [process, left of=dec5, xshift=-3cm, yshift=-2.25cm] {Split into compact and $(k-1)$ point essential spectrum operators};
\node (pro12) [kernel, below of=pro9, xshift=0cm, yshift=-0.5cm] {Compact kernel problem};
\node (pro13) [kernel, below of=pro9, xshift=0cm, yshift=-1.5cm] {Apply $(k-1)$pt algorithm};
\node (dec6) [decision, below of=dec5, yshift=-2.5cm] {$\delta^U=0$ and $ 1 \not\in \sigma^-_{ess}(E)$};
\node (pro11) [stop, right of=dec6, xshift=3cm] {$(d_{i})$ is a \\ diagonal of $E$};

\draw [arrow] (input) -- (pro7);
\draw [arrow] (pro7) -- (pro8);
\draw [arrow] (pro8) -- node [xshift=0.45cm] {yes} (dec5);
\draw [arrow] (pro8) -- node [yshift=0.25cm] {no} (no3);
\draw [arrow] (dec5) -- node [xshift=0.45cm] {no} (dec6);
\draw [arrow] (dec5) -- node [yshift=0.25cm] {yes} (pro9);
\draw [arrow] (pro9) -- (pro12);

\draw [arrow] (dec6) -- node [yshift=0.25cm] {no} (pro11);
\draw [arrow] (dec6) -- node [yshift=0.25cm] {yes} (pro9);

\end{tikzpicture}
}
\caption{Algorithm for operators with $3$-point or $4$-point essential spectrum.}
\label{diagram3}
\end{figure}
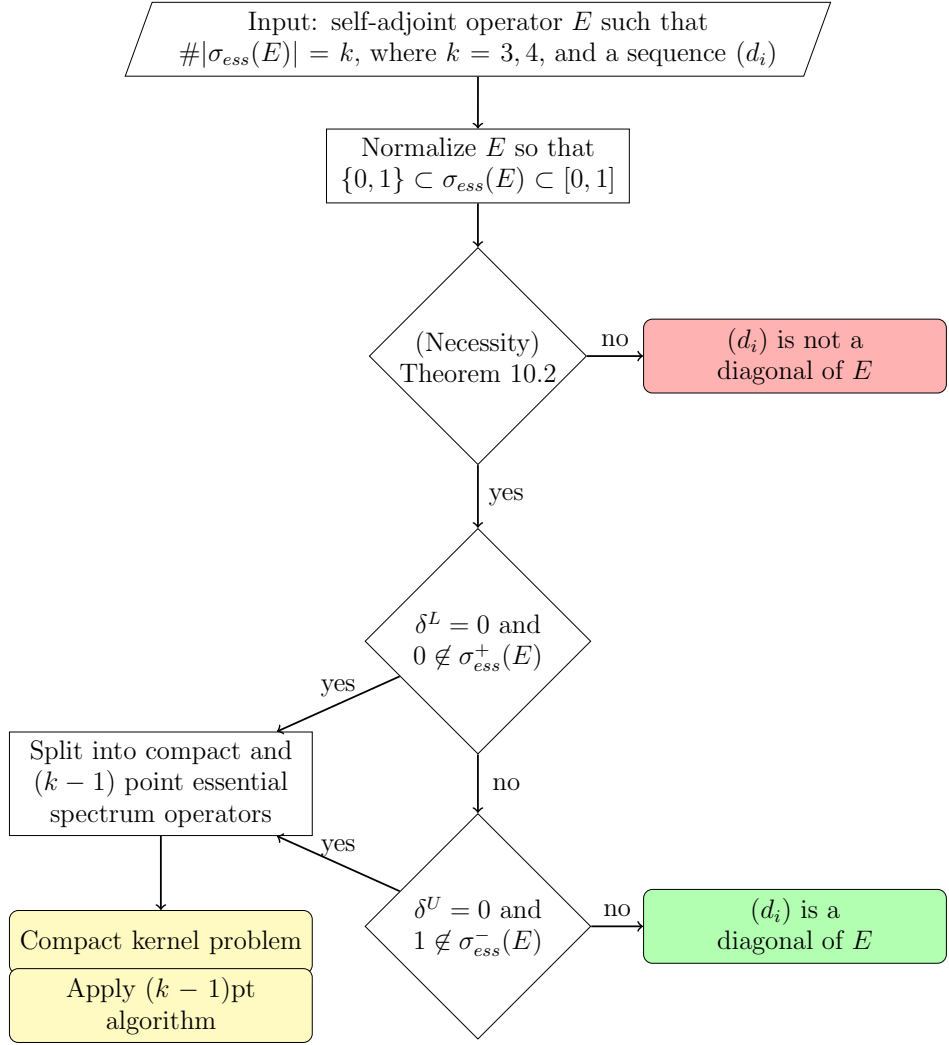

The algorithm for determining whether a sequence $\boldsymbol d$ is a diagonal of an operator $E$ with $\#|\sigma_{ess}(E)| \ge 5$ is represented by Figure \ref{diagram4}. This procedure starts by normalizing $E$ so that \eqref{mainthm2.0} holds. Then we check whether all necessary conditions \eqref{mainthm2.1}--\eqref{mainthm2.3} and one of the conditions (i)--(iv) in Theorem \ref{mainthm2.} are satisfied. If not, then $\boldsymbol d$ is not a diagonal of $E$. Otherwise, we check whether the three conditions $\delta^L=0$, $0 \not \in \sigma^+_{ess}(E)$, and $\delta'^L\le 0$ from Lemma \ref{prune} all hold. If yes, then we can prune an operator $E$ at $0$, and then repeat the process of normalizing and checking necessary conditions in Theorem \ref{mainthm2.} as long as we have at least 5 points in the essential spectrum. Since the minimum of the essential spectrum of $E$ (without normalizing) increases with each pruning, this iterative procedure has to stop after a countable number of steps. This can be described by countably transfinite induction procedure. Using a symmetric variant of Lemma \ref{prune}, we also run the procedure of pruning operator $E$ from the right.

After countable number of pruning steps we have a trichotomy:
\begin{enumerate}
\item either the operator $E$ has exactly 4 points in its essential spectrum, in which case we need to apply the 4pt algorithm described in Figure \ref{diagram3}, 
\item the necessity in Theorem \ref{mainthm2.} fails, in which case $\boldsymbol d$ is not a diagonal of $E$, or 
\item no more pruning as in Lemma \ref{prune} can be done, in which case the resulting pair $(E, \boldsymbol d)$ has the property that $\boldsymbol d$ is diagonal of $E$ if and only the same is true for the original input operator and sequence.
\end{enumerate}
In the last case, we will perform up to two additional prunings. That is, we check if $\delta^L=0$ and $0 \not \in \sigma^+_{ess}(E)$. If yes, then we prune operator $E$ at $0$, renormalize $E$, and observe that the new lower excess $\delta^L>0$. If no, we leave $E$ intact. 
Likewise, we check if $\delta^U=0$ and $1 \not \in \sigma^-_{ess}(E)$. If yes, then we prune operator $E$ at $1$, renormalize $E$, and observe that the new upper excess $\delta^U>0$. If no, we leave $E$ intact. As a result we obtain a normalized operator $E'$ with at least 3 points in its essential spectrum and a new sequence $\boldsymbol d'$, which satisfies 
\begin{equation}\label{mainthm2.5}{\delta'}^{L}=0\implies 0\in\sigma_{ess}^{+}(E'),\quad\text{and}\quad {\delta'}^{U}=0\implies 1\in\sigma_{ess}^{-}(E'),\end{equation}
where ${\delta'}^L$ and ${\delta'}^U$ are excesses corresponding to $(E',\boldsymbol d')$.
  We claim that $\boldsymbol d$ is a diagonal of $E$ if and only if $\boldsymbol d'$ is a diagonal of $E'$. Lemma \ref{prune2} shows this claim in the case when $E'$ is obtained by pruning of $E$ at both $0$ and $1$. The case when pruning occurs only on one side follows in a similar way. Since both lower and upper excess of $E'$ are positive, the necessary conditions in Theorem \ref{mainthm2.} are also sufficient. Hence, the algorithm concludes by giving a definite answer whether the $\boldsymbol d$ is a diagonal of $E$ or not.

The algorithm for determining whether a sequence $\boldsymbol d$ is a diagonal of an operator $E$ with $\#|\sigma_{ess}(E)| = 3$, $4$ is represented by Figure \ref{diagram3}. This procedure starts by normalizing $E$ so that \eqref{mainthm2.0} holds. Then the algorithm splits into two similar cases depending whether we have $3$ or $4$ points in the essential spectrum.

In the latter case, we check whether all necessary conditions \eqref{mainthm2.1}--\eqref{mainthm2.3} and one of the conditions (i)--(iv) in Theorem \ref{mainthm2.} are satisfied. If not, then $\boldsymbol d$ is not a diagonal of $E$. Otherwise, $\boldsymbol d$ is a diagonal of $E$ provided that \eqref{mainthm2.4} holds. In the case that both $\delta^L=0$ and $0 \not \in \sigma^+_{ess}(E)$ hold, Theorem \ref{isplit} applies. In particular, we look for all possible splittings of $E$ into $E_1$ and $E_2$ at $0$ with $z_0=\#|\{i: d_i=0\}|$ and apply both Theorem \ref{kp} to test whether    
$(d_{i})_{d_i<0}$ is a diagonal of $E_1$ and the 3pt algorithm to test whether $(d_{i})_{d_i>0}$ is a diagonal of $E_2$. 
If the necessary condition in Theorem \ref{kp} fail or the 3pt algorithm gives negative answer for all possible splittings, then $\boldsymbol d$ is not a diagonal of $E$. On the other hand, if the sufficient condition in Theorem \ref{kp} holds and the 3pt algorithm gives affirmative answer for some splitting, then $\boldsymbol d$ is a diagonal of $E$. This leaves out the possibility that for some splittings the 3pt algorithm is inconclusive (this only happens if the 2pt algorithm needs to invoked) or we encounter the kernel problem for positive compact operators. Since $\dim \ker E<\infty$, the algorithm requires analyzing only a finite number of splittings of $E$, if any. We deal in a similar way in the case when $\delta^U=0$ and $1 \not \in \sigma^-_{ess}(E)$ hold. This leaves out the final case when \eqref{mainthm2.4} holds, where Theorem \ref{mainthm2.} yields that $\boldsymbol d$ is a diagonal of $E$.

The algorithm for \(3\)-point essential spectrum is performed in the same way with the exception that in the case $\delta^L=0$ and $0 \not \in \sigma^+_{ess}(E)$ or the case $\delta^U=0$ and $1 \not \in \sigma^-_{ess}(E)$, we need to consider all possible splittings and apply Theorem \ref{kp} and the 2pt algorithm. Hence, the 3pt algorithm is inconclusive only when we encounter the kernel problem for positive compact operators or the 2pt algorithm is inconclusive. Likewise, the 4pt algorithm is inconclusive only when we encounter the kernel problem for positive compact operators or the 3pt algorithm is inconclusive. Finally, the $\ge 5$pt algorithm is inconclusive only when it reduces to the 4pt algorithm. In fact, the following theorem shows that a combination of algorithms in Figures \ref{diagram4} and \ref{diagram3} for operators with $\ge 3$ points in their essential spectrum is conclusive for a large class of operators.

\begin{thm}\label{concl}
Let $E$ be a self-adjoint operator such that
\begin{equation}\label{concl0}
\exists \ \lambda \in \sigma_{ess}(E) \qquad \inf \sigma_{ess}^+(E)<\lambda< \sup \sigma_{ess}^-(E).
\end{equation}
Let $\boldsymbol d$ be a bounded sequence of real numbers. Then, the algorithms in Figures \ref{diagram4} and \ref{diagram3} yield a definite answer whether $\boldsymbol d$ is a diagonal of $E$, or not.
\end{thm}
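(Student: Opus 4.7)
My strategy is a structural induction on $|\sigma_{ess}(E)|$, tracking how condition \eqref{concl0} is preserved along the steps of the algorithms in Figures \ref{diagram4} and \ref{diagram3} and showing that it precludes the two possible inconclusive end-states, namely the compact kernel problem of Theorem \ref{kp} and the reduction to the 2pt algorithm. The first observation is that $\sigma_{ess}^\pm(E)$ are closed subsets of $\sigma_{ess}(E)$, so $a:=\inf\sigma_{ess}^+(E)$ and $b:=\sup\sigma_{ess}^-(E)$ both lie in $\sigma_{ess}(E)$; together with $\lambda$ this yields three distinct essential-spectrum points, hence $|\sigma_{ess}(E)|\ge 3$, and one of the algorithms applies.

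For the base case $|\sigma_{ess}(E)|=3$, after normalization $\sigma_{ess}(E)=\{0,p,1\}$; since $a,b\in\{0,p,1\}$, the strict inequality $a<\lambda<b$ with $\lambda\in\sigma_{ess}(E)$ forces $a=0$, $\lambda=p$, and $b=1$, equivalently $0\in\sigma_{ess}^+(E)$ and $1\in\sigma_{ess}^-(E)$. Neither splitting decision in Figure \ref{diagram3} can then fire, so the 3pt algorithm concludes via the sufficient half of Theorem \ref{mainthm2.}. For the $\ge 5$pt case I would verify that pruning preserves \eqref{concl0}: the pruning condition $0\notin\sigma_{ess}^+(E)$ gives $a(E)>0$, and restriction to $\ran\pi_E([0,\infty))$ leaves $\sigma_{ess}^\pm$ unchanged on $(0,\infty)$, so $a(E')=a(E)$, $b(E')=b(E)$, and $\lambda>0$ remains a witness after renormalization. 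Pruning at $1$ is symmetric. Consequently pruning never drops $|\sigma_{ess}|$ below $3$: the $\ge 5$pt algorithm either concludes directly via Theorem \ref{mainthm2.} (after the up-to-two additional prunings described in Section \ref{S21}) or reduces to the 4pt or 3pt algorithm while preserving \eqref{concl0}.

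The subtle case is $|\sigma_{ess}(E)|=4$ with $\sigma_{ess}(E)=\{0,p_2,p_3,1\}$, where \eqref{concl0} admits exactly the three configurations $(a,b)\in\{(0,1),(0,p_3),(p_2,1)\}$. The first triggers no splitting. In the remaining two, the 4pt algorithm may split at the asymmetric extreme, producing a positive compact summand plus a 3pt summand; by Theorem \ref{isplit} all eigenvalues of $E$ lying strictly between the two extremes of $\sigma_{ess}(E)$ are confined to the 3pt summand, which therefore inherits $0\in\sigma_{ess}^+$ and $1\in\sigma_{ess}^-$ after renormalization, making the 3pt subalgorithm conclusive by the base case. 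The core remaining difficulty is the compact summand, where Theorem \ref{kp} contains the Kaftal-Weiss-Loreaux gap for $z>0$; I would bypass it by focusing on the splitting that assigns the full multiplicity of the decoupling eigenvalue to the 3pt side (i.e.\ $z=0$ for the compact summand), which collapses Theorem \ref{kp} to its kernel-free form. The hardest step is proving the attendant rigidity lemma: under \eqref{concl0}, $\boldsymbol d\in\mathcal D(E)$ if and only if the $z=0$ splitting is a valid realization, so that this single splitting suffices to decide "yes" or "no". This rigidity is the crux of the argument and rests on the structural constraints that \eqref{concl0} imposes on the distribution of multiplicities at the decoupling extreme.
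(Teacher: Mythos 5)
Your overall structure — inducting on $|\sigma_{ess}(E)|$, observing that $a:=\inf\sigma_{ess}^+(E)$ and $b:=\sup\sigma_{ess}^-(E)$ lie in $\sigma_{ess}(E)$ so that \eqref{concl0} forces $|\sigma_{ess}(E)|\ge 3$, verifying the $3$-point base case directly, and checking that pruning in the $\ge 5$-point algorithm preserves \eqref{concl0} — all tracks the paper's argument faithfully. One small correction of fact along the way: $\sigma_{ess}^{\pm}(E)$ are not closed sets (e.g.\ $\sigma_{ess}^{+}(E)$ is only closed under decreasing limits), but your conclusion that $a,b\in\sigma_{ess}(E)$ is nonetheless correct, since the infimum of a set closed under decreasing limits is attained, and symmetrically for $\sup\sigma_{ess}^{-}(E)$.

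The genuine gap is the $4$-point case, which you correctly identify as the crux and then leave unproved. Your ``rigidity lemma'' — that $\boldsymbol{d}\in\mathcal{D}(E)$ if and only if the $z_1=0$ splitting (equivalently, the pruning of $E$ at $0$) is realizable — is precisely the paper's Lemma \ref{prune3}, and its proof is the mathematical content of the theorem. The paper proves it in two directions. For the forward direction, $\delta^{L}=0$ forces decoupling (Theorem \ref{nec}), so $(d_i)_{d_i\ge 0}$ is a diagonal of $E|_{\mathcal H_1}$, which may differ from the pruned operator $E'$ only by having fewer zero eigenvalues; one then needs to \emph{upgrade} this to $(d_i)_{d_i\ge 0}$ being a diagonal of $E'$, and this upgrade is exactly where \eqref{concl0} enters — it guarantees $\alpha_0\in\sigma_{ess}^{+}(E')$ and $1\in\sigma_{ess}^{-}(E')$, so that the $3$-point sufficiency \eqref{mainthm2.4} of Theorem \ref{mainthm2.} applies with no kernel problem. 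For the converse, the restriction of $E$ to $\ran\pi((-\infty,0))$ has trivial kernel by construction, so the kernel-free Kaftal--Weiss case of Theorem \ref{kp} decides $(d_i)_{d_i<0}$ with no gap. Your intuition that \eqref{concl0} imposes ``structural constraints on the distribution of multiplicities at the decoupling extreme'' is not quite the right mechanism: multiplicities at $0$ are not being constrained; rather, \eqref{concl0} guarantees the one-step-down problem is solvable, which is what lets you ignore how the kernel of $E$ is distributed across a splitting. Without a proof of Lemma \ref{prune3} (or your rigidity claim), the argument does not close.
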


To show this result we need and analogue of Lemma \ref{prune2}.

\begin{lem}\label{prune3}
Let $E$ be a self-adjoint operator on $\mathcal H$ such that $\#|\sigma_{ess}(E)| = 4$ and \eqref{mainthm2.0} and \eqref{concl0} hold. Let $\boldsymbol{d} = (d_{i})_{i\in \N}$ be such that \eqref{mainthm2.1}, \eqref{mainthm2.2}, \eqref{mainthm2.3}, and one of the four conditions (i)--(iv) in Theorem \ref{mainthm2.} hold. Suppose that 
\begin{equation}\label{prune9}
\delta^L=0 \quad\text{and} \quad 0 \not\in \sigma^+_{ess}(E).
\end{equation}
Let $(E',\boldsymbol d')$ be a pruning of $(E,\boldsymbol d)$ at $0$. Let $\alpha_0 = \inf \sigma_{ess}(E')$. Suppose that $\alpha_0 \in \sigma^+_{ess}(E)$.
Then, $\boldsymbol{d}$ is a diagonal of $E$ $\iff$ $\boldsymbol{d}'$ is a diagonal of $E'$.
\end{lem}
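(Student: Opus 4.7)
The strategy parallels the proof of Lemma \ref{prune2}: use the decoupling of $E$ at $0$ (which is forced by $\delta^L = 0$), reduce to the pruned operator $E'$, transfer the necessity conditions of Theorem \ref{mainthm2.} to the normalized pruned operator, and invoke the sufficiency direction of that theorem. The hypothesis $\alpha_0 \in \sigma^+_{ess}(E)$ here plays the role that the positivity of both post-pruning excesses plays in Lemma \ref{prune2}.

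For the forward direction, suppose $\boldsymbol d$ is a diagonal of $E$ with respect to an orthonormal basis $(e_i)$. Since $\sigma_{ess}(E) \subset [0,1]$, the negative part of $E$ is compact; combined with $\delta^L = 0$, Proposition \ref{p211} gives that $E$ decouples at $0$. Writing $\mathcal H_1 = \overline{\lspan}\{e_i : d_i \ge 0\}$, we have $\mathcal H_1 \subset \ran \pi([0, \infty))$ and $\boldsymbol d'$ is a diagonal of $E|_{\mathcal H_1}$; the operators $E|_{\mathcal H_1}$ and $E'$ differ only in the multiplicity of the eigenvalue $0$. Let $\tilde E'$ be the rescaling of $E'$ sending the extreme points of $\sigma_{ess}(E')$ to $0$ and $1$, and let $\tilde{\boldsymbol d'}$ be the corresponding rescaling of $\boldsymbol d'$. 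The necessity conditions that hold for $(E, \boldsymbol d)$ transfer to $(\tilde E', \tilde{\boldsymbol d'})$ exactly as in Lemma \ref{prune2}, yielding \eqref{mainthm2.1}--\eqref{mainthm2.3} together with one of the alternatives (i)--(iv) of Theorem \ref{mainthm2.}. Because pruning at $0$ removes the point $0$ from the essential spectrum (since $0 \notin \sigma^+_{ess}(E)$), we have $\#|\sigma_{ess}(\tilde E')| = 3$, so Theorem \ref{mainthm2.} is applicable. The hypothesis $\alpha_0 \in \sigma^+_{ess}(E)$ translates under normalization to $0 \in \sigma^+_{ess}(\tilde E')$, supplying the first half of the sufficiency condition \eqref{mainthm2.4}; the second half is automatic when $\tilde{\delta'}^U > 0$, and in the case $\tilde{\delta'}^U = 0$ it follows from the decoupling of $E$ at $1$ (forced by compactness of $(E - \mathbf I)_+$) together with an analysis of $\sigma(E)$ near $1$. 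The sufficiency direction of Theorem \ref{mainthm2.} then delivers that $\boldsymbol d'$ is a diagonal of $E'$.

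The reverse direction is direct. The exterior majorization \eqref{mainthm2.1} restricted to $\alpha < 0$ combined with $\delta^L = 0$ yields, via Propositions \ref{LR} and \ref{LRS}, the strong majorization $\boldsymbol d_{-}^{\downarrow} \preccurlyeq \boldsymbol \lambda_{-}^{\downarrow}$ with matching (possibly infinite) sums. Since $E|_{\ran \pi((-\infty, 0))}$ is compact with trivial kernel and eigenvalue list $(\lambda_j)_{\lambda_j < 0}$, applying the Kaftal--Weiss special case of Theorem \ref{kp} (to the negated operator) produces $(d_i)_{d_i < 0}$ as a diagonal of $E|_{\ran \pi((-\infty, 0))}$. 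Concatenating this basis with the one realizing $\boldsymbol d'$ as a diagonal of $E'$ and using the orthogonal decomposition $E = E|_{\ran \pi((-\infty, 0))} \oplus E'$ gives $\boldsymbol d$ as a diagonal of $E$.

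The principal difficulty lies in the forward direction, specifically in verifying the full sufficiency condition \eqref{mainthm2.4} for the normalized pruned operator $\tilde E'$. The first half of \eqref{mainthm2.4} is precisely what the hypothesis $\alpha_0 \in \sigma^+_{ess}(E)$ provides; the second half requires case analysis on whether $1 \in \sigma^-_{ess}(E)$, which is nontrivial only when $\delta^U = 0$ and isolated eigenvalues of $E$ accumulate at $1$ from above, and is handled through the decoupling of $E$ at $1$ guaranteed by compactness of $(E - \mathbf I)_+$.
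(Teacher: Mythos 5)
The overall strategy is correct and matches the paper: for the forward direction, use decoupling of $E$ at $0$, transfer the necessity conditions of Theorem~\ref{mainthm2.} to the (normalized) pruned operator, and invoke the sufficiency direction; for the reverse direction, use the Kaftal--Weiss special case of Theorem~\ref{kp} for the negative part. The reverse direction as you wrote it agrees with the paper.

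However, your forward direction contains a gap in verifying the second half of condition~\eqref{mainthm2.4}, namely $\delta'^{U}=0\implies 1\in\sigma_{ess}^{-}(E')$. You propose a case analysis in which, when $\delta^{U}=0$ and $1\notin\sigma_{ess}^{-}(E)$, the issue ``is handled through the decoupling of $E$ at $1$ \ldots together with an analysis of $\sigma(E)$ near~$1$.'' This is not an argument: decoupling of $E$ at $1$ only tells you how $\boldsymbol{d}$ splits across $1$; it does not establish that $\boldsymbol{d}'$ is a diagonal of $E'$, and if $\delta'^{U}=0$ while $1\notin\sigma_{ess}^{-}(E')$ then the sufficiency hypothesis of Theorem~\ref{mainthm2.} genuinely fails, so you would be left without a proof. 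The paper avoids this entirely by observing that the standing hypothesis~\eqref{concl0} rules the case out. Indeed, with $\sigma_{ess}(E)=\{0,\alpha_0,b,1\}$, the assumptions $0\notin\sigma_{ess}^{+}(E)$ and $\alpha_0\in\sigma_{ess}^{+}(E)$ give $\inf\sigma_{ess}^{+}(E)=\alpha_0$; condition~\eqref{concl0} then supplies some $\lambda\in\sigma_{ess}(E)$ with $\alpha_0<\lambda<\sup\sigma_{ess}^{-}(E)\leq1$, which forces $\lambda=b$ and hence $\sup\sigma_{ess}^{-}(E)=1$, i.e.\ $1\in\sigma_{ess}^{-}(E)$ and so $1\in\sigma_{ess}^{-}(E')$. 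Once this is noted, both halves of~\eqref{mainthm2.4} hold unconditionally and Theorem~\ref{mainthm2.} applies directly, with no case analysis on $\delta^{U}$ at all. You should replace the sketchy ``analysis of $\sigma(E)$ near $1$'' with this short deduction from~\eqref{concl0}.
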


\begin{proof}
Note that the condition \eqref{mainthm2.0} and  $0 \not\in \sigma^+_{ess}(E)$ imply that $0<\alpha_0<1$. 
Suppose first that $\boldsymbol{d}$ is a diagonal of $E$ with respect to an orthonormal basis $(e_{i})$ of $\mathcal H$. Since $\delta^L=0$, the operator $E$ decouples at $0$ with respect to $\boldsymbol d$ as in Definition \ref{decouple}. In particular, the operator $E|_{\mathcal H_1}$ has diagonal $\boldsymbol d'$. The operators $E'$ and $E|_{\mathcal H_1}$ may only differ by the multiplicity of the eigenvalue 0. 
Since the normalization of the operator $E|_{\mathcal H_1}$ satisfies the necessary conditions of Theorem \ref{mainthm2.} for the normalization of the diagonal $\boldsymbol d'$, so does the normalization of the operator $E'$. Since both $\alpha_0 \in \sigma_{ess}^+(E')$, $1 \in \sigma_{ess}^-(E')$, and $\#|\sigma_{ess}(E')| = 3$, by the sufficiency of Theorem \ref{mainthm2.}, the sequence $\boldsymbol d'$ is a diagonal of $E'$. 

Conversely, suppose that $\boldsymbol d'$ is a diagonal of $E'$. By a special case of Theorem \ref{kp}, a characterization of diagonals of positive compact operators with trivial kernel due to Kaftal and Weiss \cite{kw}, $(d_{i})_{d_i<0}$ is a diagonal of the restriction of $E$ to the range of $\pi(-\infty,0)$. Hence, $\boldsymbol d$ is a diagonal of $E$.
\end{proof}

\begin{proof}[Proof of Theorem \ref{concl}] Observe that the $\ge 5$pt algorithm in Figure \ref{diagram4} is inconclusive only when it reduces to the 4pt algorithm. By the assumption \eqref{concl0}, the 3pt algorithm in Figure \ref{diagram3} is conclusive. Finally, the 4pt algorithm might be inconclusive only when \eqref{prune9} or its symmetric variant at $1$ holds. However, Lemma \ref{prune3} (or its symmetric variant) shows that the question whether $\boldsymbol d$ is a diagonal of $E$ reduces to the same question for the pruning of $E$ at $0$, which is answered by the 3pt algorithm. Either way, our algorithms yield a definite answer for operators satisfying \eqref{concl0}.
\end{proof}

\begin{remark}\label{concl5} As a corollary of Theorem \ref{concl}, the $\ge 5$pt algorithm is conclusive for the class of self-adjoint operators with uncountable spectrum, which includes all non-diagonalizable self-adjoint operators. Indeed, if a self-adjoint operator $E$ has uncountable spectrum, then $\sigma(E)$ has cardinality of the continuum $\mathfrak c$, and so does the essential spectrum $\sigma_{ess}(E)$. Consequently, the set $\sigma_{ess}^+(E) \cap \sigma_{ess}^-(E)$ has cardinality of the continuum $\mathfrak c$, and \eqref{concl0} holds.
\end{remark}

We finish this section by illustrating the necessity of transfinite recursive pruning in the algorithm in Figure \ref{diagram4}.

\begin{ex} Let $\boldsymbol \lambda, \boldsymbol d \in c_0^+$ be two sequences in $[0,1]$ such that the necessity conditions \eqref{kp1}--\eqref{kp4a} hold, but the sufficiency condition \eqref{kp4b} fails with $z=1$. That is, Theorem \ref{kp} can not be applied to deduce that there exists a compact operator $E$ with the eigenvalue list $\boldsymbol \lambda$ and diagonal $\boldsymbol d$. In other words, we encounter the kernel problem for compact positive operators with 1-dimensional kernel. Next, we shall add additional terms to $\boldsymbol \lambda$ and $\boldsymbol d$ to obfuscate the presence of the kernel problem.

By the von Neumann definition of ordinals, an ordinal $\alpha$ is a well ordered set of smaller ordinals $\beta<\alpha$. Let $\alpha$ be a countable limit ordinal. Let $\pi: \N \to \alpha$ be a bijection. Define a sequence $\boldsymbol \nu=(\nu_\beta)_{\beta<\alpha}$ by 
\[
\nu_\beta = 2- \sum_{n\in \N: \ \pi(n) < \beta}
2^{-n}.
\]
Hence, $\boldsymbol \nu$ is an embedding of ordinal $\alpha$ as a subset of the interval $[1,2]$ reversing the usual order $<$ of reals. 
Moreover, the limit points of the sequence $\boldsymbol \nu$ are in one-to-one correspondence with limit ordinals $\le \alpha$. We shall consider sequences obtained by concatenation of $\boldsymbol \lambda$ and $\boldsymbol  d$ with the sequence $\boldsymbol \nu$. 

Now suppose that our ordinal $\alpha$ is quite large, say $\alpha=\omega^\omega$. Let $E$ be a self-adjoint operator with eigenvalue list $\boldsymbol\lambda \oplus \boldsymbol \nu$. Hence, $\#|\sigma_{ess}(E)|=\infty$.
When we run the algorithm in Figure \ref{diagram4} for $E$ and the sequence $\boldsymbol d \oplus \boldsymbol \nu$, after normalization we see that all necessity conditions in Theorem \ref{mainthm2.} hold. While $\delta^L=0$, we have that $0\in \sigma^+_{ess}(E)$, and hence no pruning at $0$ is allowed. However, $\delta^U=0$, $1\not\in \sigma^-_{ess}(E)$,  and the upper excess of a pruning of $E$ at $1$ is zero, $\delta'^U=0$. Hence, we prune the operator $E$ at $1$, which corresponds to reducing the sequence $\boldsymbol \nu$ to $\boldsymbol \nu'= (\nu_\beta)_{\omega\le \beta<\alpha}$. Hence, up to a scaling factor the resulting pair is an operator $E$ with eigenvalue list $\boldsymbol\lambda \oplus \boldsymbol \nu'$ and the sequence $\boldsymbol d \oplus \boldsymbol \nu'$.

The algorithm takes this pair and then feeds it as an input for another run of the loop. The result of this run is an operator $E$ with eigenvalue list $\boldsymbol\lambda \oplus \boldsymbol \nu''$ and the sequence $\boldsymbol d \oplus \boldsymbol \nu''$, where $\boldsymbol \nu''= (\nu_\beta)_{2\omega\le \beta<\alpha}$. This is done infinitely many times until we are reduced to original pair $\boldsymbol \lambda$ and $\boldsymbol d$. If $\alpha=\omega^\omega$, only the algorithm in Figure \ref{diagram4} is emplyed. However, if $\alpha=\omega^\omega+3\omega$, then after $\omega$ runs through the loop, we are left out with the pair $\boldsymbol\lambda \oplus \boldsymbol \nu^\omega$ and $\boldsymbol d \oplus \boldsymbol \nu^\omega$, where $\boldsymbol \nu^\omega = (\nu_\beta)_{\omega^\omega\le \beta<\alpha}$. Since $\boldsymbol\lambda \oplus \boldsymbol \nu^\omega$ has 4 accumulation points we need to run through 4pt, then 3pt, and finally 2pt algorithms in Figures \ref{diagram3} and \ref{diagram2} at the final stages since pruning needs to be replaced by splitting. The compact kernel problem in Figure \ref{diagram3} does not arise at each splitting since sequences $\boldsymbol \lambda$ and $\boldsymbol d$ are being concatenated by the same sequence. Likewise, in 2pt algorithm in Figure \ref{diagram2} we observe that $(\mathfrak d_1)$ holds and we split $E$ into two compact operators for which 1pt Algorithm \cite[Figure 1]{mbjj7} and Theorem \ref{kp} are used, respectively. This is actually the special case of the splitting marked by ${}^\S$. However, the kernel problem in Theorem \ref{kp} is mute since sequences $\boldsymbol \lambda$ and $\boldsymbol d$ are  concatenated by the same sequence. The end result is the original pair $\boldsymbol \lambda$ and $\boldsymbol d$ for which 1pt algorithm is synonymous with the unsolved kernel problem. 


This illustrates how our algorithms process a pair of sequences $\boldsymbol \lambda$ and $\boldsymbol d$ into simpler subsequences until no more processing can be done and we either have a definite answer or we encounter the kernel problem. 
\end{ex}

\section{Algorithm for determining diagonals of self-adjoint operators \\
with \(2\)-point essential spectrum}

In this section we give an algorithm for determining whether a given sequence is a diagonal of a self-adjoint operator $E$ with exactly two points in its essential spectrum. 

\begin{thm}\label{pmtp3.}
Let $E$ be a self-adjoint operator on $\mathcal H$ such that its spectrum satisfies 
\[
\sigma(E) \subset (-\infty, 1]
\qquad\text{and}\qquad
\sigma_{ess}(E)=\{0,1\}.
\]
Suppose that $f_{\boldsymbol \lambda}(1/2) <\infty$, where
 $\boldsymbol\lambda=(\lambda_{j})_{j\in J}$ is the list of all eigenvalues of $E$ with multiplicity. 
Let $\boldsymbol{d} = (d_{i})_{i\in \N}$ be a sequence in $(-\infty,1]$. Define
\[\delta^{L} = \liminf_{\beta\nearrow 0}(f_{\boldsymbol\lambda}(\beta)-f_{\boldsymbol{d}}(\beta)).\]

\noindent {\rm (Necessity)} If $\boldsymbol{d}$  is a diagonal of $E$, then
\begin{align}
\label{pmtp3.0}
\#|\{i: d_i=1\}| & \le \#|\{j: \lambda_j=1\}|,\\
\label{pmtp3.1}
 f_{\boldsymbol{d}}(\alpha)  & \leq f_{\boldsymbol\lambda}(\alpha)  
\qquad\text{for all }\alpha<0,
\end{align}
and one of the following three conditions holds:
\begin{enumerate}
\item
$\delta^L=\infty$ and $\sum_{d_i<1} (1-d_i)=\infty$,
\item 
$\delta^L <\infty$ and $f_{\boldsymbol d}(1/2) =\infty$, or
\item
$\delta^L<\infty$,  $f_{\boldsymbol d}(1/2) <\infty$, $\boldsymbol d$ has accumulation points at $0$ and $1$, and there exists $\delta_0 \in [0,\delta^L]$ such that:
\begin{align}
\label{pmtp3.2}
f_{\boldsymbol\lambda}(\alpha)  &\leq f_{\boldsymbol{d}}(\alpha) + (1-\alpha)\delta_{0}  & \text{for all }\alpha\in(0,1),\\
\label{pmtp3.3}f_{\boldsymbol\lambda}'(\alpha)-f_{\boldsymbol{d}}'(\alpha)  & \equiv -\delta_0 \mod 1 & \text{for some }\alpha\in(0,1),
\\
\label{pmtp3.4}
\sum_{\lambda_i<0} |\lambda_i| &<\infty \implies
\delta^L=\delta_0.
\end{align}
\end{enumerate}

In addition, the operator $E$ decouples at a point $\alpha\in [0,1)$ if \((\mathfrak{d}_{\alpha}')\) holds, where
\begin{enumerate}
\item[$(\mathfrak d'_0)$] $\delta^L=0$ \quad or \quad $\delta_0=0$, 
\item[$(\mathfrak d'_\alpha)$] $\alpha \in (0,1)$ and $f_{\boldsymbol\lambda}(\alpha)  = f_{\boldsymbol{d}}(\alpha) + (1-\alpha)\delta_{0}$.
\end{enumerate}

\noindent {\rm (Sufficiency)}  Conversely, if \eqref{pmtp3.0} and \eqref{pmtp3.1} hold, $(\mathfrak d'_\alpha)$ does {\bf not} hold for any $\alpha \in [0,1)$, and either 
\begin{itemize}
\item (i) or (ii) holds, or
\item (iii) holds, $\lambda_j<1$ for all $j$,
\end{itemize}
then $\boldsymbol{d}$  is a diagonal of $E$.
\end{thm}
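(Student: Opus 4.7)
The plan is to derive Theorem \ref{pmtp3.} from Theorem \ref{pmt} and the sufficiency machinery of Sections \ref{S17}--\ref{S19}, using the key observation that $\sigma(E) \subset (-\infty, 1]$ forces $\delta^U = 0$. Indeed, for any $\beta > 1$ the sums defining $f_{\boldsymbol{\lambda}}(\beta)$ and $f_{\boldsymbol{d}}(\beta)$ are empty since $\lambda_j, d_i \le 1 < \beta$. Under this identity, the five cases (i)--(v) of Theorem \ref{pmt} collapse: cases (i) and (ii) of Theorem \ref{pmt} become impossible, (iii) and (iv) of Theorem \ref{pmt} become (i) and (ii) here, and (v) becomes (iii) here with the forced value $\delta_{1} \in [0,\delta^{U}] = \{0\}$; consequently \eqref{pmt4} specializes to \eqref{pmtp3.4}, while \eqref{pmt5} is automatic.

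For necessity, the majorization \eqref{pmtp3.1} (which is only nontrivial for $\alpha < 0$) follows from Theorem \ref{cptschurv2} applied to $-E$, whose positive part is compact because $\sigma_{ess}(E) \cap (-\infty,0) = \varnothing$. The cardinality bound \eqref{pmtp3.0} uses the extremality of $1 = \sup \sigma(E)$: when $d_i = 1$, the identity $\langle (\mathbf{I}-E) e_i, e_i \rangle = 0$ combined with $\mathbf{I}-E \ge 0$ forces $e_i \in \ker(E - \mathbf{I})$, so that $(e_i)_{d_i = 1}$ is an orthonormal subset of that eigenspace. The trichotomy (i)--(iii) and the decoupling conclusions $(\mathfrak{d}'_\alpha)$ for $\alpha \in [0, 1)$ are then inherited directly from Theorem \ref{pmt} upon substituting $\delta^U = \delta_1 = 0$; the conclusion $(\mathfrak{d}_1)$ of Theorem \ref{pmt} triggers automatically but yields only the trivial splitting off $\ker(E-\mathbf{I})$, which is already captured by \eqref{pmtp3.0}.

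For sufficiency, the preliminary step is to peel off the matched block at $\alpha = 1$: using \eqref{pmtp3.0}, select $J_1 \subset \{j : \lambda_j = 1\}$ with $\#|J_1| = \#|\{i : d_i = 1\}|$ and write $E = \mathbf{I}|_{J_1} \oplus E'$. The $d_i = 1$ subsequence is trivially a diagonal of $\mathbf{I}|_{J_1}$, so the problem reduces to showing that $(d_i)_{d_i < 1}$ is a diagonal of $E'$. In case (iii), the hypothesis $\lambda_j < 1$ together with \eqref{pmtp3.0} forces $d_i < 1$ for all $i$; moreover, $\sigma_{ess}(E) = \{0, 1\}$ together with the spectral theorem forces $E$ to be diagonalizable (no absolutely or singular continuous part can be supported on the two-point set $\{0, 1\}$), so $\boldsymbol\lambda$ is itself a diagonal of $E$, and Theorem \ref{csuff}(ii) applies, the hypothesis $\delta_0 > 0$ being supplied by the forbidden $(\mathfrak{d}'_0)$. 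In case (i), Theorem \ref{nbd}(c$'$) applies directly to $(E', (d_i)_{d_i < 1})$, since all of $\delta^U = 0$, $\delta^L = \infty$, $\sum_{d_i < 1}(1 - d_i) = \infty$, and $d_i < 1$ hold. In case (ii), the forbidden $(\mathfrak{d}'_0)$ forces $\delta^L > 0$, and Lemma \ref{nbdL} applies to $(E', (d_i)_{d_i < 1})$.

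The principal obstacle will be arranging the splitting so that the residual operator still satisfies the structural hypothesis $\{0, 1\} \subset \sigma_{ess}(E')$ underlying Theorem \ref{nbd} and Lemma \ref{nbdL}. This must be done with care when both $\#|J_1|$ and $\#|\{j : \lambda_j = 1\}|$ are countably infinite: one chooses $J_1$ as a proper infinite subset of $\{j : \lambda_j = 1\}$, leaving infinitely many $\lambda_j = 1$ eigenvalues in $E'$ so that $1$ remains an eigenvalue of infinite multiplicity. In the corner case where $\#|J_1| = \#|\{j : \lambda_j = 1\}| < \infty$ is exhausted, $1 \in \sigma_{ess}(E)$ must be a limit point of the point spectrum of $E$, a property preserved under removal of finitely many eigenvectors. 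Since no splitting at $0$ occurs, the membership $0 \in \sigma_{ess}(E')$ is automatic.
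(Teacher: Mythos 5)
Your proposal is correct and takes essentially the same route as the paper. Both arguments reduce the necessity part to Theorem~\ref{pmt} via the observation that $\sigma(E)\subset(-\infty,1]$ forces $\delta^U=0$ (so conditions (iii), (iv), (v) of Theorem~\ref{pmt} collapse to (i), (ii), (iii) here with $\delta_1=0$, and \eqref{pmt5} becomes automatic), derive \eqref{pmtp3.0} directly from the fact that $d_i=1$ forces $e_i\in\ker(E-\mathbf{I})$, and for sufficiency first peel off a matched $\{d_i=1\}$ block (using the cardinality bound to construct the reduced eigenvalue list) before applying Theorem~\ref{nbd}(c$'$), Lemma~\ref{nbdL}, and Theorem~\ref{csuff}(ii) in cases (i), (ii), (iii) respectively, with the $\delta_0>0$ hypothesis of Theorem~\ref{csuff}(ii) supplied by the exclusion of $(\mathfrak{d}'_0)$.

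The only cosmetic differences: you spell out explicitly why $E$ is diagonalizable in case (iii) (finite essential spectrum implies countable spectrum implies purely atomic spectral measure), whereas the paper implicitly relies on Theorem~\ref{nec}; and you flag the bookkeeping needed to keep $1\in\sigma_{ess}(E')$ after the peeling step (keeping infinitely many $\lambda_j=1$ when the set is infinite, or noting that $1$ is a non-isolated limit point of the spectrum when it is finite). The paper handles the same point more tersely by noting that $1$ remains an accumulation point of $\boldsymbol{\lambda}'$. Both are sound.
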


\begin{remark} 

Note that the number \(\delta_{0}\) is only defined when (iii) holds. Hence, in the case that (iii) does not hold, the statements \((\mathfrak{d}_{\alpha}')\) must be properly interpreted using the following convention. If (i) or (ii) holds, then  we interpret any equality involving \(\delta_{0}\) in the statements \((\mathfrak{d}_{\alpha}')\) as false. With this convention, we note that if (i) or (ii) holds, then \((\mathfrak{d}_{\alpha}')\) is false for all \(\alpha\in(0,1)\), whereas the statement \((\mathfrak{d}_{0}')\) becomes \(\delta^{L}=0\).

\end{remark}

\begin{proof}
The necessity part of Theorem \ref{pmtp3.} follows automatically from Theorem \ref{pmt}. The exception is \eqref{pmtp3.0} which follows from the fact that any basis vector corresponding to diagonal $d_i=1$ is an eigenvector of $E$.

For the sufficiency suppose that \eqref{pmtp3.0} and \eqref{pmtp3.1} hold, and \((\mathfrak{d}_{\alpha}')\) does not hold for any \(\alpha\in[0,1)\). By (i)--(iii) we have that $d_i<1$ for infinitely many $i$.  Let \(\boldsymbol{d}'=(d_{i})_{d_{i}<1}\). If \(\#|\{i : d_{i}=1\}|<\infty\), let \(\boldsymbol\lambda'\) be a sequence with all of the terms of \((\lambda_{i})_{\lambda_{i}<1}\) together with \(\#|\{i : \lambda_{i}=1\}| - \#|\{i : d_{i}=1\}|\) terms equal to \(1\). If \(\#|\{i : d_{i}=1\}|=\infty\), then set \(\boldsymbol{\lambda}' = \boldsymbol{\lambda}\). If \(\boldsymbol{d}'\) is a diagonal of a self-adjoint operator \(E\) with eigenvalue list \(\boldsymbol{\lambda}'\), then \(E\oplus\mathbf{I}\) has eigenvalue list \(\boldsymbol\lambda\) and diagonal \(\boldsymbol{d}\), where \(\mathbf{I}\) is the identity on a space of dimension \(\#|\{i : d_{i}=1\}|\). Note that \(1\) is an accumulation point of \(\boldsymbol{\lambda}'\), and hence, without loss of generality, we may assume \(d_{i}<1\) for all \(i\).

If (i) holds, then by Theorem \ref{nbd}(c') the sequence \(\boldsymbol{d}\) is a diagonal of \(E\).
If (ii) holds, then \(\delta^{L}>0\) since \((\mathfrak d_{0}')\) does not hold. By Lemma \ref{nbdL}  the sequence \(\boldsymbol{d}\) is a diagonal of \(E\). Finally, suppose that (iii) holds, and \(\lambda_{j}<1\) for all \(j\). That \((\mathfrak{d}_{0}')\) does not hold implies that \(\delta_{0}>0\). Hence we may apply Theorem \ref{csuff}(ii) to deduce that \(\boldsymbol{d}\) is a diagonal of \(E\).
\end{proof}

\begin{thm}\label{ppp}
Let $E$ be a self-adjoint operator on $\mathcal H$ such that its spectrum satisfies 
\[
\sigma(E) \subset [0, 1]
\qquad\text{and}\qquad
\sigma_{ess}(E)=\{0,1\}.
\]
Suppose that $f_{\boldsymbol \lambda}(1/2) <\infty$, where
 $\boldsymbol\lambda=(\lambda_{j})_{j\in J}$ is the list of all eigenvalues of $E$ with multiplicity. 
Let $\boldsymbol{d} = (d_{i})_{i\in \N}$ be a sequence in $[0,1]$. 

\noindent {\rm (Necessity)} If $\boldsymbol{d}$  is a diagonal of $E$, then
\begin{align}
\label{ppp0}
\#|\{i: d_i=0\}| & \le \#|\{j: \lambda_j=0\}|,\\
\#|\{i: d_i=1\}| & \le \#|\{j: \lambda_j=1\}|,
\label{ppp1}
\end{align}
and one of the following two conditions holds:
\begin{enumerate}
\item
$f_{\boldsymbol d}(1/2) =\infty$, or
\item $f_{\boldsymbol d}(1/2) <\infty$, $\boldsymbol d$ has accumulation points at $0$ and $1$, and 
\begin{align}
\label{ppp2}
f_{\boldsymbol\lambda}(\alpha)  &\leq f_{\boldsymbol{d}}(\alpha)  & \text{for all }\alpha\in(0,1),\\
\label{ppp3}f_{\boldsymbol\lambda}'(\alpha)-f_{\boldsymbol{d}}'(\alpha)  & \equiv 0 \mod 1 & \text{for some }\alpha\in(0,1).
\end{align}
\end{enumerate}

In addition, the operator $E$ decouples at a point $\alpha\in (0,1)$ if $f_{\boldsymbol\lambda}(\alpha)  = f_{\boldsymbol{d}}(\alpha)$.

\noindent {\rm (Sufficiency)}  Conversely, if \eqref{ppp0} and \eqref{ppp1} hold and either 
\begin{itemize}
\item (i) holds, or
\item (ii) holds, and $0<\lambda_j<1$ for all $j$,
\end{itemize}
then $\boldsymbol{d}$  is a diagonal of $E$.
\end{thm}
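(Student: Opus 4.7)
The necessity direction is a specialization of Theorem \ref{pmt}. Since $\sigma(E) \subset [0,1]$, any diagonal of $E$ satisfies $d_i \in [0,1]$, and the eigenvalues $\lambda_j$ lie in $[0,1]$. Hence $f_{\boldsymbol d}(\beta) = f_{\boldsymbol \lambda}(\beta) = 0$ for $\beta \le 0$ or $\beta \ge 1$, forcing $\delta^L = \delta^U = 0$ and making \eqref{pmt1} automatic. The cardinality bounds \eqref{ppp0} and \eqref{ppp1} follow from the fact that $d_i = 0$ together with $E \ge 0$ forces $Ee_i = 0$ (since $\|E^{1/2}e_i\|^2 = \langle Ee_i,e_i\rangle = 0$), and symmetrically $d_i = 1$ forces $(\mathbf I - E)e_i = 0$. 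The alternatives (i) and (ii) of the theorem then correspond exactly to alternatives (iv) and (v) of Theorem \ref{pmt}; with $\delta^L = \delta^U = 0$ the only admissible values are $\delta_0 = \delta_1 = 0$, which reduces \eqref{pmt2}--\eqref{pmt3} to \eqref{ppp2}--\eqref{ppp3} and makes \eqref{pmt4}--\eqref{pmt5} trivial. The decoupling statement for $\alpha \in (0,1)$ is an instance of $(\mathfrak d_\alpha)$ in Theorem \ref{pmt}.

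For sufficiency in case (i), where $f_{\boldsymbol d}(1/2) = \infty$, the plan is to peel off the extreme diagonal terms and invoke Theorem \ref{tomilov}. Let $I_0 = \{i : d_i = 0\}$ and $I_1 = \{i : d_i = 1\}$. Using \eqref{ppp0} and \eqref{ppp1}, I choose subspaces $J_0 \subset \ker E$ and $J_1 \subset \ker(\mathbf I - E)$ with $\dim J_k = \#|I_k|$; when $\dim \ker E = \infty$ with $\#|I_0| = \infty$ I further require $\dim(\ker E \ominus J_0) = \infty$, so that $0$ remains in the essential spectrum of the complement, and analogously for $J_1$. Then $E = \boldsymbol{0}|_{J_0} \oplus \mathbf I|_{J_1} \oplus E|_{\mathcal H'}$, where $\mathcal H' = (J_0 \oplus J_1)^\perp$ satisfies $\{0,1\} \subset \sigma_{ess}(E|_{\mathcal H'}) \subset [0,1]$. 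The remaining sequence $\boldsymbol d' = (d_i)_{i \notin I_0 \cup I_1}$ lies in $(0,1)$ and, because terms $d_i = 0$ or $d_i = 1$ contribute nothing to $f_{\boldsymbol d}(1/2)$, still satisfies $\sum_i \min(d_i', 1 - d_i') = \infty$. Theorem \ref{tomilov} then yields $\boldsymbol d'$ as a diagonal of $E|_{\mathcal H'}$, and combining with the trivial diagonals of $\boldsymbol{0}|_{J_0}$ and $\mathbf I|_{J_1}$ via Lemma \ref{subdiag} produces $\boldsymbol d$ as a diagonal of $E$.

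For sufficiency in case (ii), the key observation is that $E$ is automatically diagonalizable. Since $\sigma(E) \setminus \sigma_{ess}(E) \subset (0,1)$ consists of isolated eigenvalues of finite multiplicity, the full spectrum $\sigma(E) \subset \{0,1\} \cup \{\lambda_j : j \in J\}$ is countable. The continuous part $\pi_c$ of the spectral measure of $E$ is supported on $\sigma(E)$, hence on a countable set; as $\pi_c$ is non-atomic, its total mass is $\sum_{\lambda \in \sigma(E)} \pi_c(\{\lambda\}) = 0$, so $\pi_c = 0$ and $E$ is pure point. Since $0 < \lambda_j < 1$ by assumption, neither $0$ nor $1$ is an eigenvalue, and their membership in $\sigma_{ess}(E)$ forces $\boldsymbol \lambda$ to accumulate at both $0$ and $1$. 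Combined with \eqref{ppp0}--\eqref{ppp1} and $\dim \ker E = \dim \ker(\mathbf I - E) = 0$, this also gives $d_i \in (0,1)$ for all $i$. Now $\boldsymbol \lambda$ is a diagonal of $E$ via its eigenbasis (after reindexing by $\mathbb N$), both $f_{\boldsymbol \lambda}$ and $f_{\boldsymbol d}$ are finite on $(0,1)$ by Proposition \ref{f}, and \eqref{ppp2}--\eqref{ppp3} supply the majorization and trace hypotheses of Theorem \ref{archaic}, which directly yields $\boldsymbol d$ as a diagonal of $E$.

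The main technical hurdle is the diagonalizability argument in case (ii); without it, $\boldsymbol \lambda$ would not be a reference diagonal to which Theorem \ref{archaic} could be applied. The spectral-measure argument relies on the rigidity imposed by $\sigma_{ess}(E) = \{0,1\}$ together with the absence of $0$ and $1$ from the eigenvalue list, which squeezes any continuous part of the spectrum into a countable set and hence to nothing. A secondary care-point is the choice of the subspaces $J_0, J_1$ in case (i) so that the restricted operator retains $\{0,1\}$ in its essential spectrum; the casework on $\dim \ker E$ versus $\#|I_0|$ is straightforward but must be made explicit.
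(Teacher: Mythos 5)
Your proof is correct and follows the same approach as the paper: necessity is a specialization of the main necessity theorems (you route through Theorem \ref{pmt}, the paper through Theorem \ref{nec}; with $\sigma(E)\subset[0,1]$ both $\delta^L$ and $\delta^U$ vanish, forcing $\delta_0=\delta_1=0$), and sufficiency splits into the non-Blaschke case handled by the decomposition $E=\mathbf{0}\oplus E'\oplus\mathbf{I}$ with Theorem \ref{tomilov}, and the Blaschke case handled by Theorem \ref{archaic}. You usefully make explicit the point the paper leaves tacit, namely that the hypotheses $\sigma(E)\subset[0,1]$ and $\sigma_{ess}(E)=\{0,1\}$ force $\sigma(E)$ to be countable and hence $E$ to be diagonalizable, which is exactly what one needs in order to regard $\boldsymbol\lambda$ as a diagonal of $E$ before applying Theorem \ref{archaic}.
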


\begin{proof} (Necessity) Suppose that \(\boldsymbol{d}\) is a diagonal of \(E\). Both \eqref{ppp0} and \eqref{ppp1} follow from the fact that, since \(\sigma(E)\subset[0,1]\), any basis element corresponding to a diagonal term \(d_{i}\in\{0,1\}\) must be an eigenvector. If \(f_{\boldsymbol{d}}(1/2)=\infty\), then there is nothing to prove. In the case that \(f_{\boldsymbol{d}}(1/2)<\infty\), then \eqref{ppp2}, \eqref{ppp3}, and the additional statement about decoupling follow from Theorem \ref{nec}.

(Sufficiency) Suppose \eqref{ppp0} and \eqref{ppp1} hold. Suppose (i) holds. We can decompose \(E=\mathbf{0}\oplus E'\oplus \mathbf{I}\) where \(\mathbf{0}\) is the zero operator on a space of dimension \(\#|\{i : d_{i}=0\}|\), and \(\mathbf{I}\) is the identity operator on a space of dimension \(\#|\{i: d_{i} = 1\}|\). Moreover, we may assume \(\{0,1\}\subset\sigma_{ess}(E')\). By Theorem \ref{tomilov} the sequence \(\boldsymbol{d}':=(d_{i})_{d_{i}\in(0,1)}\) is a diagonal of \(E'\), and hence \(\boldsymbol{d}\) is a diagonal of \(E\). Next, suppose (ii) holds, and $0<\lambda_j<1$ for all $j$. Note that \eqref{ppp0} and \eqref{ppp1} imply that \(0<d_{i}<1\) for all \(i\in\N\), and hence \(\boldsymbol{d}\) is a diagonal of \(E\) by Theorem \ref{archaic}.
\end{proof}

\begin{figure}
\resizebox{6.5in}{!}{
\begin{tikzpicture}[node distance=2cm]

\node (input) [io] {Input: self-adjoint operator $E$ such that $\#|\sigma_{ess}(E)| =2 $ and a sequence $(d_{i})$};
\node (pro7) [process, below of=input] {Normalize $E$ so that $\sigma_{ess}(E) = \{0,1 \} $ };
\node (dec4) [decision, below of=pro7, yshift=-1cm] {$f_{\boldsymbol \lambda}(1/2)<\infty$};
\node (no2) [no, below of=dec4, yshift=-0.5cm] {$(d_{i})$ is not a \\ diagonal of $E$};
\node (pro8) [decision, right of=dec4, xshift=3cm] {(Necesssity) Theorem \ref{mainthm2.}};
\node (dec5) [decision, below of=pro8, yshift=-2.5cm] {$\delta_L=0$ and $0 \not\in \sigma^+_{ess}(E)$};
\node (pro9) [process, right of=dec5, xshift=3.5cm] {Prune lower part of $E$};
\node (pro9b) [kernel, below of=pro9] {Apply 1pt algorithm to upper part of $E$};
\draw [arrow] (pro9) -- (pro9b);
\node (dec6) [decision, below of=dec5, yshift=-2.5cm] {$\delta_U=0$ and $1 \not\in \sigma^-_{ess}(E)$};
\node (pro10) [process, right of=dec6, xshift=3.5cm] {Prune upper part of $E$};
\node (pro10b) [kernel, below of=pro10] {Apply 1pt algorithm to lower part of $E$};
\draw [arrow] (pro10) -- (pro10b);
\node (pro11) [stop, below of=dec6, yshift=-1.5cm] {$(d_{i})$ is a \\ diagonal of $E$};

\node (pro12) [decision, left of=dec4, xshift=-3cm] {(Necessity) Theorem \ref{pmt}};
\node (dec7) [decision, below of=pro12, yshift=-2.5cm] {$(\mathfrak d_\alpha)$ holds $\alpha \in (0,1)$};
\node (pro13) [process, right of=dec7, xshift=3cm] {\textbf{Split}  $E$ at $\alpha$ into two compact parts};
\node (pro13b) [kernel, below of=pro13] {Apply 1pt algorithm to each part of $E$};
\draw [arrow] (pro13) -- (pro13b);
\node (dec8) [decision, below of=dec7, yshift=-2.5cm] {$(\mathfrak d_0)$ or $(\mathfrak d_1)$ holds};
\node (pro14) [stop, below of=dec8, yshift=-1.5cm] {$(d_{i})$ is a \\ diagonal of $E$};
\node (pro15) [process, right of=dec8, xshift=3cm] {Split $E$ into compact and Blaschke parts$^\S$};
\node (pro16) [kernel, below of=pro15, yshift=-0.5cm] {Compact kernel problem};
\node (pro17) [process, below of=pro15, yshift=-1.5cm] {Theorem \ref{pmtp3.} applicable};
\node (dec10) [decision, below of=pro17, yshift=-1cm] {$(\mathfrak d_0)$ and $(\mathfrak d_1)$ hold};
\node (pro18) [process, right of=dec10, xshift=3cm] {Split $E$ into compact and Blaschke parts};
\node (pro19) [kernel, left of=dec10,  xshift=-3cm] {kernel problem for operator as in Theorem \ref{pmtp3.}};
\node (pro20) [kernel, right of=pro18, xshift=3.5cm, yshift=0.5cm] {Compact kernel problem};
\node (pro21) [kernel, right of=pro18, xshift=3.5cm, yshift=-0.5cm] {Kernel problem for operator as in Theorem \ref{ppp}};

\draw [arrow] (input) -- (pro7);
\draw [arrow] (pro7) -- (dec4);

\draw [arrow] (dec4) -- node [yshift=0.25cm] {yes} (pro12);
\draw [arrow] (dec4) -- node [yshift=0.25cm] {no} (pro8);
\draw [arrow] (pro8) -- node [xshift=0.45cm] {yes} (dec5);
\draw [arrow] (dec5) -- node [xshift=0.45cm] {no} (dec6);
\draw [arrow] (dec5) -- node [yshift=0.25cm] {yes} (pro9);
\draw [arrow] (dec6) -- node [xshift=0.45cm] {no} (pro11);
\draw [arrow] (dec6) -- node [yshift=0.25cm] {yes} (pro10);

\draw [arrow] (pro12) -- node [xshift=0.45cm] {yes} (dec7);
\draw [arrow] (dec7) -- node [yshift=0.25cm]{yes} (pro13);
\draw [arrow] (dec7) -- node [xshift=0.45cm]{no} (dec8);

\draw [arrow] (dec8) -- node [xshift=0.45cm] {no} (pro14);
\draw [arrow] (dec8) -- node [yshift=0.25cm] {yes} (pro15);
\draw [arrow] (pro15) -- (pro16);
\draw [arrow] (pro17) -- (dec10);

\draw [arrow] (pro8) -- node [yshift=0.25cm]{no} (no2);
\draw [arrow] (pro12) -- node [yshift=0.25cm]{no} (no2);

\draw [arrow] (dec10) -- node [yshift=0.25cm]{no} (pro19);
\draw [arrow] (dec10) -- node [yshift=0.25cm]{yes} (pro18);

\draw [arrow] (pro18) -- (pro20);
\draw [arrow] (pro18) -- (pro21);

\end{tikzpicture}
}
\caption{Algorithm for operators with 2-point essential spectrum. In the special case the splitting marked by $^\S$ might result into two compact operators for which 1pt Algorithm and Theorem \ref{kp} are used instead.}
\label{diagram2}
\end{figure}
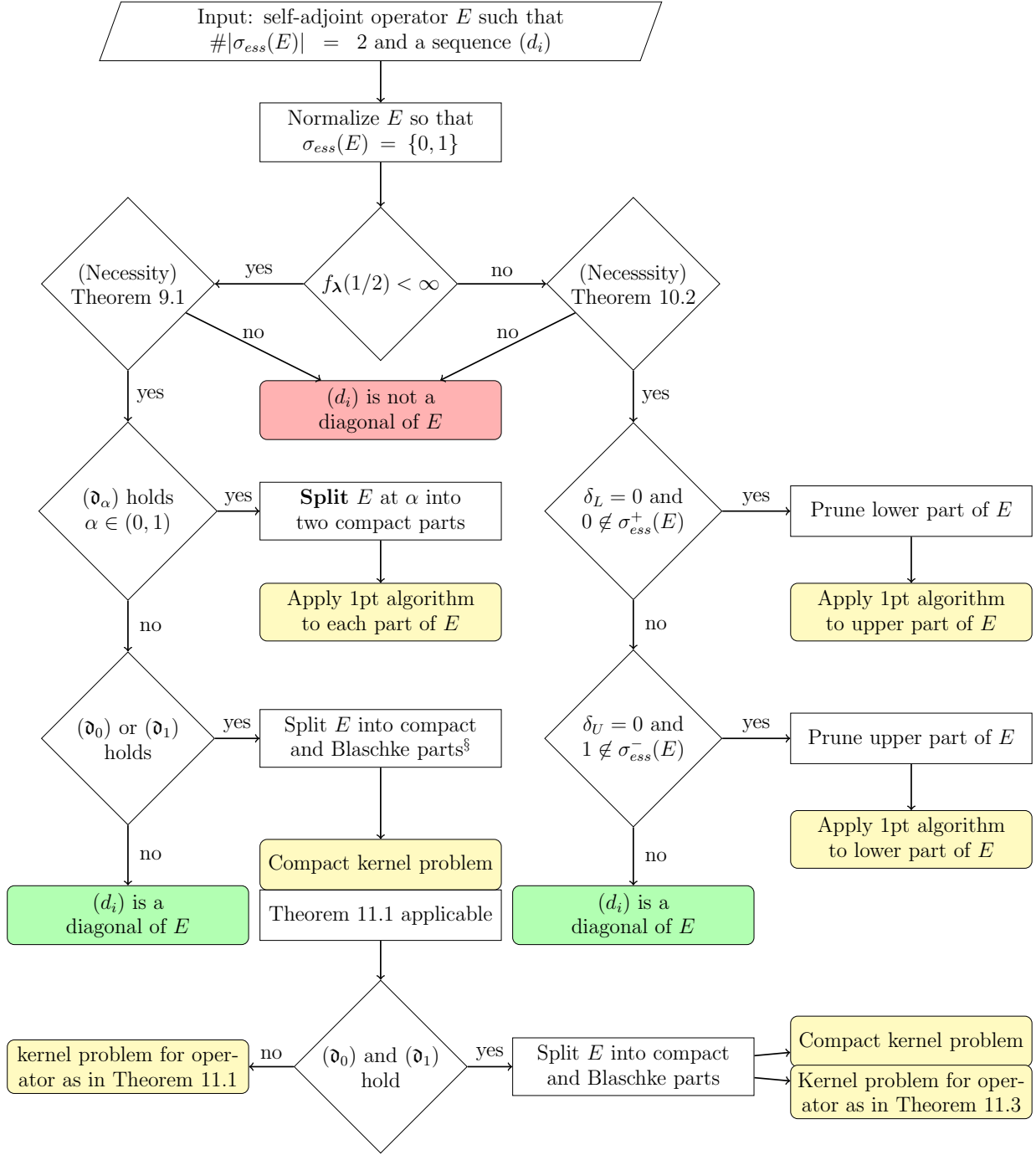

The algorithm for determining whether a sequence $\boldsymbol d$ is a diagonal of an operator $E$ with $\#|\sigma_{ess}(E)| = 2$ is represented by Figure \ref{diagram2}. This procedure starts by normalizing $E$ so that \eqref{pmt0} holds. Then the algorithm splits into two cases depending whether eigenvalue sequence $\boldsymbol \lambda$ satisfies the Blaschke condition $f_{\boldsymbol \lambda}(1/2)<\infty$, or not.

In the former case, we check whether the exterior majorization \eqref{pmt1} holds and whether one of the conditions (i)--(v) in Theorem \ref{pmt} is satisfied. If not, then $\boldsymbol d$ is not a diagonal of $E$. Once the necessity conditions in Theorem \ref{pmt} are satisfied, we check whether $(\mathfrak d_\alpha)$ holds for some $\alpha \in (0,1)$. By Remark \ref{pmt6} if one of the conditions (i)--(iv) hold, then $(\mathfrak d_\alpha)$ is false for all $\alpha \in (0,1)$. Hence, if $(\mathfrak d_\alpha)$ holds for some $\alpha \in (0,1)$, then necessarily condition (v) in Theorem \ref{pmt} holds. By Theorem \ref{nec}, if $\boldsymbol d$ is a diagonal of $E$, then the operator $E$ decouples at $\alpha$. Hence, Theorem \ref{isplit} yields a splitting of $E$ at $\alpha$ into operators $E_1$ and $E_2$ such that $(d_{i})_{d_i<\alpha}$ is a diagonal of $E_1$, $(d_{i})_{d_i>\alpha}$ is a diagonal of $E_2$, and \eqref{z102} holds. Conversely, the existence of such splitting implies that $\boldsymbol d$ is a diagonal of $E$. Hence, to determine whether $\boldsymbol d $ is a diagonal of $E$, we look for all possible splittings of $E$ into $E_1$ and $E_2$ at $\alpha$ as above. For each of operator $E_1$ and $E_2$ we use 1pt Algorithm in \cite[Figure 1]{mbjj7} to test whether    
$(d_{i})_{d_i<\alpha}$ and $(d_{i})_{d_i>\alpha}$ are diagonals of $E_1$ and $E_2$, respectively. 

If $(\mathfrak d_\alpha)$ is false for all $\alpha \in (0,1)$, then we check whether $(\mathfrak d_\alpha)$ holds for $\alpha=0$ or $\alpha=1$. If not, then Theorem \ref{pmt} yields that $\boldsymbol d $ is a diagonal of $E$. Hence, we can assume that $(\mathfrak d_\alpha)$ holds for $\alpha=0$ or $\alpha=1$. To fix attention, assume $(\mathfrak d_1)$ holds, but $(\mathfrak d_0)$ does not hold. By Theorem \ref{pmt}, if $\boldsymbol d$ is a diagonal of $E$, then the operator $E$ decouples at $1$. As before, Theorem \ref{isplit} yields a splitting of $E$ at $1$ into operators $E_1$ and $E_2$ such that $(d_{i})_{d_i<1}$ is a diagonal of $E_1$, $(d_{i})_{d_i>1}$ is a diagonal of $E_2$, and \eqref{z102} holds for $\alpha=1$. Conversely, the existence of such splitting implies that $\boldsymbol d$ is a diagonal of $E$.
While the operator $E_2$ is a shift of compact operator, the operator $E_1$ might or might not have two points in its essential spectrum depending whether $1$ belongs to the left essential spectrum $\sigma_{ess}^-(E)$. In the special case when $E_1$ has only one point in its essential spectrum, we use the 1pt Algorithm in \cite[Figure 1]{mbjj7} and Theorem \ref{kp} to test whether    
$(d_{i})_{d_i<1}$ and $(d_{i})_{d_i>1}$ are diagonals of $E_1$ and $E_2$, respectively. This corresponds to symbol ${}^\S$ in Figure \ref{diagram2}. In the generic case when $E_1$ has two points in its essential spectrum,  Theorem \ref{pmtp3.} is used to test whether  $(d_{i})_{d_i<1}$ is a diagonal of $E_1$. Since $(\mathfrak d_\alpha)$ does not hold for any $\alpha\in [0,1)$ and $\delta_1=0$, we can deduce that for the operator $E_1$ the condition $(\mathfrak d'_\alpha)$ does not hold for any $\alpha\in [0,1)$. Note that the sufficiency direction of Theorem \ref{pmtp3.} only applies when all the eigenvalues $\lambda$ of $E_1$ are $<1$. For all splittings where $E_1$ has at least one eigenvalue $\lambda=1$, the algorithm is inconclusive due to the kernel problem for operators as in Theorem \ref{pmtp3.}. Finally, observe that if any splitting of $E$ results in an operator $E_2$ that is finite dimensional, then we apply the classical Schur-Horn theorem instead of Theorem \ref{kp}.
This concludes the analysis of the algorithm when $(\mathfrak d_1)$ holds, but $(\mathfrak d_0)$ does not hold.

The case when $(\mathfrak d_0)$ holds, but $(\mathfrak d_1)$ does not hold follows by applying a symmetric variant of Theorem \ref{pmtp3.}. Finally, we consider the case when both $(\mathfrak d_0)$ and $(\mathfrak d_1)$ hold.
By Theorem \ref{pmt}, if $\boldsymbol d$ is a diagonal of $E$, then the operator $E$ decouples both at $0$ and at $1$. Two successive applications of Theorem \ref{isplit} yield a splitting of $E$ at $0$ and $1$ into operators $E_0$, $E_1$, and $E_2$ such that $(d_{i})_{d_i<0}$ is a diagonal of $E_0$, $(d_{i})_{0<d_i<1}$ is a diagonal of $E_1$, $(d_{i})_{d_i>1}$ is a diagonal of $E_2$, and 
\begin{align*}
\dim \ker ( E) & =  \dim \ker ( E_0) + \dim \ker ( E_1) + \#|\{i: d_i=0\}|,\\
\dim \ker ( \mathbf I - E) &=  \dim \ker ( \mathbf I -E_1) + \dim \ker ( \mathbf I - E_2) + \#|\{i: d_i=1\}|.
\end{align*}
Conversely, the existence of such splitting implies that $\boldsymbol d$ is a diagonal of $E$. Hence, to determine whether $\boldsymbol d $ is a diagonal of $E$, we look for all possible splittings of $E$ into $E_0$, $E_1$, and $E_2$ as above. For operators $E_0$ and $E_2$ we use Theorem \ref{kp} to test whether    
$(d_{i})_{d_i<0}$ and $(d_{i})_{d_i>1}$ are diagonals of $E_0$ and $E_2$, respectively, whereas we use Theorem \ref{ppp} to test whether $(d_{i})_{0<d_i<1}$ is a diagonal of $E_1$. For all splittings when when $E_1$ has at least one eigenvalue $\lambda=0$ or $\lambda=1$, the algorithm is inconclusive due to the kernel problem for operators as in Theorem \ref{ppp}. 

Finally, observe that if any splitting of $E$ results in one of the operators $E_0$, $E_1$, or $E_2$ being finite dimensional, then we apply the classical Schur-Horn theorem instead of Theorems \ref{kp} and \ref{ppp}. Observe that the extreme case when both $E_0$ and $E_2$ are finite dimensional is also possible. However, this is the only scenario when two of the operators $E_0$, $E_1$, or $E_2$ are finite dimensional. This concludes the analysis of the algorithm in Figure \ref{diagram2} when the eigenvalue sequence $\boldsymbol \lambda$ satisfies the Blaschke condition $f_{\boldsymbol \lambda}(1/2)<\infty$.

To analyze the algorithm when the eigenvalue sequence $\boldsymbol \lambda$ satisfies non-Blaschke condition $f_{\boldsymbol \lambda}(1/2)=\infty$, we employ the following lemma.

\begin{lem}\label{prune13}
Let $E$ be a self-adjoint operator on $\mathcal H$. Let $\boldsymbol\lambda$ be the list of all eigenvalues of $E$ with multiplicity. Suppose that 
\[
\sigma_{ess}(E)=\{0,1\} \qquad\text{and}\qquad
f_{\boldsymbol \lambda}(1/2)=\infty.
\]
Let $\boldsymbol{d} = (d_{i})_{i\in \N}$ be such that \eqref{mainthm2.1}, \eqref{mainthm2.2}, and one of the four conditions (i)--(iv) in Theorem \ref{mainthm2.} hold. Suppose that 
\[
\delta^U=0 \qquad\text{and} \qquad 1 \not\in \sigma^-_{ess}(E).
\]
Let $(E',\boldsymbol d')$ be a pruning of $(E,\boldsymbol d)$ at 1. Let $\boldsymbol\lambda'$ be the list of all eigenvalues of $E'$ with multiplicity. Then, $\boldsymbol{d}$ is a diagonal of $E$ $\iff$ $\boldsymbol{d}'$ is a diagonal of $E'$.
\end{lem}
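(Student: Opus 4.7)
My plan is to adapt the decoupling-then-splitting strategy of Lemmas \ref{prune} and \ref{prune2} to the present non-Blaschke two-point essential spectrum setting. As preliminary observations, the boundedness of $E$ combined with $\sigma_{ess}(E) = \{0, 1\}$ ensures that $(E - \mathbf I)_+$ is compact, since the eigenvalues $\lambda_j > 1$ can accumulate only at $1$. The hypothesis $1 \notin \sigma_{ess}^-(E)$ forces $\dim \ker(E - \mathbf I) < \infty$, so that $\sigma_{ess}(E') = \{0\}$ and consequently $E'$ is itself compact self-adjoint.

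For the $(\Leftarrow)$ direction, I will decompose $E = E' \oplus E''$, where $E'' := E|_{\ran \pi((1, \infty))}$, and reduce the problem to showing $(d_i)_{d_i > 1}$ is a diagonal of $E''$. The shifted operator $E'' - \mathbf I$ is positive compact with trivial kernel, and the shifted sequences $(d_i - 1)_{d_i > 1}$ and $(\lambda_j - 1)_{\lambda_j > 1}$ lie in $c_0^+$. They satisfy the majorization \eqref{kp2} (from \eqref{mainthm2.1} for $\alpha > 1$) and the sum-equality \eqref{kp3} (via Proposition \ref{LRS} when the sums are finite, trivially when infinite, using $\delta^U = 0$); the kernel condition \eqref{kp4b} is vacuous. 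Theorem \ref{kp} then delivers $(d_i)_{d_i > 1}$ as a diagonal of $E''$, and concatenation yields $\boldsymbol d$ as a diagonal of $E$.

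For the $(\Rightarrow)$ direction, I will first apply Proposition \ref{p211} to $E - \mathbf I$: its positive part is compact and its lower excess at $0$ equals $\delta^U = 0$, hence $E$ decouples at $1$. Theorem \ref{isplit} then produces a splitting $E \cong \mathbf I_{z_0} \oplus E_1 \oplus E_2$ at $\alpha = 1$ with $z_0 = \#|\{i : d_i = 1\}|$, $\sigma(E_1)\subset(-\infty,1]$, $\sigma(E_2)\subset[1,\infty)$, such that $(d_i)_{d_i < 1}$ is a diagonal of $E_1$ and $(d_i)_{d_i > 1}$ is a diagonal of $E_2$. In particular $\boldsymbol d'$ is already a diagonal of the compact operator $\tilde E := \mathbf I_{z_0} \oplus E_1$, while $E'$ is unitarily equivalent to $\tilde E \oplus \mathbf I_{z_2}$ where $z_2 := \dim \ker(E_2 - \mathbf I)$ is finite. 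The case $z_2 = 0$ is immediate. When $z_2 > 0$, I will upgrade the diagonal statement from $\tilde E$ to $E'$ by invoking the sufficiency of Theorem \ref{intropv3}: the necessary conditions \eqref{p1v3}--\eqref{p6v3} for $(\tilde{\boldsymbol\lambda}, \boldsymbol d')$ are preserved upon adjoining $z_2$ copies of $1$ to the eigenvalue list (since both the positive-part partial sums and $\sigma_+$ only increase while the negative-part data is unchanged), and $\sigma_+^{E'} \geq \sigma_+^{\tilde E} + z_2 > 0$ supplies \eqref{p0v3}.

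The main obstacle will be verifying the summability implications \eqref{p3v3}--\eqref{p4v3} for $(E', \boldsymbol d')$. In the exceptional summable cases I will extract the needed trace identity $\mathrm{tr}(\boldsymbol d') = \mathrm{tr}(E')$ by combining the overall trace balance $\mathrm{tr}(\boldsymbol d) = \mathrm{tr}(E)$ with the relation $\mathrm{tr}((d_i)_{d_i > 1}) - \mathrm{tr}(E'')$ controlled by $\delta^U = 0$ on the upper block via Proposition \ref{LRS}, allowing the $z_2$-shift in $\sigma_+^{E'}$ to be matched by the corresponding shift in $\sigma_-^{E'}$. The non-Blaschke hypothesis $f_{\boldsymbol\lambda}(1/2) = \infty$ enters the picture only through its role in placing this lemma within the non-Blaschke branch of the algorithm in Figure \ref{diagram2} and in ensuring that $\boldsymbol d'$ has accumulation at both $0$ and $1$ so that the trace comparisons of Theorem \ref{intropv3} are nonvacuous.
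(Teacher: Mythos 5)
Your plan is broadly the paper's strategy and the $(\Leftarrow)$ direction is fine, but the $(\Rightarrow)$ direction has a genuine gap at precisely the step you flag as the main obstacle, and your proposed repair does not work. After splitting you correctly have $\boldsymbol d'$ as a diagonal of the compact operator $\tilde E\cong \mathbf I_{z_0}\oplus E_1$, with $E'\cong\tilde E\oplus\mathbf I_{z_2}$, and you correctly note that adjoining $z_2$ ones increases $\sigma_+$ by $z_2$ while leaving $\sigma_-$ and the negative-side data untouched. But this means \eqref{p3v3} (namely $\boldsymbol d'_+\in\ell^1\implies\sigma_-\ge\sigma_+$) is exactly the condition that can \emph{fail} when passing from $\tilde E$ to $E'$: $\sigma_+$ goes up by $z_2$ and $\sigma_-$ does not move. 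Your proposed fix via a trace identity $\tr(\boldsymbol d')=\tr(E')$ "matching the $z_2$-shift in $\sigma_+^{E'}$ by a corresponding shift in $\sigma_-^{E'}$" cannot succeed: when $z_2>0$ we have $\sigma_+^{E'}\ge z_2>0$, so $\tr(\boldsymbol d')\ne\tr(E')$ even when both converge, and $\sigma_-^{E'}$ is literally identical to $\sigma_-^{\tilde E}$ because the only change to the eigenvalue list was adding positive terms; there is no shift in $\sigma_-$ to match.

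What actually rescues \eqref{p3v3} is the hypothesis $f_{\boldsymbol\lambda}(1/2)=\infty$, whose role you misidentified (it has nothing to do with $\boldsymbol d'$ accumulating at $1$ --- $E'$ is compact, so $\boldsymbol d'\in c_0$ with accumulation only at $0$). Since $f_{\boldsymbol\lambda}(1/2)=\infty$ and removing/adding finitely many $1$'s does not affect summability of the positive part, we get $(\boldsymbol\lambda')_+\notin\ell^1$ and likewise $(\tilde{\boldsymbol\lambda})_+\notin\ell^1$. Hence if $\boldsymbol d'_+\in\ell^1$, then $\sigma_+^{\tilde E}=\infty$, and applying the \emph{necessity} direction of \eqref{p3v3} to the compact $\tilde E$ (which already has $\boldsymbol d'$ as a diagonal) forces $\sigma_-^{\tilde E}=\infty$, whence $\sigma_-^{E'}=\infty\ge\sigma_+^{E'}$. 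This is how the paper closes the gap; it is an observation your write-up is missing.
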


\begin{proof} 
Suppose first that $\boldsymbol{d}$ is a diagonal of $E$. Since $\delta^U=0$, the operator $E$ decouples at $1$ with respect to $\boldsymbol d$ as in Definition \ref{decouple}. That is,
\[
\mathcal H_0=\overline{\lspan }\{ e_i: d_i \le 1 \}\qquad\text{and}\qquad \mathcal H_1=\overline{\lspan} \{ e_i: d_i >1 \},
\]
are invariant subspaces of $E$ and
\[
\sigma(E|_{\mathcal H_0}) \subset (-\infty,1]\qquad\text{and}\qquad\sigma(E|_{\mathcal H_1}) \subset [1,\infty).
\]
In particular, the operator $E|_{\mathcal H_0}$, which is a restriction of $E'$, has diagonal $\boldsymbol d'$. The operators $E'$ and $E|_{\mathcal H_0}$ are compact and may only differ by the multiplicity of the eigenvalue 1. That is, the eigenvalue list of $E|_{\mathcal H_0}$ is the same as that of $E'$ with the exception that it might contain fewer eigenvalues equal to 1. If $E'$ equals $E|_{\mathcal H_0}$, then $\boldsymbol{d}'$ is a diagonal of $E'$. Otherwise note that the value of $\sigma_+$ in Theorem \ref{intropv3} for the pair $(E',\boldsymbol d')$ is positive and greater than or equal to that of for the pair $(E|_{\mathcal H_0},\boldsymbol d')$. By the necessity direction of Theorem \ref{intropv3}, we see that \eqref{p5v3}--\eqref{p2v3} and \eqref{p4v3} also hold for the pair $(E',\boldsymbol d')$. Since $f_{\boldsymbol \lambda}(1/2)=\infty$, we have $(\boldsymbol \lambda')_+ \not\in \ell^1$. Hence,
if $\boldsymbol d_+ \in \ell^1$, then the values of $\sigma_+$ and $\sigma_-$ for the pair $(E|_{\mathcal H_0},\boldsymbol d')$ are infinite. Thus, \eqref{p3v3} also holds for the pair $(E',\boldsymbol d')$, and by the sufficiency direction of Theorem \ref{intropv3} the sequence $\boldsymbol d'$ is a diagonal of $E'$.

Conversely, suppose that $\boldsymbol d'$ is a diagonal of $E'$. By a special case of Theorem \ref{kp}, a characterization of diagonals of positive compact operators with trivial kernel due to Kaftal and Weiss \cite{kw}, $(d_{i})_{d_i>1}$ is a diagonal of the restriction of $E$ to the range of $\pi(1,\infty)$. Hence, $\boldsymbol d$ is a diagonal of $E$.
\end{proof}

In the case when $f_{\boldsymbol \lambda}(1/2)=\infty$, we start by checking whether all necessary conditions \eqref{mainthm2.1}--\eqref{mainthm2.3} and one of the conditions (i)--(iv) in Theorem \ref{mainthm2.} are satisfied. If not, then $\boldsymbol d$ is not a diagonal of $E$. In the case that both $\delta^U=0$ and $1 \not \in \sigma^-_{ess}(E)$ hold, then we define $(E',\boldsymbol d')$ to be a pruning of $(E,\boldsymbol d)$ at 1. Lemma \ref{prune13} yields that $\boldsymbol d$ is a diagonal of $E$ if and only if $\boldsymbol d'$ is a diagonal of $E'$. Since $E'$ is a compact operator, we apply the 1pt algorithm to test whether $\boldsymbol d'$ is a diagonal of $E'$. Likewise, if both $\delta^L=0$ and $0 \not \in \sigma^+_{ess}(E)$ hold, then we use a symmetric variant of Lemma \ref{prune13} to justify the pruning of $(E,\boldsymbol d)$ at 0. This leaves out the final case when \eqref{mainthm2.4} holds, where Theorem \ref{mainthm2.} yields that $\boldsymbol d$ is a diagonal of $E$. This concludes the analysis of the 2pt algorithm in Figure \ref{diagram2}.

\section{Kernel problem for operators with 2-point essential spectrum} \label{S23}

In this section we discuss the kernel problem for two classes of operators as Theorems \ref{pmtp3.} and \ref{ppp}. In particular, we show that an operator as in Theorem \ref{ppp} needs to satisfy additional necessary conditions in the analogy to compact positive operators in Theorem \ref{kp}.

\begin{thm}\label{trp} Let $E$ be a positive operator on a Hilbert space $\Hil$ with $\sigma(E)\subset [0,1]$. Let $(e_{i})_{i\in I}$ be an orthonormal basis of $\Hil$.
Define the sequence $\boldsymbol{d}=(d_{i})_{i\in I}$ by $d_{i} = \langle Ee_{i},e_{i}\rangle$. If $f_{\boldsymbol d}(\alpha)<\infty$ for some $\alpha \in (0,1)$,
 then $E$ is diagonalizable. Moreover, if $\boldsymbol\lambda=(\lambda_{i})_{i\in I}$ is the eigenvalue list (with multiplicity) of $E$, then 
\begin{align}\label{trp1}
f_{\boldsymbol{\lambda}}(\alpha)
\le 
 f_{\boldsymbol d}(\alpha)
&\qquad\text{for all }\alpha\in(0,1),\\
\label{trp2}f_{\boldsymbol{d}}'(\alpha)-f_{\boldsymbol\lambda}'(\alpha)\in\Z
&\qquad\text{for a.e. }\alpha\in(0,1).
\end{align}
In addition, we have
\begin{align}\label{trp3}
 \#|\{i: d_i=0\}| +\liminf_{\alpha \searrow 0} \frac{f_{\boldsymbol{d}}(\alpha)-
 f_{\boldsymbol \lambda}(\alpha)}{\alpha} \ge \#|\{j: \lambda_j=0\}| \ge  \#|\{i: d_i=0\}|,
 \\
 \label{trp4}
 \#|\{i: d_i=1\}| +\liminf_{\alpha \nearrow 1} \frac{f_{\boldsymbol{d}}(\alpha)-
 f_{\boldsymbol \lambda}(\alpha)}{1-\alpha} \ge \#|\{j: \lambda_j=1\}|  \ge \#|\{i: d_i=1\}|.
\end{align}
\end{thm}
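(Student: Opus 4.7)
The plan is to first deduce the bulk of the statement — diagonalizability, the exterior majorization \eqref{trp1}, and the integrality condition \eqref{trp2} — as a direct application of Lemma \ref{trap}, and then to prove the cardinality refinements \eqref{trp3} and \eqref{trp4} by a careful spectral-theoretic expansion of the difference $f_{\boldsymbol d}(\alpha)-f_{\boldsymbol\lambda}(\alpha)$.

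First observe that $\sigma(E)\subset[0,1]$ forces $d_i\in[0,1]$ for all $i\in I$. Since $f_{\boldsymbol d}(\alpha_0)<\infty$ for some $\alpha_0\in(0,1)$, both $C_{\boldsymbol d}(\alpha_0)$ and $D_{\boldsymbol d}(\alpha_0)$ are finite; a short argument (each of the sets $\{i\colon \alpha_0\le d_i<\beta\}$ is finite when $\beta<1$, since $D_{\boldsymbol d}(\alpha_0)<\infty$, and analogously at the lower end) yields $f_{\boldsymbol d}(\alpha)<\infty$ for every $\alpha\in(0,1)$. Apply Lemma \ref{trap} with the ONB $(e_i)_{i\in I}$ (which is a Parseval frame with $\|e_i\|^2=1$) and the choice $J=\{i\colon d_i<\alpha\}$, so that \eqref{trap0} holds automatically. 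By Proposition \ref{fd}, the right-hand side of \eqref{trap1} equals $f_{\boldsymbol d}(\alpha)$, yielding \eqref{trp1}, and diagonalizability follows as well. The auxiliary hypothesis \eqref{trapa} is trivial because $\min(\|e_i\|^2,1-\|e_i\|^2)=0$, so \eqref{trap2} of Lemma \ref{trap} gives \eqref{trp2}.

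The easy halves of \eqref{trp3} and \eqref{trp4} are immediate: if $d_i=\langle Ee_i,e_i\rangle=0$ with $E\ge0$ then $E^{1/2}e_i=0$, so $e_i\in\ker E$; hence $\#|\{j\colon \lambda_j=0\}|\ge\#|\{i\colon d_i=0\}|$, and the analogous statement for $\mathbf{I}-E$ gives the other half. For the quantitative halves, the main task is \eqref{trp3}; \eqref{trp4} follows from it applied to $\mathbf{I}-E$ together with the identity $f_{\mathbf{1}-\boldsymbol d}(\alpha)=f_{\boldsymbol d}(1-\alpha)$ (the potential $\#|\{d_i=\alpha\}|$ discrepancies between $C$ and $D$ cancel). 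Set $P_0=\pi(\{0\})$, $Q=\pi((0,\alpha))$, $R=\pi([\alpha,1])$, so that $\mathbf I=P_0+Q+R$. Using the notation $k_i=\langle Ke_i,e_i\rangle$, $t_i=\langle Te_i,e_i\rangle$, $p_i=\langle Re_i,e_i\rangle$ from the proof of Lemma \ref{trap} and the identity $d_i=k_i+p_i-t_i$, a direct expansion of $f_{\boldsymbol d}(\alpha)-f_{\boldsymbol\lambda}(\alpha)$ gives
\[
f_{\boldsymbol d}(\alpha)-f_{\boldsymbol\lambda}(\alpha)=\sum_{i\in J}\int_{[\alpha,1]}(\lambda-\alpha)\,d\mu_i+\sum_{i\in I\setminus J}\left[\alpha\|P_0 e_i\|^2+\int_{(0,\alpha)}(\alpha-\lambda)\,d\mu_i\right],
\]
where $\mu_i(\cdot)=\langle \pi(\cdot)e_i,e_i\rangle$. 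Every term is nonnegative, and dropping all but the $\alpha\|P_0 e_i\|^2$ contributions and dividing by $\alpha$ yields
\[
\frac{f_{\boldsymbol d}(\alpha)-f_{\boldsymbol\lambda}(\alpha)}{\alpha}\ \ge\ \sum_{d_i\ge\alpha}\|P_0 e_i\|^2.
\]

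For any finite subset $F\subset\{i\colon d_i>0\}$, choosing $\alpha<\min_{i\in F}d_i$ shows $\sum_{i\in F}\|P_0 e_i\|^2\le\liminf_{\alpha\searrow 0}(f_{\boldsymbol d}(\alpha)-f_{\boldsymbol\lambda}(\alpha))/\alpha$, and taking a supremum over $F$ gives $\sum_{d_i>0}\|P_0 e_i\|^2\le\liminf_{\alpha\searrow 0}(f_{\boldsymbol d}(\alpha)-f_{\boldsymbol\lambda}(\alpha))/\alpha$. Since $(e_i)$ is an ONB, $\operatorname{tr}P_0=\sum_i\|P_0e_i\|^2$; the terms with $d_i=0$ contribute $1$ each, so
\[
\#|\{j\colon\lambda_j=0\}|=\operatorname{tr}P_0=\#|\{i\colon d_i=0\}|+\sum_{d_i>0}\|P_0 e_i\|^2,
\]
which together with the previous inequality is exactly \eqref{trp3}. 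The main obstacle is the bookkeeping in the expansion of $f_{\boldsymbol d}(\alpha)-f_{\boldsymbol\lambda}(\alpha)$: one must recognize that after substituting $d_i=k_i+p_i-t_i$, the cross terms recombine as integrals against $\mu_i$ supported away from $0$, isolating the crucial $\alpha\|P_0 e_i\|^2$ summand that produces the correct cardinality on the right-hand side.
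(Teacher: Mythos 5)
Your proof is correct and takes essentially the same approach as the paper's: both isolate the contribution of the kernel projection $\pi(\{0\})$ by refining the operator inequality $K \le \alpha(\mathbf{I}-P)$ to $K \le \alpha(\mathbf{I}-P-\pi(\{0\}))$, which after dividing by $\alpha$ and letting $\alpha\searrow 0$ pins down $\operatorname{tr}\pi(\{0\})=\dim\ker E$. The only cosmetic differences are that you write the refined estimate as a pointwise spectral-measure identity for $f_{\boldsymbol d}(\alpha)-f_{\boldsymbol\lambda}(\alpha)$ rather than as a chain of operator inequalities in the style of Lemma \ref{trap}, and that you dispatch the endpoint cases $d_i\in\{0,1\}$ directly within the formula (verifying they contribute $0$), whereas the paper first reduces to $d_i\in(0,1)$ before running the argument; both routes are valid and lead to the same key inequality $(f_{\boldsymbol d}(\alpha)-f_{\boldsymbol\lambda}(\alpha))/\alpha\ge\sum_{d_i\ge\alpha}\|\pi(\{0\})e_i\|^2$.
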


\begin{remark}

Conditions \eqref{trp3} and \eqref{trp4} are analogues of the majorization inequality \eqref{kp4a} in Theorem \ref{kp}, albeit expressed in terms of majorization functions. Indeed, \eqref{kp4a} can be equivalently stated using the Lebesgue majorization as
\[
\liminf_{\alpha \searrow 0} \frac{\delta(\alpha,\boldsymbol \lambda,\boldsymbol d)}{\alpha} \ge z.
\]
Likewise, \eqref{kp4b} is equivalent to existence of $\alpha_0>0$ such that
\[
\delta(\alpha,\boldsymbol \lambda,\boldsymbol d) \ge \alpha z
\qquad\text{for } 0<\alpha<\alpha_0.
\]
Since we do not need these facts, the proof is left to the reader.
\end{remark}

\begin{proof} In light of Lemma \ref{trap}, it remains to show conclusions \eqref{trp3} and \eqref{trp4}. By symmetry consideration, it suffices to show only \eqref{trp3}.
If $d_i=0$ or $1$ for some $i\in I$, then $e_i$ is an eigenvector of $E$ with eigenvalue $0$ or $1$, resp. Hence, without loss of generality we can assume that $d_i\in (0,1)$ for all $i\in I$. We proceed as in the proof of Lemma \ref{trap}.

By the spectral theorem there is a projection valued measure $\pi$ such that 
\[
E = \int_{[0,1]}\lambda\,d\pi(\lambda).
\]
Fix $\alpha\in(0,1)$ and set
\[
K = \int_{[0,\alpha)}\lambda\,d\pi(\lambda)\quad\text{and}\quad 
T = \int_{[\alpha,1]}(1-\lambda)\,d\pi(\lambda).\]
Let $P$ be the projection given by $P =\pi([\alpha, 1])$. For each $i\in I$ set $k_{i} = \langle Ke_{i},e_{i}\rangle, p_{i} = \langle Pe_{i},e_{i}\rangle$, and $t_{i} = \langle Te_{i},e_{i}\rangle$. Since $E=K+P-T$ we also have 
\begin{equation}\label{trp5}d_{i} = k_{i}+p_{i}-t_{i}.\end{equation}

We claim that $K$ and $T$ are trace class. Since $K$ and $T$ are positive and $(e_{i})_{i\in I}$ is a Parseval frame, it is enough to show that $\sum_{i\in I}(k_{i}+t_{i})<\infty$. 

Next, we observe
\begin{equation}\label{trp6}T=\int_{[\alpha,1]}(1-\lambda)\,d\pi(\lambda)\leq \int_{[\alpha,1]}(1-\alpha)\,d\pi(\lambda)= (1-\alpha)P\end{equation}
and
\begin{equation}\label{trp7}K = \int_{(0,\alpha)}\lambda\,d\pi(\lambda)\leq \int_{(0,\alpha)}\alpha\,d\pi(\lambda)= \alpha(\mathbf{I}-P) - \alpha \pi(\{0\}).\end{equation}

Let $J=\{i\in I: d_i < \alpha\}$. Combing $E=K+P-T$  and \eqref{trp6} yields $E \ge K+ \frac{\alpha}{1-\alpha}T$. By \eqref{trp5} we deduce that  $d_{i}\geq k_{i}+\frac{\alpha}{1-\alpha}t_{i}$, and thus
\begin{equation}\label{trp8}\sum_{i\in J}\left(k_{i}+\frac{\alpha}{1-\alpha}t_{i}\right)\leq \sum_{i\in J}d_{i} <\infty.\end{equation}
From \eqref{trp7} we deduce that
 \[
\mathbf I - E = T-K +(\mathbf I - P)  \ge T +  \frac{1-\alpha}{\alpha} K + \pi(\{0\}).
\]
Thus, 
$1-d_{i} \geq \frac{1-\alpha}{\alpha}k_{i}+t_{i} + z_i$, where $z_i=\langle \pi(\{0\})e_i,e_i \rangle$. This yields
\begin{equation}\label{trp9}\sum_{i\in I \setminus J }\left(\frac{1-\alpha}{\alpha}k_{i}+t_{i} +z_i\right)\leq \sum_{i\in I \setminus J} (1-d_{i})
 <\infty.\end{equation}
Combining \eqref{trp8} and \eqref{trp9} yields that $K$ and $T$ are trace class.

By the spectral theorem for compact operators, there is an orthonormal basis $(f_{i})_{i\in J_{1}}$ for $\ran(\mathbf{I}-P)$ consisting of eigenvectors of $K$, with associated eigenvalues $(\lambda_{i})_{i\in J_{1}}$. There is also an orthonormal basis $(f_{i})_{i\in J_{2}}$ for $\ran(P)$ consisting or eigenvectors of $T$ with associated eigenvalues $(1-\lambda_{i})_{i\in J_{2}}$. Since $K$ and $T$ are trace class, both $(\lambda_{i})_{i\in J_{1}}$ and $(1-\lambda_{i})_{i\in J_{2}}$ are summable. For $i\in J_{1}$ we have $(P-T)f_{i} = 0$ and thus $Ef_{i} = \lambda_{i}f_{i}$. For $i\in J_{2}$ we have $Kf_{i} = 0$ so that $Ef_{i} = \lambda_{i}$. 
Therefore, $(f_{i})_{i\in J_{1}\cup J_{2}}$ is an orthonormal basis for $\Hil$ consisting of eigenvectors of $E$ with associated eigenvalues $(\lambda_{i})_{i\in J_{1}\cup J_{2}}$. This shows that $E$ is diagonalizable. 

From the definitions of $K$ and $T$ we see that $\lambda_{i}<\alpha$ for $i\in J_{1}$ and $\lambda_{i}\geq \alpha$ for $i\in J_{2}$. Thus,

\[C_{\boldsymbol\lambda}(\alpha)=\sum_{\lambda_{i}<\alpha}\lambda_{i} = \sum_{i\in J_{1}}\lambda_{i}<\infty\quad\text{and}\quad D_{\boldsymbol\lambda}(\alpha)=\sum_{\lambda_{i}\geq \alpha}(1-\lambda_{i})=\sum_{i\in J_{2}}(1-\lambda_{i})<\infty.\]
This implies $f_{\boldsymbol\lambda}(\alpha)<\infty$. Moreover, using \eqref{trp8} and \eqref{trp9} we have
\begin{align*}
f_{\boldsymbol\lambda}(\alpha) & = (1-\alpha)\sum_{i\in J_{1}}\lambda_{i} + \alpha\sum_{i\in J_{2}}(1-\lambda_{i}) = (1-\alpha)\tr(K)+\alpha\tr(T) = (1-\alpha)\sum_{i\in I}k_{i} + \alpha\sum_{i\in I}t_{i}\\
 & = (1-\alpha)\sum_{i\in J}\left(k_{i} + \frac{\alpha}{1-\alpha}t_{i}\right) + \alpha\sum_{i\in I \setminus J }\left(\frac{1-\alpha}{\alpha}k_{i} + t_{i}\right)\\
 & \leq 
  (1-\alpha)\sum_{i\in J}d_i + \alpha\sum_{i\in I \setminus J }( 1 - d_i) - \alpha\sum_{i\in I \setminus J } z_i 
  =  f_{\boldsymbol{d}}(\alpha) - \alpha\sum_{i\in I \setminus J } z_i,
\end{align*}
which shows \eqref{trp1}. Moreover,
\[
\frac{f_{\boldsymbol{d}}(\alpha) - f_{\boldsymbol\lambda}(\alpha)}{\alpha} \ge \sum_{d_i > \alpha } z_i \to \sum_{i\in I} z_i = \dim \ker E
\qquad\text{as }\alpha \searrow 0.
\]
This proves \eqref{trp3}.
\end{proof}

Theorem \ref{trp} could be extended to the class of operators as in Theorem \ref{pmtp3.} by replacing the conclusion \eqref{trp4} with
\[
 \#|\{i: d_i=1\}| +\liminf_{\alpha \nearrow 1} \frac{f_{\boldsymbol{d}}(\alpha)-
 f_{\boldsymbol \lambda}(\alpha)}{1-\alpha} +\delta_0 \ge \#|\{j: \lambda_j=1\}|  \ge \#|\{i: d_i=1\}|.
\]
Since we do not use this fact, we skip its proof which is a more complicated variant of Theorem \ref{trp}. Besides, it seems unlikely that \eqref{trp3} and \eqref{trp4} are sufficient conditions for the existence of operators with prescribed diagonals as in Theorem \ref{trp}. Indeed, we expect that the existence of  $\alpha_0>0$ such that
\[
 \#|\{i: d_i=0\}| + \frac{f_{\boldsymbol{d}}(\alpha)-
 f_{\boldsymbol \lambda}(\alpha)}{\alpha} \ge \#|\{j: \lambda_j=0\}| \ge  \#|\{i: d_i=0\}| \qquad\text{for } 0<\alpha<\alpha_0,
 \]
together with analogous statement for \eqref{trp4} are sufficient conditions.
This is in analogy with the kernel problem for positive compact operators in Theorem \ref{kp}, where the gap between the necessary \eqref{kp4a} and sufficient \eqref{kp4b} conditions persists.

\end{document}